\newcolumntype{L}[1]{>{\raggedright\arraybackslash}p{#1}}
\newcommand{\PLINEQCONST}{C_{\text{\normalfont{P\L}}}}
\newcommand{\VARINEQCONST}{C_{\text{\normalfont{Var}}}}
\newcommand{\AC}{\text{\normalfont{ac}}}
\newcommand{\LIN}{\text{\normalfont{lin}}}
\newcommand{\ENTROPIC}{\text{\normalfont{entr}}}
\newcommand{\FULL}{\text{\normalfont{full}}}
\newcommand{\LOCAL}{\text{\normalfont{loc}}}
\newcommand{\MINSUB}{\text{\normalfont{min}}}
\newcommand{\MAXSUB}{\text{\normalfont{max}}}
\newcommand{\LBSUB}{\text{\normalfont{LB}}}
\newcommand{\UBSUB}{\text{\normalfont{UB}}}
\newcommand{\STRONGLYCONVEX}{\text{\normalfont{sc}}}
\newcommand{\VARNORM}{\text{\normalfont{var}}}
\newcommand{\COPULA}{\text{\normalfont{cop}}}
\newcommand{\SPHERICAL}{\text{\normalfont{sph}}}
\newcommand{\ELLIPTICAL}{\text{\normalfont{ell}}}
\DeclareFontFamily{U}{mathx}{\hyphenchar\font45}
\DeclareFontShape{U}{mathx}{m}{n}{
  <-> mathx10
}{}
\DeclareSymbolFont{mathx}{U}{mathx}{m}{n}
\DeclareMathAccent{\widecheck}{0}{mathx}{"71}
\DeclareFontFamily{U}{mathx}{\hyphenchar\font45}
\DeclareFontShape{U}{mathx}{m}{n}{
      <5> <6> <7> <8> <9> <10>
      <10.95> <12> <14.4> <17.28> <20.74> <24.88>
      mathx10
      }{}
\DeclareSymbolFont{mathx}{U}{mathx}{m}{n}
\DeclareMathSymbol{\bigtimes}{1}{mathx}{"91}
\newcommand{\Td}{\mathrm{d}}
\newcommand{\BA}{\mathbf{A}}
\newcommand{\BB}{\mathbf{B}}
\newcommand{\BI}{\mathbf{I}}
\newcommand{\BK}{\mathbf{K}}
\newcommand{\BM}{\mathbf{M}}
\newcommand{\BO}{\mathbf{O}}
\newcommand{\BU}{\mathbf{U}}
\newcommand{\BY}{\mathbf{Y}}
\newcommand{\BIb}{{\boldsymbol{b}}}
\newcommand{\BIf}{{\boldsymbol{f}}}
\newcommand{\BIg}{{\boldsymbol{g}}}
\newcommand{\BIh}{{\boldsymbol{h}}}
\newcommand{\BIm}{{\boldsymbol{m}}}
\newcommand{\BIp}{{\boldsymbol{p}}}
\newcommand{\BIu}{{\boldsymbol{u}}}
\newcommand{\BIv}{{\boldsymbol{v}}}
\newcommand{\BIx}{{\boldsymbol{x}}}
\newcommand{\BIy}{{\boldsymbol{y}}}
\newcommand{\BIz}{{\boldsymbol{z}}}
\newcommand{\BIX}{{\boldsymbol{X}}}
\newcommand{\BIY}{{\boldsymbol{Y}}}
\newcommand{\BIZ}{{\boldsymbol{Z}}}
\newcommand{\CB}{\mathcal{B}}
\newcommand{\CC}{\mathcal{C}}
\newcommand{\CF}{\mathcal{F}}
\newcommand{\CK}{\mathcal{K}}
\newcommand{\CL}{\mathcal{L}}
\newcommand{\CM}{\mathcal{M}}
\newcommand{\CP}{\mathcal{P}}
\newcommand{\CS}{\mathcal{S}}
\newcommand{\CW}{\mathcal{W}}
\newcommand{\CX}{\mathcal{X}}
\newcommand{\CY}{\mathcal{Y}}
\newcommand{\CZ}{\mathcal{Z}}
\newcommand{\FC}{\mathfrak{C}}
\newcommand{\ITheta}{\varTheta}
\newcommand{\BSigma}{{\boldsymbol{\Sigma}}}
\newcommand{\Bbeta}{{\boldsymbol{\beta}}}
\newcommand{\Beta}{{\boldsymbol{\eta}}}
\newcommand{\Bvartheta}{{\boldsymbol{\vartheta}}}
\newcommand{\R}{\mathbb{R}} 
\newcommand{\N}{\mathbb{N}} 
\newcommand{\vecone}{\mathbf{1}} 
\newcommand{\veczero}{\mathbf{0}} 
\newcommand{\INDI}{\mathbbm{1}} 
\newcommand{\TRANSP}{\mathsf{T}} 
\newcommand{\DIFF}{\Td} 
\newcommand{\DIFFX}[1]{\,\Td{#1}} 
\newcommand{\DIFFM}[2]{\,{#1}({#2})}
\newcommand{\diag}{\mathrm{diag}} 
\newcommand{\PROB}{\mathbb{P}} 
\newcommand{\EXP}{\mathbb{E}} 
\newcommand{\VAR}{\mathrm{Var}} 
\newcommand{\clos}{\mathrm{cl}} 
\newcommand{\support}{\mathrm{supp}} 
\newcommand{\conv}{\mathrm{conv}} 
\newcommand{\lebesgue}{\mathscr{L}}
\newcommand{\boundary}{\mathrm{bd}}
\DeclareMathOperator*{\maximize}{\mathrm{maximize}} 
\DeclareMathOperator*{\esssup}{ess\,sup}
\numberwithin{equation}{section}  
\newtheorem{definition}{Definition}[section]
\newtheorem{remark}[definition]{Remark}
\newtheorem{theorem}[definition]{Theorem}
\newtheorem{proposition}[definition]{Proposition}
\newtheorem{corollary}[definition]{Corollary}
\newtheorem{lemma}[definition]{Lemma}
\newtheorem{assumption}[definition]{Assumption}
\newtheorem{setting}[definition]{Setting}
\DeclareMathOperator*{\argmin}{arg\,min}
\newcommand{\intersects}{\hspace{1pt}\cap\hspace{1pt}}
\numberwithin{equation}{section}
\numberwithin{figure}{section}
\numberwithin{table}{section}
\begin{document}
\title[Provably convergent algorithm for free-support Wasserstein barycenter]{Provably convergent stochastic fixed-point algorithm for free-support Wasserstein barycenter of continuous non-parametric measures}

\author[Z. Chen]{Zeyi Chen}
\author[A. Neufeld]{Ariel Neufeld}
\author[Q. Xiang]{Qikun Xiang}

\address{Area of Decision Sciences, INSEAD, 1 Ayer Rajah Ave, 138676 Singapore}
\email{zeyi.chen@insead.edu}

\address{Division of Mathematical Sciences, Nanyang Technological University, 21 Nanyang Link, 637371 Singapore}
\email{ariel.neufeld@ntu.edu.sg}

\address{Division of Mathematical Sciences, Nanyang Technological University, 21 Nanyang Link, 637371 Singapore}
\email{qikun.xiang@ntu.edu.sg}

\date{\today}

\begin{abstract}
	We develop an estimator-based stochastic fixed-point framework for approximately computing the \mbox{2-Wasserstein} barycenter of continuous, non-parametric probability measures. 
	Notably, we provide the first rigorous convergence analysis for implementable estimator-based
	stochastic extensions of the fixed-point iterative scheme proposed by {\'A}lvarez-Esteban, del Barrio, Cuesta-Albertos, and Matr{\'a}n (2016).
	In particular, we establish almost sure convergence, and identify sufficient conditions for geometric rates of convergence under controlled errors in optimal transport (OT) map estimation.
	We subsequently propose a concrete, provably convergent, and computationally tractable stochastic algorithm that accommodates input measures satisfying Caffarelli-type regularity conditions, 
	which form a dense subset of the Wasserstein space. 
	This algorithm leverages a modified entropic OT map estimator to enable efficient and scalable implementation.
	To facilitate quantitative evaluation, we further propose a novel and efficient procedure for synthetically generating benchmark instances, in which the input measures exhibit non-trivial features and the corresponding barycenters are approximately known.
	Numerical experiments on both synthetic and real-world datasets demonstrate the strong computational efficiency, estimation accuracy, and sampling flexibility of our approach. 

    \textbf{Keywords:} {Wasserstein barycenter, optimal transport, transportation map estimation, entropic regularization}

\end{abstract}
\maketitle

\section{Introduction}\label{sec: intro}

In this paper, we consider the problem of approximately computing the 2-Wasserstein barycenter \citep{agueh2011barycenters} of multiple continuous non-parametric probability measures $\nu_1, \dots, \nu_K \in \CP_{2, \AC}(\R^d)$ that possess finite second moments and are absolutely continuous with respect to the Lebesgue measure on $\R^d$.
The \textit{$\CW_2$-distance} between two probability measures $\mu, \nu \in \CP_2(\R^d)$ with finite second moments is defined via the following optimal transport problem (see, e.g., \citep{villani2009optimal}) with squared-Euclidean cost: 
\begin{align}
	\CW_2(\mu, \nu) := \biggl( \inf_{\pi \in \Pi(\mu, \nu)} \int_{\R^d\times\R^d} \|\BIx - \BIy\|^2 \DIFFM{\pi}{\DIFF \BIx, \DIFF \BIy}\biggr)^{\frac{1}{2}},
	\label{eqn: W2-distance}
\end{align}
where $\Pi(\mu, \nu)$ denotes the set of coupling measures with $\mu$ and $\nu$ as their marginals (see Definition~\ref{def: coupling}).\footnote{In this paper, we use the terms \textit{$\CW_2$-distance} and \textit{$\CW_2$-metric} interchangeably; see, e.g., \citep[Theorem~6.9]{villani2009optimal} for the properties of $\CW_2$ as a metric on $\CP_2(\R^d)$.}
Given $K\in\N$ input measures $\nu_1, \dots, \nu_K \in \CP_2(\R^d)$, and weights $w_1, \dots, w_K \in(0,1)$ satisfying $\sum_{k = 1}^{K} w_k = 1$, 
the \textit{barycenter functional} $V:\CP_2(\R^d)\to\R$ is defined as follows:
\begin{align}
	\label{eqn: V-definition}
	V(\mu) := \sum_{k = 1}^{K} w_k \CW_2(\mu, \nu_k)^2 \qquad \forall \mu\in\CP_2(\R^d).
\end{align}
Then, $\bar{\mu} \in \CP_2(\R^d)$ is called a \textit{$\CW_2$-barycenter} of $\nu_1, \dots, \nu_K$ with weights $w_1, \dots, w_K$ if
\begin{align}
	\label{def: W2-barycenter}
	\bar{\mu} \in \argmin_{\mu \in \CP_2(\R^d)} V(\mu).
\end{align}

Due to the appealing geometric structure and statistical properties of the Wasserstein space \citep{panaretos2020invitation}, the Wasserstein barycenter problem arises in widespread applications of combining probabilistic information from heterogeneous data sources. 
Examples include aggregating subset posteriors in Bayesian inferences \citep{srivastava2018scalable,srivastava2015wasp}, mixing color textures in image processing \citep{rabin2012wasserstein}, forming group consensus from expert forecasts in decision analysis \citep{petracou2022decision}, and selecting the reference measure in distributionally robust optimization \citep{rychener2024wasserstein,lau2022wasserstein}, to name but a few.

However, it is well-known that computing the $\CW_2$-barycenter of general probability measures is challenging.
For example, even in the restrictive case with discrete input measures, 
it has been proved by \citet*{altschuler2022NP} that the problem is already NP-hard.
A common approximation strategy is to parametrize the underlying $\CW_2$-barycenter via a discrete measure supported on fixed atoms
and turn the problem into optimizing the histogram weights over a finite-dimensional probability simplex;
see, e.g., \citep[Chapter~6]{peyre2019computational} and the references therein. 
Nevertheless, most such ``fixed-support'' approaches scale poorly to high dimensions due to prohibitive computational burdens, 
and they are unsuitable for scenarios when sampling from the barycenter measure is needed.

Given the aforementioned challenges, our work contributes to the literature of ``free-support'' approaches which do not prescribe any discrete support when approximating the $\CW_2$-barycenter
of general continuous non-parametric input probability measures.
At a high level, the goal of this paper is to study implementable stochastic extensions of the prominent fixed-point iterative scheme proposed by \citet*{alvarez2016fixed}. 
Specifically, \citet{alvarez2016fixed} have demonstrated that the $\CW_2$-barycenter of  $\nu_1, \dots, \nu_K\in\CP_{2,\AC}(\R^d)$ can be characterized as a fixed-point of the operator $G: \CP_{2, \AC}(\R^d) \to \CP_{2, \AC}(\R^d)$ defined through the following pushforward operation:\footnote{
	For two closed subsets $\CX,\CY$ of Euclidean spaces and a Borel measurable function $T:\CX\to\CY$, the pushforward of a probability measure $\mu\in\CP(\CX)$ by $T$ is denoted by $T\sharp\mu\in \CP(\CY)$, which is defined via $T \sharp \mu (B) \equiv \mu\circ T^{-1}(B)$ for every Borel set $B\subseteq\CY$.}
\begin{align}
	\label{eqn: G-operator}
	G(\mu) := \Big[\textstyle \sum_{k = 1}^K w_k T^\mu_{\nu_k}\Big] \sharp \mu \qquad \forall \mu \in \CP_{2, \AC}(\R^d),
\end{align}
where $T^{\mu}_{\nu_k}$ corresponds to Monge's optimal transport (OT) map from $\mu$ to $\nu_k$ (see Theorem~\ref{thm: Brenier}).
In particular, the $G$-operator in (\ref{eqn: G-operator}) is continuous with respect to the $\CW_2$-metric \citep[Theorem~3.1]{alvarez2016fixed}, and the following result holds.
\begin{theorem}[Properties of the $G$-operator {\citep[Corollary~3.5 \& Theorem~3.6]{alvarez2016fixed}}]\label{thm: G-property}
	Let $\nu_1,\ldots,\nu_K\in\CP_{2,\AC}(\R^d)$,
	where for at least one index $k\in\{1,\ldots,K\}$, $\nu_k$ has $\CL^{\infty}$-bounded density.
	Then, the $G$-operator defined in (\ref{eqn: G-operator}) satisfies the following properties.
	\begin{enumerate}[label=(\roman*)]
		\item\label{thms: G-property-fixedpoint}The unique $\CW_2$-barycenter $\bar{\mu} \in \CP_{2, \AC}(\R^d)$ (see Theorem~\ref{thm: unique barycenter}) of $\nu_1, \dots, \nu_K$ with weights $w_1,\ldots,w_K$ is a fixed-point of $G$, i.e., $\bar{\mu} = G(\bar{\mu})$.
		\item\label{thms: G-property-accumulation}For any $\mu_0 \in \CP_{2, \AC}(\R^d)$, the sequence $(\mu_t)_{t \in \N_0}$ generated by the iteration 
		\begin{align}
			\label{eqn: G-iteration}
			\mu_{t} := G(\mu_{t-1}) \qquad \forall t\in\N
		\end{align}
		is tight. 
		Moreover, every accumulation point of the sequence $(\mu_t)_{t \in \N_0}$ with respect to the $\CW_2$-metric is a fixed-point of $G$. 
	\end{enumerate}
\end{theorem}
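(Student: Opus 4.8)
My plan is to reduce everything to a single \emph{descent inequality}: for every $\mu\in\CP_{2,\mathrm{ac}}(\R^d)$,
\begin{align}
  V\big(G(\mu)\big)\;\le\;V(\mu)-\CW_2\big(\mu,G(\mu)\big)^2,
  \label{eqn: descent}
\end{align}
and then read off both parts from it. Granting \eqref{eqn: descent}, part~\ref{thms: G-property-fixedpoint} is immediate: by Theorem~\ref{thm: unique barycenter} the barycenter $\bar\mu$ lies in $\CP_{2,\mathrm{ac}}(\R^d)$, so $G(\bar\mu)$ is a well-defined element of $\CP_2(\R^d)$; minimality of $\bar\mu$ gives $V(\bar\mu)\le V(G(\bar\mu))$, and combined with \eqref{eqn: descent} this forces $\CW_2(\bar\mu,G(\bar\mu))=0$, i.e.\ $\bar\mu=G(\bar\mu)$.

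To establish \eqref{eqn: descent}, I would apply Brenier's theorem to write each $T^{\mu}_{\nu_k}=\nabla\phi_k$ with $\phi_k$ convex, so that $\bar T:=\sum_{k=1}^{K} w_k T^{\mu}_{\nu_k}=\nabla\big(\sum_{k=1}^{K} w_k\phi_k\big)$ is again a gradient of a convex function and hence, again by Brenier, is the optimal transport map from $\mu$ to $G(\mu)=\bar T\sharp\mu$; in particular $\int_{\R^d}\|\bar T(\BIx)-\BIx\|^2\DIFFM{\mu}{\DIFF\BIx}=\CW_2(\mu,G(\mu))^2$. For each $k$ the map $\BIx\mapsto\big(\bar T(\BIx),T^{\mu}_{\nu_k}(\BIx)\big)$ pushes $\mu$ forward to a coupling of $G(\mu)$ and $\nu_k$, whence $\CW_2(G(\mu),\nu_k)^2\le\int_{\R^d}\|\bar T(\BIx)-T^{\mu}_{\nu_k}(\BIx)\|^2\DIFFM{\mu}{\DIFF\BIx}$. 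Forming the $w_k$-weighted sum over $k$ and using, pointwise in $\BIx$, the elementary identity $\sum_k w_k\|b_k-\bar b\|^2=\sum_k w_k\|b_k-\BIx\|^2-\|\bar b-\BIx\|^2$ with $b_k=T^{\mu}_{\nu_k}(\BIx)$ and $\bar b=\bar T(\BIx)$, the cross terms cancel and the bound collapses to $\sum_k w_k\CW_2(\mu,\nu_k)^2-\int_{\R^d}\|\bar T(\BIx)-\BIx\|^2\DIFFM{\mu}{\DIFF\BIx}=V(\mu)-\CW_2(\mu,G(\mu))^2$, which is~\eqref{eqn: descent}.

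For part~\ref{thms: G-property-accumulation}, \eqref{eqn: descent} makes $(V(\mu_t))_{t\in\N_0}$ non-increasing, so $V(\mu_t)\le V(\mu_0)$ and $\CW_2(\mu_t,\nu_k)^2\le V(\mu_0)/w_k$ for all $t,k$; the triangle inequality then bounds the second moments $\int_{\R^d}\|\BIx\|^2\DIFFM{\mu_t}{\DIFF\BIx}$ uniformly in $t$, and Markov's inequality yields tightness of $(\mu_t)_{t\in\N_0}$. Being non-increasing and bounded below by $0$, $(V(\mu_t))_{t\in\N_0}$ converges, so telescoping \eqref{eqn: descent} gives $\CW_2(\mu_t,\mu_{t+1})^2=\CW_2(\mu_t,G(\mu_t))^2\le V(\mu_t)-V(\mu_{t+1})\to 0$. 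If $\mu_{t_j}\to\mu_\infty$ in the $\CW_2$-metric along a subsequence, then also $\mu_{t_j+1}=G(\mu_{t_j})\to\mu_\infty$, and \emph{provided} $\mu_\infty\in\CP_{2,\mathrm{ac}}(\R^d)$ the $\CW_2$-continuity of $G$ \citep[Theorem~3.1]{alvarez2016fixed} gives $G(\mu_{t_j})\to G(\mu_\infty)$; uniqueness of $\CW_2$-limits then forces $G(\mu_\infty)=\mu_\infty$.

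The genuinely delicate point — and the main obstacle — is the proviso $\mu_\infty\in\CP_{2,\mathrm{ac}}(\R^d)$, since this class is not weakly closed and one must show the limit lies in the domain of $G$. I would handle it via a uniform regularity estimate on the iterates: writing the density of $G(\mu)=\nabla\bar\phi\sharp\mu$ through the Monge--Amp\`ere relation and invoking concavity of $\log\det$ on positive semidefinite matrices, $\det\big(\sum_k w_k D^2\phi_k\big)\ge\prod_k\det\big(D^2\phi_k\big)^{w_k}$, shows that the density of $G(\mu)$ is dominated by a weighted geometric mean of the densities of $\nu_1,\dots,\nu_K$; hence $G$ preserves the class of measures with Lebesgue density bounded by $\max_k\|\rho_{\nu_k}\|_\infty$, and such a bound is inherited by weak limits, giving $\mu_\infty\in\CP_{2,\mathrm{ac}}(\R^d)$. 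Making this rigorous without a priori smoothness (via Alexandrov second derivatives of convex functions) and removing the boundedness assumption on $\rho_{\nu_k}$ by truncation and approximation is where the real work lies; an alternative is to sharpen \eqref{eqn: descent} to a one-step contraction $V(G(\mu))-V(\bar\mu)\le\tfrac{1}{2}\big(\CW_2(\mu,\bar\mu)^2-\CW_2(G(\mu),\bar\mu)^2\big)$ using convexity of $\mu\mapsto\CW_2(\mu,\nu_k)^2$ along generalized geodesics based at $\bar\mu$, which forces $V(\mu_t)\to\min V$ and thereby identifies every accumulation point with the absolutely continuous barycenter $\bar\mu$ through the uniqueness part of Theorem~\ref{thm: unique barycenter}.
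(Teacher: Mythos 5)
This statement is quoted from \citep{alvarez2016fixed}; the paper does not reprove it, although Proposition~\ref{prop: decrement} and Step~4 of the proof of Theorem~\ref{thm: fixedpoint-convergence} use the same ingredients, so they reveal what the underlying argument is. Measured against that, your proposal reproduces the cited argument essentially faithfully for three of its four steps. Your derivation of the descent inequality is exactly what the paper records as \citep[Proposition~3.3]{alvarez2016fixed} in (\ref{eqn: decrement-deterministic}): push $\mu$ forward by the pair $\big(\bar T, T^{\mu}_{\nu_k}\big)$, use Brenier's theorem to recognize $\bar T=\nabla\big(\sum_k w_k\phi_k\big)$ as the OT map from $\mu$ to $G(\mu)$, and invoke the pointwise weighted-variance identity; part~(i) follows by minimality of $\bar\mu$, and tightness follows from $V(\mu_t)\le V(\mu_0)$, the resulting uniform bound on second moments, and Markov's inequality. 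All of that is correct and is the same route as the source.

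The step you correctly identify as delicate --- showing that a $\CW_2$-accumulation point $\mu_\infty$ is absolutely continuous so that $G(\mu_\infty)$ is defined and \citep[Theorem~3.1]{alvarez2016fixed} applies --- is also where your proposal remains incomplete. Your first fix works but is heavier than needed: the log-concavity bound $\det\big(\sum_k w_k D^2\phi_k\big)\ge\prod_k\det\big(D^2\phi_k\big)^{w_k}$ requires bounded densities for all $K$ marginals. The estimate the paper actually quotes from \citep[Remark~3.2]{alvarez2016fixed} in Step~4 of the proof of Theorem~\ref{thm: fixedpoint-convergence} is the cheaper observation $\sum_k w_k D^2\phi_k\succeq w_1 D^2\phi_1$, hence $\det\big(\sum_k w_k D^2\phi_k\big)\ge w_1^d\det(D^2\phi_1)$ and $\|f_{G(\mu)}\|_\infty\le w_1^{-d}\|f_{\nu_1}\|_\infty$; this needs only one marginal with bounded density, and the uniform density bound passes to the weak limit via Portmanteau. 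Either route still requires Alexandrov second derivatives to make the Monge--Amp\`ere computation rigorous, as you note.

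Your alternative --- the one-step contraction $V(G(\mu))-V(\bar\mu)\le\tfrac12\big(\CW_2(\mu,\bar\mu)^2-\CW_2(G(\mu),\bar\mu)^2\big)$ --- should not be used: it would prove too much. Substituting any fixed-point $\mu^*\in\CP_{2,\mathrm{ac}}(\R^d)$ of $G$ gives $V(\mu^*)\le V(\bar\mu)=\min V$, hence $\mu^*=\bar\mu$ by uniqueness of the barycenter; i.e., it would establish that $G$ has a unique fixed-point. The paper explicitly remarks after Theorem~\ref{thm: fixedpoint-convergence} that this is an open question for non-parametric inputs, and Theorem~\ref{thm: G-property}\ref{thms: G-property-accumulation} is deliberately formulated to say only that accumulation points are fixed-points, not that they equal $\bar\mu$. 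Convexity of $\mu\mapsto\CW_2(\mu,\nu_k)^2$ along generalized geodesics based at $\bar\mu$ is true, but it does not make the $G$-update a proximal or gradient step for which that non-expansion inequality would follow, so this sharpening is not available.
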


\begin{algorithm}[t]
	\caption{\bf{Abstract stochastic fixed-point iterative scheme.}}%
	\label{algo: abstract}
	\KwIn{$K\in\N$ input probability measures $\nu_1, \ldots, \nu_K \in \CP_{2, \AC}(\R^d)$, 
	weights $w_1,\ldots,w_K\in(0,1)$ with $\sum_{k=1}^Kw_k=1$, 
	initial probability measure~$\mu_0 \in \CP_{2, \AC}(\R^d)$.}
	\KwOut{$(\widehat{\mu}_t)_{t \in \N_0}$.}

	\nl\label{alglin: abstract-initialize}Initialize $\widehat{\mu}_0 \leftarrow \mu_0$.

	\nl \For{$k=1,\ldots,K$}{
		\nl\label{alglin: abstract-initial-samplesize}Choose the samples sizes $\widehat{M}_{0,k},\widehat{N}_{0,k}\in\N$
		and the hyperparameter(s) $\widehat{\ITheta}_{0,k}\in\Theta$ 
		for the estimator.
	}

	\nl \For{$t = 1, 2, \dots$}{
		\textbf{[Iteration~$t$]:}\\
		\nl \For{$k=1,\ldots,K$}{

			\nl\label{alglin: abstract-sample1}Randomly generate $\widehat{M}_{t-1,k} \in \N$ independent samples $\{\BIX_{t,k,i}\}_{i=1:\widehat{M}_{t-1,k}}$ from $\widehat{\mu}_{t-1}$.

			\nl\label{alglin: abstract-sample2}Randomly generate $\widehat{N}_{t-1,k} \in \N$ independent samples $\{\BIY_{t,k,j}\}_{j=1:\widehat{N}_{t-1,k}}$ from $\nu_k$.

			\nl\label{alglin: abstract-estimator}Approximate $T^{\widehat{\mu}_{t-1}}_{\nu_k}$ with an estimator $\widehat{T}_{t,k}\approx T^{\widehat{\mu}_{t-1}}_{\nu_k}$ using the samples $\{\BIX_{t,k,i}\}_{i=1:\widehat{M}_{t-1,k}}$ and $\{\BIY_{t,k,j}\}_{j=1:\widehat{N}_{t-1,k}}$
			as well as the hyperparameter(s) $\widehat{\ITheta}_{t-1,k}$.
		}
		\nl\label{alglin: abstract-update}Choose $\widehat{\mu}_{t} \in \CP_{2, \AC}(\R^d)$ using all available information up to iteration~$t$ such that $\widehat{\mu}_{t} \approx \big[\sum_{k = 1}^K w_k \widehat{T}_{t,k}\big] \sharp \widehat{\mu}_{t-1}$.\\

		\nl \For{$k=1,\ldots,K$}{
			\nl\label{alglin: abstract-next-samplesize}Choose the sample sizes 
			$\widehat{M}_{t,k},\widehat{N}_{t,k}\in\N$
			and the hyperparameter(s) $\widehat{\ITheta}_{t,k}\in\Theta$ 
			for the estimator using all available information up to iteration~$t$.
		}
	}
	\nl \Return $\big(\widehat{\mu}_t\big)_{t \in \N_0}$.
\end{algorithm}

The $G$-iteration \eqref{eqn: G-iteration} naturally gives rise to a simple iterative scheme for computing the $\CW_2$-barycenter where one begins with an arbitrary $\mu_0\in\CP_{2,\AC}(\R^d)$ and iterates (\ref{eqn: G-iteration}) to generate $(\mu_t)_{t\in\N_0}$.
It is then guaranteed that $(\mu_t)_{t\in\N_0}$ converges in $\CW_2$ to the $\CW_2$-barycenter of $\nu_1,\ldots,\nu_K$ with weights $w_1,\ldots,w_K$ whenever $G$ has a unique fixed-point. 
In fact, it has been recently shown by \citet*{tanguy2024computing} that this fixed-point method can be generalized to compute barycenters under general transportation costs and non-continuous probability measures. 
Moreover, we highlight that there exists an alternative treatment of the $G$-iteration \eqref{eqn: G-iteration} from the perspective of Wasserstein gradient flows.
In particular, 
\citet*{zemel2019frechet} have shown that this iterative scheme corresponds to a gradient descent scheme minimizing the barycenter functional~$V$ on the Wasserstein space $\big(\CP_2(\R^d),\CW_2\big)$ that leads to the optimal decrement; see also \citep{chewi2020gradient} and \citep{backhoff2025stochastic}. 
In fact, any fixed-point of the $G$-operator is a \textit{Karcher mean} \citep{karcher1977riemannian} of $\nu_1,\ldots,\nu_K\in\CP_{2,\AC}(\R^d)$, which is a notion that features the stationary points of sum-of-squared functionals; 
see our brief summary of these notions at the end of Section~\ref{ssec: preliminary-OT} as well as \citep[Theorem~1 and Corollary~1]{zemel2019frechet}.

However, when $\nu_1,\ldots,\nu_K$ are general continuous probability measures, the $G$-operator \eqref{eqn: G-operator} corresponds to an impractical ``oracle'' due to the intractability in computing the OT map $T^\mu_{\nu_k}$ exactly.
As a consequence, existing numerical implementations of this framework are either limited to particular parametric measures from the same elliptical family (see, e.g., \citep[Section~4]{alvarez2016fixed}), or carried out via neural network approximations \citep{korotin2022wasserstein} which hinder end-to-end convergence analyses. 

This bottleneck motivates our development of a provably convergent estimator-based stochastic extension of this deterministic fixed-point iterative scheme (or, equivalently, a stochastic approximation counterpart of the aforementioned Wasserstein gradient descent scheme) that supports both rigorous analysis in theory and efficient implementations in practice. 
The intuitive idea of our framework is sketched in Algorithm~\ref{algo: abstract} in its most abstract form.
Specifically, Line~\ref{alglin: abstract-estimator} in Algorithm~\ref{algo: abstract} approximates each true OT map $T^{\widehat{\mu}_{t-1}}_{\nu_k}$ with an estimator $\widehat{T}_{t,k}$ which is a (measurable) function of samples from the source and target measures,
whereas Line~\ref{alglin: abstract-update} thereafter approximates the $G$-operator \eqref{eqn: G-operator} when updating from $\widehat{\mu}_{t-1}$ to $\widehat{\mu}_{t}$.
Notice that letting $\widehat{T}_{t,k}= T^{\widehat{\mu}_{t-1}}_{\nu_k}$ in Line~\ref{alglin: abstract-estimator} and letting $\widehat{\mu}_{t} = \big[\sum_{k = 1}^K w_k \widehat{T}_{t,k}\big] \sharp \widehat{\mu}_{t-1}$ in Line~\ref{alglin: abstract-update} will recover the deterministic $G$-iteration \eqref{eqn: G-iteration}, and one may therefore anticipate that this stochastic extension mimics the behavior of the deterministic system, provided that the approximations therein have controlled errors. 
This paper rigorously confirms this intuition and explicitly discusses \textit{when} and \textit{how} approximation errors are controlled for Algorithm~\ref{algo: abstract} to preserve similar convergence guarantees as in Theorem~\ref{thm: G-property}.

\subsection{Contributions}\label{ssec: contributions}
The main contributions of this paper are summarized as follows.

\begin{enumerate}[label = (\roman*),beginpenalty=10000]
	\item We derive in Section~\ref{sec: fixedpoint-convergence} general conditions under which the stochastic estimator-based iterative scheme in Algorithm~\ref{algo: abstract} converges almost surely to a fixed-point of the $G$-operator in \eqref{eqn: G-operator} (Theorem~\ref{thm: main-convergence}), which coincides with the $\CW_2$-barycenter of the input measures when 
	$G$ has a unique fixed-point.  
	Specifically, we rigorously establish convergence when the approximation errors are suitably controlled in conditional expectation, and provide sufficient conditions that guarantee a geometric rate of convergence, together with concrete examples satisfying these conditions (Section~\ref{ssec: main-convergence-conditions}).  
	To the best of our knowledge, this is the first work to deliver a rigorous convergence analysis for 
	the estimator-based
	stochastic counterparts of the fixed-point iterative scheme of \citet{alvarez2016fixed}.

	\item 
	In Section~\ref{sec: Caffarelli-setting},
	we develop a computationally tractable stochastic fixed-point algorithm (Algorithm~\ref{algo: concrete}) for approximately computing the $\CW_2$-barycenter
	as a concrete version of Algorithm~\ref{algo: abstract}.
	This algorithm accommodates a broad class of probability measures
	satisfying Caffarelli-type regularity conditions,
	which are sufficiently rich to approximate any probability measure in the Wasserstein space to arbitrary accuracy (Proposition~\ref{prop: Caffarelli-density}). 
	We show that this algorithm satisfies the conditions required for convergence (Theorem~\ref{thm: Caffarelli-convergence}). 
	Moreover, we characterize a class of admissible OT map estimators that ensure convergence (Assumption~\ref{asp: OTmap-estimator}), and explicitly propose a modified entropic OT map estimator that is amenable to efficient implementation driven by Sinkhorn's algorithm (Proposition~\ref{prop: OTmap-estimator-entropic}). 
	A complexity analysis is also provided for our algorithm.

	\item We propose a novel and efficient approach (Algorithm~\ref{algo: generating-problem-instance}) for generating synthetic $\CW_2$-barycenter problem instances in Section~\ref{sec: generation-instance}. 
	Our method constructs input probability measures with non-trivial distributional features whose $\CW_2$-barycenter approximately coincides with a user-specified probability measure that is known a priori (Proposition~\ref{prop: synthetic-generation}). 
	As a result, our framework allows quantitative evaluation of any approximate $\CW_2$-barycenter candidate and facilitates direct comparison across different $\CW_2$-barycenter algorithms, 
	thus providing a practical and flexible benchmarking tool.

	\item In Section~\ref{sec: numerics}, we conduct numerical experiments on both synthetically generated and real-world problem instances, and we benchmark our proposed algorithm against baseline algorithms in the literature. 
	Our results demonstrate the strong computational efficiency of our proposed stochastic fixed-point algorithm, as well as its superior quality in estimation and flexibility in sampling, which highlight its potential in practical usage.
\end{enumerate}

\subsection{Related works}\label{ssec: literature}
We mention in detail three streams of literature that are closely related to our study, which include free-support algorithms for approximating the Wasserstein barycenter, variants of the entropy-regularized Wasserstein barycenter, and various approaches for estimating the OT map.

\subsubsection*{Free-support Wasserstein barycenter algorithms}
Many works in the literature, e.g.,\@ \citet*{cuturi2014fast} and \citet*{claici2018stochastic}, among others, have proposed algorithms that incrementally update the support of a discretized approximate barycenter.
While they do not anticipate the support of the underlying Wasserstein barycenter, thus belong to free-support algorithms,
their resulting approximate barycenters remain discrete.
Recent years witnessed an extensive and rapid development of continuous free-support algorithms driven by the thriving high-dimensional function approximation tools such as neural networks.
For example, 
\citet*{cohen2020estimating} parametrized the barycenter as the pushforward of another latent measure by a generative neural network in optimizing the barycenter functional; 
\citet*{fan2021scalable} tackled a min-max-min reformulation of the variational problem \eqref{def: W2-barycenter} using input convex neural networks \citep{makkuva2020optimal}; 
\citet*{korotin2022wasserstein} developed a generative model via iterative regressions to approximately implement the fixed-point iteration \eqref{eqn: G-iteration}. 
In the meantime, there have emerged numerous studies that considered the Wasserstein gradient of the barycenter functional \citep{zemel2019frechet}, and proposed first-order gradient-descent type methods to compute the Wasserstein barycenter \citep{chewi2020gradient,backhoff2025stochastic,kim2025optimal}; 
readers are referred to \citep[Section~5]{chewi2024statistical} for a detailed exposition on Wasserstein gradient flows.

\subsubsection*{Entropy-regularized Wasserstein barycenters}
Entropic optimal transport (EOT) modifies the objective function in \eqref{eqn: W2-distance} by including a penalization term based on the entropy of the coupling.
Its computational advantages have been actively studied since the seminal work by \citet*{cuturi2013sinkhorn}.
Entropy-regularized Wasserstein barycenters can be defined by plugging entropic variants of the Wasserstein distance into the barycenter functional \eqref{eqn: V-definition}.  
For example, 
\citet*{janati2020debiased} promoted the usage of Sinkhorn divergences in formulating regularized Wasserstein barycenter problems since they reduce biases incurred by naive entropic regularizations \citep{feydy2019interpolating}.
\citet*{chizat2025doubly} recently provided a ``doubly-regularized'' entropic formulation that simultaneously embeds an inner regularization on the optimal transport between the barycenter and the input measures, and an outer regularization term of the barycenter's differential entropy.
In particular, \citet{chizat2025doubly} has provided approximation error bounds of doubly-regularized EOT barycenters to the unregularized Wasserstein barycenter, as well as stability bounds to perturbations of the input measures.
The formulation of \citet{chizat2025doubly} is flexible enough to recover several existing entropy-regularized Wasserstein barycenters, notably Schr\"{o}dinger barycenters \citep{cuturi2014fast,cuturi2018semidual} and outer-regularized barycenters \citep{bigot2019penalization, carlier2021entropic}, among others; see \citep[Section~1.3]{chizat2025doubly}.
Dedicated numerical algorithms for these variants are presented in the aforementioned references.

\subsubsection*{Estimation of OT maps}
Motivated by the unavailability of tractable exact OT maps between general continuous probability measures,
our stochastic fixed-point algorithm utilizes consistent OT map estimators.
Despite that the estimation of OT maps often incurs difficult computations in light of the difficulty of evaluating the $\CW_2$-distance in high dimensions \citep{tacskesen2023discrete,taskesen2023semi},
diverse types of OT map estimators with desirable statistical and structural properties have been proposed in literature.
By imposing smoothness assumptions on the ground-truth OT map between measures, 
\citet*{hutter2021minimax} established a universal lower bound on the minimax convergence rate of any data-driven OT map estimator. 
In particular, as discussed in \citep[Appendix~E]{hutter2021minimax}, their assumptions are satisfied when the source and target measures fulfill a collection of regularity conditions, which is provided by the prominent Caffarelli's regularity theory; see, e.g.,\@ \citep[Theorem~12.50]{villani2009optimal}.
Subsequently, concrete classes of OT map estimators with provably sharp convergence rates are developed under such Caffarelli-type settings, notably 
the (empirical and smoothed) plug-in estimators \citep{manole2024plugin,deb2021rates}, 
the entropic estimator \citep{pooladian2021entropic}, 
the kernel sum-of-squares estimator \citep{vacher2024optimal}, 
among others.
Besides achieving favorable statistical rates, OT map estimators are typically expected to preserve geometric properties of the underlying true OT map, e.g.,\@ as the gradient of a convex function. 
To this end, many structure-preserving estimation schemes have been developed.
For example, \citet*{paty2020regularity} leveraged the convex interpolability framework in \citet*{taylor2017convex} to seek a curvatured proxy of the Brenier potential; 
\citet*{curmei2023shape} parametrized the OT map as the gradient of a finite-degree polynomial and leveraged sum-of-squares matrices to enforce the shape constraints; 
\citet*{gonzalez2022gan} deployed Lipschitz-constrained generative adversarial networks for OT map estimation. 

\section{Preliminary notions and results}\label{sec: preliminaries}

\subsection{Notions and notations}\label{ssec: notations}
In the following, we introduce the terminologies and notations that are used throughout this paper. 
We let 
$\lfloor \,\cdot\,\rfloor$, $\lceil \,\cdot\,\rceil$
denote the floor and ceiling functions, respectively, 
i.e., for $a\in\R$,
$\lfloor a\rfloor$ is the greatest integer less than or equal to $a$,
$\lceil a\rceil$ is the least integer greater than or equal to $a$.
We let $a\vee b:=\max\{a,b\}$,
$a\wedge b:=\min\{a,b\}$ 
for all $a,b\in\R$.
All vectors are assumed to be column vectors and are denoted by boldface symbols. 
In particular, for $k\in\N$, $\veczero_k$ and $\vecone_k$ denote the vector in $\R^k$ with all entries equal to zero and one, respectively. 
We denote by $\langle \,\cdot\,, \cdot \,\rangle$ the Euclidean dot product, i.e., $\langle \BIx, \BIy \rangle := \BIx^\TRANSP \BIy$ and 
we denote the $p$-norm by $\|\cdot\|_p$ for any $p\in(0,\infty]$.
We use $\|\cdot\|$ without subscript to denote the Euclidean norm for notational simplicity, i.e., $\|\BIx\|:=\|\BIx\|_2=\big(\langle\BIx,\BIx\rangle\big)^{\frac{1}{2}}$. 
Open and closed Euclidean balls centered at $\BIx$ with radius $r>\nobreak0$ are denoted by $B(\BIx,r)$ and $\bar{B}(\BIx,r)$, respectively.
For any set $\CX \subseteq \R^k$, we let $\clos(\CX)$ denote its closure and let $\boundary(\CX)$ denote its boundary.
Moreover, for $k\in\N$, we let $\BO_k$ denote the $k$-by-$k$ zero matrix, and let $\BI_k$ denote the $k$-by-$k$ identity matrix. 
We use $\mathbb{S}^k$, $\mathbb{S}^k_{+}$, and $\mathbb{S}^k_{++}$ to denote the set of $k$-by-$k$ matrices that are symmetric, symmetric positive semi-definite, and symmetric positive definite, respectively.
For $\BA,\BB\in\mathbb{S}^k$, $\BA\succeq\BB$ denotes $\BA-\BB\in\mathbb{S}^k_+$. 
Furthermore, the smallest and the largest eigenvalues of any $\BA\in\mathbb{S}^k$ are denoted by $e_{\MINSUB}(\BA)$ and $e_{\MAXSUB}(\BA)$, respectively.

For a closed subset $\CX$ of a Euclidean space, we let $\CB(\CX)$ denote the Borel subsets of $\CX$, and let $\CP(\CX)$ denote the set of Borel probability measures on $\CX$, while $\CP_2(\CX)\subseteq\CP(\CX)$ consists of the ones with finite second moments. 
We view $\CP_2(\CX)$ as a Polish space equipped with the $\CW_2$-metric (\ref{eqn: W2-distance}).
For any $\BIx\in\R^d$,
we use $\delta_{\BIx}$ to denote the Dirac measure at~$\BIx$.
As mentioned in Section~\ref{sec: intro}, 
the set $\CP_{2, \AC}(\CX)$ consists of all probability measures in $\CP_2(\CX)$ which are absolutely continuous with respect to the Lebesgue measure. 
When analyzing functions defined on any subset $\CM\subseteq\CP_{2}(\CX)$,
the notion of Borel measurability is defined with respect to the Borel $\sigma$-algebra on $\CM$ generated from the subspace topology inherited from $\big(\CP_2(\CX),\CW_2\big)$.
For any $\mu\in\CP(\CX)$ and any $\CY\in\CB(\CX)$ with $\mu(\CY)>0$, we use $\mu|_{\CY}$ to denote the probability measure formed by truncating $\mu$ to $\CY$, i.e., $\mu|_{\CY}(A):=\frac{\mu(\CY\intersects A)}{\mu(\CY)}$ $\forall A\in\CB(\CX)$.
As mentioned, 
for closed subsets $\CX,\CY$ of Euclidean spaces,
the pushforward of a probability measure $\mu\in\CP(\CX)$ by a Borel measurable function $T:\CX\to\CY$ is denoted by $T\sharp\mu\in\CP(\CY)$.

Let us also introduce the notations for the following function classes. 
For any $\mu\in\CP(\CX)$ where $\CX\subseteq\R^d$ is closed and $p>0$, 
we let $\CL^p(\mu)$ denote the set of Borel measurable functions $f:\CX\to\R$ where $|f|^p$ is $\mu$-integrable.
Moreover, for any Borel measurable function $T:\R^d\to\R^d$ and any probability measure $\mu\in\CP(\R^d)$,
we define
\begin{align*}
	\|T\|_{\CL^2(\mu)}:=\bigg(\int_{\R^d}\big\|T(\BIx)\big\|^2\DIFFM{\mu}{\DIFF\BIx}\bigg)^{\frac{1}{2}}.
\end{align*}
For a bounded open set $\CX\subset\R^d$ and for $q\in\N_0$, $\alpha\in(0,1]$, 
we let $\CC^q(\clos(\CX))$ denote the set of \mbox{$\R$-valued} continuous functions on $\clos(\CX)$ that are $q$-times continuously differentiable on $\CX$, 
let $\CC^{\infty}(\clos(\CX))$ denote the set of infinitely differentiable $\R$-valued functions on $\clos(\CX)$,
and let $\CC^{q,\alpha}(\clos(\CX))$ denote the set of \mbox{$\R$-valued} continuous functions on $\clos(\CX)$ that are $q$-times continuously differentiable on $\CX$ whose $q$-th order partial derivatives are $\alpha$-H{\"o}lder continuous.
In particular, $\CC^{q,\alpha}(\clos(\CX))$ is a Banach space with respect to the following norm (see, e.g., \citep[Theorem~5.1.1]{evans2022partial}):
\begin{align*}
	\|\varphi\|_{\CC^{q,\alpha}(\clos(\CX))}:=\max_{|\Bbeta|\le q}\sup_{\BIx\in\CX}\big\{\big|\partial^{\Bbeta}\varphi(\BIx)\big|\big\}+\max_{|\Bbeta|=q}\sup_{\BIx,\BIy\in\CX}\Bigg\{\frac{\big|\partial^{\Bbeta}\varphi(\BIx)-\partial^{\Bbeta}\varphi(\BIy)\big|}{\|\BIx-\BIy\|^{\alpha}}\Bigg\} \qquad\forall \varphi\in \CC^{q,\alpha}(\clos(\CX)),
\end{align*}
where $\Bbeta=(\beta_1,\ldots,\beta_d)\in\N_0^d$ is a multi-index, $|\Bbeta|:=\beta_1+\cdots+\beta_d$, and $\partial^{\Bbeta}\varphi:=\frac{\partial^{|\Bbeta|}\varphi}{\partial x_1^{\beta_1}\cdots\partial x_d^{\beta_d}}$ denotes the partial derivative of $\varphi$ with respect to the multi-index $\Bbeta$. 
We call $\CC^{q,\alpha}(\clos(\CX))$ the set of $(q,\alpha)$-H{\"o}lder functions on $\clos(\CX)$. 
Moreover, we let $\CC^{\LOCAL,q,\alpha}(\R^d)$ denote the set of \mbox{$\R$-valued} functions on $\R^d$ that are $(q,\alpha)$-H{\"o}lder when restricted to the closure of any bounded open set.
We call $\CC^{\LOCAL,q,\alpha}(\R^d)$ the set of locally $(q,\alpha)$-H{\"o}lder functions on $\R^d$. 
Furthermore, we denote by $\CC_{\LIN}(\R^d,\R^d)$ the set of continuous functions from $\R^d$ to $\R^d$ that have at most linear growth, i.e., $T\in\nobreak\CC_{\LIN}(\R^d,\R^d)$ if and only if $T:\R^d\to\R^d$ is continuous and \mbox{$\sup_{\BIx\in\R^d}\Big\{\frac{\|T(\BIx)\|}{1+\|\BIx\|}\Big\}<\infty$}. 
Note that $\CC_{\LIN}(\R^d,\R^d)$ is a Banach space with respect to the norm $\|T\|_{\CC_{\LIN}(\R^d,\R^d)}:=\sup_{\BIx\in\R^d}\Big\{\frac{\|T(\BIx)\|}{1+\|\BIx\|}\Big\}$ \mbox{$\forall T\in \CC_{\LIN}(\R^d,\R^d)$}. 
Lastly, 
we let $I_d:\R^d\to\R^d$ denote the identity map on $\R^d$.

We use $\partial \varphi(\BIx)\subseteq\R^d$ to denote 
the subdifferential of any proper, lower semi-continuous (l.s.c.), and convex function $\varphi:\R^d\to\R\cup\{\infty\}$ at $\BIx \in \R^d$.
For $0\le \underline{\lambda}\le \overline{\lambda}\le \infty$, a proper, l.s.c., and convex function $\varphi: \R^d \rightarrow \R \cup \{\infty\}$ is called \textit{$\overline{\lambda}$-smooth} 
(denoted by $\varphi \in \FC_{0, \overline{\lambda}}(\R^d)$) if
\begin{align*}
	\varphi(\BIy) \leq \varphi(\BIx) + \langle\BIg, \BIy - \BIx\rangle + \frac{\overline{\lambda}}{2}\|\BIx - \BIy\|^2 \qquad\forall \BIx,\BIy\in\R^d, \; \forall \BIg\in\partial \varphi(\BIx),
\end{align*}
and is called \textit{$\underline{\lambda}$-strongly convex} 
(denoted by $\varphi \in \FC_{\underline{\lambda}, \infty}(\R^d)$) if
\begin{align*}
	\varphi(\BIy) \geq \varphi(\BIx) + \langle\BIg, \BIy - \BIx\rangle + \frac{\underline{\lambda}}{2}\|\BIx - \BIy\|^2 \qquad \forall \BIx,\BIy\in\R^d, \; \forall \BIg\in\partial \varphi(\BIx).
\end{align*}
It follows from classical results (see, e.g., \citep[Lemma~1.2.3 \& Theorem~2.1.5]{nesterov2004introconvex}) that for $\overline{\lambda}<\infty$, every $\varphi\in\FC_{0,\overline{\lambda}}(\R^d)$ is continuously differentiable on $\R^d$ and $\nabla\varphi$ is $\overline{\lambda}$-Lipschitz continuous. 
We denote by $\FC_{\underline{\lambda}, \overline{\lambda}}(\R^d)$ the collection of proper, l.s.c.\@, and convex functions on $\R^d$ which are $\overline{\lambda}$-smooth and $\underline{\lambda}$-strongly convex.
In particular, $\FC_{0, \infty}(\R^d)$ consists of all proper l.s.c.\@ convex functions on $\R^d$. 
In addition, we denote 
$\FC^{\infty}_{\underline{\lambda},\overline{\lambda}}(\R^d):=\CC^{\infty}(\R^d)\intersects\FC_{\underline{\lambda},\overline{\lambda}}(\R^d)$, 
$\FC^{q}_{\underline{\lambda},\overline{\lambda}}(\R^d):=\CC^{q}(\R^d)\intersects\FC_{\underline{\lambda},\overline{\lambda}}(\R^d)$, 
$\FC^{\LOCAL,q,\alpha}_{\underline{\lambda},\overline{\lambda}}(\R^d):=\CC^{\LOCAL,q,\alpha}(\R^d)\intersects\FC_{\underline{\lambda},\overline{\lambda}}(\R^d)$ for $q\in\N_0$, $\alpha\in(0,1]$.

\subsection{Preliminary results about optimal transport and Wasserstein barycenter}\label{ssec: preliminary-OT}

Many of our discussions in this paper invoke results from the optimal transport theory and properties around the $\CW_2$-distance between probability measures; see, e.g., the books of \citet*{villani2003topics, villani2009optimal}, \citet*{santambrogio2015optimal}, 
\citet*{ambrosio2008gradient}, \citet*{chewi2024statistical}. We start by recalling the notion of couplings. 
\begin{definition}[Coupling]\label{def: coupling}
	Given $m \in \N$ probability measures $\nu_1 \in \CP(\CX_1), \dots, \nu_m \in \CP(\CX_m)$ on closed subsets $\CX_1, \dots, \CX_m$ of $\R^d$, the set of couplings of $\nu_1, \dots, \nu_m$ is denoted by $\Pi(\nu_1, \dots, \nu_m)$, which is defined as
	\begin{align*}
		\Pi(\nu_1, \ldots, \nu_m) := \big\{ \pi \in \CP(\CX_1 \times \cdots \times \CX_m): \text{ the marginal of }\pi \text{ on } \CX_i \text{ is } \nu_i \text{ for }i = 1, \dots, m \big\}.
	\end{align*}
\end{definition}

The minimization problem embedded in the formulation (\ref{eqn: W2-distance}) is known as Kantorovich's optimal transport problem \citep{kantorovich1948problem} with respect to the squared-Euclidean cost, and the infimum is well known to be attained by an optimal coupling; see, e.g., \citep[Theorem~4.1]{villani2009optimal}. In the rest of this paper, the optimality of a coupling is always considered with respect to the squared-Euclidean cost.
The existence of a $\CW_2$-barycenter is shown by \citet*[Proposition~2.3]{agueh2011barycenters}, and there may exist more than one $\CW_2$-barycenter of $\nu_1, \dots, \nu_K$ in general. A sufficient condition to guarantee the uniqueness of $\CW_2$-barycenter is given as follows.

\begin{theorem}[{\citep[Proposition~3.5 \& Theorem~5.1]{agueh2011barycenters}}]\label{thm: unique barycenter}
	Among $\nu_1, \dots, \nu_K \in \CP_2(\R^d)$, if there exists at least one index $k \in \{1, \dots, K\}$ such that $\nu_k \in \CP_{2, \AC}(\R^d)$, then the $\CW_2$-barycenter $\bar{\mu}$ of $\nu_1, \dots, \nu_K \in \CP_2(\R^d)$ is unique. Moreover, if there exists at least one index $k \in \{1, \dots, K\}$ such that $\nu_k$ has $\CL^{\infty}$-bounded density, then the unique $\CW_2$-barycenter $\bar{\mu}$ belongs to $\CP_{2, \AC}(\R^d)$. 
\end{theorem}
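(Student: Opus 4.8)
The existence of at least one $\CW_2$-barycenter $\bar\mu\in\CP_2(\R^d)$ is already available from \citep[Proposition~2.3]{agueh2011barycenters}, so the plan is to establish uniqueness and, under the stronger hypothesis, the absolute continuity of $\bar\mu$. My strategy is to route both claims through the multi-marginal reformulation of the barycenter problem due to \citet{agueh2011barycenters}. Let $B(\BIy_1,\ldots,\BIy_K):=\sum_{k=1}^Kw_k\BIy_k$ denote the Euclidean barycenter map and $c(\BIy_1,\ldots,\BIy_K):=\sum_{k=1}^Kw_k\|\BIy_k-B(\BIy_1,\ldots,\BIy_K)\|^2$ the induced multi-marginal cost. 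First I would recall that, by gluing for a candidate $\mu\in\CP_2(\R^d)$ the optimal couplings of $\mu$ with each $\nu_k$ along their common marginal and then minimizing the shared barycenter coordinate $\BIx$ pointwise (its optimal value being $B(\BIy_1,\ldots,\BIy_K)$, the minimizer of $\BIx\mapsto\sum_kw_k\|\BIx-\BIy_k\|^2$), one obtains
\begin{align*}
	\inf_{\mu\in\CP_2(\R^d)}V(\mu)=\inf\Big\{\int_{(\R^d)^K}c\,\DIFF\boldsymbol\gamma\;:\;\boldsymbol\gamma\in\Pi(\nu_1,\ldots,\nu_K)\Big\},
\end{align*}
and, crucially, that $\bar\mu$ is a $\CW_2$-barycenter if and only if $\bar\mu=B\sharp\boldsymbol\gamma^*$ for some optimal multi-marginal plan $\boldsymbol\gamma^*$. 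Hence the uniqueness of $\bar\mu$ reduces to the uniqueness of $\boldsymbol\gamma^*$.

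For the first assertion I would then invoke the structure theory of multi-marginal optimal transport with the quadratic barycenter cost $c$: if at least one marginal $\nu_k$ is absolutely continuous, then the optimal plan $\boldsymbol\gamma^*$ is unique and concentrated on a graph over the $\BIy_k$-coordinate, say $\boldsymbol\gamma^*=(\nabla\varphi_1,\ldots,\nabla\varphi_K)\sharp\nu_k$ with each $\varphi_j$ convex (this is where a Gangbo--\'{S}wi\k{e}ch-type argument enters; see \citep{agueh2011barycenters}). Uniqueness of $\boldsymbol\gamma^*$ at once yields uniqueness of $\bar\mu=B\sharp\boldsymbol\gamma^*$, settling the first claim. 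Moreover, since the $k$-th marginal of $\boldsymbol\gamma^*$ equals $\nu_k$, the map $\nabla\varphi_k$ is a gradient of a convex function that pushes $\nu_k$ onto $\nu_k$, so Brenier's theorem forces $\nabla\varphi_k=\mathrm{id}$ $\nu_k$-a.e.; we may therefore take $\varphi_k=\tfrac12\|\cdot\|^2$, which gives
\begin{align*}
	\bar\mu=B\sharp\boldsymbol\gamma^*=\Big[\textstyle\sum_{j=1}^Kw_j\nabla\varphi_j\Big]\sharp\nu_k=(\nabla\Psi)\sharp\nu_k,\qquad\Psi:=\textstyle\sum_{j=1}^Kw_j\varphi_j\in\FC_{w_k,\infty}(\R^d),
\end{align*}
where $\Psi$ is $w_k$-strongly convex because it carries the summand $\tfrac{w_k}{2}\|\cdot\|^2$.

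Granted the representation $\bar\mu=(\nabla\Psi)\sharp\nu_k$ with $\Psi\in\FC_{w_k,\infty}(\R^d)$, the second assertion becomes elementary. Suppose this $\nu_k$ has a density $f_k$ bounded by $M<\infty$. Then the conjugate $\Psi^*$ is $\tfrac{1}{w_k}$-smooth, so $\nabla\Psi^*$ is defined on all of $\R^d$, is $\tfrac{1}{w_k}$-Lipschitz, and satisfies $\nabla\Psi^*\circ\nabla\Psi=\mathrm{id}$ $\nu_k$-a.e. Consequently, for every Borel set $A\subseteq\R^d$ one has $(\nabla\Psi)^{-1}(A)\subseteq\nabla\Psi^*(A)$ up to a $\nu_k$-null set, so that
\begin{align*}
	\bar\mu(A)=\nu_k\big((\nabla\Psi)^{-1}(A)\big)=\int_{(\nabla\Psi)^{-1}(A)}f_k\,\DIFF\lebesgue\;\le\;M\,\lebesgue\big(\nabla\Psi^*(A)\big)\;\le\;\frac{M}{w_k^{\,d}}\,\lebesgue(A),
\end{align*}
using that a $\tfrac{1}{w_k}$-Lipschitz self-map of $\R^d$ scales $d$-dimensional Lebesgue measure by at most $w_k^{-d}$. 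Hence $\bar\mu$ is absolutely continuous with density at most $M/w_k^{\,d}$, and since $\bar\mu\in\CP_2(\R^d)$ we conclude $\bar\mu\in\CP_{2,\mathrm{ac}}(\R^d)$.

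The step I expect to be the main obstacle is precisely the multi-marginal input used above: the equivalence between the barycenter problem and the multi-marginal transport problem, and, more delicately, the uniqueness and graph structure of the optimal multi-marginal plan under absolute continuity of one marginal. These are the technical core of \citep{agueh2011barycenters}, resting on Kantorovich duality for the cost $c$ together with Gangbo--\'{S}wi\k{e}ch-type regularity of optimal multi-marginal plans; in a genuinely self-contained treatment essentially all of the work is concentrated there, whereas the passage from the $w_k$-strong convexity of $\Psi$ to the bounded-density conclusion, as sketched above, is routine.
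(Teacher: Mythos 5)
This theorem is not proved in the paper; it is quoted from Agueh and Carlier \citep{agueh2011barycenters}, whose own argument for uniqueness (their Proposition~3.5) is quite different from yours: it shows that $\mu \mapsto \CW_2(\mu,\nu_k)^2$ is \emph{strictly} convex with respect to linear interpolation of measures whenever $\nu_k$ is absolutely continuous, so that $V$ is strictly convex and its minimizer is unique — no multi-marginal machinery is needed for that part. Your route instead reduces uniqueness of $\bar{\mu}$ to uniqueness of the optimal multi-marginal plan $\boldsymbol{\gamma}^*$, and this is where the gap lies. The Gangbo--\'{S}wi\k{e}ch theorem (and its use in \citep{agueh2011barycenters}) requires \emph{all} marginals to vanish on small sets; under your hypothesis that only one $\nu_k$ is absolutely continuous, neither the uniqueness of $\boldsymbol{\gamma}^*$ nor its graph structure over the $\BIy_k$-coordinate is available from the cited sources, and the uniqueness claim is in fact stronger than what you need and can fail: when two distinct configurations $(\BIy_j)_{j\ne k}$, $(\BIy_j')_{j\ne k}$ in the product of the supports of the singular marginals have the same weighted sum $\sum_{j\ne k} w_j\BIy_j=\sum_{j\ne k} w_j\BIy_j'$ and the same pairwise interaction cost, mass can be redistributed between them (conditionally on $\BIy_k$) without changing any marginal, the cost, or $B\sharp\boldsymbol{\gamma}$ — so the plan need not be unique even though the barycenter is. You flag this step as the ``technical core'' you are outsourcing, but the outsourcing target does not deliver it under the stated one-marginal hypothesis, so the first assertion is not established.

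The same issue propagates into the second assertion. Your density bound $\bar{\mu}(A)\le M w_k^{-d}\lebesgue(A)$ is correct and is exactly the bound used elsewhere in the paper (cf.\@ Step~4 of the proof of Theorem~\ref{thm: fixedpoint-convergence}), and the passage from ``$\Psi$ is $w_k$-strongly convex'' to absolute continuity via the $w_k^{-1}$-Lipschitz map $\nabla\Psi^*$ is sound. What is missing is a non-circular derivation of the representation $\bar{\mu}=(\nabla\Psi)\sharp\nu_k$ with $\Psi\in\FC_{w_k,\infty}(\R^d)$: Brenier's theorem applied to the pair $(\nu_k,\bar{\mu})$ only gives a convex potential with no quantitative strong convexity, the fixed-point identity $\sum_j w_j\varphi^{\bar{\mu}}_{\nu_j}=\tfrac12\|\cdot\|^2$ presupposes that $\bar{\mu}$ is absolutely continuous (which is the conclusion), and your multi-marginal graph structure is the unproven step above. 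Agueh and Carlier obtain the required strong convexity of the Kantorovich potential from the \emph{dual} of the barycenter problem (their Section~5); to close your argument you would need to either import that dual analysis or prove the one-regular-marginal Monge/structure result for the barycenter cost directly.
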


Next, let us present Brenier's theorem which characterizes optimal couplings with gradients of convex functions when the source measure $\mu$ belongs to $\CP_{2,\AC}(\R^d)$; see, e.g., \citep[Theorem~2.12]{villani2003topics}.

\begin{theorem}[Brenier's theorem]\label{thm: Brenier}
	Let $\mu\in\CP_{2,\AC}(\R^d)$, $\nu \in \CP_2(\R^d)$. 
	Then, there is a unique optimal coupling $\pi^\star \in \Pi(\mu, \nu)$ that minimizes (\ref{eqn: W2-distance}). 
	Moreover, $\pi\in \Pi(\mu, \nu)$ minimizes (\ref{eqn: W2-distance}) if and only if there exists a proper, l.s.c., and convex function $\varphi^{\mu}_{\nu}:\R^d\to\R\cup\{\infty\}$ such that $\pi = \big[I_d, T^{\mu}_{\nu}\big] \sharp\mu$ where $T^{\mu}_{\nu}=\nabla\varphi^{\mu}_{\nu}$ is the $\mu$-a.e.\@ everywhere unique gradient of $\varphi^{\mu}_{\nu}$. 
	In this case, it holds that 
	\begin{align*}
		\CW_2(\mu,\nu)^2=\int_{\R^d}\|\BIx\|^2-2\varphi^{\mu}_{\nu}(\BIx)\DIFFM{\mu}{\DIFF\BIx} + \int_{\R^d} \|\BIy\|^2 - 2 \sup_{\BIx\in\R^d}\big\{\langle\BIy,\BIx\rangle-\varphi^{\mu}_{\nu}(\BIx)\big\}\DIFFM{\nu}{\DIFF\BIy},
	\end{align*}
	and $T^{\mu}_{\nu}$ is Monge's optimal transport map from $\mu$ to $\nu$, i.e.,\@ it is the $\mu$-almost everywhere unique optimal solution of the following Monge's optimal transport problem:
	\begin{align*}
		T^{\mu}_{\nu}\in\argmin_{T}\bigg\{\int_{\R^d} \big\|\BIx - T(\BIx)\big\|^2 \DIFFM{\mu}{\DIFF \BIx}:  T: \R^d \rightarrow \R^d \text{ is Borel measurable and }T \sharp \mu = \nu \bigg\}.
	\end{align*}
\end{theorem}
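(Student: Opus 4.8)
The plan is to follow the classical route to Brenier's theorem via Kantorovich duality and cyclical monotonicity; since the statement is quoted from \citep[Theorem~2.12]{villani2003topics}, I only sketch the argument. First I would settle existence of an optimal coupling: the set $\Pi(\mu,\nu)$ is tight, hence weakly sequentially compact by Prokhorov's theorem, and $\pi\mapsto\int_{\R^d\times\R^d}\|\BIx-\BIy\|^2\DIFFM{\pi}{\DIFF\BIx,\DIFF\BIy}$ is lower semicontinuous for the weak topology with integrand bounded below, so the infimum in (\ref{eqn: W2 distance}) is attained by some $\pi^\star\in\Pi(\mu,\nu)$ (see, e.g., \citep[Theorem~4.1]{villani2009optimal}). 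Expanding $\|\BIx-\BIy\|^2=\|\BIx\|^2-2\langle\BIx,\BIy\rangle+\|\BIy\|^2$ and discarding the terms $\int\|\BIx\|^2\DIFFM{\mu}{\DIFF\BIx}$ and $\int\|\BIy\|^2\DIFFM{\nu}{\DIFF\BIy}$ that are constant over $\Pi(\mu,\nu)$, minimizing the transport cost is equivalent to maximizing $\pi\mapsto\int\langle\BIx,\BIy\rangle\DIFFM{\pi}{\DIFF\BIx,\DIFF\BIy}$.

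Next I would use the structure of optimizers. The support of any optimal plan is cyclically monotone with respect to the cost $-\langle\BIx,\BIy\rangle$, which is precisely the classical notion of a monotone subset of $\R^d\times\R^d$; by Rockafellar's theorem, every monotone set is contained in the graph of the subdifferential $\partial\varphi^{\mu}_{\nu}$ of some proper, l.s.c., convex function $\varphi^{\mu}_{\nu}:\R^d\to\R\cup\{\infty\}$, which can in fact be built from a Kantorovich potential for the dual of (\ref{eqn: W2 distance}). Hence $\pi\in\Pi(\mu,\nu)$ minimizes (\ref{eqn: W2 distance}) if and only if it is concentrated on $\{(\BIx,\BIy)\in\R^d\times\R^d:\BIy\in\partial\varphi^{\mu}_{\nu}(\BIx)\}$.

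Now comes the step that uses $\mu\in\CP_{2,\mathrm{ac}}(\R^d)$. A proper convex function on $\R^d$ is differentiable Lebesgue-almost everywhere on the interior of its effective domain, and since $\mu$ is absolutely continuous while its mass sits inside the (convex) effective domain of $\varphi^{\mu}_{\nu}$, the interior of that domain has full $\mu$-measure; hence $\varphi^{\mu}_{\nu}$ is differentiable $\mu$-almost everywhere and $\partial\varphi^{\mu}_{\nu}(\BIx)=\{\nabla\varphi^{\mu}_{\nu}(\BIx)\}$ for $\mu$-a.e.\@ $\BIx$. Combined with the support inclusion from the previous step, this forces every optimal $\pi$ to equal $\big[I_d,\nabla\varphi^{\mu}_{\nu}\big]\sharp\mu$; setting $T^{\mu}_{\nu}:=\nabla\varphi^{\mu}_{\nu}$, the optimal coupling is therefore unique, equal to $\big[I_d,T^{\mu}_{\nu}\big]\sharp\mu$, and $\nabla\varphi^{\mu}_{\nu}$ is $\mu$-a.e.\@ unique. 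Moreover, any Borel $T:\R^d\to\R^d$ with $T\sharp\mu=\nu$ induces a coupling $\big[I_d,T\big]\sharp\mu\in\Pi(\mu,\nu)$ of cost $\int_{\R^d}\|\BIx-T(\BIx)\|^2\DIFFM{\mu}{\DIFF\BIx}\ge\CW_2(\mu,\nu)^2$, with equality only if $\big[I_d,T\big]\sharp\mu=\pi^\star$; hence $T^{\mu}_{\nu}$ is the $\mu$-a.e.\@ unique solution of Monge's problem.

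Finally, for the displayed identity I would substitute $T^{\mu}_{\nu}=\nabla\varphi^{\mu}_{\nu}$ into $\CW_2(\mu,\nu)^2=\int_{\R^d}\|\BIx-\nabla\varphi^{\mu}_{\nu}(\BIx)\|^2\DIFFM{\mu}{\DIFF\BIx}$, expand the square, and use the Fenchel--Young equality $\varphi^{\mu}_{\nu}(\BIx)+\big(\varphi^{\mu}_{\nu}\big)^{*}\big(\nabla\varphi^{\mu}_{\nu}(\BIx)\big)=\langle\BIx,\nabla\varphi^{\mu}_{\nu}(\BIx)\rangle$, valid wherever $\varphi^{\mu}_{\nu}$ is differentiable, where $\big(\varphi^{\mu}_{\nu}\big)^{*}(\BIy):=\sup_{\BIx\in\R^d}\{\langle\BIy,\BIx\rangle-\varphi^{\mu}_{\nu}(\BIx)\}$; integrating against $\mu$ and using $\nabla\varphi^{\mu}_{\nu}\sharp\mu=\nu$ rewrites $\int\langle\BIx,\nabla\varphi^{\mu}_{\nu}(\BIx)\rangle\DIFFM{\mu}{\DIFF\BIx}$ as $\int\varphi^{\mu}_{\nu}\DIFFM{\mu}{\DIFF\BIx}+\int\big(\varphi^{\mu}_{\nu}\big)^{*}\DIFFM{\nu}{\DIFF\BIy}$, and rearranging yields the stated formula. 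The main obstacle is the passage from optimality of $\pi^\star$ to its concentration on a subdifferential graph, i.e., establishing $c$-cyclical monotonicity of optimal plans together with Rockafellar's characterization of monotone sets; the $\mu$-a.e.\@ differentiability, the uniqueness, and the distance formula are then routine consequences of $\mu$ being absolutely continuous and of convex duality.
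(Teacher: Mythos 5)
The paper does not prove Theorem~\ref{thm: Brenier}; it is stated as a classical preliminary result and cited to \citep[Theorem~2.12]{villani2003topics}, so there is no in-paper proof to compare against. Your sketch follows the standard route to Brenier's theorem — existence of an optimal plan by tightness and lower semicontinuity, reduction of the quadratic cost to maximizing $\int\langle\BIx,\BIy\rangle\,\DIFF\pi$, $c$-cyclical monotonicity of optimal plans combined with Rockafellar's characterization of maximal monotone sets as subdifferential graphs, then absolute continuity of $\mu$ to upgrade the subdifferential to a $\mu$-a.e.\ defined gradient map, and Fenchel--Young to obtain the displayed formula — and this matches the argument in the cited reference. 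Two points that are glossed over but would need to be made explicit in a full write-up: (i) to show that the mass of $\mu$ sits inside $\inter(\dom\varphi^{\mu}_{\nu})$ you must first argue that $\varphi^{\mu}_{\nu}$ is $\mu$-a.e.\ finite (from the dual optimality conditions) and then invoke the fact that the boundary of a convex set and the non-differentiability set of a convex function both have Lebesgue measure zero, hence $\mu$-measure zero; (ii) for the uniqueness of the optimal plan one needs that a \emph{single} convex function works for \emph{all} optimal plans simultaneously — this follows because any convex combination of two optimal plans is again optimal, so the union of their supports is $c$-cyclically monotone, but it is worth stating since otherwise the argument only establishes that each optimal plan is a Monge plan without immediately forcing the plans to coincide.
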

We refer to $\varphi^{\mu}_{\nu}:\R^d\to\R\cup\{\infty\}$ and $T^{\mu}_{\nu}:\R^d\to\R^d$ in Theorem~\ref{thm: Brenier} as the \textit{Brenier potential} from $\mu$ to $\nu$ and the \textit{optimal transport (OT) map} from $\mu$ to $\nu$, respectively. 
In general, the $\mu$-almost everywhere uniqueness of $T^{\mu}_{\nu}$ does not necessarily imply the $\mu$-almost everywhere uniqueness of $\varphi^{\mu}_{\nu}$ even up to an additive constant.
However, $\varphi^{\mu}_{\nu}$ becomes $\mu$-almost everywhere uniquely determined up to an additive constant if $\support(\mu)$ is the closure of a connected open set on which $\mu$ has positive density; see, e.g., \citep[Remark~10.30]{villani2009optimal}.

It is well-known that $\CP_{2,\AC}(\R^d)$ equipped with the $\CW_2$-metric admits a
Riemann-like structure in which 
$\mu(\lambda) := \big[I_d+\lambda(T^{\mu_0}_{\mu_1}-I_d)\big]\sharp\mu_0$ $\forall \lambda\in[0,1]$
acts as the unique geodesic curve between any $\mu_0,\mu_1\in\CP_{2,\AC}(\R^d)$;
see, e.g., \citep[Section~3.2]{zemel2019frechet} and 
\citep[Appendix~A]{chewi2020gradient} for concise summaries of the related notions and results.
This property allows one to characterize 
the Fr{\'e}chet derivative of certain functionals on $\CP_{2,\AC}(\R^d)$.
In particular, for any $\nu\in\CP_{2,\AC}(\R^d)$,
the Fr{\'e}chet derivative of the functional 
$\CP_{2,\AC}(\R^d)\ni \mu\mapsto \frac{1}{2}\CW_2(\mu,\nu)^2\in\R$ at any $\mu\in\CP_{2,\AC}(\R^d)$
is equal to
$-(T^{\mu}_{\nu}- I_d)$ \citep[Corollary~10.2.7]{ambrosio2008gradient}.
Consequently, this yields the following Fr{\'e}chet derivative of the barycenter functional $V$ in (\ref{eqn: V-definition}), denoted by $\boldsymbol{\nabla}V(\cdot)$:
\begin{align*}
	\boldsymbol{\nabla}V(\mu)= 2I_d-2\Bigg(\sum_{k=1}^{K}w_kT^{\mu}_{\nu_k}\Bigg) \qquad \forall \mu\in\CP_{2,\AC}(\R^d).
\end{align*}
$\boldsymbol{\nabla}V(\cdot)$ is called the \textit{Wasserstein gradient} of $V$.
This grants the $G$-iteration (\ref{eqn: G-iteration}) an alternative interpretation 
as a gradient descent step: $\mu_t=G(\mu_{t-1})=\big[I_d-\frac{1}{2}\boldsymbol{\nabla}V(\mu_{t-1})\big]\sharp\mu_{t-1}$ with the optimal step size $\frac{1}{2}$; see \citep[Lemma~2]{zemel2019frechet}.
Additionally, 
observe that $\big\|\boldsymbol{\nabla}V(\mu)\big\|^2_{\CL^2(\mu)}=4\CW_2(\mu,G(\mu))^2$
and $\mu$ is a fixed-point of $G$ if and only if 
$\boldsymbol{\nabla}V(\mu)=\nobreak0$ ($\mu$-almost everywhere), 
Thus, any fixed-point of~$G$ can be interpreted as a stationary point of the barycenter functional~$V$ (also known as the Karcher mean \citep[Corollary~1]{zemel2019frechet}),
i.e., where the Wasserstein gradient vanishes. 
Note, however, that 
$V$ does not possess
geodesic convexity,
i.e., 
the condition
$V\big([I_d+\lambda(T^{\mu_0}_{\mu_1}-\nobreak I_d)]\sharp\mu_0\big)\le (1-\nobreak\lambda)V(\mu_0)+\lambda V(\mu_1)$ $\forall \lambda\in[0,1]$ 
is violated; see \citep[Appendix~B.2]{chewi2020gradient} for a counterexample demonstrating this violation.
The lack of geodesic convexity leads to the difficulty of guaranteeing that $G$ possesses a unique fixed-point.
Thus, convergence analyses of Wasserstein barycenter algorithms that arise from the \mbox{$G$-iteration} (\ref{eqn: G-iteration})
are analogous to 
the convergence analyses of gradient descent algorithms minimizing non-convex functions in a Euclidean space.
This feature of our convergence analyses will be discussed further in Section~\ref{ssec: main-convergence-conditions}.

\section{Convergence analysis}\label{sec: fixedpoint-convergence}

\subsection{Statement of the main convergence theorem}\label{ssec: fixedpoint-convergence-statement}

In this section, we work under the following mild regularity assumptions about the input probability measures $\nu_1,\ldots,\nu_K$ as well as 
$\widehat{T}_{t,k}$ and $\widehat{\mu}_{t}$
in Line~\ref{alglin: abstract-estimator} 
and Line~\ref{alglin: abstract-update}
of Algorithm~\ref{algo: abstract}.

\begin{assumption}[Regularity assumptions of Algorithm~\ref{algo: abstract}]\label{asp: abstract-regularity}
	There exists at least one index $k\in\{1,\ldots,K\}$
	such that the $k$-th input measure $\nu_k\in\CP_{2,\AC}(\R^d)$
	has $\CL^{\infty}$-bounded density.
	The set $\Theta$ is a metric space representing the space of hyperparameter(s) used in the OT map estimator on Line~\ref{alglin: abstract-estimator}.
	Moreover,
	for each $t\in\N$ and for $k=1,\ldots,K$, 
	$\widehat{T}_{t,k}$ belongs to $\CC_{\LIN}(\R^d,\R^d)$,
	and has a Borel dependency on 
	\interfootnotelinepenalty=10000
	$\big(\allowbreak\BIX_{t,k,1},\ldots,\allowbreak\BIX_{t,k,\widehat{M}_{t-1,k}},\allowbreak\BIY_{t,k,1},\ldots,\BIY_{t,k,\widehat{N}_{t-1,k}},\widehat{\ITheta}_{t-1,k}\big)$.\footnote{We say that $\widehat{T}_{t,k}$ 
	has a Borel dependency on 
	$\big(\BIX_{t,k,1},\ldots,\allowbreak\BIX_{t,k,\widehat{M}_{t-1,k}},\allowbreak\BIY_{t,k,1},\ldots,\BIY_{t,k,\widehat{N}_{t-1,k}},\widehat{\ITheta}_{t-1,k}\big)$ 
	if for every 
	$m\in\nobreak\N$
	and every
	$n\in\nobreak\N$
	there exists a Borel measurable function 
	$h_{m,n}:(\R^d)^{m+n}\times\Theta\to\CC_{\LIN}(\R^d,\R^d)$
	such that $\widehat{T}_{t,k}\INDI_{\{\widehat{M}_{t-1,k}=m,\,\widehat{N}_{t-1,k}=n\}}=h_{m,n}\big(\allowbreak\BIX_{t,k,1},\ldots,\allowbreak\BIX_{t,k,m},\allowbreak\BIY_{t,k,1},\ldots,\BIY_{t,k,n},\widehat{\ITheta}_{t-1,k}\big)$
	$\PROB$-almost surely.
	Analogous notions of Borel dependency apply to subsequent arguments.}
\end{assumption}

Let $(\Omega,\CF,\PROB)$ be a probability space on which the random samples 
in Line~\ref{alglin: abstract-sample1} and Line~\ref{alglin: abstract-sample2}
of Algorithm~\ref{algo: abstract}
are defined. 	
Under Assumption~\ref{asp: abstract-regularity},
Algorithm~\ref{algo: abstract} generates an $(\CF_t)_{t\in\N_0}$-adapted stochastic process $(\widehat{\mu}_{t})_{t\in\N_0}$, 
\begin{align}
	\CF_{0}&:=\{\emptyset,\Omega\},\nonumber\\
	\CF_{1}&:=\sigma\Bigg(\bigcup_{k=1}^{K}
		\Bigg[
			\Bigg(\bigcup_{i=1}^{\widehat{M}_{0,k}}\sigma(\BIX_{1,k,i})\Bigg)
			\cup 
			\Bigg(\bigcup_{j=1}^{\widehat{N}_{0,k}}\sigma(\BIY_{1,k,j})\Bigg)\Bigg]\Bigg),\label{eqn: filtration}\\
	\CF_{t}&:=\sigma\left(\CF_{t-1}\cup \left\{A\in\CF:\!\!\begin{tabular}{l}
		$A\cap 
			\Big(\bigcap_{k=1}^{K}
				\big(
					\{\widehat{M}_{t-1,k}=m_{t-1,k}\}
					\cap 
					\{\widehat{N}_{t-1,k}=n_{t-1,k}\}
				\big)
			\Big)$\\
		$\quad \in \sigma\Big(
			\bigcup_{k=1}^{K}\Big[
				\big(
					\bigcup_{i=1}^{m_{t-1,k}} \sigma(\BIX_{t,k,i})
				\big)
				\cup 
				\big(
					\bigcup_{j=1}^{n_{t-1,k}} \sigma(\BIY_{t,k,j})
				\big)
			\Big]\Big)$ \\
		$\qquad\qquad\qquad\quad\forall (m_{t-1,k})_{k=1:K}\subset\N$,
		$\forall (n_{t-1,k})_{k=1:K}\subset\N$
	\end{tabular}\!\!\right\}\right)\qquad \forall t\ge 2.\nonumber
\end{align}

The goal of this section is to prove the following main convergence theorem.

\begin{theorem}[Convergence analysis of Algorithm~\ref{algo: abstract}]\label{thm: main-convergence}
	Let $(\Omega,\CF,\PROB)$ be a probability space on which the random samples in Line~\ref{alglin: abstract-sample1} and Line~\ref{alglin: abstract-sample2} are defined, 
	let the regularity conditions in Assumption~\ref{asp: abstract-regularity} hold,
	and let $(\CF_t)_{t\in\N_0}$ be defined by (\ref{eqn: filtration}).
	Moreover, 
	let $\bar{\mu}$ denote the unique $\CW_2$-barycenter of $\nu_1,\ldots,\nu_K$ with weights $w_1,\ldots,w_K$.
	Then, whenever the following inequalities are satisfied for some $\beta\in(0,1)$:
	\begin{align}
		\EXP\Big[\big\|\widehat{T}_{t,k}-T^{\widehat{\mu}_{t-1}}_{\nu_k}\big\|^2_{\CL^2(\widehat{\mu}_{t-1})}\Big|\CF_{t-1}\Big] & \le \beta^{t} \qquad \forall 1\le k\le K,\; \forall t\in\N, \label{eqn: main-convergence-cond1}\\
		\EXP\Big[\CW_2\Big(\big[{\textstyle\sum_{k=1}^{K}w_k\widehat{T}_{t,k}}\big]\sharp \widehat{\mu}_{t-1},\widehat{\mu}_t\Big)^2\Big|\CF_{t-1}\Big] & \le \beta^{t} \qquad \hphantom{\forall 1\le k\le K,\; }\, \forall t\in\N, \label{eqn: main-convergence-cond2}
	\end{align}
	the following statements hold.
	\begin{enumerate}[label=(\roman*),beginpenalty=10000]
		\item\label{thms: main-convergence-tight-as}%
		It holds $\PROB$-almost surely that $(\widehat{\mu}_t)_{t\in\N_0}$ is precompact with respect to the $\CW_2$-metric.
		Moreover, every accumulation point of $(\widehat{\mu}_t)_{t\in\N_0}$ with respect to the $\CW_2$-metric is a fixed-point of $G$. 

		\item\label{thms: main-convergence-as}%
		If $G$ has a unique fixed-point, then $(\widehat{\mu}_t)_{t\in\N_0}$ converges $\PROB$-almost surely in $\CW_2$ to $\bar{\mu}$.

		\item\label{thms: main-convergence-violation}%
		The process $\big(\CW_2(\widehat{\mu}_t,G(\widehat{\mu}_t))^2\big)_{t\in\N_0}$ measuring the violation of the fixed-point property of $(\widehat{\mu}_t)_{t\in\N_0}$
		obeys the following best-iterate sublinear rate:
		\begin{align*}
			\EXP\bigg[\min_{0\le s\le t}\Big\{\CW_2(\widehat{\mu}_{s},G(\widehat{\mu}_{s}))^2\Big\}\bigg] & \le \bigg(V(\widehat{\mu}_0)-V(\bar{\mu})+ \frac{4\beta}{1-\beta}\bigg)(t+1)^{-1} \qquad \forall t\in\N_0.
		\end{align*}

		\item\label{thms: main-convergence-PL}%
		Let us assume in addition that the following expected Polyak--{\L}ojasiewicz inequality holds with respect to some $\PLINEQCONST\in(0,1]$:
		\begin{align}
			\EXP\big[V(\widehat{\mu}_t)\big] - V(\bar{\mu}) & \le \frac{1}{\PLINEQCONST}\EXP\big[\CW_2(\widehat{\mu}_t,G(\widehat{\mu}_t))^2\big] \qquad \forall t\in \N_0.
			\tag{\textsf{Ex-P\L}}
			\label{eqn: main-convergence-condPL}
		\end{align}
		Then, $(\widehat{\mu}_t)_{t\in\N_0}$ converges $\PROB$-almost surely in $\CW_2$ to $\bar{\mu}$,
		and $\big(V(\widehat{\mu}_t)\big)_{t\in\N_0}$ converges in $\CL^1$ to~$V(\bar{\mu})$ with respect to the following geometric rate:
		\begin{align}
			\begin{split}
			\hspace{35pt}\EXP\big[V(\widehat{\mu}_t)\big]-V(\bar{\mu})&\le 
			\big(V(\widehat{\mu}_0) - V(\bar{\mu})\big)(1-\PLINEQCONST)^{t}\\
			&\qquad +
			\begin{cases}
				\frac{4\beta}{|1-\PLINEQCONST-\beta|} \big((1-\PLINEQCONST)\vee \beta\big)^t & \text{if }\beta\ne 1-\PLINEQCONST \\
				4\beta t(1-\PLINEQCONST)^{t-1} & \text{if }\beta= 1-\PLINEQCONST
			\end{cases} \qquad \forall t\in\N_0.
		\end{split}
		\label{eqn: main-convergence-linearrate}
		\end{align}
		In particular, it holds that
		$\EXP\big[V(\widehat{\mu}_t)\big]-V(\bar{\mu})=O\Big(\big((1-\PLINEQCONST)\vee \beta\big)^t\Big)$.

		\item\label{thms: main-convergence-PL-var}%
		In addition to the assumption of statement~\ref{thms: main-convergence-PL},
		let us assume that the following expected variance inequality holds with respect to some $\VARINEQCONST>0$:
		\begin{align}
			\EXP\big[V(\widehat{\mu}_t)\big] - V(\bar{\mu}) & \ge \VARINEQCONST\EXP\big[\CW_2(\widehat{\mu}_t,\bar{\mu})^2\big] \qquad \forall t\in \N_0.
			\tag{\textsf{Ex-Var}}
			\label{eqn: main-convergence-condvar}
		\end{align}
		Then, $\big(\CW_2(\widehat{\mu}_t,\bar{\mu})^2\big)_{t\in\N_0}$ converges in $\CL^1$ to~0 with respect to the following geometric rate:
		\begin{align*}
			\begin{split}
			\hspace{14pt}\EXP\big[\CW_2(\widehat{\mu}_t,\bar{\mu})^2\big]
			&\le  \frac{1}{\VARINEQCONST}\big(V(\widehat{\mu}_0) - V(\bar{\mu})\big)(1-\PLINEQCONST)^{t}\\
			&\qquad +\begin{cases}
				\frac{4\beta}{\VARINEQCONST|1-\PLINEQCONST-\beta|} \big((1-\PLINEQCONST)\vee \beta\big)^t & \text{if }\beta\ne 1-\PLINEQCONST \\
				\frac{4\beta}{\VARINEQCONST}t(1-\PLINEQCONST)^{t-1} & \text{if }\beta= 1-\PLINEQCONST
			\end{cases} \qquad \forall t\in\N_0.
		\end{split}
		\end{align*}
		In particular, it holds that
		$\EXP\big[\CW_2(\widehat{\mu}_t,\bar{\mu})^2\big]=O\Big(\big((1-\PLINEQCONST)\vee \beta\big)^t\Big)$.
	\end{enumerate}
\end{theorem}

We will perform an analysis of the decrements of the stochastic process 
$\big(V(\widehat{\mu}_t)\big)_{t\in\N_0}$
in Section~\ref{ssec: decrement},
and we will present the proof of Theorem~\ref{thm: main-convergence}
in Section~\ref{ssec: proof-main-convergence}.
In Section~\ref{ssec: main-convergence-conditions},
we will discuss the various conditions in Theorem~\ref{thm: main-convergence},
including 
the uniqueness of the fixed-point of~$G$,
\eqref{eqn: main-convergence-condPL}, and
\eqref{eqn: main-convergence-condvar}.
In particular, 
we will provide a one-dimensional setting
to show that 
all conditions in Theorem~\ref{thm: main-convergence} including 
(\ref{eqn: main-convergence-cond1}) and
(\ref{eqn: main-convergence-cond2}) 
can be \textit{simultaneously satisfied}.

\subsection{Decrement analysis}\label{ssec: decrement}

Before proving Theorem~\ref{thm: main-convergence},
let us first analyze the decrements of the stochastic process
$\big(V(\widehat{\mu}_t)\big)_{t\in\N_0}$
in the following lemma.

\begin{lemma}[Decrement of the process $\big(V(\widehat{\mu}_t)\big)_{t\in\N_0}$]\label{lem: decrement}
	Let Assumption~\ref{asp: abstract-regularity} hold, 
	let $(\Omega,\CF,\PROB)$ be a probability space on which the random samples in Line~\ref{alglin: abstract-sample1} and Line~\ref{alglin: abstract-sample2} of Algorithm~\ref{algo: abstract} are defined, 
	and let $(\CF_t)_{t\in\N_0}$ be defined by (\ref{eqn: filtration}).
	Moreover, let $V$ be the function defined in (\ref{eqn: V-definition}) and let $G$ be the operator defined in (\ref{eqn: G-operator}). 
	Then, the sequence $\bigl(V(\widehat{\mu}_t)\bigr)_{t \in \N_0}$ satisfies
	\begin{align}
		\label{eqn: decrement-pathwise}
			\begin{split}
			V(\widehat{\mu}_{t}) -	V(\widehat{\mu}_{t-1}) 
			&\le {-\CW_2}\big(\widehat{\mu}_{t-1}, G(\widehat{\mu}_{t-1}) \big)^2  
			+ 2 \sum_{k = 1}^{K} w_k \big\|\widehat{T}_{t,k} - T^{\widehat{\mu}_{t-1}}_{\nu_k} \big\|^2_{\CL^2(\widehat{\mu}_{t-1})} \\
			&\qquad + 2 \CW_2\big(\big[{\textstyle\sum_{k=1}^Kw_k}\widehat{T}_{t,k}\big]\sharp\widehat{\mu}_{t-1}, \widehat{\mu}_{t}\big)^2 \hspace{77.7pt}\qquad\qquad\forall t\in\N,\;\PROB\text{-a.s.,}
			\end{split}
	\end{align}
	where all terms on the right-hand side of (\ref{eqn: decrement-pathwise}) are $\PROB$-almost surely finite for all $t\in\N$.
	In particular, taking conditional expectations with respect to $\CF_{t-1}$ on both sides of (\ref{eqn: decrement-pathwise}) yields
	\begin{align}
		\label{eqn: decrement-condexp}
		\begin{split}
			\EXP\big[V(\widehat{\mu}_{t}) \big| \CF_{t-1} \big] -	V(\widehat{\mu}_{t-1}) 
			&\le {-\CW_2} \big(\widehat{\mu}_{t-1}, G(\widehat{\mu}_{t-1}) \big)^2  
			+ 2\sum_{k = 1}^{K}w_k \EXP \Big[ \big\|\widehat{T}_{t,k} - T^{\widehat{\mu}_{t-1}}_{\nu_k} \big\|^2_{\CL^2(\widehat{\mu}_{t-1})} \Big | \CF_{t-1}\Big] \\
			&\qquad + 2 \EXP \Big[\CW_2\big(\big[{\textstyle\sum_{k=1}^Kw_k}\widehat{T}_{t,k}\big]\sharp\widehat{\mu}_{t-1}, \widehat{\mu}_{t}\big)^2 \Big| \CF_{t-1}\Big] \qquad\quad \forall t\in\N,\;\PROB\text{-a.s.}
		\end{split}
	\end{align}
\end{lemma}

\begin{proof}[Proof of Lemma~\ref{lem: decrement}]
	Throughout this proof, let us fix an arbitrary $t\in\N$, denote $\bar{T}^{\widehat{\mu}_{t-1}}:=\sum_{k=1}^Kw_k T^{\widehat{\mu}_{t-1}}_{\nu_k}$, $\bar{T}_{t}:=\sum_{k=1}^Kw_k \widehat{T}_{t,k}$, and denote $\widetilde{\mu}_{t}:=\bar{T}_{t}\sharp\widehat{\mu}_{t-1}$. 
	Since $\widehat{\mu}_{t-1}\in\CP_2(\R^d)$ by Line~\ref{alglin: abstract-initialize} and Line~\ref{alglin: abstract-update} of Algorithm~\ref{algo: abstract},
	and since $\bar{T}_{t}\in\CC_{\LIN}(\R^d,\R^d)$ by Assumption~\ref{asp: abstract-regularity},
	it holds $\PROB$-almost surely that
	\begin{align*}
		\int_{\R^d}\|\BIy\|^2\DIFFM{\widetilde{\mu}_{t}}{\DIFF\BIy}&=\int_{\R^d}\big\|\bar{T}_t(\BIy)\big\|^2\DIFFM{\widehat{\mu}_{t-1}}{\DIFF\BIy}\le \|\bar{T}_t\|_{\CC_{\LIN}(\R^d,\R^d)}^2\int_{\R^d}\big(1+\|\BIy\|\big)^2\DIFFM{\widehat{\mu}_{t-1}}{\DIFF\BIy}<\infty,
	\end{align*}
	and hence $\widetilde{\mu}_{t}\in\CP_{2}(\R^d)$ $\PROB$-almost surely.
	Moreover, for $k=1,\ldots,K$,
	we have 
	\begin{align*}
		\big\|\widehat{T}_{t,k}-T^{\widehat{\mu}_{t-1}}_{\nu_k}\big\|^2_{\CL^2(\widehat{\mu}_{t-1})}
		&\le 2\|\widehat{T}_{t,k}\|_{\CC_{\LIN}(\R^d,\R^d)}^2\int_{\R^d}\big(1+\|\BIx\|\big)^2\DIFFM{\widehat{\mu}_{t-1}}{\DIFF\BIx}
		+2\int_{\R^d}\|\BIy\|^2\DIFFM{\nu_k}{\DIFF\BIy}<\infty.
	\end{align*}
	Hence, we have shown that all terms on the right-hand side of (\ref{eqn: decrement-pathwise}) are $\PROB$-almost surely finite.

	Our proof uses the following identity, which can be verified directly by expanding both sides:
	\begin{align}
		\label{eqn: decrement-proof-identity}
		\begin{split}
			\sum_{k= 1}^{K}w_k \big\| \BIy - \BIz_k \big\|^2 &= \| \BIy - \bar{\BIz}\|^2 + \sum_{k = 1}^{K}w_k \| \bar{\BIz} - \BIz_k \|^2 \\
			& \qquad \qquad \text{where }\bar{\BIz}:=\sum_{k=1}^Kw_k \BIz_k \qquad \forall \BIy,\BIz_1,\ldots,\BIz_k\in\R^d.
		\end{split}
	\end{align}
	For any $\BIx\in\R^d$, substituting $\BIy \leftarrow \BIx$ and $\BIz_k \leftarrow T^{\widehat{\mu}_{t-1}}_{\nu_k}(\BIx)$ in (\ref{eqn: decrement-proof-identity}) gives us
	\begin{align}
		\label{eqn: decrement-proof-decompose-1}
		\sum_{k = 1}^{K}w_k \big\|\BIx - T_{\nu_k}^{\widehat{\mu}_{t-1}}(\BIx) \big\|^2 
		= \big\|\BIx - \bar{T}^{\widehat{\mu}_{t-1}}(\BIx)\big\|^2 
		+ \sum_{k = 1}^{K}w_k  \big\|\bar{T}^{\widehat{\mu}_{t-1}}(\BIx) - T_{\nu_k}^{\widehat{\mu}_{t-1}}(\BIx) \big\|^2 \qquad \forall \BIx\in\R^d.
	\end{align}
	Moreover, 
	for any $\BIx,\BIy\in\R^d$,
	substituting $\BIy \leftarrow \BIy$ and $\BIz_k \leftarrow T_{\nu_k}^{\widehat{\mu}_{t-1}}(\BIx)$ in (\ref{eqn: decrement-proof-identity}), we obtain
	\begin{align}
		\label{eqn: decrement-proof-decompose-2}
		\begin{split}
			\sum_{k = 1}^{K}w_k \big\|\BIy - T_{\nu_k}^{\widehat{\mu}_{t-1}}(\BIx) \big\|^2 
			&= \big\|\BIy - \bar{T}^{\widehat{\mu}_{t-1}}(\BIx)\big\|^2 
			+ \sum_{k = 1}^{K}w_k \big\|\bar{T}^{\widehat{\mu}_{t-1}}(\BIx) - T_{\nu_k}^{\widehat{\mu}_{t-1}}(\BIx) \big\|^2 \qquad \forall \BIx,\BIy\in\R^d.
		\end{split}
	\end{align}
	Combining (\ref{eqn: decrement-proof-decompose-1}) and (\ref{eqn: decrement-proof-decompose-2}) leads to
	\begin{align}
	\label{eqn: decrement-proof-decompose-comb}
	\begin{split}
		&\left(\sum_{k = 1}^{K}w_k \big\|\BIy - T_{\nu_k}^{\widehat{\mu}_{t-1}}(\BIx) \big\|^2\right)
		- \left(\sum_{k = 1}^{K}w_k\big\|\BIx - T_{\nu_k}^{\widehat{\mu}_{t-1}}(\BIx) \big\|^2\right) \\
		& \qquad = \big\|\BIy- \bar{T}^{\widehat{\mu}_{t-1}}(\BIx)\big\|^2 - \big\|\BIx - \bar{T}^{\widehat{\mu}_{t-1}}(\BIx)\big\|^2\\
		& \qquad \le - \big\|\BIx - \bar{T}^{\widehat{\mu}_{t-1}}(\BIx)\big\|^2 
		+ 2\big\|\bar{T}_{t}(\BIx) - \bar{T}^{\widehat{\mu}_{t-1}}(\BIx)\big\|^2 
		+ 2\big\|\BIy - \bar{T}_{t}(\BIx)\big\|^2   \qquad \forall\BIx,\BIy\in\R^d. 
	\end{split}
	\end{align}

	In the remainder of this proof, all statements hold in the $\PROB$-almost sure sense, and we will omit ``$\PROB$-a.s.\@'' for ease of notation. 
	Let $\CX=\CY=\CZ:=\R^d$ denote different copies of $\R^d$.
	Subsequently, let $\widehat{\xi}_t\in\Pi(\widetilde{\mu}_t,\widehat{\mu}_t)$ be an optimal coupling of 
	$\widetilde{\mu}_t$ 
	and $\widehat{\mu}_t$,
	let $\widehat{\eta}_t\in\Pi(\widehat{\mu}_{t-1},\widetilde{\mu}_t,\widehat{\mu}_t)\subset\CP(\CX\times\CZ\times\CY)$ 
	be constructed by the gluing lemma (see, e.g., \citep[Lemma~7.6]{villani2003topics})
	with 
	$[I_d,\bar{T}_{t}]\sharp\widehat{\mu}_{t-1}\in\Pi(\widehat{\mu}_{t-1},\widetilde{\mu}_{t})\subset\CP(\CX\times\CZ)$
	and 
	$\widehat{\xi}_t\in\Pi(\widetilde{\mu}_{t},\widehat{\mu}_{t})\subset\CP(\CZ\times\CY)$,
	and let 
	$\widehat{\pi}_{t}\in\Pi(\widehat{\mu}_{t-1},\widehat{\mu}_{t})$ 
	denote the marginal of $\widehat{\eta}_t$ on $\CX\times\CY$.

	In the following, let us examine the integrals of 
	the terms 
	$\sum_{k = 1}^{K}w_k \big\|\BIy - T_{\nu_k}^{\widehat{\mu}_{t-1}}(\BIx) \big\|^2$,
	$\sum_{k = 1}^{K}w_k\big\|\BIx - T_{\nu_k}^{\widehat{\mu}_{t-1}}(\BIx) \big\|^2$,
	$\big\|\BIx - \bar{T}^{\widehat{\mu}_{t-1}}(\BIx)\big\|^2$,
	$\big\|\bar{T}_{t}(\BIx) - \bar{T}^{\widehat{\mu}_{t-1}}(\BIx)\big\|^2$,
	$\big\|\BIy - \bar{T}_{t}(\BIx)\big\|^2$
	with respect to $\widehat{\pi}_t$.

	Firstly, for $k=1,\ldots,K$, 
	let 
	$\widehat{\gamma}_{t,k}\in\Pi(\widehat{\mu}_{t-1},\widehat{\mu}_{t},\nu_{k})\subset\CP_2(\CX\times\CY\times\CZ)$ 
	be constructed by the gluing lemma with 
	$\widehat{\pi}_{t}\in\Pi(\widehat{\mu}_{t-1},\widehat{\mu}_{t})\subset\CP(\CX\times\CY)$
	and 
	$\big[I_d,T_{\nu_k}^{\widehat{\mu}_{t-1}}\big]\sharp\widehat{\mu}_{t-1}\in\Pi(\widehat{\mu}_{t-1},\nu_k)\subset\CP(\CX\times\CZ)$.
	Since the marginal of $\widehat{\gamma}_{t,k}$ on $\CY\times\CZ$
	is a suboptimal coupling of $\widehat{\mu}_t$ and $\nu_k$, it
	then holds that
	\begin{align*}
		\int_{\R^d\times\R^d}\big\|\BIy - T_{\nu_k}^{\widehat{\mu}_{t-1}}(\BIx) \big\|^2 \DIFFM{\widehat{\pi}_{t}}{\DIFF\BIx,\DIFF\BIy}
		&=\int_{\R^d\times\R^d\times\R^d}\|\BIy-\BIz\|^2\DIFFM{\widehat{\gamma}_{t,k}}{\DIFF\BIx,\DIFF\BIy,\DIFF\BIz} \\
		&\ge \CW_2(\widehat{\mu}_{t},\nu_k)^2 
		\hspace{40pt}\qquad \forall 1\le k\le K.
	\end{align*}
	Consequently, we get
	\begin{align}
		\label{eqn: decrement-proof-decompose-integral1}
		\sum_{k = 1}^{K}w_k \int_{\R^d\times\R^d}\big\|\BIy - T_{\nu_k}^{\widehat{\mu}_{t-1}}(\BIx) \big\|^2 \DIFFM{\widehat{\pi}_t}{\DIFF\BIx,\DIFF\BIy}\ge \sum_{k=1}^Kw_k \CW_2(\widehat{\mu}_{t},\nu_k)^2 = V(\widehat{\mu}_{t}).
	\end{align}
	Secondly, since $T^{\widehat{\mu}_{t-1}}_{\nu_k}$ is the OT map from $\widehat{\mu}_{t-1}$ to $\nu_k$ for $k=1,\ldots,K$, we have 
	\begin{align}
		\label{eqn: decrement-proof-decompose-integral2}
		\begin{split}
			\sum_{k = 1}^{K}w_k \int_{\R^d\times\R^d}\big\|\BIx - T_{\nu_k}^{\widehat{\mu}_{t-1}}(\BIx) \big\|^2 \DIFFM{\widehat{\pi}_t}{\DIFF\BIx,\DIFF\BIy}
			&=\sum_{k = 1}^{K}w_k \int_{\R^d}\big\|\BIx - T_{\nu_k}^{\widehat{\mu}_{t-1}}(\BIx) \big\|^2 \DIFFM{\widehat{\mu}_{t-1}}{\DIFF\BIx}\\
			&= \sum_{k=1}^Kw_k \CW_2(\widehat{\mu}_{t-1},\nu_k)^2 = V(\widehat{\mu}_{t-1}).
		\end{split}
	\end{align}
	Thirdly,
	for $k=1,\ldots,K$, 
	let 
	$\varphi^{\widehat{\mu}_{t-1}}_{\nu_k}$ denote the Brenier potential
	from $\widehat{\mu}_{t-1}$ to $\nu_k$,
	which is a proper, l.s.c.,
	and convex function.
	Since 
	$\bar{T}^{\widehat{\mu}_{t-1}}$
	is $\widehat{\mu}_{t-1}$ almost everywhere equal to the gradient of the proper, l.s.c.,
	and convex function $\sum_{k=1}^{K}w_k\varphi^{\widehat{\mu}_{t-1}}_{\nu_k}$,
	it follows from Brenier's theorem (Theorem~\ref{thm: Brenier}) 
	that $\bar{T}^{\widehat{\mu}_{t-1}}$
	is the OT map from $\widehat{\mu}_{t-1}$
	to $\bar{T}^{\widehat{\mu}_{t-1}}\sharp\widehat{\mu}_{t-1}=G(\widehat{\mu}_{t-1})$,
	resulting in
	\begin{align}
		\label{eqn: decrement-proof-decompose-integral3}
		\int_{\R^d\times\R^d}\big\|\BIx - \bar{T}^{\widehat{\mu}_{t-1}}(\BIx) \big\|^2 \DIFFM{\widehat{\pi}_{t}}{\DIFF\BIx,\DIFF\BIy}=\int_{\R^d}\big\|\BIx - \bar{T}^{\widehat{\mu}_{t-1}}(\BIx) \big\|^2 \DIFFM{\widehat{\mu}_{t-1}}{\DIFF\BIx}= \CW_2\big(\widehat{\mu}_{t-1}, G(\widehat{\mu}_{t-1})\big)^2.
	\end{align}
	Fourthly, the convexity of $\R^d\ni\BIz\mapsto\|\BIz\|^2\in\R$ together with Jensen's inequality gives
	\begin{align*}
		\big\|\bar{T}_{t}(\BIx) - \bar{T}^{\widehat{\mu}_{t-1}}(\BIx)\big\|^2&=\left\|\sum_{k=1}^Kw_k \big(\widehat{T}_{t,k}(\BIx)-T^{\widehat{\mu}_{t-1}}_{\nu_k}(\BIx)\big)\right\|^2\le \sum_{k=1}^Kw_k \big\|\widehat{T}_{t,k}(\BIx)-T^{\widehat{\mu}_{t-1}}_{\nu_k}(\BIx)\big\|^2 \quad \forall \BIx\in\R^d,
	\end{align*}
	which results in
	\begin{align}
		\label{eqn: decrement-proof-decompose-integral4}
		\begin{split}
			\int_{\R^d\times\R^d}\big\|\bar{T}_{t}(\BIx) - \bar{T}^{\widehat{\mu}_{t-1}}(\BIx)\big\|^2 \DIFFM{\widehat{\pi}_t}{\DIFF\BIx,\DIFF\BIy}
			&=\int_{\R^d}\big\|\bar{T}_{t}(\BIx) - \bar{T}^{\widehat{\mu}_{t-1}}(\BIx)\big\|^2 \DIFFM{\widehat{\mu}_{t-1}}{\DIFF\BIx}\\
			&\le \sum_{k=1}^Kw_k \int_{\R^d}\big\|\widehat{T}_{t,k}(\BIx)-T^{\widehat{\mu}_{t-1}}_{\nu_k}(\BIx)\big\|^2 \DIFFM{\widehat{\mu}_{t-1}}{\DIFF\BIx}\\
			&=\sum_{k=1}^Kw_k \big\|\widehat{T}_{t,k}-T^{\widehat{\mu}_{t-1}}_{\nu_k}\big\|^2_{\CL^2(\widehat{\mu}_{t-1})}.
		\end{split}
	\end{align}
	Lastly, recall that $\widehat{\xi}_t$
	is an optimal coupling of $\widetilde{\mu}_{t}$
	and $\widehat{\mu}_{t}$,
	which leads to 
	\begin{align}
		\label{eqn: decrement-proof-decompose-integral5}
		\begin{split}
			\int_{\R^d\times\R^d}\big\|\BIy - \bar{T}_{t}(\BIx)\big\|^2 \DIFFM{\widehat{\pi}_t}{\DIFF\BIx,\DIFF\BIy}
			&= \int_{\R^d\times\R^d\times\R^d}\|\BIy-\BIz\|^2 \DIFFM{\widehat{\eta}_{t}}{\DIFF\BIx,\DIFF\BIy,\DIFF\BIz}\\
			&=\int_{\R^d\times\R^d}\|\BIy-\BIz\|^2\DIFFM{\widehat{\xi}_{t}}{\DIFF\BIy,\DIFF\BIz}=\CW_2(\widetilde{\mu}_{t},\widehat{\mu}_{t})^2.
		\end{split}
	\end{align}
	
	Now, integrating both sides of (\ref{eqn: decrement-proof-decompose-comb}) with respect to $\widehat{\pi}_t$ and then combining it with (\ref{eqn: decrement-proof-decompose-integral1})--(\ref{eqn: decrement-proof-decompose-integral5}) completes the proof of (\ref{eqn: decrement-pathwise}).
	Finally, taking conditional expectations with respect to $\CF_t$ on both sides of (\ref{eqn: decrement-pathwise}) proves (\ref{eqn: decrement-condexp}). 
	The proof is now complete.
\end{proof}

\begin{remark}
	In \citep[Proposition~3.3]{alvarez2016fixed}, the decrement of the sequence $\big(V(\mu_t)\big)_{t\in\N_0}$ in the deterministic fixed-point iteration $\mu_{t}\leftarrow G(\mu_{t-1})$ $\forall t\in\N$ is controlled through the inequality:
	\begin{align}
		\label{eqn: decrement-deterministic}
		V(\mu_{t})-V(\mu_{t-1})\le -\CW_2(\mu_{t-1},G(\mu_{t-1}))^2 \qquad \forall t\in\N.
	\end{align}
	Compared to (\ref{eqn: decrement-deterministic}), 
	the stochastic decrement in (\ref{eqn: decrement-pathwise}) has two additional terms on the right-hand side:
	\begin{itemize}
		\item the term $2\sum_{k = 1}^{K} w_k \big\|\widehat{T}_{t,k} - T^{\widehat{\mu}_{t-1}}_{\nu_k} \big\|^2_{\CL^2(\widehat{\mu}_{t-1})}$
		comes from the inexactness when approximating the true OT map 
		$T_{\nu_k}^{\widehat{\mu}_{t-1}}$
		by the OT map estimator $\widehat{T}_{t,k}$,
		i.e., from the approximation in Line~\ref{alglin: abstract-estimator} of Algorithm~\ref{algo: abstract};
		
		\item the term $2\CW_2\big(\big[{\textstyle\sum_{k=1}^Kw_k}\widehat{T}_{t,k}\big]\sharp\widehat{\mu}_{t-1}, \widehat{\mu}_{t}\big)^2$
		comes from the inexactness when approximating the pushforward
		$\big[{\textstyle\sum_{k=1}^Kw_k}\widehat{T}_{t,k}\big]\sharp\widehat{\mu}_{t-1}$ by $\widehat{\mu}_{t}$,
		i.e., from the approximation in Line~\ref{alglin: abstract-update} of Algorithm~\ref{algo: abstract}.
	\end{itemize}
	Note that Theorem~\ref{thm: main-convergence} assumes that the conditional expectations of these two additional error terms are controlled to decay geometrically.
\end{remark}

\subsection{Proof of Theorem~\ref{thm: main-convergence}}\label{ssec: proof-main-convergence}

We are now ready to prove Theorem~\ref{thm: main-convergence}.
Throughout this proof, we denote
$\widetilde{\mu}_{t}:=\big[{\textstyle\sum_{k=1}^Kw_k}\widehat{T}_{t,k}\big]\sharp\widehat{\mu}_{t-1}$ for all $t\in\N$ for notational simplicity.
The proof is divided into the following 8 steps.
\begin{itemize}[beginpenalty=10000]
	\item Step~1: showing that for $k=1,\ldots,K$, it holds $\PROB$-almost surely that 
	$\big\|\widehat{T}_{t,k} - T^{\widehat{\mu}_{t-1}}_{\nu_k} \big\|^2_{\CL^2(\widehat{\mu}_{t-1})}\le \beta^{\frac{t}{2}}$ 
	is true for all but finitely many $t\in\N$.
	
	\item Step~2: showing that it holds $\PROB$-almost surely that 
	$\CW_2(\widetilde{\mu}_{t},\widehat{\mu}_{t})^2\le \beta^{\frac{t}{2}}$ 
	is true for all but finitely many $t\in\N$;
	in particular, 
	$\lim_{t\to\infty}\CW_2(\widetilde{\mu}_{t},\widehat{\mu}_{t})=\nobreak0$ 
	holds $\PROB$-almost surely.
	
	\item Step~3: showing that for $k=1,\ldots,K$, 
	$\lim_{t\to\infty}\CW_2(\widehat{T}_{t,k}\sharp\widehat{\mu}_{t-1},\nu_k)=\nobreak0$
	holds $\PROB$-almost surely.
	
	\item Step~4: proving the $\PROB$-almost sure $\CW_2$-precompactness of $(\widehat{\mu}_{t})_{t\in\N_0}$.
	
	\item Step~5: proving statements~\ref{thms: main-convergence-tight-as} and \ref{thms: main-convergence-as}.
	
	\item Step~6: proving statement~\ref{thms: main-convergence-violation}.
	
	\item Step~7: proving (\ref{eqn: main-convergence-linearrate}).
	
	\item Step~8: proving statements~\ref{thms: main-convergence-PL} and \ref{thms: main-convergence-PL-var}.
	
\end{itemize}

\underline{Step~1.}
Let us fix an arbitrary $k\in\{1,\ldots,K\}$ in this step.
Applying the law of total expectation and Markov's inequality to
(\ref{eqn: main-convergence-cond1}) gives
\begin{align*}
	\PROB\Big[\big\|\widehat{T}_{t,k} - T^{\widehat{\mu}_{t-1}}_{\nu_k} \big\|^2_{\CL^2(\widehat{\mu}_{t-1})}>\beta^{\frac{t}{2}}\Big]
	\le \beta^{-\frac{t}{2}}\EXP\Big[\big\|\widehat{T}_{t,k} - T^{\widehat{\mu}_{t-1}}_{\nu_k} \big\|^2_{\CL^2(\widehat{\mu}_{t-1})}\Big]\le \beta^{\frac{t}{2}} \qquad \forall t\in\N.
\end{align*}
Since 
$\sum_{t=1}^{\infty}\PROB\Big[\big\|\widehat{T}_{t,k} - T^{\widehat{\mu}_{t-1}}_{\nu_k} \big\|^2_{\CL^2(\widehat{\mu}_{t-1})}>\beta^{\frac{t}{2}}\Big]\le \sum_{t=1}^{\infty}\beta^{\frac{t}{2}}<\infty$,
the Borel--Cantelli lemma implies that, $\PROB$-almost surely,
$\big\|\widehat{T}_{t,k} - T^{\widehat{\mu}_{t-1}}_{\nu_k} \big\|^2_{\CL^2(\widehat{\mu}_{t-1})}\le \beta^{\frac{t}{2}}$ 
holds for all but finitely many $t\in\N$.
This completes Step~1.

\underline{Step~2.}
Similar to Step~1, applying the law of total expectation and Markov's inequality to (\ref{eqn: main-convergence-cond2}) yields
\begin{align*}
	\PROB\big[\CW_2(\widetilde{\mu}_{t},\widehat{\mu}_{t})^2>\beta^{\frac{t}{2}}\big]
	\le \beta^{-\frac{t}{2}}\EXP\big[\CW_2(\widetilde{\mu}_{t},\widehat{\mu}_{t})^2\big]\le \beta^{\frac{t}{2}} \qquad \forall t\in\N.
\end{align*}
Since 
$\sum_{t=1}^{\infty}\PROB\big[\CW_2(\widetilde{\mu}_{t},\widehat{\mu}_{t})^2>\beta^{\frac{t}{2}}\big]\le \sum_{t=1}^{\infty}\beta^{\frac{t}{2}}<\infty$,
the Borel--Cantelli lemma implies that, $\PROB$-almost surely,
$\CW_2(\widetilde{\mu}_{t},\widehat{\mu}_{t})^2\le \beta^{\frac{t}{2}}$ 
holds for all but finitely many $t\in\N$.
In particular, 
$\lim_{t\to\infty}\CW_2(\widetilde{\mu}_{t},\widehat{\mu}_{t})=\nobreak0$ 
holds $\PROB$-almost surely.
This completes Step~2.

\underline{Step~3.}
Let us fix an arbitrary $k\in\{1,\ldots,K\}$ here.
Observe that $\big[\widehat{T}_{t,k},T^{\widehat{\mu}_{t-1}}_{\nu_k}\big]\sharp\widehat{\mu}_{t-1}\in\Pi\big(\widehat{T}_{t,k}\sharp\widehat{\mu}_{t-1},\nu_k\big)$.
It thus holds that 
\begin{align*}
	\CW_2\big(\widehat{T}_{t,k}\sharp\widehat{\mu}_{t-1},\nu_k\big)^2&\le \int_{\R^d}\big\|\widehat{T}_{t,k}(\BIx)-T^{\widehat{\mu}_{t-1}}_{\nu_k}(\BIx)\big\|^2\DIFFM{\widehat{\mu}_{t-1}}{\DIFF\BIx} = \big\|\widehat{T}_{t,k}-T^{\widehat{\mu}_{t-1}}_{\nu_k}\big\|_{\CL^2(\widehat{\mu}_{t-1})}^2 \qquad \forall t\in\N.
\end{align*}
Subsequently, Step~1 implies that $\lim_{t\to\infty}\CW_2\big(\widehat{T}_{t,k}\sharp\widehat{\mu}_{t-1},\nu_k\big)=\nobreak0$ holds 
$\PROB$-almost surely.
This completes Step~3.

\underline{Step~4.}
In this step, let $\widetilde{\eta}_{t}:=\big[\widehat{T}_{t,1},\ldots,\widehat{T}_{t,K}\big]\sharp\widehat{\mu}_{t-1}\in\Pi\big(\widehat{T}_{t,1}\sharp\widehat{\mu}_{t-1},\ldots,\widehat{T}_{t,K}\sharp\widehat{\mu}_{t-1}\big)\subset\CP\big((\R^d)^K\big)$ for all $t\in\N$.
Moreover,
let $A$ denote the mapping
$(\R^d)^K\ni(\BIx_1,\ldots,\BIx_K)\mapsto\sum_{k=1}^{K}w_k\BIx_k\in\R^d$.
Notice that $\widetilde{\mu}_t=A\sharp\widetilde{\eta}_t$ for all $t\in\N$. 
By Step~3 
and the equivalence between (i) and (iii) in 
\citep[Theorem~7.12]{villani2003topics},
it holds that
$\big(\widehat{T}_{t,k}\sharp\widehat{\mu}_{t-1}\big)_{t\in\N}$
converges weakly to~$\nu_k$ for $k=1,\ldots,K$,
and that 
\begin{align}
	\lim_{t\to\infty} \int_{\R^d}\|\BIy\|^2 \DIFFM{\widehat{T}_{t,k}\sharp\widehat{\mu}_{t-1}}{\DIFF\BIy} = \int_{\R^d}\|\BIy\|^2 \DIFFM{\nu_k}{\DIFF\BIy} \qquad \forall 1\le k\le K.
	\label{eqn: main-convergence-proof-quadconverge}
\end{align}
Subsequently, it follows from Prokhorov's theorem that, 
$\big(\widehat{T}_{t,k}\sharp\widehat{\mu}_{t-1}\big)_{t\in\N}$ is $\PROB$-almost surely a tight sequence of probability measures for $k=1,\ldots,K$.
It hence holds $\PROB$-almost surely that 
each marginal of the sequence 
$(\widetilde{\eta}_t)_{t\in\N}$ 
(on each copy of $\R^d$) 
belongs to a tight set of probability measures on $\R^d$, 
and it then follows from 
a multi-marginal generalization of 
\citep[Lemma~4.4]{villani2009optimal} that 
$(\widetilde{\eta}_t)_{t\in\N}$ 
is a tight set of probability measures on $(\R^d)^K$.
Consequently, 
Prokhorov's theorem implies that 
every subsequence of $(\widetilde{\eta}_t)_{t\in\N}$ 
admits a further subsequence which is weakly convergent. 
Let $(\widetilde{\eta}_{t_i})_{i\in\N}$ be a weakly convergent subsequence of $(\widetilde{\eta}_t)_{t\in\N}$ with weak limit $\widetilde{\eta}_{t_{\infty}}\in\CP((\R^d)^K)$.
It subsequently follows from Step~3 that 
$\widetilde{\eta}_{t_{\infty}}\in\Pi(\nu_1,\ldots,\nu_K)$, and hence
(\ref{eqn: main-convergence-proof-quadconverge}) implies
\begin{align}
	\begin{split}
	\lim_{i\to\infty}\int_{(\R^d)^K} \|\BIx\|^2 \DIFFM{\widetilde{\eta}_{t_i}}{\DIFF\BIx}
	&=\lim_{i\to\infty}\int_{(\R^d)^K}{\textstyle\sum_{k=1}^K}\|\BIx_k\|^2\DIFFM{\widetilde{\eta}_{t_i}}{\DIFF \BIx_1,\ldots,\DIFF \BIx_K}\\
	&=\sum_{k=1}^K\lim_{i\to\infty} \int_{\R^d} \|\BIx_k\|^2\DIFFM{\widehat{T}_{t_i,k}\sharp \widehat{\mu}_{t_i-1}}{\DIFF\BIx_k}\\
	&=\sum_{k=1}^K\int_{\R^d}\|\BIx_k\|^2\DIFFM{\nu_k}{\DIFF\BIx_k}=\int_{(\R^d)^K}\|\BIx\|^2\DIFFM{\widetilde{\eta}_{t_{\infty}}}{\DIFF\BIx} \qquad \PROB\text{-a.s.}
	\end{split}
	\label{eqn: main-convergence-proof-W2-convergence}
\end{align}
On the other hand,
it follows from the convexity of 
$\R^d\ni \BIz\mapsto \|\BIz\|^2\in\R$ 
and Jensen's inequality that
\begin{align}
	\big\|A(\BIx)\big\|^2\le \sum_{k=1}^Kw_k \|\BIx_k\|^2\le \|\BIx\|^2 \qquad \forall \BIx=(\BIx_1,\ldots,\BIx_K)\in(\R^d)^K.
	\label{eqn: main-convergence-proof-quadratic-growth}
\end{align}
Combining (\ref{eqn: main-convergence-proof-quadratic-growth}), (\ref{eqn: main-convergence-proof-W2-convergence}), 
and the equivalence between (iii) and (iv) in \citep[Theorem~7.12]{villani2003topics} yields
\begin{align}
	\begin{split}
	\lim_{i\to\infty}\int_{\R^d}\|\BIy\|^2\DIFFM{\widetilde{\mu}_{t_i}}{\DIFF\BIy}
	&= \lim_{i\to\infty} \int_{(\R^d)^K} \big\|A(\BIx)\big\|^2\DIFFM{\widetilde{\eta}_{t_i}}{\DIFF\BIx}\\
	&= \int_{(\R^d)^K}\big\|A(\BIx)\big\|^2\DIFFM{\widetilde{\eta}_{t_{\infty}}}{\DIFF\BIx}
	= \int_{\R^d}\|\BIy\|^2 \DIFFM{A\sharp \widetilde{\eta}_{t_{\infty}}}{\DIFF\BIy}\qquad \PROB\text{-a.s.}
	\end{split}
	\label{eqn: main-convergence-proof-W2-convergence-tildemu}
\end{align}
Moreover, since $(\widetilde{\eta}_{t_i})_{i\in\N}$ converges weakly to $\widetilde{\eta}_{t_{\infty}}$ $\PROB$-almost surely and $A$ is continuous, it holds $\PROB$-almost surely that 
$\lim_{i\to\infty}\int_{\R^d}\psi\DIFFX{\widetilde{\mu}_{t_{i}}}
=\lim_{i\to\infty}\int_{(\R^d)^K}\psi\circ A \DIFFX{\widetilde{\eta}_{t_i}}
=\int_{(\R^d)^K}\psi\circ A\DIFFX{\widetilde{\eta}_{t_{\infty}}}
=\int_{\R^d}\psi \DIFFX{A\sharp \widetilde{\eta}_{t_{\infty}}}$ 
for any continuous and bounded function $\psi:\R^d\to\R$, 
which shows that
$(\widetilde{\mu}_{t_i})_{i\in\N}$ converges weakly to $A\sharp\widetilde{\eta}_{t_{\infty}}$ $\PROB$-almost surely.
Now, (\ref{eqn: main-convergence-proof-W2-convergence-tildemu}) 
and the equivalence between (i) and (iii) in \citep[Theorem~7.12]{villani2003topics} 
show that \sloppy{$\lim_{i\to\infty}\CW_2(\widetilde{\mu}_{t_i},A\sharp\widetilde{\eta}_{t_{\infty}})=\nobreak0$} holds $\PROB$-almost surely.
Furthermore, since Step~2 has established that 
$\lim_{t\to\infty}\CW_2(\widetilde{\mu}_t,\widehat{\mu}_t)=\nobreak0$
holds $\PROB$-almost surely,
the above analyses imply that, $\PROB$-almost surely, every subsequence of $(\widehat{\mu}_t)_{t\in\N_0}$ admits a further subsequence which converges with respect to the $\CW_2$-metric, and thus $(\widehat{\mu}_t)_{t\in\N_0}$ is precompact with respect to the $\CW_2$-metric $\PROB$-almost surely. 
Step~4 is now complete.

\underline{Step~5.}
In this step, 
for every $\omega\in\Omega$, 
let us use the notations 
$\widehat{\mu}_{t}^{(\omega)}$, 
$\widetilde{\mu}_t^{(\omega)}$,
$\widehat{T}_{t,k}^{(\omega)}$
to explicitly express the dependencies of the random variables 
$\widehat{\mu}_{t}$, 
$\widetilde{\mu}_t$, 
$\widehat{T}_{t,k}$
on~$\omega$ 
for $k=1,\ldots,K$ and $t\in\N_0$. 
Lemma~\ref{lem: decrement} and Steps~1--4 above 
have shown the existence of 
an $\CF$-measurable set 
$\widetilde{\Omega}\subseteq\Omega$ 
with $\PROB[\widetilde{\Omega}]=1$, which satisfies:
\begin{align}
	\label{eqn: main-convergence-proof-subsetOmega}
	\forall \omega\in\widetilde{\Omega},\;\exists \overline{t}^{(\omega)}\in\N_0,\; \begin{cases}
		\big(\widehat{\mu}_{t}^{(\omega)}\big)_{t\in\N_0} \text{ is precompact with respect to the }\CW_2\text{-metric},\!\!\!\!\!\!\!\!\!\!\!\!\!\!\!\!\!\!\! \\
		\CW_2\big(\widetilde{\mu}_{t}^{(\omega)},\widehat{\mu}_{t}^{(\omega)}\big)^2 \le \beta^{\frac{t}{2}} & \phantom{1\le k\le K,\; }\;\;\,\forall t\ge \overline{t}^{(\omega)}, \\
		\Big\|\widehat{T}_{t,k}^{(\omega)}-T^{\widehat{\mu}_{t-1}^{(\omega)}}_{\nu_k}\Big\|^2_{\CL^2\big(\widehat{\mu}_{t-1}^{(\omega)}\big)} \le \beta^{\frac{t}{2}} & \forall 1\le k\le K,\; \forall t\ge \overline{t}^{(\omega)},\\
		\text{\eqref{eqn: decrement-pathwise} holds with respect to }\omega & \phantom{1\le k\le K,\; }\;\;\,\forall t\ge \overline{t}^{(\omega)}.
	\end{cases}
\end{align}
Let us fix an arbitrary $\omega\in\widetilde{\Omega}$ 
and let $(t_i)_{i\in\N_0}$ be 
an arbitrary subsequence such that 
$\lim_{i\to\infty}\CW_2\big(\widehat{\mu}_{t_i}^{(\omega)},\widehat{\mu}_{t_\infty}^{(\omega)}\big)=\nobreak0$ 
for $\widehat{\mu}_{t_\infty}^{(\omega)}\in\CP_2(\R^d)$.
The continuity of $V$ on $\CP_2(\R^d)$ then implies that $\lim_{i\to\infty}V\big(\widehat{\mu}_{t_i}^{(\omega)}\big)=V\big(\widehat{\mu}_{t_\infty}^{(\omega)}\big)$.
Removing finitely many initial terms from $(t_i)_{i\in\N_0}$ if necessary,
we assume without loss of generality that $t_0\ge \overline{t}^{(\omega)}$. 
For each $i\in\N_0$, 
summing (\ref{eqn: decrement-pathwise}) over 
$t\leftarrow t_i+1,t_i+2,\ldots,t_{i+1}$
and using the properties in (\ref{eqn: main-convergence-proof-subsetOmega}) lead to
\begin{align*}
	V\big(\widehat{\mu}_{t_{i+1}}^{(\omega)}\big)-V\big(\widehat{\mu}_{t_i}^{(\omega)}\big) 
	&=\sum_{s=t_i+1}^{t_{i+1}} V\big(\widehat{\mu}_{s}^{(\omega)}\big) - V\big(\widehat{\mu}_{s-1}^{(\omega)}\big) \\
	&\leq {-\left(\sum_{s=t_i+1}^{t_{i+1}}\CW_2\Big(\widehat{\mu}_{s-1}^{(\omega)}, G\big(\widehat{\mu}_{s-1}^{(\omega)}\big) \Big)^2\right)}
	+ \left(\sum_{s=t_i+1}^{t_{i+1}} 2\sum_{k = 1}^{K}w_k \Big\|\widehat{T}_{s,k}^{(\omega)} - T^{\widehat{\mu}_{s-1}^{(\omega)}}_{\nu_k} \Big\|^2_{\CL^2\big(\widehat{\mu}_{s-1}^{(\omega)}\big)} \right) \\
	&\qquad + \left( \sum_{s=t_i+1}^{t_{i+1}} 2 \CW_2\big(\widetilde{\mu}_{s}^{(\omega)}, \widehat{\mu}_{s}^{(\omega)}\big)^2 \right)\allowdisplaybreaks\\
	&\le {-\left(\sum_{s=t_i+1}^{t_{i+1}}\CW_2\Big(\widehat{\mu}_{s-1}^{(\omega)}, G\big(\widehat{\mu}_{s-1}^{(\omega)}\big) \Big)^2\right)} + \left( \sum_{s=t_i+1}^{t_{i+1}} 4\beta^{\frac{s}{2}}\right)\\
	&\le - \CW_2\Big(\widehat{\mu}_{t_i}^{(\omega)}, G\big(\widehat{\mu}_{t_i}^{(\omega)}\big) \Big)^2  
	+ \frac{4\beta^{\frac{t_i+1}{2}}}{1-\beta^{\frac{1}{2}}}
	\hspace{120pt} \qquad\qquad\forall i\in\N_0.
\end{align*}
Rearranging the terms above leads to
\begin{align*}
	\CW_2\Big(\widehat{\mu}_{t_i}^{(\omega)}, G\big(\widehat{\mu}_{t_i}^{(\omega)}\big) \Big)^2 \le \Big|V\big(\widehat{\mu}_{t_{i+1}}^{(\omega)}\big)-V\big(\widehat{\mu}_{t_i}^{(\omega)}\big)\Big| + \frac{4\beta^{\frac{t_i+1}{2}}}{1-\beta^{\frac{1}{2}}} \qquad\forall i\in\N_0,
\end{align*}
and we subsequently get
\begin{align*}
	\limsup_{i\to\infty}\CW_2\Big(\widehat{\mu}_{t_i}^{(\omega)}, G\big(\widehat{\mu}_{t_i}^{(\omega)}\big) \Big)^2 \le \limsup_{i\to\infty}\Big|V\big(\widehat{\mu}_{t_{i+1}}^{(\omega)}\big)-V\big(\widehat{\mu}_{t_i}^{(\omega)}\big)\Big| + \limsup_{i\to\infty}\frac{4\beta^{\frac{t_i+1}{2}}}{1-\beta^{\frac{1}{2}}}=0.
\end{align*}
This shows that $\lim_{i\to\infty}\CW_2\Big(\widehat{\mu}_{t_i}^{(\omega)},G\big(\widehat{\mu}_{t_i}^{(\omega)}\big)\Big)=\nobreak0$,
and we hence get
$\lim_{i\to\infty}\CW_2\Big(G\big(\widehat{\mu}_{t_i}^{(\omega)}\big),\widehat{\mu}_{t_{\infty}}^{(\omega)}\Big)=\nobreak0$.

Next, by Assumption~\ref{asp: abstract-regularity} and by the symmetry among $\nu_1,\ldots,\nu_K$,
let us assume without loss of generality that
$\nu_1$ has $\CL^{\infty}$-bounded density $f_{\nu_1}$.
Subsequently, for any $\mu\in\CP_{2,\AC}(\R^d)$, the analysis in \citep[Remark~3.2]{alvarez2016fixed} demonstrates that the density function $f_{G(\mu)}\in\CL^1\big(G(\mu)\big)$ of $G(\mu)\in\CP_{2,\AC}(\R^d)$ satisfies
\begin{align*}
	\esssup\big\{f_{G(\mu)}(\BIx)\big\}\le w_1^{-d} \esssup\big\{f_{\nu_1}(\BIx)\big\}<\infty.
\end{align*}
Consequently, 
by the Portmanteau theorem,
it holds for every open set $E\subseteq\R^d$ that
\begin{align*}
	\widehat{\mu}_{t_\infty}^{(\omega)}(E)\le \liminf_{i\to\infty} G\big(\widehat{\mu}_{t_i}^{(\omega)}\big)(E)\le w_1^{-d} \esssup\big\{f_{\nu_1}(\BIx)\big\}\lebesgue(E),
\end{align*}
where $\lebesgue$ denotes the Lebesgue measure on $\R^d$. 
It thus follows that $\widehat{\mu}_{t_\infty}^{(\omega)}\in\CP_{2,\AC}(\R^d)$.
Now, the continuity of the operator 
$\CP_{2,\AC}(\R^d)\ni\mu \mapsto G(\mu)\in\CP_{2,\AC}(\R^d)$ 
in \citep[Theorem~3.1]{alvarez2016fixed} implies that 
$\CW_2\Big(\widehat{\mu}_{t_\infty}^{(\omega)}, G\big(\widehat{\mu}_{t_\infty}^{(\omega)}\big) \Big)
\le \liminf_{i\to\infty} 
\CW_2\Big(G\big(\widehat{\mu}_{t_i}^{(\omega)}\big),\widehat{\mu}_{t_{\infty}}^{(\omega)}\Big)
+ \CW_2\Big(G\big(\widehat{\mu}_{t_i}^{(\omega)}\big), G\big(\widehat{\mu}_{t_\infty}^{(\omega)}\big) \Big)=\nobreak0$, 
which shows that $\widehat{\mu}_{t_{\infty}}$ is a fixed-point of~$G$. 
Since $\PROB[\widetilde{\Omega}]=1$, 
it therefore holds $\PROB$-almost surely that every $\CW_2$-accumulation point of $(\widehat{\mu}_t)_{t\in\N_0}$ is a fixed-point of~$G$. 
We have thus completed the proof of statement~\ref{thms: main-convergence-tight-as}.
In the case where $G$ has 
a unique fixed-point $\bar{\mu}\in\CP_{2,\AC}(\R^d)$, 
then statement~\ref{thms: main-convergence-tight-as} implies that, $\PROB$-almost surely, every $\CW_2$-accumulation point of $(\widehat{\mu}_t)_{t\in\N_0}$ is equal to $\bar{\mu}$.
Therefore, $(\widehat{\mu}_t)_{t\in\N_0}$ converges $\PROB$-almost surely in $\CW_2$ to $\bar{\mu}$.
The proof of statement~\ref{thms: main-convergence-as} is now complete. 

\underline{Step~6.}
For any $t\in\N_0$,
summing over both sides of 
(\ref{eqn: decrement-condexp}) in Lemma~\ref{lem: decrement}
with respect to $t\leftarrow 1,2,\ldots,t+1$, 
applying (\ref{eqn: main-convergence-cond1}), (\ref{eqn: main-convergence-cond2}),
and then taking the expectations on both sides yield
\begin{align}
	\label{eqn: main-convergence-proof-telescope}
	\begin{split}
		\EXP\big[V(\widehat{\mu}_{t+1})\big] - \EXP\big[V(\widehat{\mu}_0)\big]
		&= \sum_{s=1}^{t+1} \Big(\EXP\big[V(\widehat{\mu}_s)\big] - \EXP\big[V(\widehat{\mu}_{s-1})\big] \Big)\\
		&\le {-\sum_{s=1}^{t+1}} \Big(\EXP\big[\CW_2\big(\widehat{\mu}_{s-1},G(\widehat{\mu}_{s-1})\big)^2\big]+4\beta^{s} \Big)\\
		&\le \Bigg({-\sum_{s=1}^{t+1}}\EXP\big[\CW_2\big(\widehat{\mu}_{s-1},G(\widehat{\mu}_{s-1})\big)^2\big]\Bigg) + \frac{4\beta}{1-\beta} \qquad \forall t\in\N_0.
	\end{split}
\end{align}
Since $V(\widehat{\mu}_0)$ is $\CF_0$-measurable 
and 
$V(\widehat{\mu}_{t+1})\ge V(\bar{\mu})$ holds $\PROB$-almost surely $\forall t\in\N$,
(\ref{eqn: main-convergence-proof-telescope}) implies that
\begin{align*}
	\sum_{s=1}^{t+1}\EXP\big[\CW_2\big(\widehat{\mu}_{s-1},G(\widehat{\mu}_{s-1})\big)^2\big]\le V(\widehat{\mu}_0) - V(\bar{\mu}) + \frac{4\beta}{1-\beta} \qquad \forall t\in\N_0.
\end{align*}
In particular,
we get
\begin{align*}
	(t+1)\EXP\bigg[\min_{0\le s\le t}\big\{\CW_2\big(\widehat{\mu}_{s},G(\widehat{\mu}_{s})\big)^2\big\}\bigg]
	&\le \sum_{s=1}^{t+1}\EXP\big[\CW_2\big(\widehat{\mu}_{s-1},G(\widehat{\mu}_{s-1})\big)^2\big]\le V(\widehat{\mu}_0) - V(\bar{\mu}) + \frac{4\beta}{1-\beta} \quad \forall t\in\N_0,
\end{align*}
which proves statement~\ref{thms: main-convergence-violation}.

\underline{Step~7.}
Let us now assume in addition that
\eqref{eqn: main-convergence-condPL}
holds with respect to $\PLINEQCONST\in(0,1]$.
Observe that (\ref{eqn: main-convergence-linearrate}) holds when $t=\nobreak0$.
Thus, we only consider $t\in\N$ in the following.
Applying (\ref{eqn: main-convergence-cond1}), (\ref{eqn: main-convergence-cond2}) to (\ref{eqn: decrement-condexp}) and then taking expectations on both sides leads to
\begin{align*}
	\EXP\big[V(\widehat{\mu}_{t})\big] - \EXP\big[V(\widehat{\mu}_{t-1})\big] 
	&\le -\EXP\big[\CW_2\big(\widehat{\mu}_{t-1},G(\widehat{\mu}_{t-1})\big)^2\big] + 4\beta^{t} \;\qquad \forall t\in\N.
\end{align*}
Substituting \eqref{eqn: main-convergence-condPL} into the inequality above yields
\begin{align*}
	\EXP\big[V(\widehat{\mu}_{t})\big] - \EXP\big[V(\widehat{\mu}_{t-1})\big] 
	&\le \PLINEQCONST\Big(V(\bar{\mu})- \EXP\big[V(\widehat{\mu}_{t-1})\big]\Big) + 4\beta^{t} \qquad \forall t\in\N.
\end{align*}
Rearranging the terms above, we get
\begin{align*}
	\EXP\big[V(\widehat{\mu}_{t})\big] - V(\bar{\mu})
	&\le (1-\PLINEQCONST)\Big(\EXP\big[V(\widehat{\mu}_{t-1})\big] - V(\bar{\mu})\Big) + 4\beta^{t} \;\qquad \forall t\in\N.
\end{align*}
Subsequently, we iteratively substitute this inequality into itself $t-1$~times to obtain
\begin{align}
	\label{eqn: main-convergence-proof-PLiter}
	\begin{split}
		\EXP\big[V(\widehat{\mu}_{t})\big] - V(\bar{\mu})
		&\le (1-\PLINEQCONST)^2\Big(\EXP\big[V(\widehat{\mu}_{t-2})\big] - V(\bar{\mu})\Big) + 4(1-\PLINEQCONST)\beta^{t-1} + 4\beta^{t}\\
		&\le \cdots\\
		&\le (1-\PLINEQCONST)^t\big(V(\widehat{\mu}_{0}) - V(\bar{\mu})\big)
		+ 4\beta\sum_{s=0}^{t-1}(1-\PLINEQCONST)^{s}\beta^{t-1-s} \qquad \forall t\in\N.
	\end{split}
\end{align}
In the case where $\beta<1-\PLINEQCONST$, we have
\begin{align}
	\label{eqn: main-convergence-proof-additionalerror-lt}
	\begin{split}
		\sum_{s=0}^{t-1}(1-\PLINEQCONST)^{s}\beta^{t-1-s}
		&= \sum_{s=0}^{t-1}\beta^{s}(1-\PLINEQCONST)^{t-1-s}\\
		&= (1-\PLINEQCONST)^{t-1}\sum_{s=0}^{t-1}\bigg(\frac{\beta}{1-\PLINEQCONST}\bigg)^{s} < \frac{(1-\PLINEQCONST)^{t}}{(1-\PLINEQCONST)-\beta} \qquad \forall t\in\N.
	\end{split}
\end{align}
In the case where $\beta>1-\PLINEQCONST$, we have
\begin{align}
	\label{eqn: main-convergence-proof-additionalerror-gt}
	\begin{split}
		\sum_{s=0}^{t-1}(1-\PLINEQCONST)^{s}\beta^{t-1-s}
		&=\beta^{t-1}\sum_{s=0}^{t-1}\bigg(\frac{1-\PLINEQCONST}{\beta}\bigg)^s
		< \frac{\beta^t}{\beta - (1-\PLINEQCONST)} \qquad \forall t\in\N.
	\end{split}
\end{align}
Lastly, in the case where $\beta=1-\PLINEQCONST$, we have
\begin{align}
	\label{eqn: main-convergence-proof-additionalerror-eq}
	\begin{split}
		\sum_{s=0}^{t-1}(1-\PLINEQCONST)^{s}\beta^{t-1-s}
		= t(1-\PLINEQCONST)^{t-1} \qquad \forall t\in\N.
	\end{split}
\end{align}
Combining (\ref{eqn: main-convergence-proof-PLiter})--(\ref{eqn: main-convergence-proof-additionalerror-eq}) proves (\ref{eqn: main-convergence-linearrate}) and completes Step~7.

\underline{Step~8}.
To prove statement~\ref{thms: main-convergence-PL},
it remains to show that $\lim_{t\to\infty}\CW_2(\widehat{\mu}_t,\bar{\mu})=\nobreak0$ $\PROB$-almost surely.
To that end,
let us combine (\ref{eqn: main-convergence-linearrate}) and Markov's inequality to derive the following inequality:
\begin{align}
	\PROB\Big[V(\widehat{\mu}_t)-V(\bar{\mu}) > \big((1-\PLINEQCONST)\vee \beta\big)^{\frac{t}{2}}\Big] &\le \big((1-\PLINEQCONST)\vee \beta\big)^{-\frac{t}{2}}\EXP\big[V(\widehat{\mu}_t)-V(\bar{\mu})\big]\nonumber\\
	&\le \big(V(\widehat{\mu}_0)-V(\bar{\mu})\big)\big((1-\PLINEQCONST)\vee \beta\big)^{\frac{t}{2}} 
	\label{eqn: main-convergence-proof-PLtoas}\\
	&\qquad + \begin{cases}
		\frac{4\beta}{|1-\PLINEQCONST-\beta|}\big((1-\PLINEQCONST)\vee \beta\big)^{\frac{t}{2}} & \text{if }\beta\ne 1-\PLINEQCONST\\
		4\beta t(1-\PLINEQCONST)^{\frac{t}{2}-1} & \text{if }\beta = 1-\PLINEQCONST
	\end{cases} \qquad\!\! \forall t\in\N_0.\nonumber
\end{align}
Since both
$\sum_{t=0}^{\infty}\big((1-\PLINEQCONST)\vee \beta\big)^{\frac{t}{2}}$
and 
$\sum_{t=0}^{\infty}t(1-\PLINEQCONST)^{\frac{t}{2}-1}$ 
are convergent,
we get 
$\sum_{t=0}^{\infty}\PROB\Big[V(\widehat{\mu}_t)-V(\bar{\mu}) > \big((1-\PLINEQCONST)\vee \beta\big)^{\frac{t}{2}}\Big]<\infty$,
and thus the Borel--Cantelli lemma implies that,
$\PROB$-almost surely,
$V(\widehat{\mu}_t)-V(\bar{\mu})\le \big((1-\PLINEQCONST)\vee \beta\big)^{\frac{t}{2}}$ 
holds for all but finitely many $t\in\N_0$.
In particular,
$\lim_{t\to\infty}V(\widehat{\mu}_t)=V(\bar{\mu})$ holds $\PROB$-almost surely.
Combining this with statement~\ref{thms: main-convergence-tight-as},
it follows that $\PROB$-almost surely,
every accumulation point of $(\widehat{\mu}_t)_{t\in\N_0}$ minimizes~$V$,
and is thus a $\CW_2$-barycenter of $\nu_1,\ldots,\nu_K$ with weights $w_1,\ldots,w_K$.
Since $\bar{\mu}$ is the unique $\CW_2$-barycenter of $\nu_1,\ldots,\nu_K$ with weights $w_1,\ldots,w_K$,
we get $\lim_{t\to\infty}\CW_2(\widehat{\mu}_t,\bar{\mu})=\nobreak0$ $\PROB$-almost surely,
completing the proof of statement~\ref{thms: main-convergence-PL}.
Finally, directly combining \eqref{eqn: main-convergence-condvar} and (\ref{eqn: main-convergence-linearrate}) proves statement~\ref{thms: main-convergence-PL-var}.
The proof of Theorem~\ref{thm: main-convergence} is now complete.
\qed

\subsection{Discussions about the conditions in Theorem~\ref{thm: main-convergence}}\label{ssec: main-convergence-conditions}

In this subsection, 
we discuss the following conditions in Theorem~\ref{thm: main-convergence}:
the uniqueness of the fixed-point of the $G$-operator,
the expected Polyak--{\L}ojasiewicz inequality \eqref{eqn: main-convergence-condPL},
and the expected variance inequality \eqref{eqn: main-convergence-condvar}.
We will also discuss sufficient conditions in the literature to guarantee them.
At the end of the subsection, we will present a 
one-dimensional setting under which these three conditions 
along with the inequalities (\ref{eqn: main-convergence-cond1}) and 
(\ref{eqn: main-convergence-cond2}) 
in Theorem~\ref{thm: main-convergence}
can be \textit{simultaneously satisfied}.

The operator~$G$ in (\ref{eqn: G-operator}) does not always have a unique fixed-point for general input probability measures $\nu_1,\ldots,\nu_K\in\CP_{2,\AC}(\R^d)$; see, e.g., Example~3.1 of \citep{alvarez2016fixed} for a concrete counterexample. 
As discussed in Section~\ref{ssec: preliminary-OT},
this non-uniqueness essentially stems from the lack of geodesic convexity of the barycenter functional~$V$.
The following conditions for a fixed-point of $G$ to be the unique $\CW_2$-barycenter is provided by \citet[Theorem~2 \& Remark~1]{zemel2019frechet}.

\begin{proposition}[Optimality criteria for the fixed-points of $G$ {\citep{zemel2019frechet}}]\label{prop: suff-cond-unique-fixedpoint}
	Let $\CX\subseteq\R^d$ be open and convex.
	For $k=1,\ldots,K$,
	let $\nu_k\in\CP_{2,\AC}(\R^d)$ satisfy 
	$\support(\nu_k)=\clos(\CX)$,
	and that 
	the density function $f_{\nu_k}$ of $\nu_k$
	is bounded and strictly positive on $\CX$.
	Then, a fixed-point $\mu\in\CP_{2,\AC}(\R^d)$ of $G$ is the unique $\CW_2$-barycenter of $\nu_1,\ldots,\nu_K$ with weights $w_1,\ldots,w_K$ provided that one of the following conditions is satisfied:
	\begin{enumerate}[label=(\Roman*),beginpenalty=10000]
		\item $\CX=\R^d$, the density function $f_{\mu}$ of $\mu$ is bounded and strictly positive, 
		and $f_{\mu},f_{\nu_1},\ldots,f_{\nu_K}\in\CC^{\LOCAL,0,\alpha}(\R^d)$ for some $\alpha\in(0,1]$;

		\item $\CX$ is bounded, $\support(\mu)=\clos(\CX)$, the density function $f_{\mu}$ of $\mu$ is bounded,
		and $f_{\mu},f_{\nu_1},\ldots,f_{\nu_K}$ are all bounded away from zero on $\CX$.
	\end{enumerate}
\end{proposition}

The above optimality criteria provide conditions on the 
accumulations points of $(\widehat{\mu}_t)_{t\in\N_0}$ produced by Algorithm~\ref{algo: abstract}
to rule out those fixed-points of $G$ that are not the $\CW_2$-barycenter $\bar{\mu}$.
However, guaranteeing the convergence of $(\widehat{\mu}_t)_{t\in\N_0}$
to $\bar{\mu}$ requires checking that any fixed-point of $G$ satisfies the optimality criteria in Proposition~\ref{prop: suff-cond-unique-fixedpoint},
resulting in a condition that is difficult to verify in practice.
Even though \citet[Remark~2]{zemel2019frechet}
have conjectured that $G$ has a unique fixed-point when $\nu_1,\ldots,\nu_K$ satisfy the assumptions stated in Proposition~\ref{prop: suff-cond-unique-fixedpoint},
the only setting we are aware of which guarantees the uniqueness of the fixed-point of~$G$ over the entire space $\CP_{2, \AC}(\R^d)$ is the one-dimensional setting \citep[Section~6.1]{backhoff2025stochastic}, i.e., when $d=1$,
whereas 
sufficient conditions to guarantee the uniqueness of the fixed-point of~$G$ for non-parametric $\nu_1,\ldots,\nu_K$ in $d\ge 2$ dimensions is still an open problem to the best of our knowledge.

To circumvent this difficulty when analyzing the convergence rate of a sequence $(\mu_t)_{t\in\N_0}\subset\CP_{2,\AC}(\R^d)$ produced by some (deterministic) iterative Wasserstein barycenter algorithm,
one strategy is to restrict attention to a suitably chosen subset $\CM\subset\CP_{2,\AC}(\R^d)$ 
where $\bar{\mu}\in\CM$,
and to establish the 
Polyak--{\L}ojasiewicz (P{\L}) inequality:
$V(\mu) - V(\bar{\mu}) \le \frac{1}{\PLINEQCONST}\CW_2(\mu,G(\mu))^2$ 
for all $\mu\in\CM$ 
with respect to some constant $\PLINEQCONST\in(0,1]$.
Subsequently, as long as one can ensure that 
$\mu_t\in\CM$ in each iteration~$t$,
the convergence guarantee of $\big(V(\mu_t)\big)_{t\in\N_0}$ to $V(\bar{\mu})$ 
at the geometric rate $O\big((1-\PLINEQCONST)^t\big)$
ensues.
For example,
\citet{chewi2020gradient} have adopted this strategy to 
establish the geometric convergence rate of the $G$-iteration (\ref{eqn: G-iteration})
in the case where $\nu_1,\ldots,\nu_K$ all belong to the same elliptical family (see Proposition~\ref{prop: suff-cond-PL}\ref{propc: suff-cond-PL-elliptical} below for the details of their setting), 
and \citet{montesuma2025computing} have recently analyzed the convergence of a Wasserstein gradient algorithm for computing the $\CW_2$-barycenter of empirical measures under a P{\L} inequality without explicitly justifying when it holds. 
We remark that the 
assumption of the 
P{\L} inequality
is common in the non-convex optimization literature for analyzing 
the convergence rates of gradient-based algorithms; see \citet{polyak1963gradient}.

Under our stochastic fixed-point iteration in Algorithm~\ref{algo: abstract}, 
the desired P{\L} inequality
manifests in the expectation form presented in \eqref{eqn: main-convergence-condPL}.
In the following, we present four specific settings in the literature under which 
\eqref{eqn: main-convergence-condPL} holds, which are direct consequences of the discussion in
\citet*[Section~6]{backhoff2025stochastic}.

\begin{proposition}[Settings where \eqref{eqn: main-convergence-condPL} holds \citep{backhoff2025stochastic}]\label{prop: suff-cond-PL}
	The expected Polyak--{\L}ojasiewicz inequality
	\eqref{eqn: main-convergence-condPL}
	holds under the four following settings.
	\begin{enumerate}[label=(\roman*),beginpenalty=10000]
		\item\label{propc: suff-cond-PL-1d}
		\emph{(One-dimensional measures; see, e.g., \citep[Proposition~7.14]{chewi2024statistical})}
		When $d=1$ and $\nu_1,\ldots,\nu_K\in\CP_{2,\AC}(\R)$,
		\eqref{eqn: main-convergence-condPL} holds with respect to 
		$\PLINEQCONST\leftarrow 1$.
		
		\item\label{propc: suff-cond-PL-commcopula}
		\emph{(Measures sharing a common copula; see \citep[Theorem~2.9]{cuesta1993optimal})}
		Let $F:[0,1]^d\to[0,1]$ be a copula,
		and let $\CM_{\COPULA,F}$ denote the collection of all $\mu\in\CP_{2,\AC}(\R^d)$ with copula $F$ (see, e.g., \citep[Definition~5.1 \& Definition~5.4]{mcneil2015quantitative} for the relevant definitions).
		Then, when $\nu_1,\ldots,\nu_K\in\CM_{\COPULA,F}$
		and $(\widehat{\mu}_t)_{t\in\N_0}\subset\CM_{\COPULA,F}$,
		\eqref{eqn: main-convergence-condPL} holds with respect to 
		$\PLINEQCONST\leftarrow 1$.
		
		\item\label{propc: suff-cond-PL-sphereequiv}
		\emph{(Spherically equivalent measures; see \citep[Theorem~3.2]{cuesta1993optimal})}
		Let $\mu_0\in\CP_{2,\AC}(\R^d)$ and let $\CM_{\SPHERICAL,\mu_0}\subset \CP_{2,\AC}(\R^d)$ be defined as follows:
		\begin{align*}
			\hspace{30pt}\CM_{\SPHERICAL,\mu_0}:=\left\{T\sharp\mu_0: 
			\begin{tabular}{l}
				$T(\BIx)={\textstyle\frac{\alpha(\|\BIx\|)}{\|\BIx\|}\BIx}\; \forall \BIx\in\R^d,\;\alpha:\R_+\to\R_+ \text{ is}$ \\
				$\text{continuous and increasing, }\lim_{z\to\infty}{\textstyle\frac{\alpha(z)}{z}}<\infty$
			\end{tabular}\right\}.
		\end{align*}
		Then, when $\nu_1,\ldots,\nu_K\in\CM_{\SPHERICAL,\mu_0}$
		and $(\widehat{\mu}_t)_{t\in\N_0}\subset\CM_{\SPHERICAL,\mu_0}$,
		\eqref{eqn: main-convergence-condPL} holds with respect to 
		$\PLINEQCONST\leftarrow 1$.
		
		\item\label{propc: suff-cond-PL-elliptical}
		\emph{(Measures belonging to the same elliptical family; see \citep[Theorem~19]{chewi2020gradient})}
		Let $\mu_0\in\CP_{2,\AC}(\R^d)$ be spherical,
		that is, 
		$U\sharp\mu_0=\mu_0$ for any transformation 
		$\R^d\ni\BIx\mapsto U(\BIx):=\BU\BIx\in\R^d$
		where $\BU\in\R^{d\times d}$ is an orthonormal matrix
		(i.e., $\BU\BU^\TRANSP=\BU^\TRANSP\BU=\BI_d$).
		Let $0<\underline{\lambda}\le \overline{\lambda}<\infty$,
		and let $\CM_{\ELLIPTICAL,\mu_0,\underline{\lambda},\overline{\lambda}}\subset \CP_{2,\AC}(\R^d)$ 
		be defined as follows:
		\begin{align*}
			\hspace{40pt}
			\CM_{\ELLIPTICAL,\mu_0,\underline{\lambda},\overline{\lambda}}:=\Big\{T\sharp\mu_0: T(\BIx)=\BA\BIx+\BIb \; \forall \BIx\in\R^d,\; \BA\in\mathbb{S}^{d}, \; \underline{\lambda}\BI_d\preceq \BA \preceq \overline{\lambda}\BI_d,\; \BIb\in\R^d\Big\}.
		\end{align*}
		Then, when $\nu_1,\ldots,\nu_K\in\CM_{\ELLIPTICAL,\mu_0,\underline{\lambda},\overline{\lambda}}$
		and $(\widehat{\mu}_t)_{t\in\N_0}\subset\CM_{\ELLIPTICAL,\mu_0,\underline{\lambda},\overline{\lambda}}$,
		\eqref{eqn: main-convergence-condPL} holds with respect to 
		$\PLINEQCONST\leftarrow \frac{\underline{\lambda}^2}{4\overline{\lambda}^2}$.
	\end{enumerate}
\end{proposition}
Note that in the first three settings above, 
\eqref{eqn: main-convergence-condPL} holds with respect to 
$\PLINEQCONST\leftarrow 1$
because of the properties that 
$T^{\mu_1}_{\mu_2}\circ T^{\mu_2}_{\mu_3}=T^{\mu_1}_{\mu_3}$
for all 
$\mu_1,\mu_2,\mu_3\in\CP_{2,\AC}(\R)$,
for all 
$\mu_1,\mu_2,\mu_3\in\CM_{\COPULA,F}$,
and for all
$\mu_1,\mu_2,\mu_3\in\CM_{\SPHERICAL,\mu_0}$,
respectively.
Essentially, this is due to the fact that 
$\big(\CP_{2,\AC}(\R),\CW_2\big)$,
$\big(\CM_{\COPULA,F},\CW_2\big)$,
and
$\big(\CM_{\SPHERICAL,\mu_0},\CW_2\big)$
can all be isometrically embedded into a Hilbert space;
see, e.g., \citep[Proposition~7.14]{chewi2024statistical} and 
\citep[Section~6.2 \& Section~6.3]{backhoff2025stochastic}.

Moreover, the expected variance inequality \eqref{eqn: main-convergence-condvar} bounds the (expected) the $\CW_2$-distance between the generated measures and the true barycenter using the (expected) optimality gap.
We present below a sufficient condition for 
\eqref{eqn: main-convergence-condvar} to hold, 
which is a direct consequence of \citep[Theorem~6]{chewi2020gradient}.
\begin{proposition}[Sufficient conditions for \eqref{eqn: main-convergence-condvar}]\label{prop: suff-cond-var}
	Let $\nu_1,\ldots,\nu_K\in\CP_{2,\AC}(\R^d)$,
	where there exists at least one index $k\in\{1,\ldots,K\}$
	such that $\nu_k$
	has $\CL^{\infty}$-bounded density.
	Let $\bar{\mu}$ denote the unique $\CW_2$-barycenter of $\nu_1,\ldots,\nu_K$ with weights $w_1,\ldots,w_K$,
	and let $\varphi^{\bar{\mu}}_{\nu_k}$ denote the Brenier potential from $\bar{\mu}$ to $\nu_k$ for $k=1,\ldots,K$.
	Then, under the assumption that there exist $\underline{\lambda}_1,\ldots,\underline{\lambda}_K\in\R_+$ 
	satisfying 
	$\varphi^{\bar{\mu}}_{\nu_k}\in\FC_{\underline{\lambda}_k,\infty}(\R^d)$ $\forall 1\le k\le K$
	and 
	$\underline{\lambda}:=\sum_{k=1}^{K}w_k\underline{\lambda}_k>0$,
	\eqref{eqn: main-convergence-condvar} holds with respect to $\VARINEQCONST\leftarrow \underline{\lambda}$.
\end{proposition}

Let us also remark that 
under the settings of Proposition~\ref{prop: suff-cond-PL}\ref{propc: suff-cond-PL-1d}--\ref{propc: suff-cond-PL-sphereequiv}, 
one can show that \eqref{eqn: main-convergence-condvar} holds with respect to
$\VARINEQCONST\leftarrow 1$ 
using the aforementioned property 
that 
$\big(\CP_{2,\AC}(\R),\CW_2\big)$,
$\big(\CM_{\COPULA,F},\CW_2\big)$,
and
$\big(\CM_{\SPHERICAL,\mu_0},\CW_2\big)$
are isometric to a Hilbert space.
Under the setting of Proposition~\ref{prop: suff-cond-PL}\ref{propc: suff-cond-PL-elliptical}, 
\citet*{chewi2020gradient} used Proposition~\ref{prop: suff-cond-var} to show that 
\eqref{eqn: main-convergence-condvar} holds with respect to $\VARINEQCONST\leftarrow \underline{\lambda}/\overline{\lambda}$.

Lastly,
we present in the following proposition a one-dimensional setting 
under which $G$ has a unique fixed-point,
both \eqref{eqn: main-convergence-condPL}
and \eqref{eqn: main-convergence-condvar} are satisfied,
and the inequalities (\ref{eqn: main-convergence-cond1}) and (\ref{eqn: main-convergence-cond2}) in Theorem~\ref{thm: main-convergence}
are also satisfied.
Thus, this setting yields all the properties of the output
$(\widehat{\mu}_t)_{t\in\N_0}$ of Algorithm~\ref{algo: abstract}
in the conclusions of statements~\ref{thms: main-convergence-tight-as}--\ref{thms: main-convergence-PL-var} in Theorem~\ref{thm: main-convergence}.

\begin{proposition}[A one-dimensional setting with kernel-based estimators]\label{prop: special-case-1d}
	Let $d=1$, 
	let $(\underline{a}_k)_{k=1:K}$, $(\overline{a}_k)_{k=1:K}$ satisfy 
	$-\infty<\underline{a}_k<\overline{a}_k<\infty$ $\forall 1\le k\le K$,
	let $\underline{a}:=\sum_{k=1}^{K}w_k\underline{a}_k$, 
	$\overline{a}:=\sum_{k=1}^{K}w_k\overline{a}_k$,
	let $\mu_0\in\CP_{2,\AC}(\R)$ satisfy 
	$\support(\mu_0)=[\underline{a},\overline{a}]$,
	and let 
	$(\nu_k)_{k=1:K}\subset\CP_{2,\AC}(\R)$ satisfy 
	$\support(\nu_k)=[\underline{a}_k,\overline{a}_k]$ $\forall 1\le k\le K$.
	Let $f_{\mu_0}$
	denote the density function of $\mu_0$,
	where we assume that 
	there exists $\zeta_0\ge\nobreak1$ satisfying 
	$\zeta_0^{-1}\le f_{\mu_0}(x)\le \zeta_0$ $\forall x\in[\underline{a},\overline{a}]$.
	For $k=1,\ldots,K$, 
	let $f_{\nu_k}$ denote the density function of $\nu_k$,
	where we assume that 
	there exists $\zeta_k\ge\nobreak1$ satisfying 
	$\zeta^{-1}_k\le f_{\nu_k}(x)\le \zeta_{k}$ $\forall x\in\nobreak[\underline{a}_k,\overline{a}_k]$.
	Moreover, let 
	$\kappa_0,\kappa_1,\ldots,\kappa_K:\R\to(0,\infty)$ be continuous and satisfy
	$\kappa_k(x)\ge \kappa_k(x')$ whenever $|x'|\ge \nobreak|x|$,
	$\int_{\R}\kappa_k(x)\DIFFX{x}=\nobreak1$,
	$\int_{\R}x^2\kappa_k(x)\DIFFX{x}<\nobreak\infty$
	$\forall 0\le k\le K$.
	With the above assumptions and notions,
	we let $(X_{t,k,i})_{i=1:\widehat{M}_{t-1,k}}$ and
	$(Y_{t,k,j})_{j=1:\widehat{N}_{t-1,k}}$
	denote the independent samples from $\widehat{\mu}_{t-1}$ and $\nu_k$
	generated in Line~\ref{alglin: abstract-sample1} and Line~\ref{alglin: abstract-sample2} of Algorithm~\ref{algo: abstract},
	respectively 
	(we remove the boldface in their notations because they are scalars rather than vectors when $d=1$),
	which are random variables on the probability space $(\Omega,\CF,\PROB)$.
	Furthermore, we let $\Theta:=(0,\infty)^2$,
	and let $\bar{\mu}$ denote the unique $\CW_2$-barycenter of $\nu_1,\ldots,\nu_K$ with weights $w_1,\ldots,w_K$.
	Subsequently, for each iteration~$t\in\N$ and for $k=1,\ldots,K$,
	let us denote $\widehat{\ITheta}_{t-1,k}=(b_{t-1,k},h_{t-1,k})\in\nobreak\Theta$,
	and define the functions 
	$\widehat{G}_{t,k}:[\underline{a},\overline{a}]\to[0,1]$,
	$\widehat{F}_{t,k}:[\underline{a}_k,\overline{a}_k]\to[0,1]$ 
	as well as the OT map estimator 
	$\widehat{T}_{t,k}:[\underline{a},\overline{a}]\to[\underline{a}_k,\overline{a}_k]$ in Line~\ref{alglin: abstract-estimator} of Algorithm~\ref{algo: abstract}
	as follows:
	\begin{align*}
		\widehat{G}_{t,k}(x)&:= \frac{1}{\widehat{M}_{t-1,k}}\sum_{i=1}^{\widehat{M}_{t-1,k}}\frac{{\displaystyle\int_{\underline{a}}^{x}}\kappa_0\Big(\frac{z-X_{t,k,i}}{b_{t-1,k}}\Big)\DIFFX{z}}{{\displaystyle\int_{\underline{a}}^{\overline{a}}}\kappa_0\Big(\frac{z-X_{t,k,i}}{b_{t-1,k}}\Big)\DIFFX{z}}
		\hspace{10.4pt}\qquad \forall x\in [\underline{a},\overline{a}], \allowdisplaybreaks\\
		\widehat{F}_{t,k}(x)&:= \frac{1}{\widehat{N}_{t-1,k}}\sum_{j=1}^{\widehat{N}_{t-1,k}}\frac{{\displaystyle\int_{\underline{a}_k}^{x}}\kappa_k\Big(\frac{z-Y_{t,k,j}}{h_{t-1,k}}\Big)\DIFFX{z}}{{\displaystyle\int_{\underline{a}_k}^{\overline{a}_k}}\kappa_k\Big(\frac{z-Y_{t,k,j}}{h_{t-1,k}}\Big)\DIFFX{z}}
		\qquad \forall x\in [\underline{a}_k,\overline{a}_k],\allowdisplaybreaks\\
		\widehat{T}_{t,k}(x)&:= \begin{cases}
			\underline{a}_k & 
			\hspace{66.3pt}\qquad\forall x\in(-\infty,\underline{a}), \\
			\widehat{F}_{t,k}^{-1}\big(\widehat{G}_{t,k}(x)\big) & 
			\hspace{82.4pt}\qquad\forall x\in[\underline{a},\overline{a}], \\
			\overline{a}_k & 
			\hspace{74.8pt}\qquad\forall x\in(\overline{a},\infty).
		\end{cases}
	\end{align*}
	Additionally, let us set 
	$\widehat{\mu}_{t}:=\big[\sum_{k = 1}^K w_k \widehat{T}_{t,k}\big] \sharp \widehat{\mu}_{t-1}$
	in Line~\ref{alglin: abstract-update} of Algorithm~\ref{algo: abstract}.
	Then, the following statements hold.
	\begin{enumerate}[label=(\roman*),beginpenalty=10000]
		\item\label{props: special-case-1d-regularity}
		All conditions in Assumption~\ref{asp: abstract-regularity} are satisfied.
		
		\item\label{props: special-case-1d-inequalities}
		For any $\beta\in(0,1)$ and for $t\in\N$,
		one can choose 
		$\widehat{M}_{t-1,k}=O(\beta^{-2t})$,
		$\widehat{N}_{t-1,k}=O(\beta^{-2t})$,
		$b_{t-1,k}=O\big(\beta^{\frac{t}{2}}\big)$,
		$h_{t-1,k}=O\big(\beta^{\frac{t}{2}}\big)$
		such that 
		both (\ref{eqn: main-convergence-cond1})
		and (\ref{eqn: main-convergence-cond2})
		are satisfied.
		Note that the constants omitted by the big-$O$ notations 
		here depend on $\widehat{\mu}_{t-1}$, $(\nu_k)_{k=1:K}$, 
		$\big(\kappa_{k}(\cdot)\big)_{k=0:K}$
		and do not depend on $\beta^t$.
		
		\item\label{props: special-case-1d-uniqueness}
		$G$ has a unique fixed-point.
		
		\item\label{props: special-case-1d-PL}
		The expected Polyak--{\L}ojasiewicz inequality \eqref{eqn: main-convergence-condPL} holds with respect to 
		$\PLINEQCONST\leftarrow 1$.
		
		\item\label{props: special-case-1d-var}
		The expected variance inequality \eqref{eqn: main-convergence-condvar}
		holds with respect to 
		$\VARINEQCONST\leftarrow 1$.
	\end{enumerate}
	Consequently, 
	all conditions in Theorem~\ref{thm: main-convergence}
	are simultaneously satisfied.
	In this case,
	the conclusions of statements~\ref{thms: main-convergence-tight-as}--\ref{thms: main-convergence-PL-var} in Theorem~\ref{thm: main-convergence} hold,
	namely, it holds that
	$(\widehat{\mu}_t)_{t\in\N_0}$ converges 
	$\PROB$-almost surely in $\CW_2$ to~$\bar{\mu}$,
	$\EXP\big[V(\widehat{\mu}_t)\big]-V(\bar{\mu})\le 4\beta^{t}$ $\forall t\in\N$,
	and 
	$\EXP\big[\CW_2(\widehat{\mu}_t,\bar{\mu})^2\big]\le\nobreak 4\beta^{t}$ $\forall t\in\N$.
\end{proposition}

\begin{proof}[Proof of Proposition~\ref{prop: special-case-1d}]
	See Appendix~\ref{sapx: proof-main-convergence-conditions}.
\end{proof}

\begin{remark}[An elliptical setting using sample mean and covariance]\label{rmk: elliptical-setting}
	As discussed, 
	under the setting of Proposition~\ref{prop: suff-cond-PL}\ref{propc: suff-cond-PL-elliptical} 
	when 
	$\nu_1,\ldots,\nu_K$ belong to the same elliptical family
	$\CM_{\ELLIPTICAL,\mu_0,\underline{\lambda},\overline{\lambda}}$,
	\eqref{eqn: main-convergence-condPL} holds with respect to 
	$\PLINEQCONST\leftarrow \frac{\underline{\lambda}^2}{4\overline{\lambda}^2}$
	and 
	\eqref{eqn: main-convergence-condvar} holds with respect to 
	$\VARINEQCONST\leftarrow \underline{\lambda}/\overline{\lambda}$
	as long as one can guarantee that 
	$(\widehat{\mu}_t)_{t\in\N_0}\subset \CM_{\ELLIPTICAL,\mu_0,\underline{\lambda},\overline{\lambda}}$.
	Consequently, one could consider the following procedure for constructing $(\widehat{\mu}_t)_{t\in\N_0}$ via Algorithm~\ref{algo: abstract}.
	Recall that 
	$(\BIX_{t,k,i})_{i=1:\widehat{M}_{t-1,k}}$ and
	$(\BIY_{t,k,j})_{j=1:\widehat{N}_{t-1,k}}$
	denote the independent samples from $\widehat{\mu}_{t-1}$ and $\nu_k$
	generated in Line~\ref{alglin: abstract-sample1} and Line~\ref{alglin: abstract-sample2} of Algorithm~\ref{algo: abstract},
	respectively.
	In Line~\ref{alglin: abstract-estimator} of Algorithm~\ref{algo: abstract},
	let $\|\BA\|_{\text{\normalfont{op}}}$ denote the operator norm of any $\BA\in\mathbb{S}^d$, and define 
	$P_{\underline{\lambda},\overline{\lambda}}:\mathbb{S}^d\to\mathbb{S}^d_{++}$,
	$\widehat{\BIm}_0,\widehat{\BIm}_1\in\R^d$,
	$\widehat{\BSigma}_0,\widehat{\BSigma}_1\in\mathbb{S}^d_{+}$,
	and 
	$\widehat{T}_{t,k}:\R^d\to\R^d$ as follows: 
	\begin{align*}
		P_{\underline{\lambda},\overline{\lambda}}(\BA)&:=\argmin_{\widehat{\BA}\in \mathbb{S}^d_{++},\, \underline{\lambda}\BI_d\preceq \widehat{\BA}\preceq \overline{\lambda}\BI_d}\big\{\|\widehat{\BA}-\BA\|_{\text{\normalfont{op}}}\big\} 
		\hspace{82.6pt}\qquad \forall \BA\in\mathbb{S}^d, \allowdisplaybreaks\\
		\widehat{\BIm}_0&:= \frac{1}{\widehat{M}_{t-1,k}}\sum_{i=1}^{\widehat{M}_{t-1,k}}\BIX_{t,k,i}, \allowdisplaybreaks\\
		\widehat{\BIm}_1&:= \frac{1}{\widehat{N}_{t-1,k}}\sum_{j=1}^{\widehat{N}_{t-1,k}}\BIY_{t,k,j}, \allowdisplaybreaks\\
		\widehat{\BSigma}_{0}&:= P_{\underline{\lambda},\overline{\lambda}}\left(\Bigg(\frac{1}{\widehat{M}_{t-1,k}}\sum_{i=1}^{\widehat{M}_{t-1,k}}\BIX_{t,k,i}\BIX_{t,k,i}^\TRANSP\Bigg)-\widehat{\BIm}_0\widehat{\BIm}_0^\TRANSP\right), \allowdisplaybreaks\\
		\widehat{\BSigma}_{1}&:= P_{\underline{\lambda},\overline{\lambda}}\left(\Bigg(\frac{1}{\widehat{N}_{t-1,k}}\sum_{j=1}^{\widehat{N}_{t-1,k}}\BIY_{t,k,j}\BIY_{t,k,j}^\TRANSP\Bigg)-\widehat{\BIm}_1\widehat{\BIm}_1^\TRANSP\right), \allowdisplaybreaks\\
		\widehat{T}_{t,k}(\BIx)&:= \widehat{\BSigma}_{0}^{-\frac{1}{2}}\Big(\widehat{\BSigma}_0^{\frac{1}{2}}\widehat{\BSigma}_1\widehat{\BSigma}_0^{\frac{1}{2}}\Big)^{\frac{1}{2}}\widehat{\BSigma}_0^{-\frac{1}{2}}\big(\BIx-\widehat{\BIm}_0\big) + \widehat{\BIm}_1 \qquad\qquad\qquad \forall \BIx\in\R^d.
	\end{align*}
	Subsequently, one sets 
	$\widehat{\mu}_{t}:=\big[\sum_{k = 1}^K w_k \widehat{T}_{t,k}\big] \sharp \widehat{\mu}_{t-1}$
	in Line~\ref{alglin: abstract-update} of Algorithm~\ref{algo: abstract}.
	The above definition of $\widehat{T}_{t,k}$ means that 
	one takes the closed-form expression of $T^{\widehat{\mu}_{t-1}}_{\nu_k}$ for $\widehat{\mu}_{t-1}$, $\nu_k$ belonging to the same elliptical family (see, e.g., 
	\citep[Theorem~2.3]{alvarez2016fixed}),
	and substitutes the mean vectors and the covariance matrices of $\widehat{\mu}_{t-1}$, $\nu_k$ in the expression
	with the sample mean vectors $\widehat{\BIm}_0$, $\widehat{\BIm}_1$
	and the projected sample covariance matrices 
	$\widehat{\BSigma}_0$, $\widehat{\BSigma}_1$, respectively.
	The purpose of the projection $P_{\underline{\lambda},\overline{\lambda}}(\cdot)$ is to guarantee that the resulting 
	$\widehat{\mu}_{t}$ remains in $\CM_{\ELLIPTICAL,\mu_0,\underline{\lambda},\overline{\lambda}}$.
	Using the concentration bounds for linear OT maps developed by 
	\citet{flamary2019concentration} 
	as well as concentration bounds for the individual eigenvalues of the sample covariance matrices (see, e.g., \citep{mendelson2006on}),
	one can control the left-hand side of 
	(\ref{eqn: main-convergence-cond1}) to be arbitrarily close to~0 
	with sufficiently large sample sizes $\widehat{M}_{t-1,k}$ and 
	$\widehat{N}_{t-1,k}$.
	Moreover, observe that the left-hand size of 
	(\ref{eqn: main-convergence-cond2}) is equal to~0 for all $t\in\N$.
	Consequently, for any $\beta\in\nobreak(0,1)$,
	one can appropriately choose 
	stochastic processes 
	$(\widehat{M}_{t,k})_{k=1:K,\,t\in\N_0}$,
	$(\widehat{N}_{t,k})_{k=1:K,\,t\in\N_0}$
	to guarantee via Theorem~\ref{thm: main-convergence} that 
	$(\widehat{\mu}_t)_{t\in\N_0}$ converges 
	$\PROB$-almost surely in $\CW_2$ to the unique $\CW_2$-barycenter $\bar{\mu}$ of 
	$\nu_1,\ldots,\nu_K$ with weights $w_1,\ldots,w_K$ such that
	$\EXP\big[V(\widehat{\mu}_t)\big]-V(\bar{\mu})=O\left(\Big(\Big(1 - \frac{\underline{\lambda}^2}{4\overline{\lambda}^2}\Big) \vee \beta\Big)^t\right)$
	and 
	$\EXP\big[\CW_2(\widehat{\mu}_t,\bar{\mu})^2\big]=O\left(\Big(\Big(1 - \frac{\underline{\lambda}^2}{4\overline{\lambda}^2}\Big) \vee \beta\Big)^t\right)$.
\end{remark}

\section{A computationally tractable setting under Caffarelli-type conditions}\label{sec: Caffarelli-setting}

\subsection{Concrete setting}\label{ssec: Caffarelli-setting-concrete}

We have developed in Theorem~\ref{thm: main-convergence} sufficient conditions
to guarantee the convergence of Algorithm~\ref{algo: abstract}.
Specifically, the $\PROB$-almost sure convergence of 
$(\widehat{\mu}_t)_{t\in\N_0}$ requires both
(\ref{eqn: main-convergence-cond1})
and (\ref{eqn: main-convergence-cond2}) to be satisfied.
While we have discussed in 
Proposition~\ref{prop: special-case-1d} and Remark~\ref{rmk: elliptical-setting}
that 
(\ref{eqn: main-convergence-cond1})
and (\ref{eqn: main-convergence-cond2})
can be satisfied in the one-dimensional setting 
and in the elliptical setting,
the goal of this section is to develop a setting involving a wider class of 
non-parametric input probability measures,
and to develop a concrete and computationally tractable procedure
to carry out the approximations in 
Line~\ref{alglin: abstract-estimator} and
Line~\ref{alglin: abstract-update} 
of Algorithm~\ref{algo: abstract}.
Our setting is presented in Setting~\ref{sett: Caffarelli}
and our concrete procedure is presented in Algorithm~\ref{algo: concrete}.

Most importantly,
we show that when the sample sizes
$(\widehat{M}_{t,k})_{k=1:K,\,t\in\N_0}$, $(\widehat{N}_{t,k})_{k=1:K,\,t\in\N_0}$ 
as well as the additional hyperparameters in the concrete procedure are suitably chosen, 
both (\ref{eqn: main-convergence-cond1})
and (\ref{eqn: main-convergence-cond2}) can be satisfied,
allowing us to prove its convergence in Theorem~\ref{thm: Caffarelli-convergence}.
To this end, observe that (\ref{eqn: main-convergence-cond1})
requires the convergence rates of the approximation errors of the OT map estimators $(\widehat{T}_{t,k})_{k=1:K,\,t\in\N}$.
Moreover, establishing the convergence rates of OT map estimators from $\mu$ to $\nu$ requires not only regularities of $\mu$ and $\nu$, but also regularity of the true OT map $T^{\mu}_{\nu}$
(and hence also the Brenier potential $\varphi^{\mu}_{\nu}$).
Typically, one would require 
$\varphi^{\mu}_{\nu}\in\CC^{q+2}(\support(\mu))$ 
for $q\in\N_0$
as well as 
$\lambda_{\LBSUB}\BI_d\preceq\nabla^2\varphi^{\mu}_{\nu}(\BIx)\preceq\lambda_{\UBSUB}\BI_d$ $\forall \BIx\in\support(\mu)$
for $0<\nobreak\lambda_{\LBSUB}\le\lambda_{\UBSUB}<\nobreak\infty$;
see, e.g., \citep{hutter2021minimax,muzellec2021near,ghosal2022multivariate,manole2024plugin,pooladian2021entropic}.
Therefore, our concrete setting is inspired by Caffarelli's global regularity theory for Brenier potentials.
The foundations of the regularity theory of OT maps was developed 
by \citet*{caffarelli1990localization, caffarelli1991some, caffarelli1992regularity, caffarelli1996boundary}
in a series of studies under suitable geometric assumptions on the supports and densities of the measures. 
Here, we only partially report these results as phrased in \citep[Theorem~12.50(iii)]{villani2009optimal}.

\begin{theorem}[Caffarelli's global regularity theory]\label{thm: Caffarelli}
	Let $q\in\N_0$, and
	let $\CX_\mu$ and $\CX_\nu$ 
	be two bounded open sets in $\R^d$ that 
	both have $\CC^{q+2}$-boundaries and are both uniformly convex.\footnote{A set $\CX \subset \R^d$ is said to have $\CC^p$-boundary with $p \in [0, \infty)$ if $\boundary(\CX)$ is locally the graph of a $\CC^p$-function,
	and is said to be uniformly convex if its second fundamental form is uniformly positive on the whole of $\boundary(\CX)$; see \citep[page~317]{villani2009optimal}.}
	Let $\mu\in\CP_{2,\AC}(\R^d)$ be concentrated on $\CX_{\mu}$ and 
	let $\nu\in\CP_{2,\AC}(\R^d)$ be concentrated on $\CX_{\nu}$, 
	i.e., \sloppy{$\mu(\R^d\backslash \CX_{\mu}) =\nu(\R^d\backslash\CX_{\nu})=\nobreak0$}.
	Suppose that for $\alpha\in(0,1)$, 
	$f_{\mu}\in\CC^{q,\alpha}\big(\clos(\CX_{\mu})\big)$, $f_{\nu}\in\CC^{q,\alpha}\big(\clos(\CX_{\nu})\big)$ are the density functions of $\mu$, $\nu$ with respect to the Lebesgue measure, respectively. 
	Moreover, suppose that there exists $\zeta\ge1$ such that $\zeta^{-1}\le f_{\mu}(\BIx)\le \zeta$ $\forall \BIx\in\clos(\CX_{\mu})$ and that $\zeta^{-1}\le f_{\nu}(\BIx)\le\zeta$ $\forall \BIx\in\clos(\CX_{\nu})$. 
	Then, the Brenier potential $\varphi^{\mu}_{\nu}$ belongs to $\CC^{q+2,\alpha}\big(\clos(\CX_{\mu})\big)$.
\end{theorem}

Our setting works with the following classes of 
\textit{admissible probability measures} satisfying a set of conditions tailored from Caffarelli's global regularity theory.

\begin{definition}[Caffarelli-type admissible probability measures]\label{def: admissible-measures}
	For $d\in\N$ and for any $q\in\N_0$, 
	let $\CS^q(\R^d)$ denote the collection of subsets of $\R^d$ defined as follows:
	\begin{align*}
		\CS^q(\R^d):=\big\{\clos(\CY):\CY\subset\R^d \text{ is non-empty, open, bounded, has a }\CC^{q+2} \text{-boundary, and is uniformly convex}\big\}.
	\end{align*}
	Let $\CM^q(\R^d)$ denote the following class of probability measures on $\R^d$:
	\begin{align*}
		\CM^q(\R^d):=\left\{\mu\in\CP_{2,\AC}(\R^d) : \begin{tabular}{l}
			$\support(\mu)\in\CS^q(\R^d),\; \exists\alpha\in(0,1),\;\exists \zeta\ge1,\;\exists f_{\mu}\in\CC^{q,\alpha}(\support(\mu)),$ \\
			$\zeta^{-1}\le f_{\mu}(\BIx)\le \zeta\;\forall \BIx\in\support(\mu),\;f_{\mu}$ is the density function of $\mu$
			\end{tabular}
		\right\}.
	\end{align*}
	We will refer to $\CM^q(\R^d)$ as the class of Caffarelli-type $q$-admissible compactly supported probability measures.
	Moreover, let $\CM^q_{\FULL}(\R^d)$ denote the following class of probability measures on $\R^d$:
	\begin{align*}
		\CM^q_{\FULL}(\R^d):=\left\{\rho\in\CP_{2,\AC}(\R^d): \begin{tabular}{l}
			$\support(\rho)=\R^d,\; \exists\alpha\in(0,1),\;\exists f_{\rho}\in\CC^{\LOCAL,q,\alpha}(\R^d),$ \\
			$f_{\rho}(\BIx)> 0\;\forall \BIx\in\R^d,\;f_{\rho}$ is the density function of $\rho$
			\end{tabular}
		\right\}. 
	\end{align*}
	We will refer to $\CM^q_{\FULL}(\R^d)$ as the class of Caffarelli-type $q$-admissible fully supported probability measures. 
\end{definition}

While Caffarelli-type $q$-admissible compactly supported probability measures $\CM^q(\R^d)$ are required to satisfy stringent conditions,
they are in fact \textit{dense} in $\CP_2(\R^d)$ with respect to $\CW_2$ for any $q\in\N_0$.
Therefore, it is not overly restrictive to consider input measures $\nu_1,\ldots,\nu_K\in\CM^q(\R^d)$ in our algorithm; see Setting~\ref{sett: Caffarelli}.
In the following, 
we present a \textit{constructive} proof of the density property of 
$\CM^q(\R^d)$ for the sake of completeness.

\begin{proposition}[$\CM^q(\R^d)$ is dense in $(\CP_2(\R^d),\CW_2)$]\label{prop: Caffarelli-density}
	Let $\nu\in\CP_2(\R^d)$, $q\in\N_0$, and $\epsilon>0$ be arbitrary.
	Let $\eta\in\CP_{2,\AC}(\R^d)$ be the law of a $d$-dimensional Gaussian random variable with mean vector $\veczero_d$ and covariance matrix $\frac{\epsilon^2}{4d}\BI_d$.
	Moreover, let $\rho:=\eta * \nu\in\CP_2(\R^d)$ be the convolution of $\eta$ and $\nu$, defined as follows 
	(see, e.g., \citep[Definition~3.9.8, Vol.~I]{bogachev2007measure}):
	\begin{align*}
		\eta * \nu(E) := \int_{\R^d}\int_{\R^d}\INDI_{E}(\BIx+\BIy)\DIFFM{\eta}{\DIFF\BIx}\DIFFM{\nu}{\DIFF\BIy} \qquad \forall E\in\CB(\R^d).
	\end{align*}
	Subsequently, let $r>0$ be sufficiently large such that 
	\begin{align*}
		\rho\big(\bar{B}(\veczero_d,r)\big)&\ge \bigg(1+\frac{\epsilon^2}{16\int_{\R^d}\|\BIx\|^2\DIFFM{\rho}{\DIFF\BIx}}\bigg)^{-1}, \qquad \int_{\R^d}\|\BIx\|^2\INDI_{\R^d\setminus \bar{B}(\veczero_d,r)}(\BIx)\DIFFM{\rho}{\DIFF\BIx} \le \frac{\epsilon^2}{16}.
	\end{align*}
	Then, it holds that 
	$\rho|_{\bar{B}(\veczero_d,r)}\in\CM^q(\R^d)$
	and 
	$\CW_2\big(\rho|_{\bar{B}(\veczero_d,r)},\nu\big)\le\nobreak\epsilon$.
\end{proposition}

\begin{proof}[Proof of Proposition~\ref{prop: Caffarelli-density}]
	See Appendix~\ref{sapx: proof-Caffarelli-setting-concrete}.
\end{proof}

For any $q\in\N_0$, $\mu,\nu\in\CM^q(\R^d)$,
the following lemma utilizes Theorem~\ref{thm: Caffarelli}
to derive the desired regularity properties of $\varphi^{\mu}_{\nu}$
for the consistent estimation of~$T^{\mu}_{\nu}$; see also \citep[Lemma~2]{manole2024plugin} \& \citep[Corollary~3.2]{gigli2011on}.

\begin{lemma}[Curvature properties of $\varphi^{\mu}_{\nu}$]\label{lem: curvature}
	Let $q\in\N_0$, $\mu,\nu\in\CM^q(\R^d)$ be arbitrary, and let 
	$\varphi^{\mu}_{\nu}:\R^d\to\R$ be the Brenier potential from $\mu$ to $\nu$ 
	(that is unique {$\mu$-almost everywhere} up to the addition of an arbitrary constant by \citep[Remark~10.30]{villani2009optimal}). 
	Then, 
	$\varphi^{\mu}_{\nu}\in\CC^{q+2,\alpha}(\support(\mu))$ 
	for some $\alpha\in(0,1)$,
	and there exist $0<\lambda_{\LBSUB}\le \lambda_{\UBSUB}<\infty$ such that $\lambda_{\LBSUB}\BI_d\preceq \nabla^2 \varphi^{\mu}_{\nu}(\BIx)\preceq \lambda_{\UBSUB}\BI_d$ for all $\BIx\in\support(\mu)$.
\end{lemma}

\begin{proof}[Proof of Lemma~\ref{lem: curvature}]
	See Appendix~\ref{sapx: proof-Caffarelli-setting-concrete}.
\end{proof}

Next, let us introduce the notion of \textit{admissible OT map estimators},
which are assumed to possess shape, growth, and consistency properties 
to guarantee that the probability measures $(\widehat{\mu}_t)_{t\in\N_0}$
remain in $\CM^q(\R^d)$ throughout Algorithm~\ref{algo: concrete},
and to ensure that the conditions 
(\ref{eqn: main-convergence-cond1}) and 
(\ref{eqn: main-convergence-cond2}) 
can be satisfied in Algorithm~\ref{algo: concrete}.

\begin{assumption}[Admissible OT map estimator]\label{asp: OTmap-estimator}
	Let $(\Omega,\CF,\PROB)$ be a probability space,
	let $q\in\N_0$, 
	and let $\mu,\nu\in\CM^q(\R^d)$.
	For any $m\in\N$ and $n\in\N$, 
	let $\BIX_1,\ldots,\BIX_m,\BIY_1,\ldots,\BIY_n:\Omega\to\R^d$ be independent random variables such that 
	the law of each $\BIX_i$ is $\mu$
	and the law of each $\BIY_j$ is $\nu$,
	i.e., $\BIX_i\sharp\PROB=\mu$ $\forall 1\le i\le m$,
	$\BIY_j\sharp\PROB=\nu$ $\forall 1\le j\le n$.
	Let $\Theta$ be a metric space, where each $\theta\in\Theta$ denotes the hyperparameter(s) that may, 
	for example, represent the extent of smoothing/regularization 
	(see Section~\ref{ssec: entropic-estimator} for details about the hyperparameter in a concrete OT map estimator). 
	Subsequently, 
	for any $\theta\in\Theta$, 
	let $\widehat{T}^{\mu,m}_{\nu,n}[\BIX_1,\ldots,\BIX_m,\BIY_1,\ldots,\BIY_n, \theta]$ 
	estimate the OT map $T^{\mu}_{\nu}$ from $\mu$ to $\nu$ based on the samples 
	$\BIX_1,\ldots,\BIX_m$ from~$\mu$ 
	and the samples $\BIY_1,\ldots,\BIY_n$ from~$\nu$.
	For notational simplicity, we often make the dependency of this estimated OT map on the samples implicit and use $\widehat{T}^{\mu,m}_{\nu,n}[\theta](\BIx) \in \R^d$ to denote $\widehat{T}^{\mu,m}_{\nu,n}[\BIX_1,\ldots,\BIX_m,\BIY_1,\ldots,\BIY_n,\theta]$ evaluated at $\BIx\in\R^d$.

	We assume further that $\widehat{T}^{\mu,m}_{\nu,n}[\theta]$ satisfies the following conditions.
	\begin{enumerate}[label=(\Roman*), beginpenalty=10000]
		\item\label{asps: OTmap-estimator-measurability}
		\textbf{Measurability:}
		$\widehat{T}^{\mu,m}_{\nu,n}[\BIX_1,\ldots,\BIX_m,\BIY_1,\ldots,\BIY_n, \theta]$ belongs to $\CC_{\LIN}(\R^d,\R^d)$ and
		has a Borel dependency on 
		$(\BIX_1,\ldots,\BIX_m,\allowbreak\BIY_1,\ldots,\BIY_n,\allowbreak \theta) \in (\R^{d})^{m+n} \times \Theta$.

		\item\label{asps: OTmap-estimator-shape}
		\textbf{Shape:}
		there exist minimum sample sizes
		$\underline{m}\in\N$ and $\underline{n}\in\N$ 
		that do not depend on $\mu$ and~$\nu$,
		and
		there exist $\alpha(\mu,\nu,m,n,\allowbreak\BIX_1,\ldots,\BIX_m,\allowbreak\BIY_1,\ldots,\BIY_n,\theta)\in\R_+$ 
		and
		$\underline{\lambda}(\mu,\nu,m,n,\allowbreak\BIX_1,\ldots,\BIX_m,\allowbreak\BIY_1,\ldots,\BIY_n,\theta)\in\R_+$,
		abbreviated to $\alpha$ and $\underline{\lambda}$, 
		both having Borel dependencies on 
		$(\mu,\nu,m,n,\allowbreak\BIX_1,\ldots,\BIX_m,\allowbreak\BIY_1,\ldots,\BIY_n,\theta)$, 
		such that 
		whenever $m\ge \underline{m}$ and $n\ge \underline{n}$,
		it holds $\PROB$-almost surely that
		$\alpha\in(0,1)$,
		$\underline{\lambda}>0$,
		and
		$\widehat{T}^{\mu,m}_{\nu,n}[\theta]=\nabla \widehat{\varphi}^{\mu,m}_{\nu,n}[\theta]$ for a function $\widehat{\varphi}^{\mu,m}_{\nu,n}[\theta]\in\FC_{\underline{\lambda},\infty}^{\LOCAL,q+2,\alpha}(\R^d)$.
		
		\item\label{asps: OTmap-estimator-growth}
		\textbf{Growth:}
		with the same $\underline{m},\underline{n}\in\N$ in \ref{asps: OTmap-estimator-shape},
		there exist $u_0(\nu),u_1(\nu)\in\R_+$ 
		that have Borel dependencies only on $\nu$
		such that
		\begin{align*}
			\hspace{30pt}\EXP\Big[\big\|\widehat{T}^{\mu,m}_{\nu,n}[\theta](\BIx) - \widehat{T}^{\mu,m}_{\nu,n}[\theta](\veczero_d)\big\|^2\Big]\le u_0(\nu)+u_1(\nu)\|\BIx\|^2 \qquad\forall m\ge \underline{m},\; \forall n\ge \underline{n},\; \forall \theta\in\Theta.
		\end{align*}
		
		\item\label{asps: OTmap-estimator-consistency}
		\textbf{Consistency:}
		with the same $\underline{m},\underline{n}\in\N$ in \ref{asps: OTmap-estimator-shape},
		for any $\epsilon>0$, 
		there exist 
		$\overline{m}(\mu,\nu,\epsilon)\in\N\cap[\underline{m},\infty)$,
		$\overline{n}(\mu,\nu,\epsilon)\in\N\cap[\underline{n},\infty)$ 
		that have Borel dependencies on $(\mu,\nu,\epsilon)$,
		and there exists
		$\widetilde{\theta}(\mu,\nu, m, n,\epsilon)\in\Theta$
		that has a Borel dependency on $(\mu,\nu,m,n,\epsilon)$, 
		such that
		\begin{align*}
			\hspace{30pt}\EXP\Big[\big\|\widehat{T}^{\mu,m}_{\nu,n}\big[\widetilde{\theta}(\mu,\nu, m, n,\epsilon)\big]-T^{\mu}_{\nu}\big\|^2_{\CL^2(\mu)}\Big]\le \epsilon \qquad \forall m\ge \overline{m}(\mu,\nu,\epsilon),\;\forall n\ge \overline{n}(\mu,\nu,\epsilon),\; \forall \epsilon>0.
		\end{align*}
	\end{enumerate}
\end{assumption}
We present in Section~\ref{ssec: entropic-estimator} a concrete example of 
admissible OT map estimator satisfying Assumption~\ref{asp: OTmap-estimator}.
Note that the consistency condition in Assumption~\ref{asp: OTmap-estimator}\ref{asps: OTmap-estimator-consistency} is possible due to 
the curvature properties of $T^{\mu}_{\nu}=\nabla\varphi^{\mu}_{\nu}$ in Lemma~\ref{lem: curvature}.

With an admissible OT map estimator $\widehat{T}^{\mu,m}_{\nu,n}[\cdot]$ that satisfies Assumption~\ref{asp: OTmap-estimator},
we are now ready to present the concrete implementation of Algorithm~\ref{algo: abstract} in 
Algorithm~\ref{algo: concrete}.
We present the details of the inputs of Algorithm~\ref{algo: concrete} in the following setting.

\begin{setting}[Inputs of Algorithm~\ref{algo: concrete}]\label{sett: Caffarelli}
The following list explains the inputs of Algorithm~\ref{algo: concrete}
and their required properties.
\begin{enumerate}[label=(S\arabic*),beginpenalty=10000]
	\item\label{setts: Caffarelli-inputs-measures}%
	$q\in\N_0$;
	the $K\in\N$ input measures $\nu_1,\ldots,\nu_K$ belong to $\CM^q(\R^d)$;
	the weights $w_1,\ldots,w_K\in(0,1)$ satisfy $\sum_{k=1}^{K}w_k=\nobreak1$.

	\item\label{setts: Caffarelli-initial-measure}%
	$\rho_0\in\CM^q_{\FULL}(\R^d)$ is the initial probability measure.
	
	\item\label{setts: Caffarelli-family-increasing}%
	$(\CX_r)_{r\in\N}$ is an infinite sequence of subsets of $\R^d$ satisfying 
	$\CX_r\in\CS^q(\R^d)$, $\CX_{r+1}\supseteq\CX_{r}$ $\forall r\in\N$,
	as well as $\bigcup_{r\in\N}\CX_r=\R^d$.
	We call $(\CX_{r})_{r\in\N}$ a family of increasing sets.
	
	\item\label{setts: Caffarelli-OTestimator}%
	$\widehat{T}^{\mu,m}_{\nu,n}[\cdot]$ is an admissible OT map estimator satisfying Assumption~\ref{asp: OTmap-estimator},
	with the associated $u_0(\cdot)$, $u_1(\cdot)$
	given by Assumption~\ref{asp: OTmap-estimator}\ref{asps: OTmap-estimator-growth}, 
	and with the associated 
	$\overline{m}(\,\cdot\,,\cdot\,,\cdot\,)$,
	$\overline{n}(\,\cdot\,,\cdot\,,\cdot\,)$,
	$\widetilde{\theta}(\,\cdot\,,\cdot\,,\cdot\,,\cdot\,,\cdot\,)$
	given by Assumption~\ref{asp: OTmap-estimator}\ref{asps: OTmap-estimator-consistency}.
	
	\item\label{setts: Caffarelli-truncation}%
	$\overline{r}_1(\,\cdot\,,\cdot\,)$ and
	$\overline{r}_2(\,\cdot\,,\ldots,\cdot\,)$
	possess the following properties.
	\begin{enumerate}[label=(S5.\Roman*),beginpenalty=10000]
		\item\label{setts: Caffarelli-truncation1}%
		For any $\rho\in\CM^q_{\FULL}(\R^d)$ and any $\epsilon>0$,
		there exists $\overline{r}_1(\rho,\epsilon)\in\N$
		that has a Borel dependency on $(\rho,\epsilon)$
		such that $\CW_2(\rho|_{\CX_r},\rho)^2\le\nobreak\epsilon$
		$\forall r\ge\overline{r}_1(\rho,\epsilon)$;
		see Lemma~\ref{lem: regularity-truncation}\ref{lems: regularity-truncation-radius} for the existence of such
		$\overline{r}_1(\,\cdot\,,\cdot\,)$,
		and see equation~(\ref{eqn: regularity-truncation-proof-r1-def}) in Appendix~\ref{sapx: proof-Caffarelli-setting-concrete} for its explicit expression.
		
		\item\label{setts: Caffarelli-truncation2}%
		For any $\rho\in\CM^q_{\FULL}(\R^d)$ and for any $\epsilon>0$,
		there exists $\overline{r}_2(\rho,\nu_1,\ldots,\nu_K,\epsilon)\in\N$
		that has a Borel dependency on $(\rho,\nu_1,\ldots,\nu_K,\epsilon)$
		such that, for any $r\ge \overline{r}_2(\rho,\nu_1,\ldots,\nu_K,\epsilon)$,
		and any $(m_k)_{k=1:K}\subset\N\cap[\underline{m},\infty)$,
		$(n_k)_{k=1:K}\subset\N\cap[\underline{n},\infty)$
		($\underline{m},\underline{n}\in\N$ are the minimum samples sizes of $\widehat{T}^{\mu,m}_{\nu,n}[\cdot]$ in Assumption~\ref{asp: OTmap-estimator}\ref{asps: OTmap-estimator-shape}),
		$(\theta_k)_{k=1:K}\subset\Theta$,
		it holds that
		$\dot{\mu}_r:=\rho|_{\CX_r}$ and $\dot{T}_r:=\sum_{k=1}^{K}w_k\widehat{T}^{\dot{\mu}_r,m_k}_{\nu_k,n_k}[\theta_k]$
		satisfy
		$\EXP\big[\CW_2(\dot{T}_r\sharp\dot{\mu}_r,\dot{T}_r\sharp\rho)^2\big]\le\nobreak\epsilon$;
		see Lemma~\ref{lem: regularity-truncation}\ref{lems: regularity-truncation-radius} for the existence of such
		$\overline{r}_2(\,\cdot\,,\ldots,\cdot\,)$,
		and see equation~(\ref{eqn: regularity-truncation-proof-r2-def}) in Appendix~\ref{sapx: proof-Caffarelli-setting-concrete} for its explicit expression.
	\end{enumerate}

	\item\label{setts: Caffarelli-beta}%
	$\beta\in(0,1)$ is a constant controlling the convergence of the error in Algorithm~\ref{algo: concrete}; 
	see Theorem~\ref{thm: Caffarelli-convergence} and Theorem~\ref{thm: main-convergence}.
\end{enumerate}	
\end{setting}

A concrete example of a family of increasing sets $(\CX_r)_{r\in\N}$ that 
satisfies the condition \ref{setts: Caffarelli-family-increasing} is $\big(\bar{B}(\veczero_d,r)\big)_{r\in\N}$.
Similarly, a family of increasing ellipsoids in $\R^d$ also satisfies \ref{setts: Caffarelli-family-increasing}.

\begin{algorithm}[t]
	\caption{\bf{Computationally tractable stochastic fixed-point iterative scheme under Setting~\ref{sett: Caffarelli}}.}\label{algo: concrete}
	\KwIn{$q\in\N_0$, $K\in\N$ input probability measures $\nu_1, \ldots, \nu_K \in \CM^q(\R^d)$, 
	weights $w_1,\ldots,w_K\in(0,1)$ with $\sum_{k=1}^Kw_k=1$, 
	initial probability measure~$\rho_0 \in \CM^q_{\FULL}(\R^d)$,\linebreak
	family of increasing sets $(\CX_r)_{r \in \N}$, OT map estimator $\widehat{T}^{\mu,m}_{\nu,n}[\cdot]$,
	$\overline{m}(\,\cdot\,,\cdot\,,\cdot\,)$,
	$\overline{n}(\,\cdot\,,\cdot\,,\cdot\,)$,
	$\widetilde{\theta}(\,\cdot\,,\cdot\,,\cdot\,,\cdot\,,\cdot\,)$,
	$\overline{r}_1(\,\cdot\,,\cdot\,)$,
	$\overline{r}_2(\,\cdot\,,\ldots,\cdot\,)$, 
	$\beta\in(0,1)$;
	see Setting~\ref{sett: Caffarelli} for details.}
	\KwOut{$(\widehat{\mu}_t)_{t \in \N_0}$.}

	\nl\label{alglin: concrete-initialize-full}Initialize $\widehat{\rho}_0 \leftarrow \rho_0$.

	\nl\label{alglin: concrete-initialize-radius}Set $\widehat{R}_0\leftarrow \overline{r}_2\big(\widehat{\rho}_0,\nu_1,\ldots,\nu_K,\frac{1}{4}\beta\big)$.
	
	\nl\label{alglin: concrete-initialize-truncate}Initialize $\widehat{\mu}_0\leftarrow \widehat{\rho}_0|_{\CX_{\widehat{R}_0}}$.

	\nl \For{$k=1,\ldots,K$}{
		\nl\label{alglin: concrete-initialize-samplesize}Set $\widehat{M}_{0,k}\leftarrow \overline{m}(\widehat{\mu}_0,\nu_k,\beta)$, 
		$\widehat{N}_{0,k}\leftarrow \overline{n}(\widehat{\mu}_0,\nu_k,\beta)$, 
		$\widehat{\ITheta}_{0,k}\leftarrow \widetilde{\theta}(\widehat{\mu}_0,\nu_k,\widehat{M}_{0,k},\widehat{N}_{0,k},\beta)$.
	}

	\nl\label{alglin: concrete-iteration}\For{$t = 1, 2, \ldots$}{
		\textbf{[Iteration~$t$]:}

		\nl \For{$k=1,\ldots,K$}{
			\nl\label{alglin: concrete-sample1}Randomly generate $\widehat{M}_{t-1,k}$ independent samples $\{\BIX_{t,k,i}\}_{i=1:\widehat{M}_{t-1,k}}$ from $\widehat{\mu}_{t-1}$. 

			\nl\label{alglin: concrete-sample2}Randomly generate $\widehat{N}_{t-1,k}$ independent samples $\{\BIY_{t,k,i}\}_{i=1:\widehat{N}_{t-1,k}}$ from $\nu_k$. 

			\nl\label{alglin: concrete-estimator}$\widehat{T}_{t,k}\leftarrow \widehat{T}^{\widehat{\mu}_{t-1},\widehat{M}_{t-1,k}}_{\nu_k,\widehat{N}_{t-1,k}}\big[\BIX_{t,k,1},\ldots,\BIX_{t,k,\widehat{M}_{t-1,k}},\BIY_{t,k,1},\ldots,\BIY_{t,k,\widehat{N}_{t-1,k}},\widehat{\ITheta}_{t-1,k}\big]$.
		}

		\nl\label{alglin: concrete-pushforward}Update $\widehat{\rho}_{t}\leftarrow \big[\sum_{k=1}^Kw_k\widehat{T}_{t,k}\big]\sharp\widehat{\rho}_{t-1}$. 

		\nl\label{alglin: concrete-radius}Set $\widehat{R}_t\leftarrow \overline{r}_1\big(\widehat{\rho}_{t},\frac{1}{4}\beta^{t}\big) \vee \overline{r}_2\big(\widehat{\rho}_{t},\nu_1,\ldots,\nu_K,\frac{1}{4}\beta^{t+1}\big)$.

		\nl\label{alglin: concrete-truncation}Update $\widehat{\mu}_t\leftarrow \widehat{\rho}_t|_{\CX_{\widehat{R}_t}}$. 

		\nl \For{$k=1,\ldots,K$}{
			\nl\label{alglin: concrete-samplesize}Set $\widehat{M}_{t,k}\leftarrow\overline{m}(\widehat{\mu}_t,\nu_k,\beta^{t+1})$, 
			$\widehat{N}_{t,k}\leftarrow\overline{n}(\widehat{\mu}_t,\nu_k,\beta^{t+1})$, 
			$\widehat{\ITheta}_{t,k}\leftarrow\widetilde{\theta}(\widehat{\mu}_t,\nu_k,\widehat{M}_{t,k},\widehat{N}_{t,k},\beta^{t+1})$.
		}
	}
	\nl \Return $(\widehat{\mu}_t)_{t \in \N_0}$.
\end{algorithm}

Under Setting~\ref{sett: Caffarelli},
the shape condition of the OT map estimator 
$\widehat{T}^{\mu,m}_{\nu,n}[\cdot]$ in Assumption~\ref{asp: OTmap-estimator}\ref{asps: OTmap-estimator-shape} allows us to preserve the regularity properties of 
$(\widehat{\rho}_t)_{t\in\N_0}$ and $(\widehat{\mu}_{t})_{t\in\N_0}$ 
throughout Algorithm~\ref{algo: concrete}.
This is stated in statements~\ref{lems: regularity-truncation-preserve} and \ref{lems: regularity-truncation-pushforward-preserve} of Lemma~\ref{lem: regularity-truncation} below.
Moreover, statement~\ref{lems: regularity-truncation-radius} of Lemma~\ref{lem: regularity-truncation}
uses the growth condition of the OT map estimator 
$\widehat{T}^{\mu,m}_{\nu,n}[\cdot]$ in Assumption~\ref{asp: OTmap-estimator}\ref{asps: OTmap-estimator-growth}
to show that
$\overline{r}_1(\,\cdot\,,\cdot\,)$ and
$\overline{r}_2(\,\cdot\,,\ldots,\cdot\,)$
in \ref{setts: Caffarelli-truncation} of Setting~\ref{sett: Caffarelli}
indeed exist.

\begin{lemma}\label{lem: regularity-truncation}
	Under \ref{setts: Caffarelli-inputs-measures}--\ref{setts: Caffarelli-OTestimator}, the following statements hold.
	\begin{enumerate}[label=(\roman*),beginpenalty=10000]
		\item\label{lems: regularity-truncation-preserve}%
		For any $\rho\in\CM_{\FULL}^q(\R^d)$ and $\CX\in\CS^q(\R^d)$, it holds that $\rho|_{\CX}\in\CM^q(\R^d)$.
		
		\item\label{lems: regularity-truncation-pushforward-preserve}%
		For any $\rho\in\CM^q_{\FULL}(\R^d)$, $\mu\in\CM^q(\R^d)$,
		and for any 
		$(m_k)_{k=1:K}\subset\N\cap[\underline{m},\infty)$,
		$(n_k)_{k=1:K}\subset\N\cap[\underline{n},\infty)$
		($\underline{m},\underline{n}\in\N$ are the minimum samples sizes of $\widehat{T}^{\mu,m}_{\nu,n}[\cdot]$ in Assumption~\ref{asp: OTmap-estimator}\ref{asps: OTmap-estimator-shape}),
		$(\theta_k)_{k=1:K}\subset\Theta$,
		it holds $\PROB$-almost surely that 
		$\bar{T}:=\sum_{k=1}^{K}w_k\widehat{T}^{\mu,m_k}_{\nu,n_k}[\theta_k]$ satisfies 
		$\bar{T}\sharp\rho\in\CM^q_{\FULL}(\R^d)$.
		
		\item\label{lems: regularity-truncation-radius}%
		One can explicitly construct $\overline{r}_1(\,\cdot\,,\cdot\,)$ and
		$\overline{r}_2(\,\cdot\,,\ldots,\cdot\,)$
		to possess the required properties in \ref{setts: Caffarelli-truncation};
		see equations~(\ref{eqn: regularity-truncation-proof-r1-def}) and (\ref{eqn: regularity-truncation-proof-r2-def}) in the proof for their explicit expressions.
	\end{enumerate}
\end{lemma}

\begin{proof}[Proof of Lemma~\ref{lem: regularity-truncation}]
	See Appendix~\ref{sapx: proof-Caffarelli-setting-concrete}.
\end{proof}

Using Lemma~\ref{lem: regularity-truncation}\ref{lems: regularity-truncation-preserve} and Lemma~\ref{lem: regularity-truncation}\ref{lems: regularity-truncation-pushforward-preserve},
one can inductively check that 
$\widehat{\rho}_{t}\in\CM^q_{\FULL}(\R^d)$
and 
$\widehat{\mu}_{t}\in\CM^q(\R^d)$
hold $\PROB$-almost surely 
for all $t\in\N_0$ throughout Algorithm~\ref{algo: concrete},
and hence $\widehat{T}_{t,k}$ in Line~\ref{alglin: concrete-estimator} of Algorithm~\ref{algo: concrete} is well-defined.

\begin{remark}
	In Algorithm~\ref{algo: concrete}, 
	rather than directly updating 
	$\widehat{\mu}_{t-1}$ to $\widehat{\mu}_{t}\leftarrow\big[\sum_{k=1}^Kw_k\widehat{T}_{t,k}\big]\sharp\widehat{\mu}_{t-1}$, 
	we first apply the pushforward by 
	$\big[\sum_{k=1}^Kw_k\widehat{T}_{t,k}\big]$ to 
	$\widehat{\rho}_{t-1}\in\CM^q_{\FULL}(\R^d)$ 
	in Line~\ref{alglin: concrete-pushforward} 
	to obtain $\widehat{\rho}_{t}\in\CM^q_{\FULL}(\R^d)$, 
	and then truncate $\widehat{\rho}_{t}$ to $\CX_{\widehat{R}_{t}}$ to get $\widehat{\mu}_{t}$ in Line~\ref{alglin: concrete-truncation}. 
	The truncation step guarantees that $\widehat{\mu}_{t}\in\CM^q(\R^d)$ 
	so that the consistency condition of the OT map estimator in Assumption~\ref{asp: OTmap-estimator}\ref{asps: OTmap-estimator-consistency} can be satisfied (see our results and discussions in Section~\ref{ssec: entropic-estimator}). 
	Note that the support of $\big[\sum_{k=1}^Kw_k\widehat{T}_{t,k}\big]\sharp\widehat{\mu}_{t-1}$ 
	does not necessarily belong to $\CS^q(\R^d)$; 
	specifically, the uniform convexity condition may fail. 
\end{remark}

\subsection{Convergence guarantee and computational tractability of Algorithm~\ref{algo: concrete}}\label{ssec: Caffarelli-convergence-proof}

We are now ready to present the main result of this section.

\begin{theorem}[Convergence guarantee of Algorithm~\ref{algo: concrete}]\label{thm: Caffarelli-convergence}
	Under Setting~\ref{sett: Caffarelli}, 
	let $(\Omega,\CF,\PROB)$ be a probability space on which the random samples in
	Line~\ref{alglin: concrete-sample1} and Line~\ref{alglin: concrete-sample2} of Algorithm~\ref{algo: concrete} are defined,
	and let $(\CF_t)_{t\in\N_0}$ be defined by (\ref{eqn: filtration}).
	Moreover, let $\bar{\mu}$ denote the unique $\CW_2$-barycenter of $\nu_1,\ldots,\nu_K$ with weights $w_1,\ldots,w_K$.
	Then, the following statements hold.
	\begin{enumerate}[label=(\roman*),beginpenalty=10000]
		\item\label{thms: Caffarelli-convergence-regularity}%
		All conditions in Assumption~\ref{asp: abstract-regularity} are satisfied.

		\item\label{thms: Caffarelli-convergence-measurability}%
		Both $(\widehat{\rho}_{t})_{t\in\N_0}$ and $(\widehat{\mu}_{t})_{t\in\N_0}$ in Algorithm~\ref{algo: concrete} are $(\CF_t)_{t\in\N_0}$-adapted stochastic processes.
		
		\item\label{thms: Caffarelli-convergence-inequalities}%
		Both conditions (\ref{eqn: main-convergence-cond1}) and (\ref{eqn: main-convergence-cond2}) in Theorem~\ref{thm: main-convergence} are satisfied with respect to $\beta\in(0,1)$ in the inputs of Algorithm~\ref{algo: concrete}.
	\end{enumerate}
	Consequently, statements~\ref{thms: main-convergence-tight-as}--\ref{thms: main-convergence-PL-var} 
	in Theorem~\ref{thm: main-convergence}
	hold with respect to the output $(\widehat{\mu}_t)_{t\in\N_0}$ of Algorithm~\ref{algo: concrete}.
\end{theorem}

\begin{proof}[Proof of Theorem~\ref{thm: Caffarelli-convergence}]
	We will defer some measurability-related steps of this proof to Lemma~\ref{lem: concrete-measurability} in Appendix~\ref{sapx: proof-Caffarelli-convergence-proof}.
	Let us first prove statement~\ref{thms: Caffarelli-convergence-regularity}.
	Since $\nu_1,\ldots,\nu_K\in\CM^q(\R^d)$ by the condition \ref{setts: Caffarelli-inputs-measures} in Setting~\ref{sett: Caffarelli},
	it follows from the definition of $\CM^q(\R^d)$ in Definition~\ref{def: admissible-measures} that 
	$\nu_1,\ldots,\nu_K$ all have $\CL^{\infty}$-bounded densities.
	Moreover, 
	Line~\ref{alglin: concrete-estimator} and
	the measurability condition of $\widehat{T}^{\mu,m}_{\nu,n}[\BIX_1,\ldots,\BIX_m,\BIY_1,\ldots,\BIY_n,\theta]$ in Assumption~\ref{asp: OTmap-estimator}\ref{asps: OTmap-estimator-measurability}
	imply that 
	$\widehat{T}_{t,k}$ has a Borel dependency on 
	$(\BIX_{t,k,1},\ldots,\BIX_{t,k,\widehat{M}_{t-1,k}},\allowbreak\BIY_{t,k,1},\ldots,\BIY_{t,k,\widehat{N}_{t-1,k}},\widehat{\ITheta}_{t-1,k})$
	for $k=1,\ldots,K$ and for all $t\in\N$.
	This proves statement~\ref{thms: Caffarelli-convergence-regularity}.

	To prove statement~\ref{thms: Caffarelli-convergence-measurability}, 
	observe that $\widehat{\rho}_0$ is $\CF_0$-measurable
	by Line~\ref{alglin: concrete-initialize-full}.
	Then,
	Line~\ref{alglin: concrete-initialize-radius} and
	the property of $\overline{r}_2(\,\cdot\,,\ldots,\cdot\,)$ in the condition \ref{setts: Caffarelli-truncation2} imply that
	$\widehat{R}_0$ is $\CF_0$-measurable,
	and thus
	Line~\ref{alglin: concrete-initialize-truncate} and Lemma~\ref{lem: concrete-measurability}\ref{lems: concrete-measurability-truncation} guarantee that $\widehat{\mu}_0$ is $\CF_0$-measurable.
	Next, let us assume that $\widehat{\rho}_{t-1}$ and $\widehat{\mu}_{t-1}$ are $\CF_{t-1}$-measurable for some $t\in\N$.
	Hence,
	Line~\ref{alglin: concrete-initialize-samplesize},
	Line~\ref{alglin: concrete-samplesize}, and
	the properties of 
	$\overline{m}(\,\cdot\,,\cdot\,,\cdot\,)$,
	$\overline{n}(\,\cdot\,,\cdot\,,\cdot\,)$,
	$\widetilde{\theta}(\,\cdot\,,\cdot\,,\cdot\,,\cdot\,,\cdot\,)$
	in Assumption~\ref{asp: OTmap-estimator}\ref{asps: OTmap-estimator-consistency} ensure that 
	$\widehat{M}_{t-1,k}$, $\widehat{N}_{t-1,k}$, $\widehat{\ITheta}_{t-1,k}$ 
	are all $\CF_{t-1}$-measurable
	for $k=1,\ldots,K$,
	and 
	the Borel dependency of $\widehat{T}_{t,k}$ on 
	$(\BIX_{t,k,1},\ldots,\BIX_{t,k,\widehat{M}_{t-1,k}},\allowbreak\BIY_{t,k,1},\ldots,\BIY_{t,k,\widehat{N}_{t-1,k}},\widehat{\ITheta}_{t-1,k})$ 
	then yields that 
	$(\widehat{T}_{t,k})_{k=1:K}$ are $\CF_{t}$-measurable.
	Consequently,
	Line~\ref{alglin: concrete-pushforward} and Lemma~\ref{lem: concrete-measurability}\ref{lems: concrete-measurability-pushforward} 
	guarantee that $\widehat{\rho}_t$ is $\CF_{t}$-measurable.
	Lastly,
	since Line~\ref{alglin: concrete-radius} and the properties of
	$\overline{r}_1(\,\cdot\,,\cdot\,)$ and
	$\overline{r}_2(\,\cdot\,,\ldots,\cdot\,)$
	in the condition \ref{setts: Caffarelli-truncation} imply that 
	$\widehat{R}_{t}$ is $\CF_t$-measurable, 
	we get from Line~\ref{alglin: concrete-truncation} and Lemma~\ref{lem: concrete-measurability}\ref{lems: concrete-measurability-truncation} that
	$\widehat{\mu}_t$ is $\CF_t$-measurable,
	and statement~\ref{thms: Caffarelli-convergence-measurability} then follows from induction.

	It remains to prove statement~\ref{thms: Caffarelli-convergence-inequalities}.
	To that end, let us fix an arbitrary $t\in\N$.
	For $k=1,\ldots,K$, 
	substituting
	$\mu\leftarrow\widehat{\mu}_{t-1}$,
	$\nu\leftarrow\nu_k$,
	$m\leftarrow\widehat{M}_{t-1,k}$,
	$n\leftarrow\widehat{N}_{t-1,k}$,
	$\epsilon\leftarrow \beta^{t}$
	into the consistency condition in Assumption~\ref{asp: abstract-regularity}\ref{asps: OTmap-estimator-consistency},
	we get from Line~\ref{alglin: concrete-initialize-samplesize},
	Line~\ref{alglin: concrete-samplesize}, and 
	Line~\ref{alglin: concrete-estimator} 
	that 
	\begin{align*}
		\EXP\Big[\big\|\widehat{T}_{t,k}-T^{\widehat{\mu}_{t-1}}_{\nu_k}\big\|_{\CL^2(\widehat{\mu}_{t-1})}^2\Big|\CF_{t-1}\Big]\le \beta^{t} \qquad \forall 1\le k\le K.
	\end{align*}
	This shows that (\ref{eqn: main-convergence-cond1}) is satisfied.
	To establish (\ref{eqn: main-convergence-cond2}),
	let us denote $\bar{T}_t:=\sum_{k=1}^{K}w_k\widehat{T}_{t,k}$.
	Firstly, observe that 
	Line~\ref{alglin: concrete-radius} guarantees 
	$\widehat{R}_{t}\ge \overline{r}_1\big(\widehat{\rho}_{t},\frac{1}{4}\beta^{t}\big)$
	$\PROB$-almost surely,
	and that Line~\ref{alglin: concrete-truncation} sets 
	$\widehat{\mu}_t=\widehat{\rho}_t|_{\CX_{\widehat{R}_{t}}}$.
	Hence, applying the 
	property of $\overline{r}_1(\,\cdot\,,\cdot\,)$ in the
	condition \ref{setts: Caffarelli-truncation1} to 
	$\rho\leftarrow\widehat{\rho}_t$,
	$r\leftarrow\widehat{R}_{t}$,
	$\epsilon\leftarrow \frac{1}{4}\beta^{t}$ yields
	\begin{align}
		\label{eqn: Caffarelli-convergence-proof-truncineq1}
		\EXP\big[\CW_2(\widehat{\mu}_{t},\widehat{\rho}_{t})^2\big|\CF_{t-1}\big]
		&=\EXP\Big[\CW_2\big(\widehat{\rho}_{t}|_{\CX_{\widehat{R}_t}},\widehat{\rho}_{t}\big)^2\Big|\CF_{t-1}\Big]\le \frac{1}{4}\beta^{t}.
	\end{align}
	Secondly, observe that Line~\ref{alglin: concrete-initialize-radius} and Line~\ref{alglin: concrete-radius} guarantee 
	$\widehat{R}_{t-1}\ge\overline{r}_2\big(\widehat{\rho}_{t-1},\nu_1,\ldots,\nu_K,\frac{1}{4}\beta^{t}\big)$
	$\PROB$-almost surely,
	Line~\ref{alglin: concrete-pushforward} sets $\widehat{\rho}_{t}=\bar{T}_{t}\sharp\widehat{\rho}_{t-1}$,
	and that 
	Line~\ref{alglin: concrete-truncation} sets $\widehat{\mu}_{t-1}=\widehat{\rho}_{t-1}|_{\CX_{\widehat{R}_{t-1}}}$.
	Thus, 
	applying 
	the property of
	$\overline{r}_2(\,\cdot\,,\ldots,\cdot\,)$ in
	the condition \ref{setts: Caffarelli-truncation2} to
	$\rho\leftarrow\widehat{\rho}_{t-1}$,
	$r\leftarrow \widehat{R}_{t-1}$,
	$\dot{\mu}_r\leftarrow \widehat{\rho}_{t-1}|_{\CX_{\widehat{R}_{t-1}}}=\widehat{\mu}_{t-1}$,
	$\widehat{T}^{\dot{\mu}_{r},m_k}_{\nu_k,n_k}[\theta_k]\leftarrow \widehat{T}_{t,k}$ $\forall 1\le k\le K$,
	$\dot{T}_r\leftarrow\bar{T}_{t}$ leads to
	\begin{align}
		\label{eqn: Caffarelli-convergence-proof-truncineq2}
		\EXP\big[\CW_2(\bar{T}_{t}\sharp\widehat{\mu}_{t-1},\widehat{\rho}_{t})^2\big|\CF_{t-1}\big]
		&=\EXP\big[\CW_2(\bar{T}_{t}\sharp\widehat{\mu}_{t-1},\bar{T}_{t}\sharp\widehat{\rho}_{t-1})^2\big|\CF_{t-1}\big]
		\le \frac{1}{4}\beta^{t}.
	\end{align}
	Finally, combining
	(\ref{eqn: Caffarelli-convergence-proof-truncineq1}) and (\ref{eqn: Caffarelli-convergence-proof-truncineq2}) proves that 
	\begin{align*}
		\EXP\big[\CW_2(\bar{T}_{t}\sharp\widehat{\mu}_{t-1},\widehat{\mu}_{t})^2\big|\CF_{t-1}\big]
		&\le 2\EXP\big[\CW_2(\bar{T}_{t}\sharp\widehat{\mu}_{t-1},\widehat{\rho}_{t})^2\big|\CF_{t-1}\big] 
		+ 2\EXP\big[\CW_2(\widehat{\mu}_{t},\widehat{\rho}_{t})^2\big|\CF_{t-1}\big]
		\le \beta^{t} \qquad \forall t\in\N.
	\end{align*}
	The proof is now complete.
\end{proof}

\begin{remark}\label{rmk: radius-samplesize-ge}
	Observe that the convergence guarantee of Algorithm~\ref{algo: concrete}
	in Theorem~\ref{thm: Caffarelli-convergence}
	remains valid as long as one requires
	$\widehat{R}_0\ge \overline{r}_2\big(\widehat{\rho}_0,\nu_1,\ldots,\nu_K,\frac{1}{4}\beta\big)$
	in Line~\ref{alglin: concrete-initialize-truncate}, 
	$\widehat{M}_{0,k}\ge \overline{m}(\widehat{\mu}_0,\nu_k,\beta)$, 
	$\widehat{N}_{0,k}\ge \overline{n}(\widehat{\mu}_0,\nu_k,\beta)$
	in Line~\ref{alglin: concrete-initialize-samplesize}, 
	$\widehat{R}_t\ge \overline{r}_1\big(\widehat{\rho}_{t},\frac{1}{4}\beta^{t}\big) \vee \overline{r}_2\big(\widehat{\rho}_{t},\nu_1,\ldots,\nu_K,\frac{1}{4}\beta^{t+1}\big)$
	in Line~\ref{alglin: concrete-radius}, 
	and
	$\widehat{M}_{t,k}\ge\overline{m}(\widehat{\mu}_t,\nu_k,\beta^{t+1})$, 
	$\widehat{N}_{t,k}\ge\overline{n}(\widehat{\mu}_t,\nu_k,\beta^{t+1})$
	in Line~\ref{alglin: concrete-samplesize}. 
\end{remark}

In Algorithm~\ref{algo: concrete}, let us assume that: 
\begin{itemize}
	\item independent random samples from $\nu_1,\ldots,\nu_K$, and $\rho_0$ can be efficiently generated; 
	
	\item the OT map estimator $\widehat{T}^{\mu,m}_{\nu,n}[\theta]$ can be tractably computed and $\widehat{T}^{\mu,m}_{\nu,n}[\theta](\BIx)$ can be tractably evaluated at any point $\BIx\in\R^d$; 
	
	\item checking whether an arbitrary point $\BIx\in\R^d$ belongs to $\CX_{r}$ is computationally tractable for all $r\in\N$.
\end{itemize}
Under these assumptions, Algorithm~\ref{algo: concrete} is computationally tractable. 
Indeed, for $t\in\N$, a random sample from $\widehat{\mu}_{t-1}$ can be generated in Line~\ref{alglin: concrete-sample1} by \textit{rejection sampling}. 
Specifically, one first generates a random sample $\BIX\in\nobreak\R^d$ from $\rho_0$ and evaluates the composition $\widehat{\BIX}:=\big[\sum_{k=1}^K w_k\widehat{T}_{t-1,k}\big]\circ \cdots \circ \big[\sum_{k=1}^K w_k\widehat{T}_{1,k}\big](\BIX)$.
This sample $\widehat{\BIX}$ is subsequently accepted if $\widehat{\BIX}\in \CX_{\widehat{R}_{t-1}}$. 
Otherwise, this generation process is repeated until the sample $\widehat{\BIX}$ is accepted. 
The computational tractability and complexity of our specific choice of OT map estimator is discussed in Section~\ref{ssec: entropic-estimator}.

\begin{remark}[Distributed implementation of Algorithm~\ref{algo: concrete}]\label{remark: distributed}
	We would like to remark that Algorithm~\ref{algo: concrete} 
	allows for implementation in a distributed and parallel computing environment, 
	which can be appealing in terms of computational efficiency. 
	Suppose that there are a large number $K$ of agents, 
	where the $k$-th agent has local access to an input measure $\nu_k\in\CM^q(\R^d)$, for $k=1,\ldots,K$.
	The $\CW_2$-barycenter problem instance with input measures $\nu_1,\ldots,\nu_K$ and weights $w_1,\ldots,w_K$ is to be solved by a central coordinator who can communicate with the $K$ agents.
	In each iteration~$t\in\N$, the coordinator first generates independent samples 
	$\{\BIX_{t,k,i}\}_{i=1:\widehat{M}_{t-1,k},\,k=1:K}$ from $\widehat{\mu}_{t-1}$ and 
	releases the subcollection of samples $\{\BIX_{t,k,i}\}_{i=1:\widehat{M}_{t-1,k}}$ to agent~$k$ 
	(Line~\ref{alglin: concrete-sample1}), for $k=1,\ldots,K$.
	Each agent~$k$ then generates independent samples $\{\BIY_{t,k,i}\}_{i=1:\widehat{N}_{t-1,k}}$ from $\nu_k$ 
	(Line~\ref{alglin: concrete-sample2}) 
	and uses $\{\BIX_{t,k,i}\}_{i=1:\widehat{M}_{t-1,k}}$ and 
	$\{\BIY_{t,k,i}\}_{i=1:\widehat{N}_{t-1,k}}$ 
	to compute an admissible OT map estimator $\widehat{T}_{t,k}$ (Line~\ref{alglin: concrete-estimator}).
	Subsequently, in order for the coordinator to generate independent samples from $\widehat{\mu}_{t}$ conditional on $\CF_{t-1}$, 
	a large number $M\in\N$ of independent samples 
	$\{\widetilde{\BIX}_{t,i}\}_{i=1:M}$ from $\widehat{\rho}_{t-1}$ are generated and sent to all $K$~agents.
	Upon receiving $\{\widetilde{\BIX}_{t,i}\}_{i=1:M}$ from the coordinator, agent~$k$ evaluates $\big\{\widehat{T}_{t,k}(\widetilde{\BIX}_{t,i})\big\}_{i=1:M}$ and sends it back to the coordinator.
	The coordinator can then generate independent samples from $\widehat{\mu}_{t}$ using the weighted sums $\Big\{\sum_{k=1}^Kw_k\widehat{T}_{t,k}(\widetilde{\BIX}_{t,i})\Big\}_{i=1:M}$ 
	(Line~\ref{alglin: concrete-pushforward}) followed by the rejection sampling procedure described above.
\end{remark}

\subsection{Modified entropic OT map estimator}\label{ssec: entropic-estimator}

As stated in Setting~\ref{sett: Caffarelli}
and Theorem~\ref{thm: Caffarelli-convergence}, 
the convergence of Algorithm~\ref{algo: concrete} depends crucially on 
the shape, growth, and consistency properties of
the OT map estimator $\widehat{T}^{\mu,m}_{\nu,n}[\theta]$ required by Assumption~\ref{asp: OTmap-estimator}.
In this subsection, we consider two 
Caffarelli-type
admissible compactly supported probability measures 
and introduce a concrete example of OT map estimator that satisfies Assumption~\ref{asp: OTmap-estimator}, 
which is a modified version of the entropic OT map estimator of \citet*{pooladian2021entropic}. 
This estimator is explicitly constructed via numerically solving an entropic optimal transport problem followed by the operation of barycentric projection \citep[Definition~5.4.2]{ambrosio2008gradient}, subject to a strong convexity modification step to ensure the desired curvature property. 

Before we present our modified entropic OT map estimator,
let us first recall the definition of the entropic optimal transport problem
and Sinkhorn's algorithm for numerically solving it.
Let $\mu,\nu\in\CP_2(\R^d)$ and let 
$\mu\otimes\nu\in\CP_2(\R^d\times\R^d)$ denote the product measure of $\mu$ and $\nu$.
The entropic optimal transport (EOT) problem
between $\mu$ and $\nu$ with respect to the cost function 
$\R^d\times\R^d\ni(\BIx,\BIy)\mapsto -\langle\BIx,\BIy\rangle\in\R$ 
and regularization parameter $\gamma>0$ is given by
\begin{align}
	\inf_{\pi\in\Pi(\mu,\nu)} \int_{\R^d\times\R^d}-\langle\BIx,\BIy\rangle\DIFFM{\pi}{\DIFF\BIx,\DIFF\BIy}+\gamma\mathrm{KL}(\pi|\mu\otimes\nu),
	\label{eqn: EOT}
\end{align}
where $\mathrm{KL}(\pi|\mu\otimes\nu)$ denotes the Kullback--Leibler divergence
between $\pi$ and $\mu\otimes\nu$ defined by
\begin{align*}
	\mathrm{KL}(\pi|\mu\otimes\nu):=\begin{cases}
		\int_{\R^d\times\R^d}\log\big(\frac{\DIFF\pi}{\DIFF\mu\otimes\nu}\big)\DIFFX{\pi} & \text{if }\pi \ll \mu\otimes\nu \\
		\infty & \text{if }\pi \centernot\ll \mu\otimes\nu
	\end{cases} \qquad \forall \pi\in\CP_2(\R^d\times\R^d).
\end{align*}
Note that the EOT problem in (\ref{eqn: EOT}) is equivalent to the EOT problem 
with the squared-Euclidean cost function 
$\R^d\times\nobreak\R^d\ni(\BIx,\BIy)\mapsto \frac{1}{2}\|\BIx-\nobreak\BIy\|^2\in\R$ up to a shift by a constant.
In particular, they yield the same optimal solutions.
The above EOT problem is an approximation of the unregularized OT problem (i.e., when $\gamma=0$),
and admits the following dual problem (see, e.g., \citep[Proposition~2.1]{genevay2016stochastic}):
\begin{align}
	\label{eqn: EOT-dual}
	\begin{split}
	\sup_{f\in\CL^1(\mu),\,g\in\CL^1(\nu)} &\int_{\R^d}f\DIFFX{\mu} + \int_{\R^d}g\DIFFX{\nu} + \gamma - \gamma \int_{\R^d\times\R^d}\exp\Big({\textstyle\frac{f(\BIx)+g(\BIy)+\langle\BIx,\BIy\rangle}{\gamma}}\Big)\DIFFM{\mu\otimes\nu}{\DIFF\BIx,\DIFF\BIy}.
	\end{split}
\end{align}
One can show that (\ref{eqn: EOT-dual}) admits maximizers and any maximizer 
$(f^\star,g^\star)$ of (\ref{eqn: EOT-dual}) satisfies the following system of equations:
\begin{align}
	\label{eqn: EOT-dual-optimalitycond}
	\begin{cases}
		\displaystyle f(\BIx) = -\gamma\log\bigg(\int_{\R^d}\exp\Big({\textstyle\frac{g(\BIy)+\langle\BIx,\BIy\rangle}{\gamma}}\Big)\DIFFM{\nu}{\DIFF\BIy}\bigg) & \text{for }\mu\text{-a.e.}\; \BIx\in\R^d, 
		\vspace{6pt}\\
		\displaystyle g(\BIy) = -\gamma\log\bigg(\int_{\R^d}\exp\Big({\textstyle\frac{f(\BIx)+\langle\BIx,\BIy\rangle}{\gamma}}\Big)\DIFFM{\mu}{\DIFF\BIx}\bigg) & \text{for }\nu\text{-a.e.}\; \BIy\in\R^d.
	\end{cases}
\end{align}
Initializing at $\widehat{g}^{(\gamma,0)}:=0$ and iteratively updating
$\big(\widehat{f}^{(\gamma,l)}\big)_{l\in\N}$, $\big(\widehat{g}^{(\gamma,l)}\big)_{l\in\N}$ 
via (\ref{eqn: EOT-dual-optimalitycond}) as follows:
\begin{align}
	\label{eqn: Sinkhorn-general}
	\begin{cases}
		\displaystyle \widehat{f}^{(\gamma,l)}(\BIx) := -\gamma\log\bigg(\int_{\R^d}\exp\Big({\textstyle\frac{\widehat{g}^{(\gamma,l-1)}(\BIy)+\langle\BIx,\BIy\rangle}{\gamma}}\Big)\DIFFM{\nu}{\DIFF\BIy}\bigg) & \forall\BIx\in\R^d
		\vspace{6pt}\\
		\displaystyle \widehat{g}^{(\gamma,l)}(\BIy) := -\gamma\log\bigg(\int_{\R^d}\exp\Big({\textstyle\frac{\widehat{f}^{(\gamma,l)}(\BIx)+\langle\BIx,\BIy\rangle}{\gamma}}\Big)\DIFFM{\mu}{\DIFF\BIx}\bigg) & \forall \BIy\in\R^d
	\end{cases} \qquad \forall l\in\N
\end{align}
leads to the celebrated Sinkhorn's algorithm \citep{sinkhorn1964relationship,cuturi2013sinkhorn},
which is guaranteed to converge to a maximizer $(f^\star,g^\star)$ of (\ref{eqn: EOT-dual}) under mild conditions; see, e.g., \citep[Corollary~4.8]{ghosal2024on}.

When 
$\mu=\frac{1}{m}\sum_{i=1}^{m}\delta_{\BIX_i}$ and 
$\nu=\frac{1}{n}\sum_{j=1}^{n}\delta_{\BIY_j}$ 
are empirical measures with 
$m,n\in\nobreak\N$, $(\BIX_i)_{i=1:m}\subset\nobreak\R^d$, $(\BIY_j)_{j=1:n}\subset\nobreak\R^d$,
(\ref{eqn: EOT-dual}) can be parametrized into the following maximization problem over scalar-valued variables $(f_i)_{i=1:m}$ and $(g_j)_{j=1:n}$:
\begin{align}
	\label{eqn: EOT-dual-discrete}
	\maximize_{(f_i),\,(g_j)} \quad & \Bigg(\frac{1}{m}\sum_{i=1}^{m}f_i\Bigg)
	+ \Bigg(\frac{1}{n}\sum_{j=1}^{n}g_j\Bigg) 
	+ \gamma
	- \Bigg(\frac{\gamma}{mn}\sum_{i=1}^{m}\sum_{j=1}^{n}\exp\Big({\textstyle\frac{f_i+g_j+\langle\BIX_i,\BIY_j\rangle}{\gamma}}\Big)\Bigg),
\end{align}
and (\ref{eqn: Sinkhorn-general}) simplifies (up to shifting by constants) to iterative updates of scalar-valued variables
$\big(\widehat{f}^{(\gamma,l)}_i\big)_{i=1:m,\,l\in\N}$,
$\big(\widehat{g}^{(\gamma,l)}_j\big)_{j=1:n,\,l\in\N}$,
with 
$\widehat{g}^{(\gamma,0)}_j=0$ $\forall 1\le j\le n$,
and 
\begin{align}
	\label{eqn: Sinkhorn-empirical}
	\begin{cases}
		\widehat{f}^{(\gamma,l)}_i := -\gamma\log\Big(\sum_{j=1}^{n}\exp\Big({\textstyle\frac{\widehat{g}^{(\gamma,l-1)}_j+\langle\BIX_i,\BIY_j\rangle}{\gamma}}\Big)\Big) & \forall 1\le i\le m
		\vspace{4pt}\\
		\widehat{g}^{(\gamma,l)}_j := -\gamma\log\Big(\sum_{i=1}^{m}\exp\Big({\textstyle\frac{\widehat{f}^{(\gamma,l)}_i+\langle\BIX_i,\BIY_j\rangle}{\gamma}}\Big)\Big) & \forall 1\le j\le n
	\end{cases} \qquad \forall l\in\N.
\end{align}
Let us define the matrix $\BK\in\R^{m\times n}$
with entries $\BK_{i,j}:=\exp\Big(\frac{\langle\BIX_i,\BIY_j\rangle}{\gamma}\Big)$
$\forall 1\le i\le m$, $\forall 1\le j\le n$.
After vectorizing
the variables 
$\big(\widehat{f}^{(\gamma,l)}_i\big)_{i=1:m}$,
$\big(\widehat{g}^{(\gamma,l)}_j\big)_{j=1:n}$,
into
$\widehat{\BIf}^{(\gamma,l)}\leftarrow\big(\widehat{f}^{(\gamma,l)}_1,\ldots,\widehat{f}^{(\gamma,l)}_m\big)^\TRANSP\in\R^m$,
$\widehat{\BIg}^{(\gamma,l)}\leftarrow\big(\widehat{g}^{(\gamma,l)}_1,\ldots,\widehat{g}^{(\gamma,l)}_n\big)^\TRANSP\in\R^n$
and then applying reparametrizations 
$\widehat{\BIu}^{(\gamma,l)}\leftarrow \exp\Big(\frac{\widehat{\BIf}^{(\gamma,l)}}{\gamma}\Big)$,
$\widehat{\BIv}^{(\gamma,l)}\leftarrow \exp\Big(\frac{\widehat{\BIg}^{(\gamma,l)}}{\gamma}\Big)$
for each $l\in\N$
(here $\exp(\cdot)$ denotes entry-wise exponentiation of a vector),
Sinkhorn's algorithm can be expressed via the following reparametrized and vectorized form,
where one begins with
$\widehat{\BIv}^{(\gamma,0)}:=\vecone_{n}$ 
and performs the following iterative updates of
$(\widehat{\BIu}^{(\gamma,l)})_{l\in\N}$, $(\widehat{\BIv}^{(\gamma,l)})_{l\in\N}$:
\begin{align}
	\label{eqn: Sinkhorn-matrix-scaling}
	\begin{cases}
		\widehat{\BIu}^{(\gamma,l)}:= \diag\big(\BK\widehat{\BIv}^{(\gamma,l-1)}\big)^{-1}\vecone_{m} \\
		\widehat{\BIv}^{(\gamma,l)}\hspace{0.8pt}:= \diag\big(\BK^\TRANSP\widehat{\BIu}^{(\gamma,l)}\big)^{-1}\vecone_{n}
	\end{cases} \qquad \forall l\in\N.
\end{align}
The iteration (\ref{eqn: Sinkhorn-matrix-scaling}) is also known as the 
iterative proportional fitting procedure and the RAS algorithm;
see, e.g., \citep[Remark~4.5]{peyre2019computational} for historical remarks.

Next, let us present our \textit{modified entropic OT map estimator} based on Sinkhorn's algorithm and barycentric projection in Proposition~\ref{prop: OTmap-estimator-entropic}.
Compared to the entropic OT map estimator of \citet{pooladian2021entropic},
there are two modifications.
The first modification is that our estimator uses 
$\big(\widehat{g}^{(\gamma,l)}_j\big)_{j=1:n}$ computed by Sinkhorn's algorithm
after finitely many iterations
rather than relying on a true maximizer of 
(\ref{eqn: EOT-dual-discrete}), which is typically unobtainable in practice.
The second modification lies in the addition of an extra term 
$T_{\STRONGLYCONVEX}(\cdot)$ 
to the estimator which does not affect its value within 
$\support(\mu)$
but guarantees the strong convexity condition in Assumption~\ref{asp: OTmap-estimator}\ref{asps: OTmap-estimator-shape}.
In our subsequent analysis, we will omit $\mu,\nu,m,n$ in the notations and denote our entropic OT map estimator by $\widehat{T}_{\ENTROPIC}[\theta]$
with $\theta=(\gamma,\overline{l})$
for the sake of notational simplicity.
Nonetheless, $m$ and $n$ will always be understood as the numbers of samples from $\mu$ and $\nu$, respectively. 

\begin{proposition}[Modified entropic OT map estimator]\label{prop: OTmap-estimator-entropic}
	Let $(\Omega,\CF,\PROB)$ be a probability space,
	let $q\in\N$, 
	let $\mu,\nu\in\CM^q(\R^d)$,
	and let 
	$R_{\mu}:=\inf\big\{r\in\R_+:\support(\mu)\subseteq\bar{B}(\veczero_d,r)\big\}$,
	$R_{\nu}:=\inf\big\{r\in\R_+:\support(\nu)\subseteq\bar{B}(\veczero_d,r)\big\}$.
	For any $m\in\N$ and $n\in\N$, 
	let $\BIX_1,\ldots,\BIX_m,\BIY_1,\ldots,\BIY_n:\Omega\to\R^d$ be independent random variables such that 
	the law of each $\BIX_i$ is $\mu$
	and the law of each $\BIY_j$ is $\nu$,
	i.e., $\BIX_i\sharp\PROB=\mu$ $\forall 1\le i\le m$,
	$\BIY_j\sharp\PROB=\nu$ $\forall 1\le j\le n$.
	Let $\Theta:=(0,\infty)\times\N$.
	For any $(\gamma,\overline{l})\in\Theta$,
	let us construct 
	$\widehat{T}_{\ENTROPIC}[\gamma,\overline{l}]:\R^d\to\R^d$ through the following three steps.
	\begin{enumerate}[beginpenalty=10000,label=(\arabic*)]
		\item \textbf{Sinkhorn step.}
		We define $\widehat{g}^{(\gamma,0)}_j:=0$ $\forall 1\le j\le n$
		and iteratively compute 
		$\big(\widehat{f}^{(\gamma,l)}_i\big)_{i=1:m}$,
		$\big(\widehat{g}^{(\gamma,l)}_j\big)_{j=1:n}$
		for $l=1,2,\ldots,\overline{l}$ via (\ref{eqn: Sinkhorn-empirical}) (or via the reparametrized and vectorized form in (\ref{eqn: Sinkhorn-matrix-scaling})).
		
		\item \textbf{Barycentric projection step.} 
		We define $\widetilde{T}_{\ENTROPIC}[\gamma,\overline{l}]:\R^d\to\R^d$ as follows:
		\begin{align}
			\widetilde{T}_{\ENTROPIC}[\gamma,\overline{l}](\BIx)&:= \frac{\sum_{j=1}^n \exp\Big(\textstyle\frac{\widehat{g}^{(\gamma,\overline{l})}_j+\langle\BIY_j,\BIx\rangle}{\gamma}\Big) \BIY_j}{\sum_{j=1}^n \exp\Big(\textstyle\frac{\widehat{g}^{(\gamma,\overline{l})}_j+\langle\BIY_j,\BIx\rangle}{\gamma}\Big)} \qquad \forall \BIx\in\R^d.
			\label{eqn: OTmap-estimator-baryproj}
		\end{align}

		\item \textbf{Strong convexity modification step.}
		We define $T_{\STRONGLYCONVEX}:\R^d\to\R^d$ as follows:
		\begin{align*}
			T_{\STRONGLYCONVEX}(\BIx):=\begin{cases}
				\exp\Big({-\frac{1}{\|\BIx\|^2-R_{\mu}^2}}\Big)\BIx & \forall \BIx\in\R^d \setminus \bar{B}(\veczero_d, R_{\mu}), \\
				\veczero_d & \forall \BIx\in \bar{B}(\veczero_d, R_{\mu}),
			\end{cases}
		\end{align*}
		and then define $\widehat{T}_{\ENTROPIC}[\gamma,\overline{l}]:\R^d\to\R^d$
		as follows:
		\begin{align*}
			\widehat{T}_{\ENTROPIC}[\gamma,\overline{l}](\BIx):=\widetilde{T}_{\ENTROPIC}[\gamma,\overline{l}](\BIx) + T_{\STRONGLYCONVEX}(\BIx) \qquad \forall \BIx\in\R^d.
		\end{align*}
	\end{enumerate}
	Then, $\widehat{T}_{\ENTROPIC}[\gamma,\overline{l}]$ satisfies the following statements.
	\begin{enumerate}[label=(\roman*), beginpenalty=10000]
		\item\label{props: OTmap-estimator-entropic-Borel}%
		\textbf{Measurability:} 
		$\widehat{T}_{\ENTROPIC}[\gamma,\overline{l}]$ belongs to $\CC_{\LIN}(\R^d,\R^d)$ and has a Borel dependency on $(\BIX_1,\ldots,\BIX_m,\allowbreak\BIY_1,\ldots,\BIY_n,\gamma,\overline{l})$,
		that is, $\widehat{T}_{\ENTROPIC}[\gamma,\overline{l}]$ satisfies Assumption~\ref{asp: OTmap-estimator}\ref{asps: OTmap-estimator-measurability}.

		\item\label{props: OTmap-estimator-entropic-shape}%
		\textbf{Shape:} 
		let us define
		\begin{align*}
			\hspace{37pt}\underline{\lambda}:=\Bigg[\frac{1}{\gamma}\exp\bigg({-\frac{6R_{\mu}R_{\nu}}{\gamma}}\bigg)e_{\MINSUB}\Big(\big({\textstyle\frac{1}{n}\sum_{j=1}^n}\BIY_j\BIY_j^\TRANSP\big) - \big({\textstyle\frac{1}{n}\sum_{j=1}^n}\BIY_j\big)\big({\textstyle\frac{1}{n}\sum_{j=1}^n}\BIY_j\big)^\TRANSP\Big)\Bigg] \wedge \exp\bigg({-\frac{1}{3R_{\mu}^2}}\bigg).
		\end{align*}
		Then, $\underline{\lambda}$ 
		has a Borel dependency on 
		$(\mu,\nu,m,n,\allowbreak\BIX_1,\ldots,\BIX_m,\allowbreak\BIY_1,\ldots,\BIY_n,\gamma,\overline{l})$.
		Moreover, for any $(\gamma,\overline{l})\in\Theta$,
		whenever $n\ge d+1$,
		it holds $\PROB$-almost surely that 
		$\underline{\lambda}>0$ and there exists 
		$\widehat{\varphi}_{\ENTROPIC}[\gamma,\overline{l}]\in\FC^{\infty}_{\underline{\lambda},\infty}(\R^d)$ with 
		$\nabla\widehat{\varphi}_{\ENTROPIC}[\gamma,\overline{l}]=\widehat{T}_{\ENTROPIC}[\gamma,\overline{l}]$.
		In particular, $\widehat{T}_{\ENTROPIC}[\gamma,\overline{l}]$ satisfies Assumption~\ref{asp: OTmap-estimator}\ref{asps: OTmap-estimator-shape} 
		with respect to 
		$\underline{m}\leftarrow\nobreak 1$,
		$\underline{n}\leftarrow\nobreak d+\nobreak1$,
		$\underline{\lambda}$,
		and any $\alpha\in(0,1)$.
	
		\item\label{props: OTmap-estimator-entropic-growth}%
		\textbf{Growth:} for any $(\gamma,\overline{l})\in\Theta$,
		it holds $\PROB$-almost surely that $\big\|\widehat{T}_{\ENTROPIC}[\gamma,\overline{l}](\BIx)-\widehat{T}_{\ENTROPIC}[\gamma,\overline{l}](\veczero_d)\big\|^2\le 8R_{\nu}^2 + 2\|\BIx\|^2$ $\forall\BIx\in\R^d$.
		In particular, $\widehat{T}_{\ENTROPIC}[\gamma,\overline{l}]$ satisfies Assumption~\ref{asp: OTmap-estimator}\ref{asps: OTmap-estimator-growth} with respect to 
		$u_0(\nu)\leftarrow 8R_{\nu}^2$, 
		$u_1(\nu)\leftarrow 2$. 
		
		\item\label{props: OTmap-estimator-entropic-consistency}%
		\textbf{Consistency:} 
		the following bound holds:
		\begin{align}
			\hspace{37pt}\EXP\Big[\big\|\widehat{T}_{\ENTROPIC}[\gamma,\overline{l}]-T^{\mu}_{\nu}\big\|_{\CL^2(\mu)}^2\Big] 
			&\le 2C_{\ENTROPIC}(\mu,\nu) \Big[\gamma^{-\frac{d}{2}}\big(\log(m)m^{-\frac{1}{2}}+\log(n)n^{-\frac{1}{2}}\big) + \gamma^{\frac{\overline{\alpha}(\mu,\nu)}{2}}\Big]\label{eqn: OTmap-estimator-error}\\
			& \quad\; + \frac{32R_{\mu}^2R_{\nu}^4}{\gamma^2}\Bigg(1-\exp\bigg({-\frac{2R_{\mu}R_{\nu}}{\gamma}}\bigg)\Bigg)^{4\overline{l}} \quad\;\; \forall \gamma\in \big(0,\overline{\gamma}(\mu,\nu)\big],\; \forall \overline{l}\in\N,\nonumber
		\end{align}
		where $\overline{\gamma}(\mu,\nu)>0$, $\overline{\alpha}(\mu,\nu)\in[3,4]$, and $C_{\ENTROPIC}(\mu,\nu)>0$
		are terms constructed in the proof;
		see equations~(\ref{eqn: OTmap-estimator-entropic-proof-consistency-alpha-def}), (\ref{eqn: OTmap-estimator-entropic-proof-consistency-Centr-def}),
		as well as the proofs of \citep[Theorem~4 \& Theorem~5]{pooladian2021entropic}
		for their explicit expressions.
		Let us define 
		\begin{align*}
			\overline{m}(\mu,\nu,\epsilon)&:= \min\Bigg\{m\in\N : \!\begin{tabular}{l} 
				$m\ge \overline{\gamma}(\mu,\nu)^{-(\overline{\alpha}(\mu,\nu)+d)},$ \\
				$\widetilde{m}^{-\frac{\overline{\alpha}(\mu,\nu)}{2(\overline{\alpha}(\mu,\nu)+d)}}\big(\log(\widetilde{m})+1\big)\le \textstyle\frac{\epsilon}{8C_{\ENTROPIC}(\mu,\nu)}\; \forall \widetilde{m}\ge m$
			\end{tabular}\!\Bigg\} 
			\hspace{25.1pt} \forall \epsilon>0,\\
			\overline{n}(\mu,\nu,\epsilon)&:= \overline{m}(\mu,\nu,\epsilon)\vee (d+1) 
			\hspace{241.0pt} \forall \epsilon>0,\\
			\hspace{34pt}\widetilde{\gamma}(\mu,\nu,m,n,\epsilon)&:= (m\wedge n)^{-\frac{1}{\overline{\alpha}(\mu,\nu)+d}}
			\hspace{166.5pt}\forall m\in\N,\; \forall n\in\N, \; \forall \epsilon>0,\\
			\widetilde{\overline{l}}(\mu,\nu,m,n,\epsilon)&:=\!\Bigg\lceil\frac{1}{4}\log\!\bigg(\frac{64R_{\mu}^2R_{\nu}^4}{\epsilon\widetilde{\gamma}(\mu,\nu,m,n,\epsilon)^2}\bigg)\exp\!\bigg(\frac{2R_{\mu}R_{\nu}}{\widetilde{\gamma}(\mu,\nu,m,n,\epsilon)}\bigg)\!\Bigg\rceil
			\quad \forall m\in\N,\; \forall n\in\N, \; \forall \epsilon>0,\\
			\widetilde{\theta}(\mu,\nu,m,n,\epsilon)&:=\big(\widetilde{\gamma}(\mu,\nu,m,n,\epsilon),\,\widetilde{\overline{l}}(\mu,\nu,m,n,\epsilon)\big)
			\hspace{96.0pt}\forall m\in\N,\; \forall n\in\N, \; \forall \epsilon>0.
		\end{align*}
		Then, 
		$\overline{m}(\mu,\nu,\epsilon)$, 
		$\overline{n}(\mu,\nu,\epsilon)$ have Borel dependencies on 
		$(\mu,\nu,\epsilon)$,
		$\widetilde{\theta}(\mu,\nu,m,n,\epsilon)$ has a Borel dependency on 
		$(\mu,\nu,m,n,\epsilon)$,
		and it holds that
		\begin{align}
			\hspace{37pt}\EXP\Big[\big\|\widehat{T}_{\ENTROPIC}[\widetilde{\theta}(\mu,\nu,m,n,\epsilon)]-T^{\mu}_{\nu}\big\|_{\CL^2(\mu)}^2\Big]&\le \epsilon \qquad \forall m\ge \overline{m}(\mu,\nu,\epsilon), \; \forall n\ge \overline{n}(\mu,\nu,\epsilon),\; \forall \epsilon>0.
			\label{eqn: OTmap-estimator-error-control}
		\end{align}
		In particular, $\widehat{T}_{\ENTROPIC}[\gamma,\overline{l}]$ satisfies Assumption~\ref{asp: OTmap-estimator}\ref{asps: OTmap-estimator-consistency} with respect to 
		$\overline{m}(\mu,\nu,\epsilon)$, 
		$\overline{n}(\mu,\nu,\epsilon)$,
		and 
		$\widetilde{\theta}(\mu,\nu,\allowbreak m,n,\epsilon)$.
	\end{enumerate}
\end{proposition}

\begin{proof}[Proof of Proposition~\ref{prop: OTmap-estimator-entropic}]
	See Appendix~\ref{sapx: proof-entropic-estimator}.
\end{proof}

We would like to highlight that establishing the consistency condition in \eqref{eqn: OTmap-estimator-error-control} relies on two important approximation bounds developed in the literature. 
Let us define $\widetilde{T}_{\ENTROPIC}[\gamma,\infty]$ 
via the barycenter projection (\ref{eqn: OTmap-estimator-baryproj})
in Proposition~\ref{prop: OTmap-estimator-entropic}
with respect to
$\widehat{g}^{(\gamma,\infty)}_j:=\lim_{l\to\infty}\Big(\widehat{g}^{(\gamma,l)}_j-\min_{1\le k\le n}\{\widehat{g}^{(\gamma,l)}_k\}\Big)$
$\forall 1\le j\le n$,
$\widehat{f}^{(\gamma,\infty)}_i:=-\gamma\log\Big(\sum_{j=1}^{n}\exp\Big(\frac{\widehat{g}^{(\gamma,\infty)}_j+\langle\BIx_i,\BIy_j\rangle}{\gamma}\Big)\Big)$ 
$\forall 1\le i\le m$,
which optimize (\ref{eqn: EOT-dual-discrete});
see, e.g., \citep[Theorem~4.2]{peyre2019computational} for the existence and optimality of 
$\big(\widehat{f}^{(\gamma,\infty)}_i\big)_{i=1:m}$, 
$\big(\widehat{g}^{(\gamma,\infty)}_j\big)_{j=1:n}$.
Since
$T_{\STRONGLYCONVEX}(\BIx)=\veczero_d$ for $\mu$-almost every $\BIx\in\R^d$,
$T_{\STRONGLYCONVEX}$ does not contribute to the approximation error 
$\EXP\Big[\big\|\widehat{T}_{\ENTROPIC}[\gamma,\overline{l}]-T^{\mu}_{\nu}\big\|_{\CL^2(\mu)}^2\Big]$ in Proposition~\ref{prop: OTmap-estimator-entropic}\ref{props: OTmap-estimator-entropic-consistency}.
Consequently, 
we bound $\EXP\Big[\big\|\widehat{T}_{\ENTROPIC}[\gamma,\overline{l}]-T^{\mu}_{\nu}\big\|_{\CL^2(\mu)}^2\Big]$
above by
$2\EXP\Big[\big\|\widetilde{T}_{\ENTROPIC}[\gamma,\infty]-T^{\mu}_{\nu}\big\|_{\CL^2(\mu)}^2\Big]
+2\EXP\Big[\big\|\widetilde{T}_{\ENTROPIC}[\gamma,\overline{l}]-\widetilde{T}_{\ENTROPIC}[\gamma,\infty]\big\|_{\CL^2(\mu)}^2\Big]$
and control these two error terms separately.
Specifically,
the term 
$\EXP\Big[\big\|\widetilde{T}_{\ENTROPIC}[\gamma,\infty]-T^{\mu}_{\nu}\big\|_{\CL^2(\mu)}^2\Big]$
corresponds to the approximation error of the entropic OT map estimator of 
\citet{pooladian2021entropic},
and its upper bound in the first term on the right-hand side of (\ref{eqn: OTmap-estimator-error}) is derived from the convergence rate in 
\citep[Theorem~4 \& Theorem~5]{pooladian2021entropic}.
On the other hand,
the term 
$\EXP\Big[\big\|\widetilde{T}_{\ENTROPIC}[\gamma,\overline{l}]-\widetilde{T}_{\ENTROPIC}[\gamma,\infty]\big\|_{\CL^2(\mu)}^2\Big]$
stems from the approximation error of Sinkhorn's algorithm 
when solving (\ref{eqn: EOT-dual-discrete}), 
and its respective upper bound in the second term on the right-hand side of (\ref{eqn: OTmap-estimator-error})
results from invoking the convergence rate of Sinkhorn's algorithm developed by 
\citet*{franklin1989on};
see also \citep[Theorem~4.2]{peyre2019computational}.

\begin{remark}[Alternative convergence rate of Sinkhorn's algorithm]
	As discussed above,
	our proof of Proposition~\ref{prop: OTmap-estimator-entropic}\ref{props: OTmap-estimator-entropic-consistency} 
	adopts the convergence analysis of Sinkhorn's algorithm by 
	\citet{franklin1989on},
	which uses the Hilbert projective metric (see the discussion in \citep[Remark~4.12]{peyre2019computational}).
	As a consequence, the rate of convergence for $\EXP\Big[\big\|\widetilde{T}_{\ENTROPIC}[\gamma,\overline{l}]-\widetilde{T}_{\ENTROPIC}[\gamma,\infty]\big\|_{\CL^2(\mu)}^2\Big]$ is geometric in the number of iterations $\overline{l}$,
	but the decay rate 
	depends exponentially on $\gamma^{-1}$.
	There are alternative convergence analyses that 
	can overcome this exponential dependency, 
	although their resulting rates only depend polynomially on $\overline{l}^{-1}$;
	see, e.g., the summary of existing studies about the convergence rate of Sinkhorn's algorithm in \citep{chizat2026sharper} and the references therein.
	For example,
	the analysis of \citet*{dvurechensky2018computational}
	yields that 
	the suboptimality of 
	$\big(\widehat{f}^{(\gamma,\overline{l})}_i\big)_{i=1:m}$, 
	$\big(\widehat{g}^{(\gamma,\overline{l})}_j\big)_{j=1:n}$
	with respect to (\ref{eqn: EOT-dual-discrete})
	converges at the rate 
	$O\Big(\frac{R_{\mu}^2R_{\nu}^2}{\gamma\overline{l}}\Big)$.
	Despite this, 
	the convergence rate of the objective value of 
	$\big(\widehat{f}^{(\gamma,\overline{l})}_i\big)_{i=1:m}$, 
	$\big(\widehat{g}^{(\gamma,\overline{l})}_j\big)_{j=1:n}$ 
	with respect to (\ref{eqn: EOT-dual-discrete}) 
	does not translate to the convergence of 
	$\big(\widehat{g}^{(\gamma,\overline{l})}_j\big)_{j=1:n}$ 
	to $\big(\widehat{g}^{(\gamma,\infty)}_j\big)_{j=1:n}$ 
	at the same rate, and hence does not yield the same convergence rate for 
	$\EXP\Big[\big\|\widetilde{T}_{\ENTROPIC}[\gamma,\overline{l}]-\widetilde{T}_{\ENTROPIC}[\gamma,\infty]\big\|_{\CL^2(\mu)}^2\Big]$. 

	Recently, \citet*{chizat2026sharper} obtained convergence rates of 
	the suboptimality of the functions 
	$\widehat{f}^{(\gamma,\overline{l})}$, $\widehat{g}^{(\gamma,\overline{l})}$ generated by (\ref{eqn: Sinkhorn-general})
	with respect to (\ref{eqn: EOT-dual})
	which are geometric in $\overline{l}$ with decay rates 
	that are polynomial in $\gamma^{-1}$;
	see \citep[Theorem~1.1 \& Theorem~1.2]{chizat2026sharper}.
	However, their analysis is based on the assumption that 
	$\mu$ admits a density, which makes it non-applicable to 
	the discrete dual EOT problem in (\ref{eqn: EOT-dual-discrete}).
	Nonetheless, 
	the numerical results in \citep[Section~7]{chizat2026sharper}
	have demonstrated that their convergence rates also hold empirically in discrete settings.	
 \end{remark}

Now that Proposition~\ref{prop: OTmap-estimator-entropic} has shown that 
$\widehat{T}_{\ENTROPIC}[\gamma,\overline{l}]$ 
satisfies Assumption~\ref{asp: OTmap-estimator}, 
letting $\widehat{T}^{\mu,m}_{\nu,n}[\theta]\leftarrow \widehat{T}_{\ENTROPIC}[\gamma,\overline{l}]$ 
in Setting~\ref{sett: Caffarelli} and Algorithm~\ref{algo: concrete} 
leads to the convergence properties of the output $(\widehat{\mu}_t)_{t\in\N_0}$ stated in Theorem~\ref{thm: main-convergence}.
This is summarized in the following corollary. 

\begin{corollary}[Convergence of Algorithm~\ref{algo: concrete} with the modified entropic OT map estimator]\label{cor: fixedpoint-convergence-entropic}
	Let the inputs of Algorithm~\ref{algo: concrete} satisfy Setting~\ref{sett: Caffarelli}, 
	where 
	the OT map estimator $\widehat{T}^{\mu,m}_{\nu,n}[\cdot]$ is given by
	the modified entropic OT map estimator $\widehat{T}_{\ENTROPIC}[\cdot]$ defined in Proposition~\ref{prop: OTmap-estimator-entropic},
	and let $u_0(\cdot)$, $u_1(\cdot)$,
	$\overline{m}(\,\cdot\,,\cdot\,,\cdot\,)$,
	$\overline{n}(\,\cdot\,,\cdot\,,\cdot\,)$,
	$\widetilde{\theta}(\,\cdot\,,\cdot\,,\cdot\,,\cdot\,,\cdot\,)$
	be defined as in Proposition~\ref{prop: OTmap-estimator-entropic}.
	Let $(\Omega,\CF,\PROB)$ be a probability space on which the random samples in
	Line~\ref{alglin: concrete-sample1} and Line~\ref{alglin: concrete-sample2} of Algorithm~\ref{algo: concrete} are defined,
	and let $(\CF_t)_{t\in\N_0}$ be defined by (\ref{eqn: filtration}).
	Moreover, let $\bar{\mu}$ denote the unique \mbox{$\CW_2$-barycenter} of $\nu_1,\ldots,\nu_K$ with weights $w_1,\ldots,w_K$.
	Then, the conclusions of statements~\ref{thms: main-convergence-tight-as}--\ref{thms: main-convergence-PL-var} 
	in Theorem~\ref{thm: main-convergence}
	hold with respect to the output $(\widehat{\mu}_t)_{t\in\N_0}$ of Algorithm~\ref{algo: concrete}.
\end{corollary}

In the following, let us analyze the computational complexity of Algorithm~\ref{algo: concrete} when equipped with the modified entropic OT map estimator $\widehat{T}_{\ENTROPIC}[\cdot]$.
Let us denote 
$\widehat{\ITheta}_{t,k}=\big(\widehat{\Gamma}_{t,k},\widehat{\overline{L}}_{t,k}\big)$ for $k=1,\ldots,K$ in each iteration~$t$ of Algorithm~\ref{algo: concrete}.
Line~\ref{alglin: concrete-estimator}
carries out the Sinkhorn step of the estimator in Proposition~\ref{prop: OTmap-estimator-entropic},
which consists of 
the computation of the pair-wise costs 
$\big({-\langle\BIX_{t,k,i},\BIY_{t,k,j}\rangle}\big)_{i=1:\widehat{M}_{t-1,k},\,j=1:\widehat{N}_{t-1,k}}$
followed by
$\widehat{\overline{L}}_{t-1,k}$~iterations 
of the update (\ref{eqn: Sinkhorn-empirical})
with respect to 
$m\leftarrow\widehat{M}_{t-1,k}$,
$n\leftarrow\widehat{N}_{t-1,k}$,
$(\BIx_i)_{i=1:m}\leftarrow (\BIX_{t,k,i})_{i=1:\widehat{M}_{t-1,k}}$,
$(\BIy_j)_{j=1:n}\leftarrow (\BIY_{t,k,j})_{j=1:\widehat{N}_{t-1,k}}$.
Consequently, the computational complexity of Line~\ref{alglin: concrete-estimator} is 
$O\big(\widehat{M}_{t-1,k}\widehat{N}_{t-1,k}(d+\widehat{\overline{L}}_{t-1,k})\big)$.
Line~\ref{alglin: concrete-estimator} then stores
the computed values of 
$\Big(\widehat{g}^{(\widehat{\Gamma}_{t,k},\widehat{\overline{L}}_{t,k})}_{j}\Big)_{j=1:\widehat{N}_{t-1,k}}$
along with the samples $(\BIY_{t,k,j})_{j=1:\widehat{N}_{t-1,k}}$,
which will be subsequently used whenever the estimated OT map
$\widehat{T}_{t,k}$ is evaluated.
As discussed in Section~\ref{ssec: Caffarelli-convergence-proof}, 
Line~\ref{alglin: concrete-sample1}
can be implemented by first generating independent samples 
from $\rho_0$,
followed by evaluating the function
$\big[\sum_{k=1}^K w_k\widehat{T}_{t-1,k}\big]\circ \cdots \circ \big[\sum_{k=1}^K w_k\widehat{T}_{1,k}\big](\BIX)$ 
for each sample $\BIX$,
and subsequently rejecting those samples that do not belong to $\CX_{\widehat{R}_{t-1}}$.
Let $S_{\rho_0}$ denote the computation cost of generating each random sample from the initial probability measure $\rho_0$.
Each evaluation of each estimated OT map 
$\widehat{T}_{t-1,k}$ is 
done via carrying out the barycentric projection step 
and the strong convexity modification step in Proposition~\ref{prop: OTmap-estimator-entropic},
which incurs a computational cost of
$O(\widehat{N}_{t-1,k}d)$.
Thus, the computational complexity of generating each sample from 
$\widehat{\rho}_{t-1}$ is 
$O\Big(S_{\rho_0}+d\sum_{s=1}^{t}\sum_{k=1}^{K}\widehat{N}_{s-1,k}\Big)$.
Since the average proportion of samples accepted in the rejection sampling is 
$\widehat{\rho}_{t-1}(\CX_{\widehat{R}_{t-1}})$,
the average case computational complexity of 
Line~\ref{alglin: concrete-sample1} is 
$O\Big(\frac{\widehat{M}_{t-1,k}}{\widehat{\rho}_{t-1}(\CX_{\widehat{R}_{t-1}})}\Big(S_{\rho_0}+d\sum_{s=1}^{t}\sum_{k=1}^{K}\widehat{N}_{s-1,k}\Big)\Big)$.

If 
$S_{\rho_0}$ is polynomial in the dimension~$d$ (e.g., $S_{\rho_0}=O(d^2)$ if $\rho_0$ is a $d$-dimensional Gaussian measure)
and if the truncation indices $(\widehat{R}_{t})_{t\in\N_0}$ 
are large enough such that $\widehat{\rho}_{t}(\CX_{\widehat{R}_{t}})$ is close to~1 for all $t\in\N_0$,
then the computational complexities above are polynomial in the dimension~$d$ 
for fixed
sample sizes $(\widehat{M}_{t,k},\widehat{N}_{t,k})_{k=1:K,\,t\in\N_0}$
and fixed hyperparameters 
$(\widehat{\Gamma}_{t,k},\widehat{\overline{L}}_{t,k})_{k=1:K,\,t\in\N_0}$.
Nevertheless,
we remark that the sample sizes required to control the estimation error 
of the modified entropic OT map estimator 
in Proposition~\ref{prop: OTmap-estimator-entropic}\ref{props: OTmap-estimator-entropic-consistency}
scale exponentially in~$d$.
This is a consequence of 
the estimation error of the entropic OT map estimators 
in \citep[Theorem~4 \& Theorem~5]{pooladian2021entropic},
and is fundamentally constrained by
the statistical estimation rate of OT maps 
between
general non-parametric probability measures;
see, e.g., the lower bound for the estimation error in \citep[Theorem~6]{hutter2021minimax}.
We will provide
in Section~\ref{ssec: numerics-additional-comments}
practical guidelines for implementing our algorithm, which have been shown to be effective in our numerical experiments.

\begin{remark}[An alternative choice of the admissible OT map estimator]\label{rmk: alternative-estimators}
Apart from the modified entropic OT map estimator introduced in this subsection, 
an alternative choice of admissible OT map estimator which is also
computationally tractable is the convex least squares estimator of \citet*[Proposition~16]{manole2024plugin}, although it needs to be appropriately modified to possess the strong convexity and differentiability properties in Assumption~\ref{asp: OTmap-estimator}\ref{asps: OTmap-estimator-shape} (e.g.,\@ by imposing shape constraints).
However, in this paper, we choose to focus on the entropic OT map estimator due to its superior computational efficiency. 
\end{remark}

\begin{remark}[Sensitivity of Algorithm~\ref{algo: concrete} to suboptimal Karcher means]
	As discussed in Section~\ref{ssec: preliminary-OT} and in Section~\ref{ssec: main-convergence-conditions}, there may exist more than one fixed-point of the $G$-operator, as the barycenter functional~$V$ is not geodesically convex in general and may attain multiple Karcher means (i.e., stationary points). 
	Therefore, 
	it is theoretically possible for
	Algorithm~\ref{algo: concrete} to be stuck around a suboptimal Karcher mean indefinitely, 
	and thus fail to converge to the underlying $\CW_2$-barycenter.
	To examine how sensitive Algorithm~\ref{algo: concrete} is to Karcher means other than the $\CW_2$-barycenter, we consider a simple problem instance with multiple Karcher means that was provided in \citet[Example~2.2]{backhoff2025stochastic}. 
	Specifically, we initialize Algorithm~\ref{algo: concrete} around a suboptimal Karcher mean and detect whether (samples of) the generated measures move towards the optimal Karcher mean that is the $\CW_2$-barycenter. 
	We empirically observe that the algorithm exhibits a degree of robustness: when provided with appropriately scaled regularization parameters, the iterates tend to escape suboptimal fixed-points and converge towards the barycenter in a handful of iterations.
	We remark that our observation is consistent with prior studies revealing the important role of regularization in overcoming suboptimal solutions; see, for example, \citet[Section~6]{chizat2025doubly}.
	Details of this numerical instance are provided in Appendix~\ref{apx: multiple-karcher-means}.
	Again, we emphasize that this non-uniqueness issue reflects an inherent theoretical limitation shared by all fixed-point and gradient-based methods for computing Wasserstein barycenters.
\end{remark}

\section{Synthetic problem instance generation: a novel algorithm}\label{sec: generation-instance}
To enable quantitative evaluation and comparison across Wasserstein barycenter algorithms, we introduce in this section a novel algorithm for generating synthetic problem instances with general continuous non-parametric input measures
whose ground-truth \mbox{$\CW_2$-barycenter} is (approximately) known.
Our construction is partially inspired by the properties of entropic OT map estimators studied in Section~\ref{ssec: entropic-estimator}.
Compared to existing instance-generating schemes in the literature, our algorithm offers desirable properties in terms of both computational efficiency and the quality of the generated instances.
The proposed approach is used in one of the numerical experiments presented in Section~\ref{sec: numerics}. 

\subsection{Motivation}\label{ssec: generation-instance-motivation}
A major challenge of quantitatively evaluating any method that approximates the $\CW_2$-barycenter is the absence of the ground-truth barycenter. 
In many existing studies, the input measures used in empirical experiments are restricted to specific parametric families of distributions, most notably elliptical distributions (see, e.g., \citep[Definition~3.26]{mcneil2015quantitative}), 
for which the ground-truth $\CW_2$-barycenter can be efficiently approximated to very high precision \citep{alvarez2016fixed}.
To evaluate approximate $\CW_2$-barycenters computed by various algorithms for non-elliptical measures, 
a common practice is to conduct experiments on low-dimensional imaging datasets and visually assess images generated from the approximate $\CW_2$-barycenters. 
However, such approaches rely purely on human judgement and lack explicit numerical evidence without access to the ground-truth barycenters. 

It is therefore important to seek problem instances with non-parametric free-support input measures where the ground-truth $\CW_2$-barycenter is a priori known or approximately known, 
such that quantitative inspections of empirical approximation errors can be conducted.
To this end, \citet{korotin2022wasserstein} proposed a method of generating input measures using an initial measure which ends up being exactly the $\CW_2$-barycenter, via exploiting the convexity and congruency properties inherited by the Brenier potential functions (see \citep[Section~5]{korotin2022wasserstein}). Although their method serves as a reasonable benchmark in many computer vision and imaging applications, the conjugacy operation therein incurs computational burdens, and the constructed congruent functions suffer from limited curvatures. As a consequence, the resulting input measures exhibit little differences in distributional features and mimic pushforwards of the initial measure under certain close-to-affine transformations, which hinders the generalizability of the generated problem instance.

As such, our goal is to provide a flexible algorithm for synthetically generating problem instances with the following desirable properties:  
\begin{enumerate}[label = (P\arabic*),beginpenalty=10000]
	\item\label{property: ground-truth}the ground-truth barycenter is \textit{a priori} known, or at least approximately known, thus can be used for evaluating the efficacy of a candidate Wasserstein barycenter algorithm;
	\item\label{property: input-structure}the input measures should allow fairly general non-parametric forms and exhibit non-trivial and distinct distributional features; 
	\item\label{property: computational-efficiency}both the construction of the input measures and the subsequent sampling are computationally efficient in practice.
\end{enumerate}

\subsection{Algorithm specifics}\label{ssec: generation-instance-specifics}
We are now ready to introduce in detail our synthetic generation algorithm presented in Algorithm~\ref{algo: generating-problem-instance}. 
The inputs of Algorithm~\ref{algo: generating-problem-instance} are specified in the following Setting~\ref{sett: config-generation}, 
after which we elaborate on the intuition and the theoretical rationale underlying the method.

\begin{algorithm}[t]
	\KwIn{
		$\bar{\mu}\in\CP_{2,\AC}(\R^d)$, 
		$K\in\N$,
		$w_1>0,\ldots,w_K>0$,
		$\widetilde{K}\in\N$, 
		$\big(\underline{\lambda}_{\widetilde{k}}, \gamma_{\widetilde{k}},\alpha_{\widetilde{k}}, n_{\widetilde{k}},(g_{\widetilde{k},j},\BIy_{\widetilde{k},j})_{j=1:n_{\widetilde{k}}}\big)_{\widetilde{k}=1:\widetilde{K}}$, 
		$\Phi:\{1,\ldots, \widetilde{K},\allowbreak-1,\ldots,-\widetilde{K}\}\to\{1,\ldots,K\}$,
		$(\BA_k, \BIb_k)_{k = 1:K}$, 
		$\xi \in [0, 1)$, 
		$\mathtt{TRUNCATE} \in \{\mathtt{True}, \mathtt{False}\}$,
		and closed sets subsets $(\CY_k)_{k=1:K}$ of $\R^d$
		satisfying Setting~\ref{sett: config-generation}.}
	\KwOut{$(\nu_k,T_k)_{k = 1: K}$, $V_{\MINSUB}$.}
	\nl\For{$\widetilde{k}=1,\ldots,\widetilde{K}$}{
	\nl\label{alglin: synthetic-generation-entropic1}%
	Define $\eta_{\widetilde{k},j}(\BIx):=\frac{\exp\Big({\textstyle\frac{g_{\widetilde{k},j} + \langle\BIy_{\widetilde{k},j},\BIx\rangle}{\gamma_{\widetilde{k}}}}\Big)}{\sum_{j'=1}^{n_{\widetilde{k}}}\exp\Big({\textstyle\frac{g_{\widetilde{k},j'} + \langle\BIy_{\widetilde{k},j'},\BIx\rangle}{\gamma_{\widetilde{k}}}}\Big)}$ $\forall\BIx\in\R^d$, $\forall 1\le j\le n_{\widetilde{k}}$. \\

	\nl\label{alglin: synthetic-generation-entropic2}%
	Define $\Beta_{\widetilde{k}}(\BIx):=\big(\eta_{\widetilde{k},1}(\BIx),\ldots,\eta_{\widetilde{k},{n_{\widetilde{k}}}}(\BIx)\big)^\TRANSP\in\R^{n_{\widetilde{k}}}$ $\forall\BIx\in\R^d$. \\

	\nl\label{alglin: synthetic-generation-entropic3}%
	Define $\BY_{\widetilde{k}}:=\left(\begin{smallmatrix}
			| & | &  & | \\
			\BIy_{\widetilde{k},1} & \BIy_{\widetilde{k},2} & \cdots & \BIy_{\widetilde{k},n_{\widetilde{k}}} \\
			| & | &  & |
	  		\end{smallmatrix}\right)\in\R^{d\times n_{\widetilde{k}}}$.\\
	
	\nl\label{alglin: synthetic-generation-smoothness}%
	Choose 
	$\overline{\lambda}_{\widetilde{k}} \ge \frac{1}{\gamma_{\widetilde{k}}}\max_{\BIx\in\R^d}\Big\{e_{\MAXSUB}\Big(\BY_{\widetilde{k}}\big(\diag(\Beta_{\widetilde{k}}(\BIx))-\Beta_{\widetilde{k}}(\BIx)\Beta_{\widetilde{k}}(\BIx)^\TRANSP\big)\BY_{\widetilde{k}}^\TRANSP\Big)\Big\} + 2\underline{\lambda}_{\widetilde{k}}$.\\
	
	\nl\label{alglin: synthetic-generation-auxiliary-map}%
	Define
	$\widetilde{T}_{\widetilde{k}}(\BIx):=\Big(\sum_{j=1}^{n_{\widetilde{k}}}\eta_{\widetilde{k},j}(\BIx)\BIy_{\widetilde{k},j}\Big) + \underline{\lambda}_{\widetilde{k}}\BIx$ $\forall\BIx\in\R^d$ and
	$\widetilde{T}_{-\widetilde{k}}(\BIx):= \overline{\lambda}_{\widetilde{k}}\BIx - \widetilde{T}_{\widetilde{k}}(\BIx)$ $\forall\BIx\in\R^d$.\\
	}

	\nl \For{$\widetilde{k}=1,\ldots,\widetilde{K}$}{
		\nl\label{alglin: synthetic-generation-multiplier}%
		Set
		$\beta_{-\widetilde{k}} \leftarrow (1 - \xi) \alpha_{\widetilde{k}}\Big(\sum_{\widetilde{k}' = 1}^{\widetilde{K}}w_{\Phi(-\widetilde{k}')}\alpha_{\widetilde{k}}  \overline{\lambda}_{\widetilde{k}'}\Big)^{-1} \in (0, \infty)$, 
		$\beta_{\widetilde{k}} \leftarrow \frac{w_{\Phi(- \widetilde{k})}}{w_{\Phi(\widetilde{k})}} \beta_{-\widetilde{k}}\in (0, \infty)$.\\
	}

	\nl\For{$k=1,\ldots,K$}{

	\nl\label{alglin: synthetic-generation-maps}%
	Define 
	$T_k(\BIx):=\Big(\sum_{i\in\Phi^{-1}(k)}\beta_i\widetilde{T}_{i}(\BIx)\Big) + \xi (\BA_k\BIx + \BIb_k)$ $\forall \BIx\in\R^d$. \\

	\nl\If{$\mathtt{TRUNCATE} = \mathtt{True}$}{
		\nl\label{alglin: synthetic-generation-truncated}%
		Set $\nu_k\leftarrow (T_k\sharp\bar{\mu})|_{\CY_{k}}$.\\
	}

	\nl\Else{
		\nl\label{alglin: synthetic-generation-pushforwards}%
		Set $\nu_k\leftarrow T_k\sharp\bar{\mu}$.\\
	}

	}

	\nl\label{alglin: synthetic-generation-LB}
	$V_{\MINSUB}\leftarrow \int_{\R^d}\sum_{k=1}^{K}w_k\big\|\BIx-T_k(\BIx)\big\|^2\DIFFM{\bar{\mu}}{\DIFF\BIx}$.

	\nl \Return $(\nu_k, T_k)_{k = 1: K}$, $V_{\MINSUB}$.\\
	\caption{\textbf{Synthetic generation of Wasserstein barycenter problem instance.}}\label{algo: generating-problem-instance}
\end{algorithm}

\begin{setting}[Inputs of Algorithm~\ref{algo: generating-problem-instance}]\label{sett: config-generation}
	Let $\bar{\mu}\in\CP_{2,\AC}(\R^d)$,
	let $K\in\N \intersects [2,\infty)$, and let $w_1>0,\ldots,w_K>0$ satisfy $\sum_{k=1}^Kw_k=1$.
	Let $\widetilde{K}\in \N \intersects [2, \infty)$ satisfy $2\widetilde{K}\ge K$, 
	and for $\widetilde{k}=1,\ldots,\widetilde{K}$, 
	let 
		$\underline{\lambda}_{\widetilde{k}}> 0$, 
		$\gamma_{\widetilde{k}}>0$, 
		$\alpha_{\widetilde{k}}>0$,
		$n_{\widetilde{k}}\in\N$, 
		$(g_{\widetilde{k},j})_{j=1:n_{\widetilde{k}}}\subset\R$, and $(\BIy_{\widetilde{k},j})_{j=1:n_{\widetilde{k}}}\subset\R^d$.
	Moreover, let $\Phi:\{1,\ldots, \widetilde{K},\allowbreak-1,\ldots,-\widetilde{K}\}\to\{1,\ldots,K\}$ be a surjective map.
	Furthermore, let $(\BA_k)_{k=1:K} \subset \mathbb{S}_{++}^d, (\BIb_k)_{k = 1:K}\subset\R^d$ satisfy $\sum_{k=1}^K w_k \BA_k= \BI_{d}$ and
	$\sum_{k=1}^K w_k\BIb_k = \veczero_d$.
	Lastly, let $\xi \in [0, 1)$,
	let $\mathtt{TRUNCATE} \in \{\mathtt{True}, \mathtt{False}\}$ be a Boolean variable,
	and let $(\CY_k)_{k=1:K}$ 
	be closed subsets of $\R^d$.
\end{setting}

Let us now provide the motivation and explanation of each operation carried out in Algorithm~\ref{algo: generating-problem-instance}.
Regarding the property~\ref{property: ground-truth}, Algorithm~\ref{algo: generating-problem-instance} is similar to the method of \citet{korotin2022wasserstein} in 
generating problem instances out of a user-specified measure $\bar{\mu}\in\CP_{2,\AC}(\R^d)$ as the underlying \mbox{$\CW_2$-barycenter}.
Regarding the property~\ref{property: input-structure}, the input measures $\nu_1,\ldots,\nu_K$ of our generated problem instances will be characterized via the pushforwards of $\bar{\mu}$ by several tailored transport maps $T_1,\ldots,T_K:\R^d\to\R^d$ (which are defined in Line~\ref{alglin: synthetic-generation-maps}), subject to a possible truncation operation indicated by the Boolean variable $\mathtt{TRUNCATE}\in\{\mathtt{True}, \mathtt{False}\}$ given in Setting~\ref{sett: config-generation}; we defer the detailed discussion regarding when to use $\mathtt{TRUNCATE}=\mathtt{True}$ and $\mathtt{TRUNCATE}=\mathtt{False}$
after Proposition~\ref{prop: synthetic-generation}.

Specifically, our construction of the transport maps $T_1,\ldots,T_K:\R^d\to\R^d$ involves building $2 \widetilde{K}\in\N$ auxiliary transport maps, namely $(\widetilde{T}_{-\widetilde{k}}, \widetilde{T}_{\widetilde{k}})_{\widetilde{k} = 1: \widetilde{K}}$,
for some $2\widetilde{K}\ge K$; see Lines~\ref{alglin: synthetic-generation-entropic1}--\ref{alglin: synthetic-generation-auxiliary-map}.
Intuitively, 
for $\widetilde{k}=1,\ldots,\widetilde{K}$,
Lines~\ref{alglin: synthetic-generation-entropic1},
\ref{alglin: synthetic-generation-entropic2},
\ref{alglin: synthetic-generation-auxiliary-map}
build 
$\widetilde{T}_{\widetilde{k}}$ upon the configurations $(g_{\widetilde{k},j},\BIy_{\widetilde{k},j})_{j=1:n_{\widetilde{k}}}$ in Setting~\ref{sett: config-generation}
via the barycentric projection step used when constructing the entropic OT map estimator in Proposition~\ref{prop: OTmap-estimator-entropic}, up to an additional linear term $\underline{\lambda}_{\widetilde{k}}\BIx$ guaranteeing that $\widetilde{T}_{\widetilde{k}}$ is the gradient of a $\underline{\lambda}_k$-strongly convex function.
One can therefore customize the shapes of the pushforward measures $(\widetilde{T}_{\widetilde{k}} \sharp \bar{\mu})_{{\widetilde{k} = 1: \widetilde{K}}}$ by providing appropriate specifications of $(g_{\widetilde{k},j},\BIy_{\widetilde{k},j})_{j=1:n_{\widetilde{k}}}$. 
For instance, suppose that $(\BIy_{\widetilde{k},j})_{j=1:n_{\widetilde{k}}}$ are independent samples from an auxiliary probability measure $\varkappa_{\widetilde{k}}\in\CP_{2,\AC}(\R^d)$, 
and suppose that $(g_{\widetilde{k},j})_{j=1:n_{\widetilde{k}}}$ are given by an approximately optimal solution of the corresponding dual entropic optimal transport problem 
(\ref{eqn: EOT-dual-discrete})
between the empirical versions of $\bar{\mu}$ and $\varkappa_{\widetilde{k}}$ generated by Sinkhorn's algorithm.
Then, the pushforward measure $\widetilde{T}_{\widetilde{k}} \sharp \bar{\mu}$ is expected to inherit the distributional features of $\varkappa_{\widetilde{k}}$.

On the other hand, 
for $\widetilde{k}=1,\ldots,\widetilde{K}$,
the constructions of $\overline{\lambda}_{\widetilde{k}}>0$
and $\widetilde{T}_{-\widetilde{k}}$
in Line~\ref{alglin: synthetic-generation-smoothness}
and Line~\ref{alglin: synthetic-generation-auxiliary-map}
ensure that 
$\widetilde{T}_{-\widetilde{k}}$ is the gradient of a $\underline{\lambda}_k$-strongly convex function
and that 
$\widetilde{T}_{-\widetilde{k}}(\BIx)+\widetilde{T}_{\widetilde{k}}(\BIx)=\overline{\lambda}_{\widetilde{k}}\BIx$ for all $\BIx\in\nobreak\R^d$.
This guarantees that all the auxiliary transport maps 
$(\widetilde{T}_{-\widetilde{k}}, \widetilde{T}_{\widetilde{k}})_{\widetilde{k} = 1: \widetilde{K}}$
are Lipschitz continuous and strongly monotone, which is essential for generating input measures with the user-specified $\CW_2$-barycenter $\bar{\mu}$
(see the proof of Proposition~\ref{prop: synthetic-generation}).

Subsequently, for $k = 1, \dots, K$, the map $T_k$ is defined as a combination performed on $(\widetilde{T}_{-\widetilde{k}}, \widetilde{T}_{\widetilde{k}})_{\widetilde{k} = 1: \widetilde{K}}$
in Line~\ref{alglin: synthetic-generation-maps},
specified by the surjective map $\Phi$ in Setting~\ref{sett: config-generation} and the coefficients $(\beta_{-\widetilde{k}}, \beta_{\widetilde{k}})_{\widetilde{k}=1:\widetilde{K}}$ computed in Line~\ref{alglin: synthetic-generation-multiplier}, 
and subject to an additive affine term $\xi(\BA_k\BIx+\BIb_k)$ in Line~\ref{alglin: synthetic-generation-maps}. 
In this way, $(T_k)_{k = 1: K}$ may partially preserve the features of $(\widetilde{T}_{\widetilde{k}})_{\widetilde{k} = 1: \widetilde{K}}$ and give rise to input measures $\nu_1,\ldots,\nu_K$ with non-trivial distributional features,
thus achieving the desirable property~\ref{property: input-structure}.

The following proposition confirms that the property~\ref{property: ground-truth} is indeed satisfied by any problem instance
generated by Algorithm~\ref{algo: generating-problem-instance}, in either case of truncated or non-truncated input measures $\nu_1, \dots, \nu_K$.

\begin{proposition}[Synthetic generation of Wasserstein barycenter problem instance via Algorithm~\ref{algo: generating-problem-instance}]\label{prop: synthetic-generation}
	Let the inputs of Algorithm~\ref{algo: generating-problem-instance} satisfy Setting~\ref{sett: config-generation}, and let 
	$(\nu_k, T_k)_{k = 1: K}, V_{\MINSUB}$
	be the outputs of Algorithm~\ref{algo: generating-problem-instance}.
	Then, in the case where $\mathtt{TRUNCATE}=\mathtt{False}$,
	the following statements hold.
	\begin{enumerate}[label=(\roman*)]
		\item\label{props: synthetic-generation-notruncation}
		$\nu_k\in\CP_{2,\AC}(\R^d)$ for $k=1,\ldots,K$, and $\bar{\mu}$ is the unique $\CW_2$-barycenter of $\nu_1,\ldots,\nu_K$ with weights $w_1,\ldots,w_K$.

		\item\label{props: synthetic-generation-lowerbound}
		$V_{\MINSUB}=V(\bar{\mu})$, where 
		$V$~is defined in (\ref{eqn: V-definition}) with respect to
		the weights $w_1,\ldots,w_K$ in the inputs.
	\end{enumerate}
	In the case where $\mathtt{TRUNCATE}=\mathtt{True}$,
	let us assume in addition that $\bar{\mu}\in\CM^q_{\FULL}(\R^d)$ for $q\in\N_0$, and that $\CY_1,\ldots,\CY_K\in\CS^q(\R^d)$. 
	Then, the following statements hold.
	\begin{enumerate}[label=(\roman*)]
	\setcounter{enumi}{2}
		\item\label{props: synthetic-generation-truncation}
		$\nu_k\in\CM^q(\R^d)$ for $k=1,\ldots,K$.

		\item\label{props: synthetic-generation-truncation-V-value} 
		For any $\CY_1,\ldots,\CY_K\in\CS^q(\R^d)$,
		one can explicitly construct a term $\epsilon(\CY_1,\ldots,\CY_K)\in(0,\infty)$ such that 
		$V(\bar{\mu})\le \inf_{\mu\in\CP_2(\R^d)}\big\{V(\mu)\big\}+\epsilon(\CY_1,\ldots,\CY_K)$
		and 
		$\Big|V_{\MINSUB}-\inf_{\mu\in\CP_2(\R^d)}\big\{V(\mu)\big\}\Big|
		\le \epsilon(\CY_1,\ldots,\CY_K)$;
		see equations (\ref{eqn: synthetic-generation-potentials}) and (\ref{eqn: synthetic-generation-truncation-error}) in the proof for the explicit definition of $\epsilon(\CY_1,\ldots,\CY_K)$.

		\item\label{props: synthetic-generation-error}
		For $k=1,\ldots,K$, let $(\CY_{k,r})_{r\in\N}$ be a family of increasing sets satisfying $\CY_{k,r}\in\CS^q(\R^d)$, $\CY_{k, r+1}\supseteq\CY_{k, r}$ $\forall r\in\N$,
		as well as $\bigcup_{r\in\N}\CY_{k,r}=\R^d$.
		Then, 
		the term $\epsilon(\,\cdot\,,\ldots,\cdot\,)$ in statement~\ref{props: synthetic-generation-truncation-V-value} satisfies
		$\lim_{r\to\infty}\epsilon(\CY_{1,r},\ldots,\CY_{K,r})=\nobreak0$.
	\end{enumerate}
\end{proposition}

\begin{proof}[Proof of Proposition~\ref{prop: synthetic-generation}]
	See Appendix~\ref{sapx: proof-generation-instance}.
\end{proof}

Therefore, 
in the case where $\mathtt{Truncate} = \mathtt{False}$,
Proposition~\ref{prop: synthetic-generation}\ref{props: synthetic-generation-notruncation} 
and 
Proposition~\ref{prop: synthetic-generation}\ref{props: synthetic-generation-lowerbound}
enable one to evaluate any approximate $\CW_2$-barycenter $\widehat{\mu}$ of the constructed $\nu_1, \dots, \nu_K$ by 
(approximately) computing $V(\widehat{\mu})$ and comparing it with $V_{\MINSUB}=V(\bar{\mu})$, or by
(approximately) computing $\CW_2(\widehat{\mu},\bar{\mu})$.
However, when $\mathtt{Truncate} = \mathtt{False}$, 
the constructed $\nu_1, \dots, \nu_K$ are not necessarily Caffarelli-type admissible probability measures satisfying \ref{setts: Caffarelli-inputs-measures} in Setting~\ref{sett: Caffarelli}, which prevents us from establishing the convergence guarantee of Algorithm~\ref{algo: concrete} as presented in Theorem~\ref{thm: Caffarelli-convergence}. 
In fact, the pushforwards of $\bar{\mu}$ by $T_1, \dots, T_K$ may fail to belong to $\CM^q(\R^d)$ even if $\bar{\mu} \in \CM^q(\R^d)$ is imposed, since $\support\big(T_{1}\sharp\bar{\mu}\big),\ldots,\support\big(T_{K}\sharp\bar{\mu}\big)$ are not necessarily convex.

To this end, Proposition~\ref{prop: synthetic-generation}\ref{props: synthetic-generation-truncation} shows that 
for any $\bar{\mu}\in \CM^q_{\FULL}(\R^d)$,
Algorithm~\ref{algo: generating-problem-instance} with 
$\mathtt{TRUNCATE}=\mathtt{True}$ 
constructs $\nu_1, \dots, \nu_K\in\CM^q(\R^d)$ 
which satisfy \ref{setts: Caffarelli-inputs-measures} in 
Setting~\ref{sett: Caffarelli}.
Moreover, Proposition~\ref{prop: synthetic-generation}\ref{props: synthetic-generation-truncation-V-value} indicates that 
$\bar{\mu}$ approximately solves the minimization problem $\inf_{\mu\in\CP_2(\R^d)}\big\{V(\mu)\big\}$ characterizing the \mbox{$\CW_2$-barycenter},
where the suboptimality is bounded by $\epsilon(\CY_1,\ldots,\CY_K)$.
Additionally, $V_{\MINSUB}$ is approximately equal to 
the minimum value of~$V$ where their absolute difference is also bounded by 
$\epsilon(\CY_1,\ldots,\CY_K)$.
Furthermore, given families of increasing sets $(\CY_{1,r})_{r\in\N},\ldots,(\CY_{K,r})_{r\in\N}$ 
that satisfy the properties in \ref{setts: Caffarelli-family-increasing}
and with $\CY_k\leftarrow \CY_{k,r}$ for $k=1,\ldots,K$, 
Proposition~\ref{prop: synthetic-generation}\ref{props: synthetic-generation-error} shows that the suboptimality gap can be controlled to be arbitrarily small by choosing sufficiently large $r\in\N$.
That is, when $r\in\N$ is large, 
$\bar{\mu}$ is a highly accurate approximate $\CW_2$-barycenter of the generated measures $\nu_1,\ldots,\nu_K$ with weights $w_1,\ldots,w_K$, 
and $V_{\MINSUB}$ can be treated as an approximate lower bound for the barycenter functional~$V$ when we 
quantitatively analyze the empirical approximation error of any $\CW_2$-barycenter algorithm using the generated measures $\nu_1,\ldots,\nu_K$.

\begin{remark}
	Under the settings of Proposition~\ref{prop: synthetic-generation}\ref{props: synthetic-generation-error} where $(\CY_{1,r})_{r\in\N},\ldots,(\CY_{K,r})_{r\in\N}$ are families of increasing sets, 
	the result of \citet*[Proposition~6]{legouic2017existence} about the stability of Wasserstein barycenter can be used to conclude that 
	the unique $\CW_2$-barycenter of $\nu_1,\ldots,\nu_K$ generated by Algorithm~\ref{algo: generating-problem-instance} with $\mathtt{TRUNCATE}\leftarrow \mathtt{True}$ and $\CY_k\leftarrow\CY_{k,r}$ for $k=1,\ldots,K$ 
	converges in $\CW_2$ to~$\bar{\mu}$
	as $r\to\infty$. 
	However, we are unable to get any quantitative bound on their $\CW_2$-distance due to the non-compactness of $\support\big(T_1 \sharp \bar{\mu}\big),\ldots,\support\big(T_K \sharp \bar{\mu}\big)$;
	see the discussion of \citet{carlier2024quantitative}
	about the difficulties of establishing quantitative stability of Wasserstein barycenter.
\end{remark}

Finally, on the computational side, we show that Algorithm~\ref{algo: generating-problem-instance} possesses the desirable property \ref{property: computational-efficiency}.
Assuming that independent samples from 
$\bar{\mu}\in\CP_{2,\AC}(\R^d)$ in the inputs of Algorithm~\ref{algo: generating-problem-instance}
can be efficiently generated, one can efficiently generate independent samples from $\nu_1,\ldots,\nu_K$ in the outputs of Algorithm~\ref{algo: generating-problem-instance} as follows.
For any $k\in\{1,\ldots,K\}$,
one first generates $S \in \N$ independent samples $\BIZ^{[1]},\ldots,\BIZ^{[S]}$ from $\bar{\mu}$ and then
computes their images $T_k\big(\BIZ^{[1]}\big),\ldots,T_k\big(\BIZ^{[S]}\big)$ under $T_k$.
Subsequently, in the case where $\mathtt{TRUNCATE}=\mathtt{False}$, 
$\big\{T_k\big(\BIZ^{[1]}\big),\ldots,T_k\big(\BIZ^{[S]}\big)\big\}$ are $S$ independent samples from $\nu_k$.
In the case where $\mathtt{TRUNCATE}=\mathtt{True}$,
one performs an extra rejection step where 
the samples in
$\big\{T_k\big(\BIZ^{[1]}\big),\ldots,T_k\big(\BIZ^{[S]}\big)\big\}$
that do not belong to $\CY_k$ are rejected and discarded.
In this way, 
$\big\{T_k\big(\BIZ^{[1]}\big),\ldots,T_k\big(\BIZ^{[S]}\big)\big\}\intersects \CY_k$ are independent samples from $\nu_k$.

\section{Numerical experiments}\label{sec: numerics}
In this section, we evaluate the performance of our proposed stochastic fixed-point algorithm (Algorithm~\ref{algo: concrete}) driven by the modified entropic OT map estimator in Proposition~\ref{prop: OTmap-estimator-entropic}. 
Our numerical experiments include problem instances generated by our synthetic generation algorithm (Algorithm~\ref{algo: generating-problem-instance}) as well as a subset posterior aggregation problem using an external dataset.
The $\mathtt{Python}$ implementation for our proposed algorithm and all code for our numerical experiments can be accessed at the GitHub repository accompanying this paper: \url{https://github.com/CHENZeyi1101/WB_Algo}.
We further compare our algorithm with several free-support algorithms in the literature for approximating the Wasserstein barycenter introduced by \citet*{cuturi2014fast}, \citet*{li2020continuous}, \citet*{fan2021scalable}, \citet*{korotin2022wasserstein}, and \citet*{kim2025optimal}.
Among them, the algorithm of \citet{kim2025optimal} is grid-based and only supports two-dimensional instances.
A brief description of these benchmark algorithms is provided in Table~\ref{tab: benchmark-summary} in Appendix~\ref{apx: numerics}.

\subsection{Experiment~1: synthetic problem instances}\label{ssec: exp-synthetic-problem-instances}
In this experiment, we use Algorithm~\ref{algo: generating-problem-instance} in Section~\ref{sec: generation-instance} to synthetically generate problem instances with non-parametric continuous input measures $\nu_1, \dots, \nu_K$ from a pre-specified probability measure $\bar{\mu}$. 
To examine the performance of our algorithm in both low- and high-dimensional regimes, we generate two problem instances with $d = 2$ and $d = 10$, respectively, which are referred as [SG-2d] and [SG-10d] in the rest of this section.

\subsubsection*{Experimental setup}
For both [SG-2d] and [SG-10d], the approximate ground-truth $\CW_2$-barycenter $\bar{\mu}$ is specified as a mixture of 5 Gaussian measures supported on $\R^2$ with randomly generated mean vectors and covariance matrices.
Therefore, it holds that $\bar{\mu} \in \CM^q_{\FULL}(\R^d)$ for all $q\in\N_0$.
We let $K=5$, $\widetilde{K}=5$ in [SG-2d]
and let $K=10$, $\widetilde{K}=10$ in [SG-10d].
For both instances, equal weights $w_1 = \cdots = w_K = \frac{1}{K}$ are considered, and $(n_{\widetilde{k}})_{\widetilde{k} = 1: \widetilde{K}}$ are also set to be equal. 
Regarding $(g_{\widetilde{k},j},\BIy_{\widetilde{k},j})_{j=1:n_{\widetilde{k}}}$ for each $\widetilde{k} = 1, \dots, \widetilde{K}$, we consider auxiliary probability measures $\varkappa_1, \dots, \varkappa_{\widetilde{K}}\in \CP_{2, \AC}(\R^2)$ all being mixtures of 5 Gaussian measures
(recall our discussions in Section~\ref{ssec: generation-instance-specifics} about the role of $\varkappa_1, \dots, \varkappa_{\widetilde{K}}$).
Then,
for $\widetilde{k}=1,\ldots,\widetilde{K}$,
$(\BIx_{j})_{j=1:n_{\widetilde{k}}}$ are $n_{\widetilde{k}}$ independent samples drawn from $\bar{\mu}$, 
whereas $(\BIy_{\widetilde{k},j})_{j=1:n_{\widetilde{k}}}$ are $n_{\widetilde{k}}$ independent samples drawn from $\varkappa_{\widetilde{k}}$.
Subsequently, $(g_{\widetilde{k},j})_{j=1:n_{\widetilde{k}}}$ are 
computed by Sinkhorn's algorithm that solves the dual EOT problem 
(\ref{eqn: EOT-dual-discrete})
between the empirical measures 
$\frac{1}{n_{\widetilde{k}}} \sum_{j = 1}^{n_{\widetilde{k}}} \delta_{\BIx_j}$ and 
$\frac{1}{n_{\widetilde{k}}} \sum_{j = 1}^{n_{\widetilde{k}}} \delta_{\BIy_{\widetilde{k}, j}}$. 
The remaining input variables, i.e., 
$\big(\underline{\lambda}_{\widetilde{k}}, \gamma_{\widetilde{k}},\alpha_{\widetilde{k}}\big)_{\widetilde{k}=1:\widetilde{K}}$, $\Phi$, $(\BA_k, \BIb_k)_{k = 1:K}$, $\xi$,
are all configured to satisfy Setting~\ref{sett: config-generation}; 
see 
the concrete configurations in
our code in the GitHub repository.

To further ensure that $\nu_1,\ldots,\nu_K$ are Caffarelli-type admissible input measures satisfying Setting~\ref{sett: Caffarelli}, we set $\mathtt{TRUNCATE}=\mathtt{True}$ in the inputs of Algorithm~\ref{algo: generating-problem-instance} in both [SG-2d] and [SG-10d].
Specifically, 
for $k = 1, \dots, K$, 
we set $\CY_k=\bar{B}(\veczero_d,r)$ 
for large~$r$ to truncate the pushforward measures in Line~\ref{alglin: synthetic-generation-truncated} of Algorithm~\ref{algo: generating-problem-instance}.
Consequently, Proposition~\ref{prop: synthetic-generation}\ref{props: synthetic-generation-truncation} guarantees $\nu_1, \dots, \nu_K \in \CM^q(\R^d)$ for all $q\in\N_0$.
Their probability density functions are visualized in Figure~\ref{fig: input-kde-pdf} via kernel density estimation (KDE), each based on $10^4$ independent samples.
Subsequently, the value of $V_{\MINSUB}$ in Line~\ref{alglin: synthetic-generation-LB} of Algorithm~\ref{algo: generating-problem-instance}
is computed via Monte Carlo integration using $10^7$ independent samples from $\bar{\mu}$.

\begin{figure}[t]
    \centering
    \includegraphics[width=0.8\textwidth]{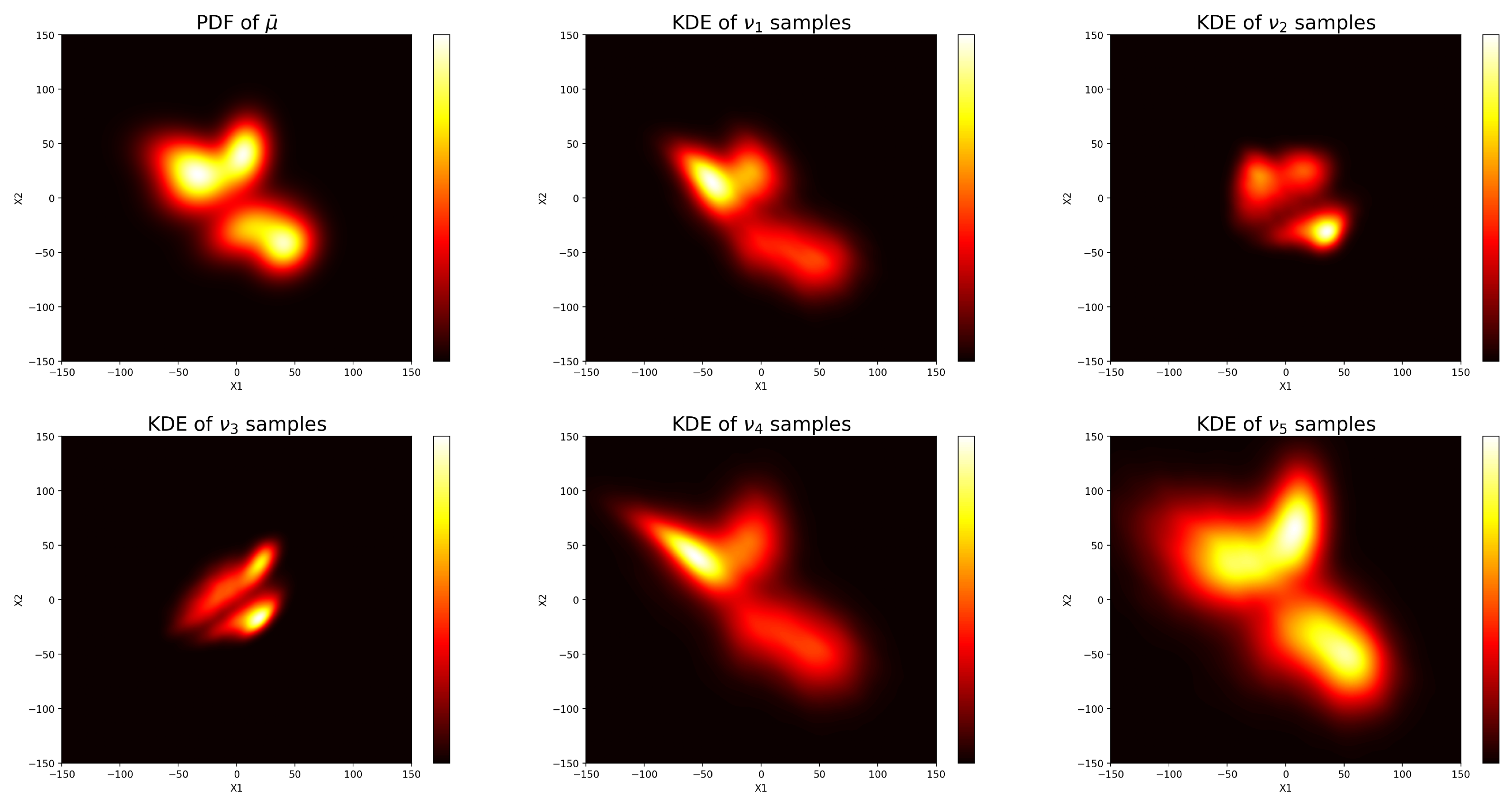}
    \caption{Probability density function of $\bar{\mu}$ and the KDEs of the input measures $\nu_1, \dots, \nu_5$ in [SG-2d].}\label{fig: input-kde-pdf}
\end{figure}

\begin{figure}[t]
    \centering
    \begin{subfigure}{\textwidth}
        \centering
        \begin{minipage}{0.48\textwidth}
            \centering
            \includegraphics[width=\linewidth]{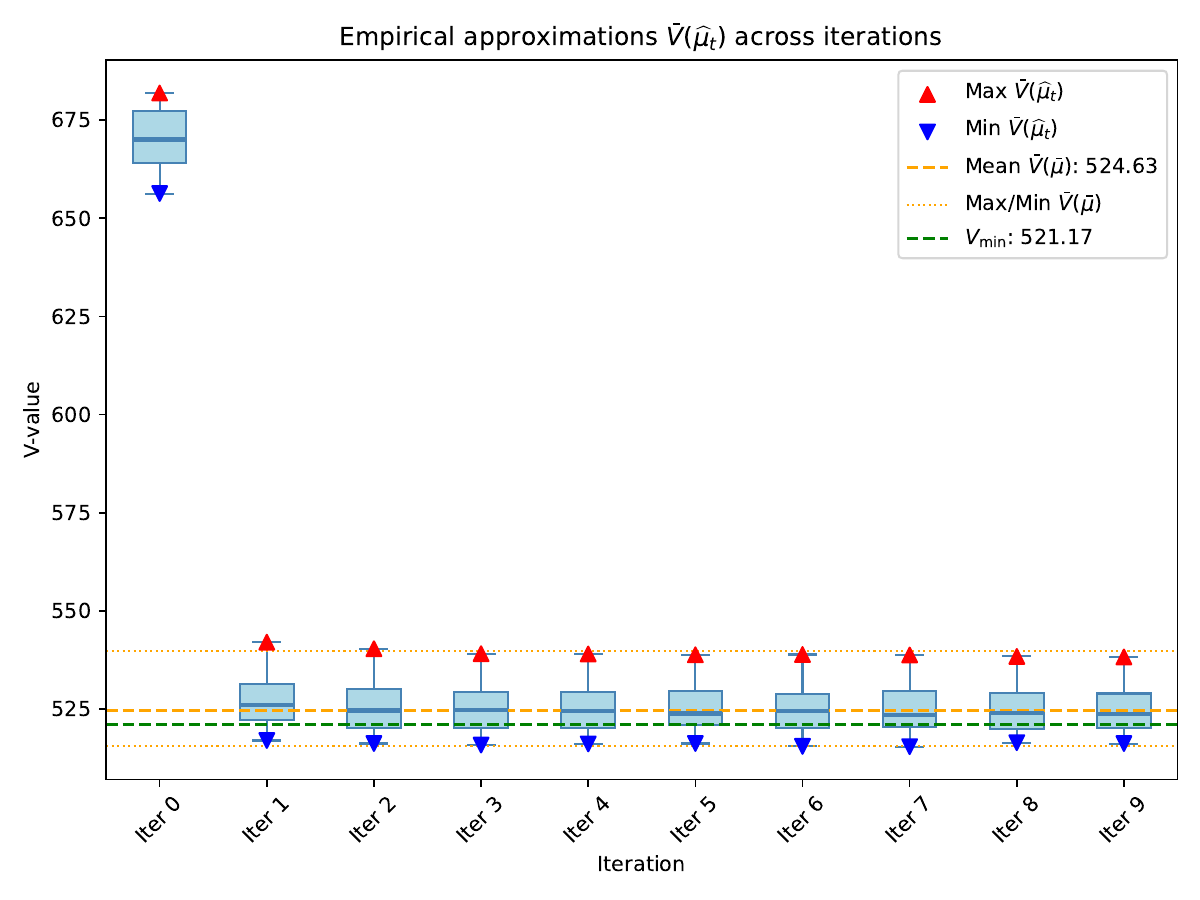}
        \end{minipage}
        \begin{minipage}{0.48\textwidth}
            \centering
            \includegraphics[width=\linewidth]{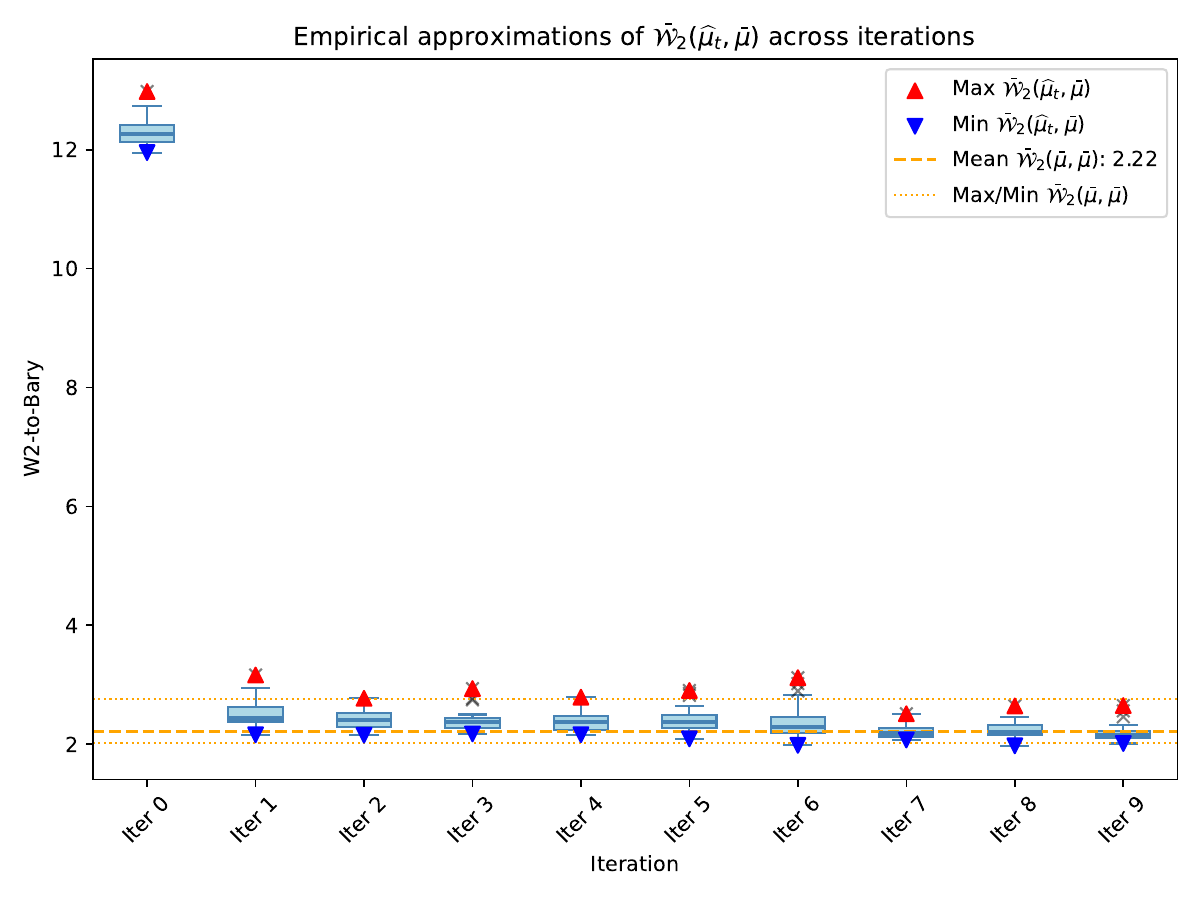}
        \end{minipage}
        \caption{Empirical evaluations of $(\widehat{\mu}_t)_{t = 0:9}$ in [SG-2d].}\label{fig: stochastic-FP-SG-2d}
    \end{subfigure}
    \vspace{0.5em}
    \begin{subfigure}{\textwidth}
        \centering
        \begin{minipage}{0.48\textwidth}
            \centering
            \includegraphics[width=\linewidth]{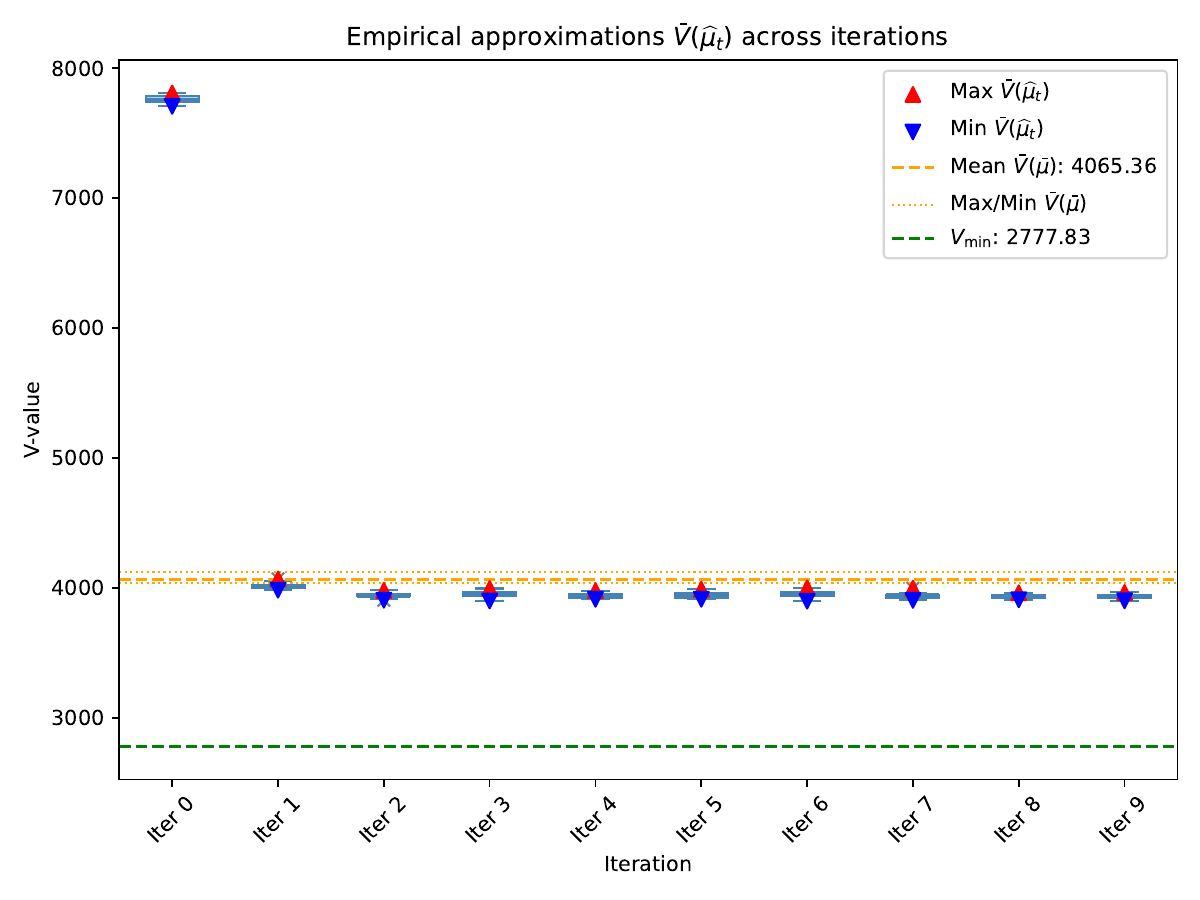}
        \end{minipage}
        \begin{minipage}{0.48\textwidth}
            \centering
            \includegraphics[width=\linewidth]{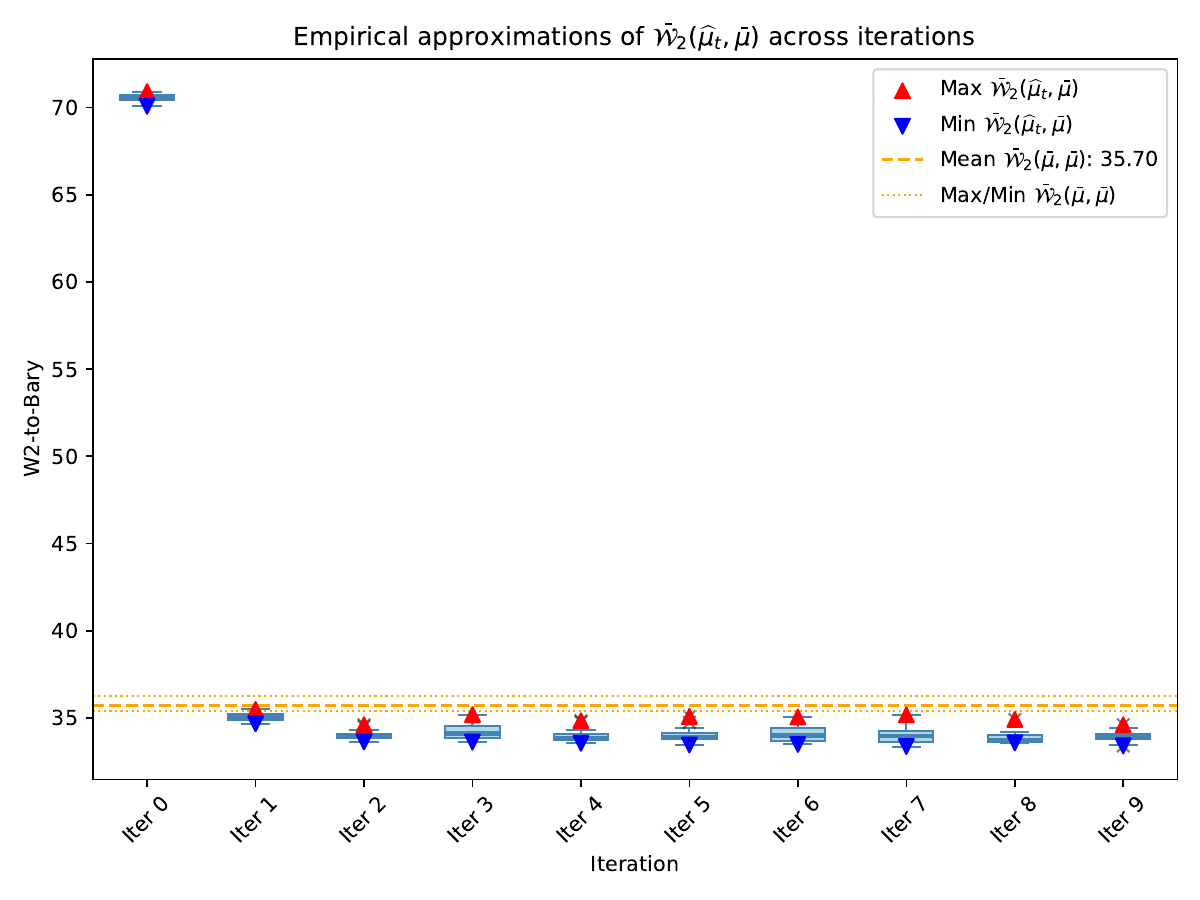}
        \end{minipage}
        \caption{Empirical evaluations of $(\widehat{\mu}_t)_{t = 0:9}$ in [SG-10d].}\label{fig: stochastic-FP-SG-10d}
    \end{subfigure}
    \caption{Box plots showing the empirical performance of $(\widehat{\mu}_t)_{t = 0:9}$ computed by Algorithm~\ref{algo: concrete} in [SG-2d] and [SG-10d], respectively. 
	\textbf{Left}: 
	values of $(\bar{V}(\widehat{\mu}_t))_{t = 0:9}$.
	\textbf{Right}: values of $(\bar{\CW}_2(\widehat{\mu}_t, \bar{\mu}))_{t = 0:9}$.}\label{fig: stochastic-FP-SG}
\end{figure}

We adopt two metrics to quantitatively evaluate the qualities of approximate barycenter candidates computed by our algorithm 
and the aforementioned benchmark algorithms.
For every approximate barycenter $\widehat{\mu}\in\CP_{2}(\R^d)$,
we approximately compute $V(\widehat{\mu})$
and $\CW_2(\widehat{\mu},\bar{\mu})$,
where we approximate the $\CW_2$-distance between each pair of probability measures 
via empirical approximation with $10^4$ independent samples,
that is, we generate $10^4$ samples from both the source and the target probability measures and then compute the $\CW_2$-distance between the resulting empirical measures.
This empirical approximation is repeated 20 times,
and we subsequently denote the 
average values by
$\bar{V}(\widehat{\mu})$ and
$\bar{\CW}_2(\widehat{\mu},\bar{\mu})$, respectively.
The value of $\bar{V}(\widehat{\mu})$ is
compared with $V_{\MINSUB}$ in the outputs of Algorithm~\ref{algo: generating-problem-instance}.
Besides $V_{\MINSUB}$,
we additionally compute $\bar{V}(\bar{\mu})$ as the empirical approximation of $V(\bar{\mu})$.
Despite that Proposition~\ref{prop: synthetic-generation}\ref{props: synthetic-generation-truncation-V-value} and Proposition~\ref{prop: synthetic-generation}\ref{props: synthetic-generation-error}
suggest
$\big|V(\bar{\mu})-V_{\MINSUB}\big|$ to be small,
the aforementioned empirical approximations of $\CW_2$-distances incur positive biases
making $V(\bar{\mu})$ larger than $V_{\MINSUB}$ in this experiment,
which is particularly noticeable in higher dimensions; 
see, e.g., the bottom panels of Figure~\ref{fig: stochastic-FP-SG}.
Because of this positive bias, 
we also compute 
$\bar{\CW}_2(\bar{\mu},\bar{\mu})$ via empirical approximation
to put the empirically approximated error $\bar{\CW}_2(\widehat{\mu},\bar{\mu})$ into perspective 
in order to fully gauge the quality of the approximate barycenter candidate $\widehat{\mu}$.

\subsubsection*{Result analysis}
Figure~\ref{fig: stochastic-FP-SG} presents the empirical performance of $(\widehat{\mu}_t)_{t = 0:9}$ generated by Algorithm~\ref{algo: concrete} in both [SG-2d] and [SG-10d], which includes the evaluation of $(\bar{V}(\widehat{\mu}_t))_{t = 0:9}$ and $(\bar{\CW}_2(\widehat{\mu}_t, \bar{\mu}))_{t = 0:9}$ across iterations. 
Specifically, each box plot summarizes the distribution of the evaluated values across 20 empirical approximations at a given iteration (including median, interquartile range, and whiskers), with overlaid markers for the minimum and maximum.
We also plot a yellow dashed line in each panel showing $\bar{V}(\bar{\mu})$ or $\bar{\CW}_2(\bar{\mu}, \bar{\mu})$ for quantitative comparison, 
each accompanied by dotted lines below and above representing the respective minimum and maximum of the empirical approximations.
Furthermore, the computed value of $V_{\MINSUB}$ in each instance is shown as a green dashed line in the left panels of Figure~\ref{fig: stochastic-FP-SG}.

It is observed that both metrics $\bar{V}(\widehat{\mu}_t)$ and $\bar{\CW}_2(\widehat{\mu}_t, \bar{\mu})$ witness a sharp descent to near-optimal values that match the corresponding values attained by $\bar{\mu}$ (indicated by the yellow dash lines) after a single iteration, which demonstrates that Algorithm~\ref{algo: concrete} efficiently converges to a near-optimal candidate measure for approximating the $\CW_2$-barycenter of $\nu_1, \dots, \nu_K$.
In particular, $\bar{V}(\widehat{\mu}_t)$ and $\bar{\CW}_2(\widehat{\mu}_t, \bar{\mu})$ remain close to $\bar{V}(\bar{\mu})$ and $\CW_2(\bar{\mu}, \bar{\mu})$, respectively, for $t = 1, \dots, 9$, which demonstrates the stability of Algorithm~\ref{algo: concrete}.
The differences between $\bar{V}(\bar{\mu})$ and $V_{\MINSUB}$ (i.e.,\@ the gap between the green and yellow dash lines in the top-left panel of Figure~\ref{fig: stochastic-FP-SG}), as well as the values of $\bar{\CW}_2(\bar{\mu}, \bar{\mu})$, 
reflect the positive estimation bias incurred when estimating the $\CW_2$-distance between continuous measures using their empirical counterparts.

\begin{table}[t]
    \centering
    \begin{adjustbox}{max width=0.65\textwidth}
    \begin{tabular}{lrrrr}
        \hline
        \rule{0pt}{2.5ex}\multirow{2}{*}{\textbf{Algorithm}} & \multicolumn{2}{c}{\textbf{$\bar{V}(\widehat{\mu})$}} & \multicolumn{2}{c}{\textbf{$\bar{\mathcal{W}}_2(\widehat{\mu}, \bar{\mu})$}} \\
        \cmidrule(lr){2-3} \cmidrule(lr){4-5}
        & \textbf{Mean} & \textbf{(Min, Max)} & \textbf{Mean} & \textbf{(Min, Max)} \\
        \hline
        \multicolumn{5}{l}{\textbf{Problem instance: [SG-2d]}} \\
        \hline
        Algorithm~\ref{algo: concrete} (ours) & 524.66 & (516.21, 538.27) & 2.20 & (2.01, 2.65) \\
        \citet{fan2021scalable} & 526.85 & (518.04, 540.93) & 2.67 & (2.40, 2.93) \\
        \citet{korotin2022wasserstein} & 525.09 & (516.10, 538.44) & 2.37 & (2.06, 2.92) \\
        \citet{li2020continuous} & 696.46 & (676.47, 719.15) & 13.26 & (12.74, 13.85) \\
        \citet{cuturi2014fast} & 525.02 & (515.81, 539.19) & 2.31 & (2.15, 2.56) \\
        \citet{kim2025optimal} & 542.96 & (527.99, 557.62) & 4.79 & (3.72, 6.20) \\
        \hline
        \multicolumn{5}{l}{\textbf{Problem instance: [SG-10d]}} \\
        \hline
        Algorithm~\ref{algo: concrete} (ours) & 3935.74 & (3900.13, 3964.96) & 33.94 & (33.39, 34.61) \\
        \citet{fan2021scalable} & 4262.36 & (4226.30, 4284.14) & 38.49 & (38.30, 38.83) \\
        \citet{korotin2022wasserstein} & 15221.22 & (15157.87, 15278.74) & 111.49 & (110.80, 111.92) \\
        \citet{li2020continuous} & 4676.26 & (4621.78, 4728.71) & 43.44 & (42.87, 43.87) \\
        \citet{cuturi2014fast} & 3906.16 & (3872.13, 3937.60) & 33.48 & (33.09, 34.08) \\
        \hline
    \end{tabular}
    \end{adjustbox}
    \caption{Performances of algorithms in Experiments [SG-2d] and [SG-10d]. In [SG-2d], 
	$V_{\MINSUB}=521.17$,
	$\bar{V}(\bar{\mu}) = 524.63$, $\bar{\CW}_2(\bar{\mu}, \bar{\mu}) = 2.22$. 
	In [SG-10d], 
	$V_{\MINSUB}=2777.83$,
	$\bar{V}(\bar{\mu}) = 4062.81$,
	$\bar{\CW}_2(\bar{\mu}, \bar{\mu}) = 35.70$.}\label{tab: results-SG}
\end{table}

To further demonstrate the efficacy of our algorithm, we compare its performance with other benchmark Wasserstein barycenter algorithms on the same instances of [SG-2d] and [SG-10d]. 
Table~\ref{tab: results-SG} presents the evaluation results for both $\bar{V}(\widehat{\mu})$ and $\bar{\CW}_2(\widehat{\mu}, \bar{\mu})$ across different algorithms, where all reported scores are computed using the final iterate (i.e., the approximate $\CW_2$-barycenter candidate obtained at the last epoch/iteration) produced by each algorithm.
The results show that our Algorithm~\ref{algo: concrete}, together with the seminal algorithm by \citet{cuturi2014fast}, consistently perform well across the two problem instances: both algorithms achieve low values of $\bar{V}(\widehat{\mu})$ and $\bar{\CW}_2(\widehat{\mu},\bar{\mu})$. 
We defer a detailed discussion on the relation between our algorithm and the algorithm of \citet{cuturi2014fast} to Remark~\ref{remark: CD14}. 
Among the other benchmarks, the neural-network based algorithms proposed by \citet{fan2021scalable} and \citet{korotin2022wasserstein} are also competitive in [SG-2d], yet they do not demonstrate strong performances in [SG-10d]. 
In particular, the substantial loss suffered by the algorithm of \citet{korotin2022wasserstein} in the latter instance seems to imply inferior parametrization by the generative neural network therein.  
Moreover, we emphasize that the algorithm of \citet{li2020continuous}, in fact, focuses on a regularized $\CW_2$-barycenter biased from the $\CW_2$-barycenter we study (see Appendix~\ref{sapx: benchmark-descriptions}), which explains their relatively inferior scores in both metrics. 

\subsection{Experiment~2: subset posterior aggregation}\label{ssec: exp-subset-posterior-aggregation}
In large-scale Bayesian inference, 
one approach for scaling the inference task to very large datasets
is to split 
a large dataset into 
several subsets and produce a collection of subset posterior distributions (e.g.,\@ via Markov chain Monte Carlo methods), followed by an aggregation scheme to approximate the full-data posterior. 
Wasserstein barycenter provides a principled approach to perform this aggregation \citep{srivastava2015wasp}. 
Theoretical results show that asymptotically the (equally-weighted) $\CW_2$-barycenter of subset posteriors converges to the full-data posterior under appropriate conditions \citep{srivastava2018scalable}.
Therefore, one may consider the $\CW_2$-barycenter of subset posteriors as 
a proxy which accurately approximate the full-data posterior in large-data regimes.

In this experiment, we utilize the open-source dataset on hourly and daily count of rental bikes between years 2011 and 2012 in Washington, DC,
with covariates covering 8 features on weather and seasonal information.\footnote{See \url{https://archive.ics.uci.edu/dataset/275/bike+sharing+dataset}.} 
We focus on a Bayesian Poisson regression task for predicting hourly bike rentals that has been considered in previous works \citep{li2020continuous, fan2021scalable} for evaluating Wasserstein barycenter algorithms.
A summary of the dataset is provided in Table~\ref{tab:bike_summary} in Appendix~\ref{sapx: exp-details-bike-sharing}.
Regression coefficients (except the intercept term) are treated as an 8-dimensional vector $\Bvartheta := (\vartheta_1, \dots, \vartheta_8) \in \R^8$, and the Markov chain Monte Carlo sampling of the posteriors are implemented using the $\mathtt{Stan}$ library \citep{carpenter2017stan}.
We refer to this experiment as [BS-8d].

\subsubsection*{Experimental setup}
As mentioned, 
the $\CW_2$-barycenter of the subset posteriors of the regression coefficients serves as an accurate approximation of
the full-data posterior,
which we denote by $\bar{\mu}_{\FULL} \in \CP_{2, \AC}(\R^8)$.
In this experiment, we instead compute the full-data posterior 
$\bar{\mu}_{\FULL}$ and use it as an accurate approximation of the ground-truth $\CW_2$-barycenter.
To obtain the subset posterior distributions, we follow \citet{li2020continuous} to randomly split the data into $K = 5$ subsets which are equally-sized up to rounding.
The 5 subset posterior distributions, denoted by $\nu_1, \dots, \nu_5 \in \CP_{2, \AC}(\R^8)$, are therefore considered as the input measures of the problem instance in this experiment. 
A brief overview of the Bayesian Poisson regression model and the definition of the subset posteriors is provided in Appendix~\ref{sapx: exp-details-bike-sharing}.

We adopt the two metrics $\bar{V}(\widehat{\mu})$ and $\bar{\CW}_2(\widehat{\mu},\bar{\mu}_{\FULL})$, where $\bar{V}(\cdot)$ and $\bar{\CW}(\cdot, \cdot)$ have been introduced in Experiment~1, to quantitatively evaluate the quality of each approximate barycenter candidate $\widehat{\mu}$ computed by our algorithm 
or by the aforementioned benchmark algorithms.
We compare them with $\bar{V}(\bar{\mu}_{\FULL})$ and $\CW_2(\bar{\mu}_{\FULL}, \bar{\mu}_{\FULL})$ accordingly.
Note that in contrast to Experiment~1,
we do not have access to an accurate approximate lower bound $V_{\MINSUB}$ for the barycenter functional in this experiment.

\begin{figure}[t]
    \centering
    \begin{subfigure}{0.45\textwidth}
        \centering
        \includegraphics[width=\linewidth]{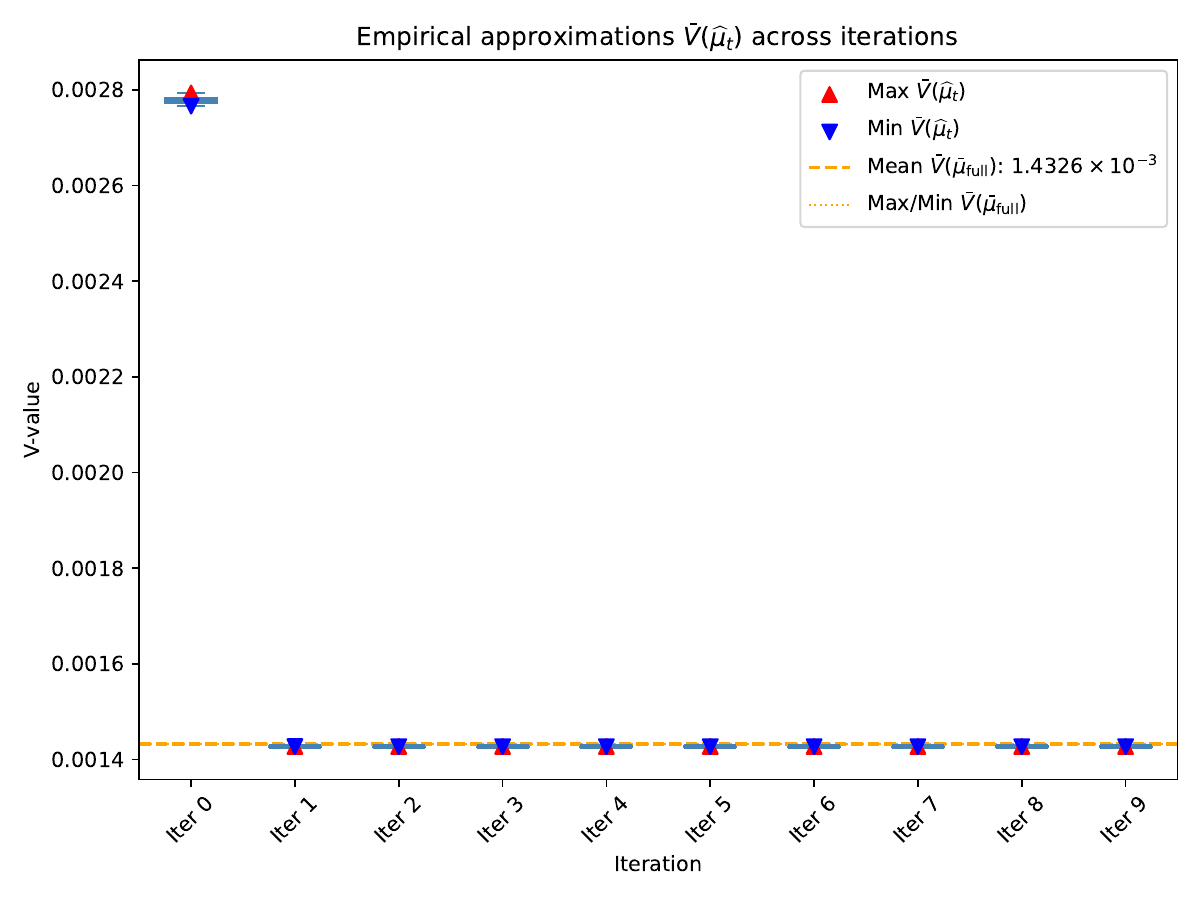}
    \end{subfigure}
    \begin{subfigure}{0.45\textwidth}
        \centering
        \includegraphics[width=\linewidth]{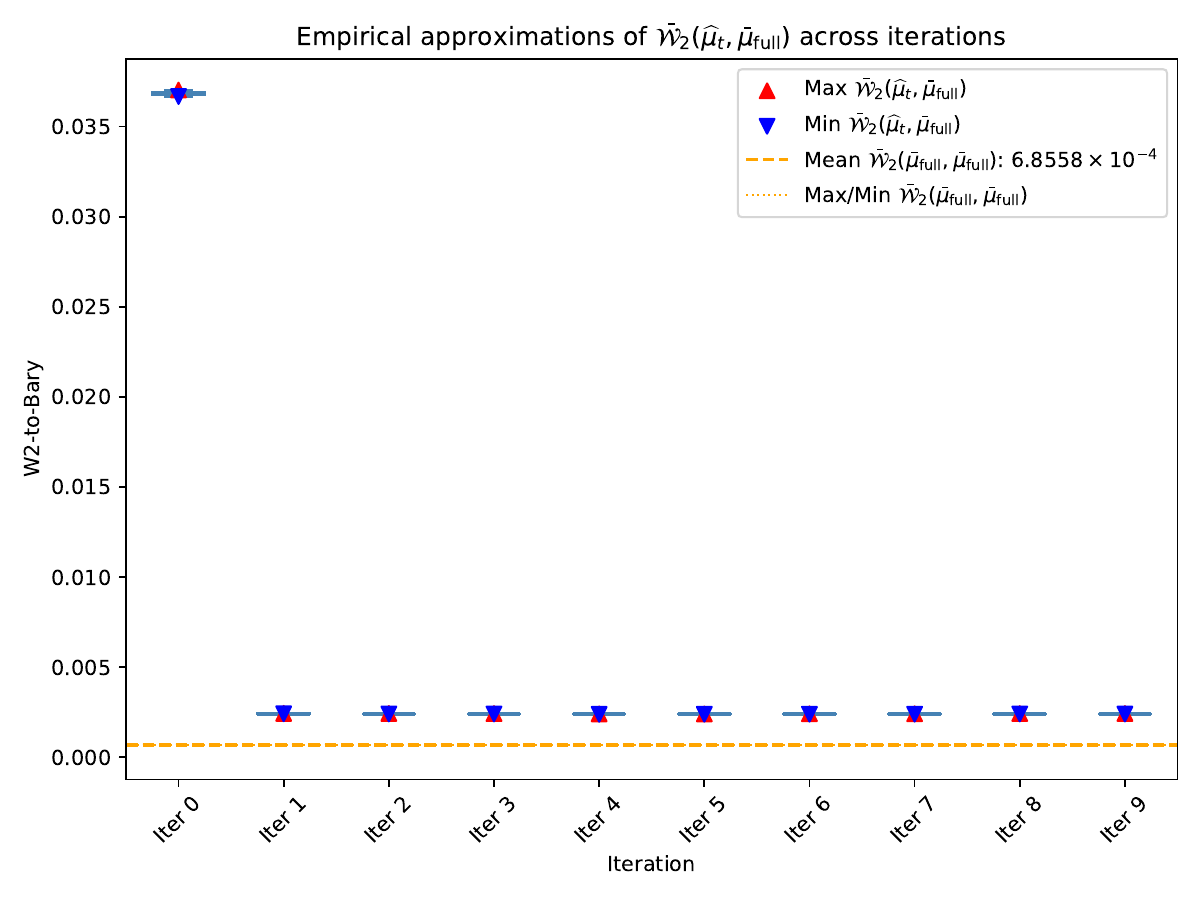}
    \end{subfigure}
    \caption{Box plots showing the empirical performance of $(\widehat{\mu}_t)_{t = 0:9}$ computed by Algorithm~\ref{algo: concrete} in [BS-8d]. \textbf{Left}: 
	values of $(\bar{V}(\widehat{\mu}_t))_{t = 0:9}$.
	\textbf{Right}: values of $(\bar{\CW}_2(\widehat{\mu}_t, \bar{\mu}_{\FULL}))_{t = 0:9}$.}\label{fig: stochastic-FP-BS-8d}
\end{figure}

\subsubsection*{Result analysis}
Figure~\ref{fig: stochastic-FP-BS-8d} presents the empirical performance of $(\widehat{\mu}_t)_{t = 0:9}$ generated by Algorithm~\ref{algo: concrete}, which includes the evaluation of $(\bar{V}(\widehat{\mu}_t))_{t = 0:9}$ and $(\bar{\CW}_2(\widehat{\mu}_t, \bar{\mu}_{\FULL}))_{t = 0:9}$ across iterations. 
The elements in the figure are the same as the ones in Figure~\ref{fig: stochastic-FP-SG} for Experiment~1.
It is observed that the evaluated values in both metrics witness a sharp descent to near-optimal values after a single iteration, which demonstrates the superior performance of our algorithm in approximating the $\CW_2$-barycenter of $\nu_1, \dots, \nu_5$.
In particular, $\bar{V}(\widehat{\mu}_t)$ and $\CW_2(\widehat{\mu}_t, \bar{\mu}_{\FULL})$ remain close to $\bar{V}(\bar{\mu}_{\FULL})$ and $\CW_2(\bar{\mu}_{\FULL}, \bar{\mu}_{\FULL})$, respectively, for $t = 1, \dots, 9$.
The gap ($\approx 7 \times 10^{-4}$) between $\bar{\CW}_2(\widehat{\mu}_t, \bar{\mu}_{\FULL})$ and $\bar{\CW}_2(\bar{\mu}_{\FULL}, \bar{\mu}_{\FULL})$ shown in the right panel of Figure~\ref{fig: stochastic-FP-BS-8d} is also witnessed when evaluating other benchmark algorithms, therefore it likely captures the approximation error of the full-data posterior $\bar{\mu}_{\FULL}$ to the exact $\CW_2$-barycenter of the subset posteriors.

Table~\ref{tab: results-BS-8d} presents the empirical evaluations for both $\bar{V}(\widehat{\mu})$ and $\bar{\CW}_2(\widehat{\mu}, \bar{\mu}_{\FULL})$ across different algorithms in \mbox{[BS-8d]}, where all reported scores are computed using the final iterate produced by each algorithm.
We observe that, except for the algorithm of \citet{li2020continuous}, all other algorithms have achieved comparable performances in approximating the underlying $\CW_2$-barycenter, with Algorithm~\ref{algo: concrete} and \citet{cuturi2014fast} providing slightly better scores in both metrics.
This is consistent with the numerical results obtained in Experiment~1.

\begin{table}[t]
    \centering
    \begin{adjustbox}{max width=0.7\textwidth}
    \begin{tabular}{lrrrr}
        \hline
        \rule{0pt}{2.5ex}\multirow{2}{*}{\textbf{Algorithm}} & \multicolumn{2}{c}{\textbf{$\bar{V}(\widehat{\mu})$ ($\times 10^{-3}$)}} & \multicolumn{2}{c}{\textbf{$\bar{\mathcal{W}}_2(\widehat{\mu}, \bar{\mu}_{\FULL})$ ($\times 10^{-3}$)}} \\
        \cmidrule(lr){2-3} \cmidrule(lr){4-5}
        & \textbf{Mean} & \textbf{(Min, Max)} & \textbf{Mean} & \textbf{(Min, Max)} \\
        \hline
        Algorithm~\ref{algo: concrete} (ours) & 1.427217 & (1.426985, 1.427468) & 2.415107 & (2.399016, 2.430971) \\
        \citet{fan2021scalable} & 1.429260 & (1.429048, 1.429543) & 2.798973 & (2.770246, 2.821899) \\
        \citet{korotin2022wasserstein} & 1.430236 & (1.429998, 1.430493) & 2.743783 & (2.731388, 2.758314) \\
        \citet{li2020continuous} & 4.669375 & (4.621498, 4.749356) & 5.660722 & (5.618368, 5.729884) \\
        \citet{cuturi2014fast} & 1.427236 & (1.427002, 1.427487) & 2.410925 & (2.401364, 2.419805) \\
        \hline
    \end{tabular}
    \end{adjustbox}
    \caption{Performances of Wasserstein barycenter algorithms in Experiment [BS-8d], where $\bar{V}(\bar{\mu}_{\FULL}) = 1.4326 \times 10^{-3}$ and $\bar{\CW}_2(\bar{\mu}_{\FULL}, \bar{\mu}_{\FULL}) = 6.8558 \times 10^{-4}$.}\label{tab: results-BS-8d}
\end{table}

\subsection{Practical guidelines and additional comments}\label{ssec: numerics-additional-comments}
We conclude this section by presenting a list of practical guidelines for implementing our proposed stochastic fixed-point algorithm, as well as its advantages when compared with other methods.
We also discuss in Remark~\ref{remark: CD14} the connection between our algorithm and the seminal algorithm by \citet{cuturi2014fast}, given their comparable performances in our numerical experiments.

\subsubsection*{Practical guidelines}
We present below a set of practical considerations for implementing Algorithm~\ref{algo: concrete} with the modified entropic OT map estimator, which are based on empirical observations from our experiments.
	\begin{itemize}
		\item \emph{About the choice of the truncation set $\CX_{\widehat{R}_{t}}$.} 
		Despite that the truncation step
		in Line~\ref{alglin: concrete-truncation} of Algorithm~\ref{algo: concrete}
		is required in order to guarantee the theoretical convergence properties of Algorithm~\ref{algo: concrete} in Theorem~\ref{thm: Caffarelli-convergence},
		our empirical observations suggest that the choice of $\CX_{\widehat{R}_{t}}$ has little effect on the performance of our algorithm. 
		Therefore, in practice, one could set the truncation indices $(\widehat{R}_t)_{t \in \N_0}$ to be sufficiently large such that the effect of rejection sampling is negligible; see Remark~\ref{rmk: radius-samplesize-ge} and our discussion in Section~\ref{ssec: Caffarelli-convergence-proof}. 
		
		\item \emph{About the implementation of Sinkhorn's algorithm.}
		Sinkhorn's algorithm can be efficiently implemented using modern software libraries for optimal transport, such as the Python Optimal Transport  ($\mathtt{POT}$) library \citep{flamary2021pot}, the Optimal Transport Tools ($\mathtt{OTT}$) library \citep{cuturi2022optimal}, and the $\mathtt{GeomLoss}$ library \citep{feydy2020geometric}.
		Moreover, it admits highly parallelizable implementations on GPUs; see, e.g., \citep[Section~4.3]{peyre2019computational} for a detailed exposition on numerical aspects of Sinkhorn's algorithm, and \citep[Section~3.3]{feydy2020geometric} for state-of-the-art techniques in speeding up Sinkhorn's algorithm. 
		In our numerical experiments, we utilized the $\mathtt{GeomLoss}$ library \citep{feydy2020geometric} to implement the Sinkhorn step. 

		\item \emph{About the choices of the sample sizes $\widehat{M}_{t,k},\widehat{N}_{t,k}$.}
		Since the convergence of Algorithm~\ref{algo: concrete} requires sufficiently large sample sizes, in practice one could always choose the sample sizes as large as permitted by the computational capacity of the hardware and software used to implement Sinkhorn's algorithm.\footnote{For example, the implementation of Sinkhorn's algorithm in the $\mathtt{Geomloss}$ library performs well with ${\sim\!10^5}$ samples per measure; see \citet[page~125]{feydy2020geometric}.}
		Moreover, one may consider gradually increasing the sample sizes over iterations in order to reduce the computational time spent in earlier iterations. 

		\item \emph{About the choices of the hyperparameters $\widehat{\Gamma}_{t,k}$, $\widehat{\overline{L}}_{t,k}$.}
		The definition of the entropic optimal transport problem (\ref{eqn: EOT}) suggests that 
		the entropic regularization parameter should roughly scale quadratically with respect to the support diameters of the source and target measures.
		Moreover, we remark that the performance of our algorithm may depreciate when the regularization parameter is set very close to zero relative to the scale of the measures' supports, due to the numerical instabililty of Sinkhorn's algorithm in such cases (see, e.g.,\@ \citep[Section~4.4]{peyre2019computational}).

		\item \emph{About the strong convexity modification step.}
		The addition of the modification term $T_{\STRONGLYCONVEX}$ in Proposition~\ref{prop: OTmap-estimator-entropic}
		is required for theoretical purposes, as it guarantees that the resulting entropic OT map estimator satisfies the required shape condition to be admissible within our framework. 
    	From a practical perspective, this modification has a negligible impact: when the truncation indices $(\widehat{R}_t)_{t\in\N_0}$ across iterations are chosen sufficiently large, the additional term does not materially affect the empirical behavior of the algorithm.

		\item \emph{About the termination criteria.}
		Empirical observations from our numerical experiments revealed that our algorithm typically requires only a handful of iterations to achieve near-optimality.
		In practice, a maximum of approximately 10 iterations is typically sufficient for Algorithm~\ref{algo: concrete}.
	\end{itemize}

	
\subsubsection*{Additional comments}
We would like to highlight two advantages of our proposed algorithm in terms of its computational efficiency and interpretability. 

First, regarding computational efficiency, our algorithm has been empirically witnessed to attain near-optimal solutions within a small number of iterations.
Moreover, while our algorithm can be executed without the need for high-performance computing hardware, it permits hardware acceleration and distributed computations for potential numerical improvements since the OT map estimation is driven by the efficient Sinkhorn's algorithm.

Second, in contrast to many prevalent Wasserstein barycenter algorithms that rely heavily on generative neural networks, our estimator-based algorithm provides ease in parametrizing the underlying Brenier potentials and OT maps. 
In particular, it circumvents the need to tune sophisticated hyperparameters and
avoids the model over-parametrization issue that is potentially present in neural networks.
In fact, the composition of the weighted sum of the OT map estimators, namely $\big[\sum_{k=1}^K w_k\widehat{T}_{t,k}\big]\circ \cdots \circ \big[\sum_{k=1}^K w_k\widehat{T}_{1,k}\big](\cdot)$, can be considered as a structure-aware generative model for approximating the $\CW_2$-barycenter, providing better interpretability.

\begin{remark}\label{remark: CD14}
	The seminal algorithm by \citet{cuturi2014fast}, which in our experiments was implemented via the built-in function provided in the $\mathtt{POT}$ library\footnote{See $\mathtt{ot.lp.free\_support\_barycenter}$ in \url{https://pythonot.github.io/gen_modules/ot.lp.html}.} and was executed with $10^4$ support atoms, 
	has been observed to be competitive in our experiments across problem instances. 
	In fact, without the line search and weight optimization steps, it has been recognized that each iteration of the free-support algorithm of \citet{cuturi2014fast} essentially amounts to a discrete approximation of the fixed-point iteration in \eqref{eqn: G-operator} proposed by \citet{alvarez2016fixed}.
	Therefore, it can be considered as a heuristic counterpart of our stochastic fixed-point algorithm without rigorous convergence guarantees.
	However, the algorithm scales poorly with the support size and computes only a single discrete probability measure as an approximate $\CW_2$-barycenter.
	Thus, it prohibits direct sampling from a continuous approximate $\CW_2$-barycenter, which limits its usage in many practical applications.
\end{remark}

\section{Conclusion and limitations}\label{sec: conclusion-limitation-guideline}

In this paper, we have developed an estimator-driven stochastic fixed-point framework for approximately computing Wasserstein barycenters of continuous, non-parametric probability measures. 
We have rigorously established almost sure convergence and identified sufficient conditions for geometric convergence rates of the scheme under controlled approximation errors. 
Building on this foundation, we have subsequently proposed a provably convergent and computationally tractable stochastic algorithm that admits input measures 
satisfying Caffarelli-type regularity conditions,
together with a modified entropic OT map estimator that is computationally efficient. 
We have further developed a novel procedure that synthetically generates benchmark instances with approximately known barycenters to enable quantitative comparison across algorithms. 
Lastly, we have performed
numerical experiments on both synthetic and real-world datasets to demonstrate the strong computational efficiency, estimation accuracy, and sampling flexibility of our approach.

Overall, our framework bridges theoretical guarantees and practical implementation, while also leaving several important unanswered questions for future research. 
First, our analysis does not fully characterize conditions under which the $G$-operator admits a unique fixed-point, which is closely related to the geodesic non-convexity of the Wasserstein barycenter functional. 
Identifying verifiable and practically meaningful sufficient conditions for uniqueness remains a longstanding and fundamental open problem.
Second, the proposed algorithm (Algorithm~\ref{algo: concrete}) should be viewed as one concrete instance within the broader stochastic fixed-point framework in the template of Algorithm~\ref{algo: abstract}. 
Developing more efficient, adaptive, or instance-specific implementations that better exploit problem structures possibly beyond our Caffarelli-type setting constitutes another promising direction for future research.

\section*{Acknowledgements}
ZC gratefully acknowledges the financial support from the INSEAD PhD Fellowship, and the support from NTU Singapore under the URECA Undergraduate Research Programme. 
AN and QX gratefully acknowledge the financial support by the MOE AcRF Tier~2 Grant \textit{MOE-T2EP20222-0013}.

\appendix

\section{Omitted proofs}\label{apx: omitted-proofs}

\subsection{Omitted proofs in Section~\ref{ssec: main-convergence-conditions}}\label{sapx: proof-main-convergence-conditions}

Before proving Proposition~\ref{prop: special-case-1d},
let us first establish the following intermediate results.

\begin{lemma}\label{lem: KDE-properties}
	Let $\nu\in\CP_{2,\AC}(\R)$ satisfy $\support(\nu)=[\underline{a},\overline{a}]$ for $-\infty<\underline{a}<\overline{a}<\infty$,
	and let 
	$f_{\nu}$ denote the density function of $\nu$, 
	where we assume that 
	there exists $\zeta\ge 1$ such that 
	$\zeta^{-1}\le f_{\nu}(x)\le \zeta$ $\forall x\in [\underline{a},\overline{a}]$.
	Let $F_{\nu}:[\underline{a},\overline{a}]\to[0,1]$ denote the distribution function of $\nu$, i.e.,
	$F_{\nu}(x):=\int_{\underline{a}}^{x}f_{\nu}(z)\DIFFX{z}$ $\forall x\in [\underline{a},\overline{a}]$.
	Subsequently, let $(\Omega,\CF,\PROB)$ be a probability space.
	For any $n\in\N$,
	let 
	$X_1,\ldots,X_n:\Omega\to\R$ be independent and identically distributed random variables with law $\nu$, i.e., 
	$X_i\sharp\PROB=\nobreak\nu$ $\forall 1\le i\le n$.
	Moreover, let $\kappa:\R\to(0,\infty)$ be continuous and satisfy 
	$\kappa(x)\ge\kappa(x')$ whenever $|x'|\ge |x|$,
	$\int_{\R}\kappa(x)\DIFFX{x}=\nobreak 1$,
	$\int_{\R}x^2\kappa(x)\DIFFX{x}<\nobreak\infty$,
	and let us define $F_{\widetilde{\nu}_{n,h}}:[\underline{a},\overline{a}]\to[0,1]$ for all $h>0$ as follows:
	\begin{align*}
		F_{\widetilde{\nu}_{n,h}}(x):= \frac{1}{n}\sum_{i=1}^{n}
		\frac{{\displaystyle\int_{\underline{a}}^{x}}\kappa\Big(\frac{z-X_i}{h}\Big)\DIFFX{z}}{{\displaystyle\int_{\underline{a}}^{\overline{a}}}\kappa\Big(\frac{z-X_i}{h}\Big)\DIFFX{z}}
		\qquad \forall x\in [\underline{a},\overline{a}],\; \forall h>0.
	\end{align*}
	Then, the following statements hold.
	\begin{enumerate}[label=(\roman*),beginpenalty=10000]
		\item\label{lems: KDE-properties-regularity}
		For any $n\in\N$ and any $h>0$, 
		it holds that 
		$F_{\widetilde{\nu}_{n,h}}:[\underline{a},\overline{a}]\to[0,1]$ is a diffeomorphism.
		
		\item\label{lems: KDE-properties-convexity}
		For any $n\in\N$ and any $h>0$,
		there exist $0<\underline{\lambda}\le\overline{\lambda}<\infty$ 
		satisfying $\underline{\lambda}\le F_{\widetilde{\nu}_{n,h}}'(x)\le \overline{\lambda}$ $\forall x\in [\underline{a},\overline{a}]$.

		\item\label{lems: KDE-properties-invcdf}
		The following bound holds as $n\to\infty$, $h\to 0$:
		\begin{align*}
			\EXP\bigg[\int_{0}^{1}\big(F_{\widetilde{\nu}_{n,h}}^{-1}(u)-F_{\nu}^{-1}(u)\big)^2\DIFFX{u}\bigg]
			&=O\big(n^{-\frac{1}{2}}+h^2\big).
		\end{align*}
		
		\item\label{lems: KDE-properties-concentrate}
		There exists a $\sigma\big((X_i)_{i=1:n}\big)$-measurable subset 
		$E_{n,h}\subset\Omega$ that depends on $n$ and $h$, 
		which satisfies:
		\begin{align}
			1-\PROB[E_{n,h}]
			&=O\big(n^{-1}h^{-2}\big),\label{eqn: KDE-properties-prob} \\
			\EXP\Bigg[\sup\Bigg\{\frac{\big(F^{-1}_{\widetilde{\nu}_{n,h}}(u)-F^{-1}_{\widetilde{\nu}_{n,h}}(u')\big)^2}{(u-u')^2}:u,u'\in[0,1],\; u\ne u'\Bigg\}\INDI_{E_{n,h}}\Bigg]
			&=O(1),\label{eqn: KDE-properties-densmin}\\
			\EXP\bigg[\int_{[\underline{a},\overline{a}]}\big(F_{\widetilde{\nu}_{n,h}}(x)-F_{\nu}(x)\big)^2\DIFFM{\nu}{\DIFF x}\INDI_{E_{n,h}}\bigg]
			&=O\big(n^{-1}+h^2\big),\label{eqn: KDE-properties-cdf}
		\end{align}
		as $n\to\infty$, $h\to\nobreak0$.
	\end{enumerate}
	All constant terms omitted by the big-$O$ notations in the above statements depend on 
	$\underline{a}$, $\overline{a}$, $\zeta$, $\kappa(\cdot)$ and 
	do not depend on $n$ and $h$.
\end{lemma}

\begin{proof}[Proof of Lemma~\ref{lem: KDE-properties}]
	Before we begin,
	note that the property that $\kappa(x)\ge\kappa(x')$ whenever $|x'|\ge|x|$
	forces $\kappa(\cdot)$ to be symmetric,
	i.e.,
	$\kappa(-x)=\kappa(x)$ $\forall x\in\R$.
	Let us define $\CK(x):=\int_{-\infty}^x\kappa(z)\DIFFX{z}$ $\forall x\in\R$.
	Observe that $\CK$ is strictly increasing, and that
	$\CK(x)\in(0,1)$, $\CK(x)+\CK(-x)=\nobreak1$ $\forall x\in\R$.
	Because of the property that $\int_{\R}x^2\kappa(x)\DIFFX{x}<\infty$,
	we get 
	\begin{align}
		\int_{0}^{\infty}{x}\big(1-\CK(x)\big)\DIFFX{x}
		=\int_{0}^{\infty}\int_{0}^{\infty}x\kappa(z)\INDI_{\{x\le z\}}\DIFFX{z}\DIFFX{x}
		=\int_{0}^{\infty}\frac{z^2}{2}\kappa(z)\DIFFX{z}<\infty, 
		\label{eqn: KDE-properties-proof-kernel-pos1}
	\end{align}
	and we get via a similar derivation that
	\begin{align}
		\int_{-\infty}^{0}{-x}\CK(x)\DIFFX{x}&<\infty. \label{eqn: KDE-properties-proof-kernel-neg1}
	\end{align}
	Moreover, observe that 
	the function $(0,\infty)\ni h\mapsto \inf_{x\in[\underline{a},\overline{a}]}\Big\{\CK\big(\frac{\overline{a}-x}{h}\big)-\CK\big(\frac{\underline{a}-x}{h}\big)\Big\}\in(0,\infty)$ is non-increasing,
	and thus we have in particular
	\begin{align}
		\begin{split}
			1>\CK\bigg(\frac{\overline{a}-x}{h}\bigg)-\CK\bigg(\frac{\underline{a}-x}{h}\bigg) 
			&\ge \inf_{x\in[\underline{a},\overline{a}]}\Bigg\{\CK\bigg(\frac{2(\overline{a}-x)}{\overline{a}-\underline{a}}\bigg)-\CK\bigg(\frac{2(\underline{a}-x)}{\overline{a}-\underline{a}}\bigg)\Bigg\} \ge \CK(1)-\CK(0)>0 \\
			& \hspace{185pt} \forall x\in[\underline{a},\overline{a}],\; \forall h\in\big(0,{\textstyle\frac{\overline{a}-\underline{a}}{2}}\big].
		\end{split}
		\label{eqn: KDE-properties-proof-kernel-denominator}
	\end{align}

	Throughout this proof,
	for any $n\in\N$,
	let $\widehat{\nu}_n:=\frac{1}{n}\sum_{i=1}^{n}\delta_{X_i}\in\CP_2(\R)$ denote the empirical measure arising from the samples $(X_i)_{i=1:n}$,
	and let us define $\overline{\nu}_{n,h},\widetilde{\nu}_{n,h}\in\CP_{2,\AC}(\R)$
	for any $n\in\N$, $h>0$ through their density functions
	\begin{align*}
		f_{\overline{\nu}_{n,h}}(x)&:=\frac{1}{nh}\sum_{i=1}^{n}\kappa\bigg(\frac{x-X_i}{h}\bigg)
		\hspace{63.9pt}\qquad \forall x\in\R,\; \forall n\in\N,\; \forall h>0,\\
		f_{\widetilde{\nu}_{n,h}}(x)&:=\frac{1}{nh}\sum_{i=1}^{n}\frac{\kappa\Big(\frac{x-X_i}{h}\Big)}{\CK\Big(\frac{\overline{a}-X_i}{h}\Big) - \CK\Big(\frac{\underline{a}-X_i}{h}\Big)}
		\qquad \forall x\in[\underline{a},\overline{a}],\; \forall n\in\N, \;\forall h>0.
	\end{align*}
	Note that $\widetilde{\nu}_{n,h}=\overline{\nu}_{n,h}|_{[\underline{a},\overline{a}]}$ $\forall n\in\N$, $\forall h>0$.
	Using the properties of $\kappa(\cdot)$,
	one checks that 
	\begin{align}
		f_{\widetilde{\nu}_{n,h}}(x)\ge f_{\overline{\nu}_{n,h}}(x)>\nobreak0
		\qquad\forall x\in[\underline{a},\overline{a}],\; \forall n\in\N,\; \forall h>0.
		\label{eqn: KDE-properties-proof-density-bounds}
	\end{align}
	Next, let 
	$F_{\overline{\nu}_{n,h}}:\R\to[0,1]$
	denote the distribution function of $\overline{\nu}_{n,h}$,
	i.e.,
	\begin{align}
		F_{\overline{\nu}_{n,h}}(x)
		:=\int_{-\infty}^{x}f_{\overline{\nu}_{n,h}}(z)\DIFFX{z}
		=\frac{1}{n}\sum_{i=1}^{n}\CK\bigg(\frac{x-X_i}{h}\bigg) \qquad \forall x\in\R,\; \forall n\in\N,\; \forall h>0,
		\label{eqn: KDE-properties-proof-untruncated-cdf}
	\end{align}
	and observe that 
	$F_{\widetilde{\nu}_{n,h}}$ is the distribution function of $\widetilde{\nu}_{n,h}$ for all $n\in\N$ and all $h>\nobreak0$.
	For any $n\in\N$ and for any $h>\nobreak 0$,
	since $F_{\widetilde{\nu}_{n,h}}'=f_{\widetilde{\nu}_{n,h}}$
	is continuous and positive on $[\underline{a},\overline{a}]$,
	we get 
	$0<\nobreak \inf_{x\in[\underline{a},\overline{a}]}\big\{F_{\widetilde{\nu}_{n,h}}'(x)\big\}\le\sup_{x\in[\underline{a},\overline{a}]}\big\{F_{\widetilde{\nu}_{n,h}}'(x)\big\}<\nobreak\infty$,
	and thus $F_{\widetilde{\nu}_{n,h}}$ is a diffeomorphism.
	This completes the proofs of statements~\ref{lems: KDE-properties-regularity} and \ref{lems: KDE-properties-convexity}.

	To prove statement~\ref{lems: KDE-properties-invcdf},
	let us
	fix arbitrary $n\in\N$ and $h\in\big(0,\frac{\overline{a}-\underline{a}}{2}\big]$,
	and
	apply a classical result about one-dimensional optimal transport
	(see, e.g., \citep[Proposition~1.18]{chewi2024statistical})
	to get 
	\begin{align}
		\EXP\bigg[\int_{0}^{1}\big(F_{\widetilde{\nu}_{n,h}}^{-1}(u)-F_{\nu}^{-1}(u)\big)^2\DIFFX{u}\bigg]
		&= \EXP\big[\CW_2(\widetilde{\nu}_{n,h},\nu)^2\big]
		\le 2\EXP\big[\CW_2(\widehat{\nu}_n,\widetilde{\nu}_{n,h})^2\big]
		+2\EXP\big[\CW_2(\widehat{\nu}_n,\nu)^2\big].
		\label{eqn: KDE-properties-proof-invcdf-decomp}
	\end{align}
	On the one hand,
	for $i=1,\ldots,n$,
	let us define $\eta_{n,h,i}\in\CP_{2,\AC}(\R)$ through its density function $f_{\eta_{n,h,i}}$ defined as follows:
	\begin{align*}
		f_{\eta_{n,h,i}}(x):=\frac{\kappa\Big(\frac{x-X_i}{h}\Big)}{h\Big(\CK\Big(\frac{\overline{a}-X_i}{h}\Big) - \CK\Big(\frac{\underline{a}-X_i}{h}\Big)\Big)} \qquad \forall x\in[\underline{a},\overline{a}],\; \forall 1\le i\le n.
	\end{align*}
	Using (\ref{eqn: KDE-properties-proof-kernel-denominator}), it thus holds that 
	\begin{align}
		\begin{split}
			\int_{[\underline{a},\overline{a}]}(x-X_i)^2\DIFFM{\eta_{n,h,i}}{\DIFF x}
			&=\frac{1}{h\Big(\CK\Big(\frac{\overline{a}-X_i}{h}\Big) - \CK\Big(\frac{\underline{a}-X_i}{h}\Big)\Big)}\int_{\underline{a}}^{\overline{a}} (x-X_i)^2 \kappa\bigg(\frac{x-X_i}{h}\bigg)\DIFFX{x}\\
			&\le \frac{h^2}{\CK(1)-\CK(0)}\int_{-\infty}^{\infty}y^2\kappa(y)\DIFFX{y}=O(h^2) \qquad\qquad \forall 1\le i\le n.
		\end{split}
		\label{eqn: KDE-properties-proof-invcdf-step1-component}
	\end{align}
	Next, let us define 
	$\pi_{n,h}:=\frac{1}{n}\sum_{i=1}^{n}\delta_{X_i}\otimes \eta_{n,h,i}\in\CP(\R\times\R)$,
	where $\delta_{X_i}\otimes \eta_{n,h,i}\in\CP(\R\times\R)$ denotes the product measure obtained from the Dirac measure $\delta_{X_i}$ and $\eta_{n,h,i}$.
	One checks that 
	$\pi_{n,h}\in\Pi(\widehat{\nu}_n,\widetilde{\nu}_{n,h})$,
	and it therefore holds by (\ref{eqn: KDE-properties-proof-invcdf-step1-component}) that
	\begin{align}
		\CW_2(\widehat{\nu}_n,\widetilde{\nu}_{n,h})^2
		&\le \int_{[\underline{a},\overline{a}]\times[\underline{a},\overline{a}]}(x-y)^2\DIFFM{\pi_{n,h}}{\DIFF x,\DIFF y}
		=\frac{1}{n}\sum_{i=1}^n \int_{[\underline{a},\overline{a}]}(X_i-y)^2\DIFFM{\eta_{n,h,i}}{\DIFF y} 
		=O(h^2).
		\label{eqn: KDE-properties-proof-invcdf-step1}
	\end{align}
	On the other hand,
	\citep[Theorem~1]{fournier2015rate} 
	shows that 
	\begin{align}
		\EXP\big[\CW_2(\widehat{\nu}_n,\nu)^2\big]=O\big(n^{-\frac{1}{2}}\big),
		\label{eqn: KDE-properties-proof-invcdf-step2}
	\end{align}
	and thus statement~\ref{lems: KDE-properties-invcdf} follows from combining
	(\ref{eqn: KDE-properties-proof-invcdf-decomp}),
	(\ref{eqn: KDE-properties-proof-invcdf-step1}), and 
	(\ref{eqn: KDE-properties-proof-invcdf-step2}).

	It remains to prove statement~\ref{lems: KDE-properties-concentrate}.
	To that end, let us fix arbitrary $n\in\N$ and $h\in\big(0,\frac{\overline{a}-\underline{a}}{2}\big]$,
	define $J_h:=\big\lfloor\frac{\overline{a}-\underline{a}}{h}\big\rfloor$,
	and define 
	$I_j:=\big[\underline{a}+\frac{j-1}{J_h}(\overline{a}-\underline{a}),\underline{a}+\frac{j}{J_h}(\overline{a}-\underline{a})\big]$
	$\forall 1\le j\le J_h$.
	We thus get 
	\begin{align}
		\frac{\overline{a}-\underline{a}}{h}
		\ge J_h 
		> \frac{\overline{a}-\underline{a}}{h} - 1
		\ge \frac{\overline{a}-\underline{a}}{h} - \frac{\overline{a}-\underline{a}}{2h}
		= \frac{\overline{a}-\underline{a}}{2h}
		\label{eqn: KDE-properties-proof-concentrate-binnum}
	\end{align}
	Moreover, since $\zeta^{-1}\le f_{\nu}(x)\le \zeta$ $\forall x\in [\underline{a},\overline{a}]$,
	we get 
	\begin{align}
		\zeta^{-1}h
		\le \frac{\zeta^{-1}(\overline{a}-\underline{a})}{J_h}
		\le \nu(I_j)
		\le \frac{\zeta(\overline{a}-\underline{a})}{J_h}
		< 2\zeta h
		\qquad \forall 1\le j\le J_h.
		\label{eqn: KDE-properties-proof-concentrate-binsize}
	\end{align}
	In the following, let us define the random variables 
	$Z_j:=\frac{1}{n}\sum_{i=1}^{n}\INDI_{I_j}(X_i)$ $\forall 1\le j\le J_h$,
	and define the set 
	$E_{n,h}\in\sigma\big((X_i)_{i=1:n}\big)$ as follows:
	\begin{align*}
		E_{n,h}:=\bigcap_{j=1}^{J_h}\Big\{{\textstyle\frac{1}{2}}\zeta^{-1}h<Z_j<3\zeta h\Big\}.
	\end{align*}
	It follows from (\ref{eqn: KDE-properties-proof-concentrate-binsize})
	that 
	$\EXP[Z_j]=\nu(I_j)\in\big[\zeta^{-1}h,2\zeta h\big)$,
	$\VAR[Z_j]=\frac{1}{n}\nu(I_j)\big(1-\nu(I_j)\big)<\frac{2\zeta h}{n}$
	$\forall 1\le j\le J_h$,
	which yields 
	\begin{align*}
		\Big\{Z_j\le {\textstyle\frac{1}{2}\zeta^{-1} h}\Big\}
		&\subseteq \Big\{Z_j\le \EXP[Z_j]-\sqrt{{\textstyle\frac{nh}{8\zeta^3}}\VAR[Z_j]}\Big\} 
		\qquad \forall 1\le j\le J_h,\\
		\Big\{Z_j\ge {\textstyle 3\zeta h}\Big\}
		&\subseteq \Big\{Z_j\ge \EXP[Z_j]+\sqrt{{\textstyle\frac{nh}{8\zeta^3}}\VAR[Z_j]}\Big\} 
		\qquad \forall 1\le j\le J_h.
	\end{align*}
	Using these relations, 
	we can bound $1-\PROB[E_{n,h}]$
	via Chebyshev's inequality and (\ref{eqn: KDE-properties-proof-concentrate-binnum}) as follows:
	\begin{align*}
		1-\PROB[E_{n,h}]
		&\le \sum_{j=1}^{J_h}\Big(\PROB\Big[Z_j\le {\textstyle\frac{1}{2}\zeta^{-1} h}\Big] + \PROB\Big[Z_j\ge {\textstyle 3\zeta h}\Big]\Big)
		\le \sum_{j=1}^{J_h}\PROB\bigg[\big|Z_j-\EXP[Z_j]\big|\ge \sqrt{{\textstyle\frac{nh}{8\zeta^3}}\VAR[Z_j]}\bigg]\\
		&\le \frac{8\zeta^3J_h}{nh}
		\le \frac{8\zeta^3 (\overline{a}-\underline{a})}{nh^2}
		=O\big(n^{-1}h^{-2}\big).
	\end{align*}
	This proves (\ref{eqn: KDE-properties-prob}).

	To prove (\ref{eqn: KDE-properties-densmin}),
	observe from (\ref{eqn: KDE-properties-proof-density-bounds}) that 
	\begin{align}
		\big(F_{\widetilde{\nu}_{n,h}}^{-1}\big)'(u) 
		= \frac{1}{f_{\widetilde{\nu}_{n,h}}\big(F_{\widetilde{\nu}_{n,h}}^{-1}(u)\big)}
		\le \bigg(\inf_{x\in[\underline{a},\overline{a}]}\big\{f_{\overline{\nu}_{n,h}}(x)\big\}\bigg)^{-1} \quad \forall u \in [0,1].
		\label{eqn: KDE-properties-proof-densmin-derivative}
	\end{align}
	Moreover, since (\ref{eqn: KDE-properties-proof-concentrate-binnum})
	guarantees 
	$|x-y|\le \frac{\overline{a}-\underline{a}}{J_h}<2h$
	$\forall x,y\in I_j$, $\forall 1\le j\le J_h$,
	and since 
	$\kappa(\cdot)$ is positive everywhere and satisfies 
	$\kappa(x)\ge \kappa(x')$ whenever $|x'|\ge|x|$,
	we can derive the following lower bound for 
	$f_{\overline{\nu}_{n,h}}$:
	\begin{align*}
		\inf_{x\in I_j}\big\{f_{\overline{\nu}_{n,h}}(x)\big\}
		&\ge \frac{1}{nh}\sum_{i=1}^{n}\INDI_{I_j}(X_i)\inf_{x\in I_j}\bigg\{\kappa\bigg(\frac{x-X_i}{h}\bigg)\bigg\}
		\ge \frac{1}{nh}\sum_{i=1}^{n}\INDI_{I_j}(X_i)\kappa(2)=\frac{\kappa(2)Z_j}{h} 
		\;\;\quad\forall 1\le j\le J_h.
	\end{align*}
	Consequently, we get from (\ref{eqn: KDE-properties-proof-densmin-derivative}) and the mean value theorem that 
	\begin{align*}
		\hspace{20pt}&\hspace{-20pt}\sup\Bigg\{\frac{\big(F^{-1}_{\widetilde{\nu}_{n,h}}(u)-F^{-1}_{\widetilde{\nu}_{n,h}}(u')\big)^2}{(u-u')^2}:u,u'\in[0,1],\; u\ne u'\Bigg\}\INDI_{E_{n,h}}\\
		&\le \bigg(\inf_{x\in[\underline{a},\overline{a}]}\big\{f_{\overline{\nu}_{n,h}}(x)\big\}\bigg)^{-2}\INDI_{E_{n,h}}
		\le \bigg(\min_{1\le j\le J_h}\bigg\{\frac{\kappa(2)Z_j}{h}\bigg\}\bigg)^{-2}\INDI_{E_{n,h}}\le \frac{4\zeta^2}{\kappa(2)^2}.
	\end{align*}
	Taking expectations on both sides of the above inequality proves 
	(\ref{eqn: KDE-properties-densmin}).

	To prove 
	(\ref{eqn: KDE-properties-cdf}),
	let us first bound the left-hand side of (\ref{eqn: KDE-properties-cdf})
	by Fubini's theorem and a bias-variance decomposition:
	\begin{align}
		\begin{split}
			\hspace{20pt}&\hspace{-20pt}
			\EXP\bigg[\int_{[\underline{a},\overline{a}]}\big(F_{\widetilde{\nu}_{n,h}}(x)-F_{\nu}(x)\big)^2\DIFFM{\nu}{\DIFF x}\INDI_{E_{n,h}}\bigg]\\
			&\le 2\EXP\bigg[\int_{\underline{a}}^{\overline{a}}\big(F_{\overline{\nu}_{n,h}}(x)-F_{\nu}(x)\big)^2f_{\nu}(x)\DIFFX{x}\bigg]
			+ 2\EXP\bigg[\sup_{x\in[\underline{a},\overline{a}]}\Big\{\big(F_{\widetilde{\nu}_{n,h}}(x)-F_{\overline{\nu}_{n,h}}(x)\big)^2\Big\}\INDI_{E_{n,h}}\bigg]\\
			&= 2\int_{\underline{a}}^{\overline{a}}\Big(\EXP\big[F_{\overline{\nu}_{n,h}}(x)\big]-F_{\nu}(x)\Big)^2f_{\nu}(x)
			\DIFFX{x}
			+ 2\int_{\underline{a}}^{\overline{a}}\Big(\EXP\big[F_{\overline{\nu}_{n,h}}(x)^2\big]- \EXP\big[F_{\overline{\nu}_{n,h}}(x)\big]^2\Big)f_{\nu}(x)
			\DIFFX{x} \\
			&\qquad + 2\EXP\bigg[\sup_{x\in[\underline{a},\overline{a}]}\Big\{\big(F_{\widetilde{\nu}_{n,h}}(x)-F_{\overline{\nu}_{n,h}}(x)\big)^2\Big\}\INDI_{E_{n,h}}\bigg].
		\end{split}
		\label{eqn: KDE-properties-proof-cdf-biasvar}
	\end{align}
	In the following, we will bound the three resulting terms on the right-hand side of (\ref{eqn: KDE-properties-proof-cdf-biasvar}) separately.

	Firstly,
	using (\ref{eqn: KDE-properties-proof-untruncated-cdf}) 
	and integration by parts,
	we get 
	\begin{align}
		\begin{split}
			\EXP\big[F_{\overline{\nu}_{n,h}}(x)\big]
			&=\EXP\bigg[\CK\bigg(\frac{x-X_1}{h}\bigg)\bigg]
			=\int_{\underline{a}}^{\overline{a}}\CK\bigg(\frac{x-z}{h}\bigg)f_{\nu}(z)\DIFFX{z}\\
			&= \CK\bigg(\frac{x-\overline{a}}{h}\bigg) 
			+ \frac{1}{h}\int_{\underline{a}}^{\overline{a}}\kappa\bigg(\frac{x-z}{h}\bigg)F_{\nu}(z)\DIFFX{z}\\
			&= \CK\bigg(\frac{x-\overline{a}}{h}\bigg) 
			+ \int_{\frac{x-\overline{a}}{h}}^{\frac{x-\underline{a}}{h}}\kappa(y)F_{\nu}(x-hy)\DIFFX{y}
			\qquad \forall x\in[\underline{a},\overline{a}].
		\end{split}
		\label{eqn: KDE-properties-proof-cdf-bias-step1}
	\end{align}
	Moreover, observe that 
	\begin{align}
		\begin{split}
			F_{\nu}(x)
			&=\CK\bigg(\frac{x-\overline{a}}{h}\bigg)F_{\nu}(x)
				+ \bigg(1-\CK\bigg(\frac{x-\underline{a}}{h}\bigg)\bigg)F_{\nu}(x)
				+ \bigg(\CK\bigg(\frac{x-\underline{a}}{h}\bigg)-\CK\bigg(\frac{x-\overline{a}}{h}\bigg)\bigg)F_{\nu}(x)\\
			&=\CK\bigg(\frac{x-\overline{a}}{h}\bigg)F_{\nu}(x)
				+ \bigg(1-\CK\bigg(\frac{x-\underline{a}}{h}\bigg)\bigg)F_{\nu}(x)
				+ \int_{\frac{x-\overline{a}}{h}}^{\frac{x-\underline{a}}{h}}\kappa(y)F_{\nu}(x) \DIFFX{y}
			\qquad \forall x\in[\underline{a},\overline{a}].
		\end{split}
		\label{eqn: KDE-properties-proof-cdf-bias-step2}
	\end{align}
	Combining (\ref{eqn: KDE-properties-proof-cdf-bias-step1})
	and (\ref{eqn: KDE-properties-proof-cdf-bias-step2}),
	it thus holds that 
	\begin{align}
		\begin{split}
			\Big|\EXP\big[F_{\overline{\nu}_{n,h}}(x)\big] - F_{\nu}(x)\Big|
			&\le \CK\bigg(\frac{x-\overline{a}}{h}\bigg)\big(1 - F_{\nu}(x)\big)
			+ \bigg(1-\CK\bigg(\frac{x-\underline{a}}{h}\bigg)\bigg)F_{\nu}(x) \\
			&\qquad + \int_{\frac{x-\overline{a}}{h}}^{\frac{x-\underline{a}}{h}}\kappa(y)\big|F_{\nu}(x-hy)-F_{\nu}(x)\big|\DIFFX{y} 
			\qquad \forall x\in[\underline{a},\overline{a}].
		\end{split}
		\label{eqn: KDE-properties-proof-cdf-bias-step3}
	\end{align}
	Using the property that 
	$\zeta^{-1}\le f_{\nu}(x)\le \zeta$ $\forall x\in[\underline{a},\overline{a}]$,
	and using 
	(\ref{eqn: KDE-properties-proof-kernel-pos1}), 
	(\ref{eqn: KDE-properties-proof-kernel-neg1}),
	we get 
	\begin{align}
		\begin{split}
			\int_{\underline{a}}^{\overline{a}}\CK\bigg(\frac{x-\overline{a}}{h}\bigg)\big(1 - F_{\nu}(x)\big)f_{\nu}(x)\DIFFX{x}
			&\le \zeta \int_{\underline{a}}^{\overline{a}}\CK\bigg(\frac{x-\overline{a}}{h}\bigg)\big(1 - F_{\nu}(x)\big)\DIFFX{x}\\
			&= \zeta h \int_{\frac{\underline{a}-\overline{a}}{h}}^{0}\CK(y)\big(F_{\nu}(\overline{a}) - F_{\nu}(\overline{a}+hy)\big)\DIFFX{y}\\
			&\le \zeta^2h^2\int_{-\infty}^{0}{-y}\CK(y)\DIFFX{y}
			=O(h^2),
		\end{split}\label{eqn: KDE-properties-proof-cdf-bias-step4}\allowdisplaybreaks\\
		\begin{split}
			\int_{\underline{a}}^{\overline{a}}\bigg(1-\CK\bigg(\frac{x-\overline{a}}{h}\bigg)\bigg)F_{\nu}(x)f_{\nu}(x) \DIFFX{x}
			&\le \zeta \int_{\underline{a}}^{\overline{a}}\bigg(1-\CK\bigg(\frac{x-\overline{a}}{h}\bigg)\bigg)F_{\nu}(x) \DIFFX{x}\\
			&= \zeta h\int_{0}^{\frac{\overline{a}-\underline{a}}{h}}\big(1-\CK(y)\big)\big(F_{\nu}(\underline{a}+hy)-F_{\nu}(\underline{a})\big) \DIFFX{y}\\
			&\le \zeta^2h^2\int_{0}^{\infty}y\big(1-\CK(y)\big)\DIFFX{y}
			=O(h^2),
		\end{split}\label{eqn: KDE-properties-proof-cdf-bias-step5}\allowdisplaybreaks\\
		\begin{split}
			\int_{\frac{x-\overline{a}}{h}}^{\frac{x-\underline{a}}{h}}\kappa(y)\big|F_{\nu}(x-hy)-F_{\nu}(x)\big|\DIFFX{y}
			&\le \zeta h\int_{-\infty}^{\infty}|y|\kappa(y)\DIFFX{y}
			=O(h) 
			\qquad\qquad \forall x\in[\underline{a},\overline{a}].
		\end{split}\label{eqn: KDE-properties-proof-cdf-bias-step6}
	\end{align}
	Subsequently, 
	using the properties that 
	$\CK\Big(\frac{x-\overline{a}}{h}\Big)\in(0,1)$,
	$\CK\Big(\frac{x-\underline{a}}{h}\Big)\in(0,1)$,
	$F_{\nu}(x)\in(0,1)$ $\forall x\in[\underline{a},\overline{a}]$,
	and combining 
	(\ref{eqn: KDE-properties-proof-cdf-bias-step3})--(\ref{eqn: KDE-properties-proof-cdf-bias-step6}),
	we get
	\begin{align}
		\begin{split}
			\hspace{20pt}&\hspace{-20pt}\!\int_{\underline{a}}^{\overline{a}}\Big(\EXP\big[F_{\overline{\nu}_{n,h}}(x)\big]-F_{\nu}(x)\Big)^2f_{\nu}(x)\DIFFX{x}\label{eqn: KDE-properties-proof-cdf-bias}\\
			&\le 3\int_{\underline{a}}^{\overline{a}}\CK\bigg(\frac{x-\overline{a}}{h}\bigg)^2\big(1 - F_{\nu}(x)\big)^2f_{\nu}(x)\DIFFX{x} 
			+ 3\int_{\underline{a}}^{\overline{a}}\bigg(1-\CK\bigg(\frac{x-\overline{a}}{h}\bigg)\bigg)^2F_{\nu}(x)^2 f_{\nu}(x) \DIFFX{x} \\
			&\qquad+ 3\int_{\underline{a}}^{\overline{a}}\bigg(\int_{\frac{x-\overline{a}}{h}}^{\frac{x-\underline{a}}{h}}\kappa(y)\big|F_{\nu}(x-hy)-F_{\nu}(x)\big|\DIFFX{y}\bigg)^2f_{\nu}(x)\DIFFX{x}\\
			&\le 3\int_{\underline{a}}^{\overline{a}}\CK\bigg(\frac{x-\overline{a}}{h}\bigg)\big(1 - F_{\nu}(x)\big)f_{\nu}(x)\DIFFX{x} 
			+ 3\int_{\underline{a}}^{\overline{a}}\bigg(1-\CK\bigg(\frac{x-\overline{a}}{h}\bigg)\bigg)F_{\nu}(x) f_{\nu}(x) \DIFFX{x} \\
			&\qquad+ 3\int_{\underline{a}}^{\overline{a}}\bigg(\int_{\frac{x-\overline{a}}{h}}^{\frac{x-\underline{a}}{h}}\kappa(y)\big|F_{\nu}(x-hy)-F_{\nu}(x)\big|\DIFFX{y}\bigg)^2f_{\nu}(x)\DIFFX{x}\\
			&= O(h^2).
		\end{split}
	\end{align}

	Secondly,
	notice that 
	\begin{align*}
		\EXP\big[F_{\overline{\nu}_{n,h}}(x)^2\big]- \EXP\big[F_{\overline{\nu}_{n,h}}(x)\big]^2
		&=\frac{1}{n}\Bigg(\EXP\bigg[\CK\bigg(\frac{x-X_1}{h}\bigg)^2\bigg]-\EXP\bigg[\CK\bigg(\frac{x-X_1}{h}\bigg)\bigg]^2\Bigg)
		\le \frac{1}{n},
	\end{align*}
	and we hence get
	\begin{align}
		\int_{\underline{a}}^{\overline{a}}\Big(\EXP\big[F_{\overline{\nu}_{n,h}}(x)^2\big]- \EXP\big[F_{\overline{\nu}_{n,h}}(x)\big]^2\Big)f_{\nu}(x) \DIFFX{x}
		= O\big(n^{-1}\big).
		\label{eqn: KDE-properties-proof-cdf-var}
	\end{align}

	Thirdly,
	observe that 
	\begin{align*}
		F_{\widetilde{\nu}_{n,h}}(x)&=\frac{1}{n}\sum_{i=1}^{n}\frac{\CK\Big(\frac{x-X_i}{h}\Big)-\CK\Big(\frac{\underline{a}-X_i}{h}\Big)}{\CK\Big(\frac{\overline{a}-X_i}{h}\Big)-\CK\Big(\frac{\underline{a}-X_i}{h}\Big)} \qquad \forall x\in[\underline{a},\overline{a}],
	\end{align*}
	and it hence follows from (\ref{eqn: KDE-properties-proof-kernel-denominator})
	that 
	\begin{align}
		\begin{split}
			\hspace{20pt}&\hspace{-20pt}
			\big|F_{\widetilde{\nu}_{n,h}}(x)-F_{\overline{\nu}_{n,h}}(x)\big|\\
			&\le \frac{1}{n}\sum_{i=1}^n \left(\frac{1}{\CK\Big(\frac{\overline{a}-X_i}{h}\Big)-\CK\Big(\frac{\underline{a}-X_i}{h}\Big)}-1\right)\CK\bigg(\frac{x-X_i}{h}\bigg) 
			+ \frac{\CK\Big(\frac{\underline{a}-X_i}{h}\Big)}{\CK\Big(\frac{\overline{a}-X_i}{h}\Big)-\CK\Big(\frac{\underline{a}-X_i}{h}\Big)}\\
			&\le \frac{2}{\big(\CK(1)-\CK(0)\big)n}\sum_{i=1}^{n}\CK\bigg(\frac{\underline{a}-X_i}{h}\bigg) + \bigg(1-\CK\bigg(\frac{\overline{a}-X_i}{h}\bigg)\bigg)
			\qquad\qquad\qquad \forall x\in[\underline{a},\overline{a}].
		\end{split}
		\label{eqn: KDE-properties-proof-cdf-extrabias-uniform}
	\end{align}
	For $j=1,\ldots,J_h$,
	and for all $x\in I_j$,
	it holds by (\ref{eqn: KDE-properties-proof-concentrate-binnum}) that 
	$\frac{\underline{a}-x}{h}\le -\frac{\overline{a}-\underline{a}}{J_h h}(j-1)\le -(j-1)$
	and 
	$\frac{\overline{a}-x}{h}\ge \frac{\overline{a}-\underline{a}}{J_h h}(J_h-j)\ge J_h-j$.
	Therefore, on the one hand, we have
	\begin{align*}
		\frac{1}{n}\sum_{i=1}^{n}\CK\bigg(\frac{\underline{a}-X_i}{h}\bigg)
		&\le\frac{1}{n}\sum_{i=1}^{n}\sum_{j=1}^{J_h}\INDI_{I_j}(X_i)\CK\bigg(\frac{\underline{a}-X_i}{h}\bigg)\\
		&\le \frac{1}{n}\sum_{i=1}^{n}\sum_{j=1}^{J_h}\INDI_{I_j}(X_i)\CK\big({-(j-1)}\big)
		=\sum_{j=1}^{J_h}Z_j\CK\big({-(j-1)}\big),
	\end{align*}
	which implies that 
	\begin{align}
		\Bigg(\frac{1}{n}\sum_{i=1}^{n}\CK\bigg(\frac{\underline{a}-X_i}{h}\bigg)\Bigg)\INDI_{E_{n,h}}
		&\le 3\zeta h\sum_{j=1}^{J_h}\CK\big({-(j-1)}\big)= 3\zeta h \int_{-\infty}^{0}{\textstyle\sum_{j=1}^{J_h}}\INDI_{(-\infty,{-(j-1)}]}(z)\kappa(z)\DIFFX{z}\nonumber\\
		&\le 3\zeta h\int_{-\infty}^{0}\big(|z|+1\big)\kappa(z)\DIFFX{z}
		=O(h).
		\label{eqn: KDE-properties-proof-cdf-extrabias-sum1}
	\end{align}
	On the other hand, 
	we have
	\begin{align*}
		\frac{1}{n}\sum_{i=1}^{n}\bigg(1-\CK\bigg(\frac{\overline{a}-X_i}{h}\bigg)\bigg)
		&\le \frac{1}{n}\sum_{i=1}^{n}\sum_{j=1}^{J_h}\INDI_{I_j}(X_i)\bigg(1-\CK\bigg(\frac{\overline{a}-X_i}{h}\bigg)\bigg)\\
		&\le \frac{1}{n}\sum_{i=1}^{n}\sum_{j=1}^{J_h}\INDI_{I_j}(X_i)\big(1-\CK(J_h-j)\big)
		=\sum_{j=1}^{J_h}Z_j\big(1-\CK(J_h-j)\big),
	\end{align*}
	which yields
	\begin{align}
		\Bigg(\frac{1}{n}\sum_{i=1}^{n}\bigg(1-\CK\bigg(\frac{\overline{a}-X_i}{h}\bigg)\bigg)\Bigg)\INDI_{E_{n,h}}
		&\le 3\zeta h\sum_{j=1}^{J_h}\big(1-\CK(J_h-j)\big)
		=3\zeta h \int_{0}^{\infty}{\textstyle\sum_{j=1}^{J_h}}\INDI_{[J_h-j,\infty)}(z)\kappa(z)\DIFFX{z} \nonumber\\
		&\le 3\zeta h\int_{0}^{\infty}\big(|z|+1\big)\kappa(z)\DIFFX{z}
		=O(h).
		\label{eqn: KDE-properties-proof-cdf-extrabias-sum2}
	\end{align}
	Subsequently, combining 
	(\ref{eqn: KDE-properties-proof-cdf-extrabias-uniform}),
	(\ref{eqn: KDE-properties-proof-cdf-extrabias-sum1}),
	and
	(\ref{eqn: KDE-properties-proof-cdf-extrabias-sum2})
	leads to
	\begin{align}
		\sup_{x\in[\underline{a},\overline{a}]}\Big\{\big|F_{\widetilde{\nu}_{n,h}}(x)-F_{\overline{\nu}_{n,h}}(x)\big|\Big\}\INDI_{E_{n,h}}=O(h).
		\label{eqn: KDE-properties-proof-cdf-extrabias}
	\end{align}
	Finally, 
	squaring (\ref{eqn: KDE-properties-proof-cdf-extrabias}) then taking the expectation,
	and combining the resulting bound with 
	(\ref{eqn: KDE-properties-proof-cdf-biasvar}),
	(\ref{eqn: KDE-properties-proof-cdf-bias}), and
	(\ref{eqn: KDE-properties-proof-cdf-var})
	completes the proof of (\ref{eqn: KDE-properties-cdf}).
	The proof is now complete.
\end{proof}

\begin{lemma}\label{lem: OT-KDE}
	Let $\mu,\nu\in\CP_{2,\AC}(\R)$ satisfy $\support(\mu)=[\underline{a}_{\mu},\overline{a}_{\mu}]$, 
	$\support(\nu)=[\underline{a}_{\nu},\overline{a}_{\nu}]$,
	where 
	$-\infty<\underline{a}_{\mu}<\overline{a}_{\mu}<\nobreak\infty$,
	$-\infty<\underline{a}_{\nu}<\overline{a}_{\nu}<\nobreak\infty$.
	Let
	$f_{\mu}$ and
	$f_{\nu}$ 
	denote the density functions of $\mu$ and $\nu$, respectively, 
	where we assume that 
	there exist $\zeta_{\mu}\ge1$, $\zeta_{\nu}\ge1$ such that 
	$\zeta_{\mu}^{-1}\le f_{\mu}(x)\le \zeta_{\mu}$ $\forall x\in [\underline{a}_{\mu},\overline{a}_{\mu}]$,
	$\zeta_{\nu}^{-1}\le f_{\nu}(x)\le \zeta_{\nu}$ $\forall x\in [\underline{a}_{\nu},\overline{a}_{\nu}]$.
	Next, let $(\Omega,\CF,\PROB)$ be a probability space.
	For any $m\in\N$ and $n\in\N$,
	let 
	$X_1,\ldots,X_m,Y_1,\ldots,Y_n:\Omega\to\R$ be independent random variables 
	where $X_1,\ldots,X_m$ have law $\mu$ 
	and $Y_1,\ldots,Y_n$ have law $\nu$,
	i.e.,
	$X_i\sharp\PROB=\mu$ $\forall 1\le i\le m$,
	$Y_j\sharp\PROB=\nu$ $\forall 1\le j\le n$.
	Moreover, let 
	$\kappa_{\mu},\kappa_{\nu}:\R\to(0,\infty)$ be continuous and satisfy
	$\kappa_{\mu}(x)\ge\kappa_{\mu}(x')$, $\kappa_{\nu}(x)\ge\kappa_{\nu}(x')$ whenever $|x'|\ge|x|$,
	$\int_{\R}\kappa_{\mu}(x)\DIFFX{x}=\int_{\R}\kappa_{\nu}(x)\DIFFX{x}=\nobreak 1$,
	$\int_{\R}x^2\kappa_{\mu}(x)\DIFFX{x}<\nobreak\infty$,
	$\int_{\R}x^2\kappa_{\nu}(x)\DIFFX{x}<\nobreak\infty$.
	Furthermore, for any $h_{\mu}>0$, $h_{\nu}>0$,
	let 
	$F_{\widetilde{\mu}_{m,h_{\mu}}}:[\underline{a}_{\mu},\overline{a}_{\mu}]\to[0,1]$ and 
	$F_{\widetilde{\nu}_{n,h_{\nu}}}:[\underline{a}_{\nu},\overline{a}_{\nu}]\to[0,1]$ be defined as follows:
	\begin{align*}
		F_{\widetilde{\mu}_{m,h_{\mu}}}(x)&:=\frac{1}{m}\sum_{i=1}^{m}
		\frac{{\displaystyle\int_{\underline{a}_{\mu}}^{x}}\kappa_{\mu}\Big(\frac{z-X_i}{h_{\mu}}\Big)\DIFFX{z}}{{\displaystyle\int_{\underline{a}_{\mu}}^{\overline{a}_{\mu}}}\kappa_{\mu}\Big(\frac{z-X_i}{h_{\mu}}\Big)\DIFFX{z}}
		\qquad \forall x\in [\underline{a}_{\mu},\overline{a}_{\mu}],\; \forall h_{\mu}>0,\allowdisplaybreaks\\
		F_{\widetilde{\nu}_{n,h_{\nu}}}(x)&:=\frac{1}{n}\sum_{j=1}^{n}
		\frac{{\displaystyle\int_{\underline{a}_{\nu}}^{x}}\kappa_{\nu}\Big(\frac{z-Y_j}{h_{\nu}}\Big)\DIFFX{z}}{{\displaystyle\int_{\underline{a}_{\nu}}^{\overline{a}_{\nu}}}\kappa_{\nu}\Big(\frac{z-Y_j}{h_{\nu}}\Big)\DIFFX{z}}
		\hspace{6.4pt}\qquad \forall x\in [\underline{a}_{\nu},\overline{a}_{\nu}],\; \forall h_{\nu}>0.
	\end{align*}
	Then, the following statements hold.
	\begin{enumerate}[label=(\roman*),beginpenalty=10000]
		\item\label{lems: OT-KDE-regularity}
		For any $m\in\N$, $n\in\N$, $h_{\mu}>0$, $h_{\nu}>0$,
		$F_{\widetilde{\nu}_{n,h_{\nu}}}:[\underline{a}_{\nu},\overline{a}_{\nu}]\to[0,1]$ 
		and 
		$F_{\widetilde{\nu}_{n,h_{\nu}}}^{-1}\circ F_{\widetilde{\mu}_{m,h_{\mu}}}:[\underline{a}_{\mu},\overline{a}_{\mu}]\to[\underline{a}_{\nu},\overline{a}_{\nu}]$
		are diffeomorphisms.
		
		\item\label{lems: OT-KDE-convexity}
		For any $m\in\N$, $n\in\N$, $h_{\mu}>0$, $h_{\nu}>0$,
		there exist $0<\underline{\lambda}\le\overline{\lambda}<\infty$
		satisfying
		$\underline{\lambda}\le \big(F_{\widetilde{\nu}_{n,h_{\nu}}}^{-1}\circ F_{\widetilde{\mu}_{m,h_{\mu}}}\big)'(x)\le \overline{\lambda}$ 
		$\forall x\in[\underline{a}_{\mu},\overline{a}_{\mu}]$.
		
		\item\label{lems: OT-KDE-consistency}
		It holds that 
		\begin{align*}
			\hspace{35pt}\EXP\Big[\big\|F_{\widetilde{\nu}_{n,h_{\nu}}}^{-1}\circ F_{\widetilde{\mu}_{m,h_{\mu}}}-T^{\mu}_{\nu}\big\|^2_{\CL^2(\mu)}\Big] 
			= O\big(m^{-1}+n^{-\frac{1}{2}}+h_{\mu}^2+h_{\nu}^2+m^{-1}h_{\mu}^{-2}+n^{-1}h_{\nu}^{-2}\big),
		\end{align*}
		as $m\to\infty$, $n\to\infty$, $h_{\mu}\to0$, $h_{\nu}\to0$.
		The constant term omitted by the big-$O$ notation above depends only on 
		$\underline{a}_{\mu}$, $\overline{a}_{\mu}$, $\zeta_{\mu}$, $\kappa_{\mu}(\cdot)$,
		$\underline{a}_{\nu}$, $\overline{a}_{\nu}$, $\zeta_{\nu}$, $\kappa_{\nu}(\cdot)$,
		and does not depend on $m$, $n$, $h_{\mu}$, $h_{\nu}$.
		In particular, choosing $h_{\mu}=O\big(m^{-\frac{1}{4}}\big)$, 
		$h_{\nu}=O\big(n^{-\frac{1}{4}}\big)$
		leads to 
		$\EXP\Big[\big\|F_{\widetilde{\nu}_{n,h_{\nu}}}^{-1}\circ F_{\widetilde{\mu}_{m,h_{\mu}}}-T^{\mu}_{\nu}\big\|^2_{\CL^2(\mu)}\Big]=O\big(m^{-\frac{1}{2}}+n^{-\frac{1}{2}}\big)$.
	\end{enumerate}
\end{lemma}

\begin{proof}[Proof of Lemma~\ref{lem: OT-KDE}]
	Throughout this proof,
	let us denote the distribution functions of $\mu$ and $\nu$ by 
	$F_{\mu}:[\underline{a}_{\mu},\overline{a}_{\mu}]\to[0,1]$ and 
	$F_{\nu}:[\underline{a}_{\nu},\overline{a}_{\nu}]\to[0,1]$,
	i.e.,
	$F_{\mu}(x):=\int_{\underline{a}_{\mu}}^xf_{\mu}(z)\DIFFX{z}$ 
	$\forall x\in[\underline{a}_{\mu},\overline{a}_{\mu}]$,
	$F_{\nu}(x):=\int_{\underline{a}_{\nu}}^xf_{\nu}(z)\DIFFX{z}$ 
	$\forall x\in[\underline{a}_{\nu},\overline{a}_{\nu}]$.
	Applying statements~\ref{lems: KDE-properties-regularity}, 
	\ref{lems: KDE-properties-convexity}, 
	\ref{lems: KDE-properties-concentrate} 
	of Lemma~\ref{lem: KDE-properties}
	with respect to 
	$\nu\leftarrow \mu$,
	$\underline{a}\leftarrow\underline{a}_{\mu}$,
	$\overline{a}\leftarrow\overline{a}_{\mu}$,
	$\gamma\leftarrow\gamma_{\mu}$,
	$\kappa(\cdot)\leftarrow \kappa_{\mu}(\cdot)$,
	$n\leftarrow m$,
	$h\leftarrow h_{\mu}$
	yields for any $m\in\N$ and any $h_{\mu}>0$ that 
	$F_{\widetilde{\mu}_{m,h_{\mu}}}$ is a diffeomorphism,
	that there exist $0<\underline{\lambda}_{\mu}\le\overline{\lambda}_{\mu}<\infty$ 
	satisfying 
	$\underline{\lambda}_{\mu}\le F'_{\widetilde{\mu}_{m,h_{\mu}}}(x)\le \overline{\lambda}_{\mu}$ $\forall x\in[\underline{a}_{\mu},\overline{a}_{\mu}]$,
	and that 
	there exists $E_{\mu,m,h_{\mu}}\in\sigma\big((X_i)_{i=1:m}\big)$
	such that 
	\begin{align}
		1 - \PROB[E_{\mu,m,h_{\mu}}] 
		&=O\big(m^{-1}h_{\mu}^{-2}\big), \label{eqn: OT-KDE-proof-mu-prob} \\
		\EXP\bigg[\int_{[\underline{a}_{\mu},\overline{a}_{\mu}]}\big(F_{\widetilde{\mu}_{m,h_{\mu}}}(x)-F_{\mu}(x)\big)^2\DIFFM{\mu}{\DIFF x}\INDI_{E_{\mu,m,h_{\mu}}}\bigg]
		&=O\big(m^{-1}+h_{\mu}^2\big),\label{eqn: OT-KDE-proof-mu-cdf}
	\end{align}
	as $m\to\infty$, $h_{\mu}\to0$.
	On the other hand, 
	applying statements~\ref{lems: KDE-properties-regularity}--\ref{lems: KDE-properties-concentrate} 
	of Lemma~\ref{lem: KDE-properties}
	with respect to 
	$\nu\leftarrow \nu$,
	$\underline{a}\leftarrow\underline{a}_{\nu}$,
	$\overline{a}\leftarrow\overline{a}_{\nu}$,
	$\gamma\leftarrow\gamma_{\nu}$,
	$\kappa(\cdot)\leftarrow \kappa_{\nu}(\cdot)$,
	$n\leftarrow n$,
	$h\leftarrow h_{\nu}$
	yields for any $n\in\N$ and any $h_{\nu}>0$ that 
	$F_{\widetilde{\nu}_{n,h_{\nu}}}$ is a diffeomorphism,
	that there exist $0<\underline{\lambda}_{\nu}\le\overline{\lambda}_{\nu}<\infty$ 
	satisfying 
	$\underline{\lambda}_{\nu}\le F'_{\widetilde{\nu}_{n,h_{\nu}}}(x)\le \overline{\lambda}_{\nu}$ $\forall x\in[\underline{a}_{\nu},\overline{a}_{\nu}]$,
	and that 
	there exists $E_{\nu,n,h_{\nu}}\in\sigma\big((Y_j)_{j=1:n}\big)$
	such that 
	\begin{align}
		1 - \PROB[E_{\nu,n,h_{\nu}}] 
		&=O\big(n^{-1}h_{\nu}^{-2}\big), \label{eqn: OT-KDE-proof-nu-prob} \\
		\EXP\Bigg[\sup\Bigg\{\frac{\big(F^{-1}_{\widetilde{\nu}_{n,h_{\nu}}}(u)-F^{-1}_{\widetilde{\nu}_{n,h_{\nu}}}(u')\big)^2}{(u-u')^2}:u,u'\in[0,1],\; u\ne u'\Bigg\}\INDI_{E_{\nu,n,h_{\nu}}}\Bigg]
		&=O(1),\label{eqn: OT-KDE-proof-nu-densmin} \\
		\EXP\bigg[\int_{0}^{1}\big(F_{\widetilde{\nu}_{n,h_{\nu}}}^{-1}(u)-F_{\nu}^{-1}(u)\big)^2\DIFFX{u}\bigg]
		&=O\big(n^{-\frac{1}{2}}+h_{\nu}^2\big), \label{eqn: OT-KDE-proof-nu-invcdf}
	\end{align}
	as $n\to\infty$, $h_{\nu}\to0$.
	Let us denote 
	$Q_{n,h_{\nu}}:=\sup\bigg\{\frac{\big(F^{-1}_{\widetilde{\nu}_{n,h_{\nu}}}(u)-F^{-1}_{\widetilde{\nu}_{n,h_{\nu}}}(u')\big)^2}{(u-u')^2}:u,u'\in[0,1],\; u\ne u'\bigg\}$
	$\forall n\in\N$,
	$\forall h_{\nu}>\nobreak0$
	in the remainder of the proof for notational simplicity.
	Note that the property that
	$F'_{\widetilde{\nu}_{n,h_{\nu}}}$
	is bounded from above and away from zero on $[\underline{a}_{\nu},\overline{a}_{\nu}]$
	guarantees
	$0<\nobreak Q_{n,h_{\nu}}<\nobreak\infty$ 
	for any $n\in\N$ and any $h_{\nu}>0$.

	For any $m\in\N$, $n\in\N$, $h_{\mu}>0$, $h_{\nu}>0$,
	since $F_{\widetilde{\mu}_{m,h_{\mu}}}$ and 
	$F_{\widetilde{\nu}_{n,h_{\nu}}}$ are both diffeomorphisms,
	it follows that 
	$F_{\widetilde{\nu}_{n,h_{\nu}}}^{-1}\circ F_{\widetilde{\mu}_{m,h_{\mu}}}:[\underline{a}_{\mu},\overline{a}_{\mu}]\to[\underline{a}_{\nu},\overline{a}_{\nu}]$
	is a diffeomorphism.
	Moreover, 
	since 
	$F'_{\widetilde{\mu}_{m,h_{\mu}}}$ 
	is bounded from above and away from zero on $[\underline{a}_{\mu},\overline{a}_{\mu}]$
	and 
	$F'_{\widetilde{\nu}_{n,h_{\nu}}}$
	is bounded from above and away from zero on $[\underline{a}_{\nu},\overline{a}_{\nu}]$,
	it follows that 
	\begin{align*}
		\big(F_{\widetilde{\nu}_{n,h_{\nu}}}^{-1}\circ F_{\widetilde{\mu}_{m,h_{\mu}}}\big)'(x) = \frac{F'_{\widetilde{\mu}_{m,h_{\mu}}}(x)}{F'_{\widetilde{\nu}_{n,h_{\nu}}}\big(F_{\widetilde{\nu}_{n,h_{\nu}}}^{-1}\big(F_{\widetilde{\mu}_{m,h_{\mu}}}(x)\big)\big)} \qquad \forall x\in[\underline{a}_{\mu},\overline{a}_{\mu}]
	\end{align*}
	is also bounded from above and away from zero on $[\underline{a}_{\mu},\overline{a}_{\mu}]$.
	The proofs of statements~\ref{lems: OT-KDE-regularity} and \ref{lems: OT-KDE-convexity} are now complete.

	It remains to prove statement~\ref{lems: OT-KDE-consistency}.
	To that end,
	we first combine (\ref{eqn: OT-KDE-proof-mu-prob}) and (\ref{eqn: OT-KDE-proof-nu-prob}) to obtain
	\begin{align}
		1 - \PROB[E_{\mu,m,h_{\mu}} \cap E_{\nu,n,h_{\nu}}]
		&\le \big(1 - \PROB[E_{\mu,m,h_{\mu}}]\big) + \big(1 - \PROB[E_{\nu,n,h_{\nu}}]\big)
		=O\big(m^{-1}h_{\mu}^{-2}+n^{-1}h_{\nu}^{-2}\big)
		\label{eqn: OT-KDE-proof-prob}
	\end{align}
	as $m\to\infty$, $n\to\infty$, $h_{\mu}\to0$, $h_{\nu}\to0$.
	Next, it follows from a classical result about one-dimensional optimal transport maps (see, e.g., \citep[Proposition~1.18]{chewi2024statistical})
	that 
	\begin{align}
		T^{\mu}_{\nu}(x)=F_{\nu}^{-1}\big(F_{\mu}(x)\big) \qquad \forall x\in [\underline{a}_{\mu},\overline{a}_{\mu}].
		\label{eqn: OT-KDE-proof-OTmap}
	\end{align}
	Moreover, observe that we have the upper bound
	\begin{align}
		\begin{split}
			\Big(F_{\widetilde{\nu}_{n,h_{\nu}}}^{-1}\big(F_{\widetilde{\mu}_{m,h_{\mu}}}(x)\big) - F_{\nu}^{-1}\big(F_{\mu}(x)\big)\Big)^2
			&\le (\overline{a}_{\nu}-\underline{a}_{\nu})^2 \\
			&\quad \forall x\in[\underline{a}_{\mu},\overline{a}_{\mu}],\; \forall m\in\N,\; \forall n\in\N,\; \forall h_{\mu}>0,\; \forall h_{\nu}>0,
		\end{split}
		\label{eqn: OT-KDE-proof-globalUB}
	\end{align}
	as well as 
	\begin{align}
		\begin{split}
			\hspace{20pt}&\hspace{-20pt}\Big(F_{\widetilde{\nu}_{n,h_{\nu}}}^{-1}\big(F_{\widetilde{\mu}_{m,h_{\mu}}}(x)\big) - F_{\nu}^{-1}\big(F_{\mu}(x)\big)\Big)^2 \\
			&\le 2\Big(F_{\widetilde{\nu}_{n,h_{\nu}}}^{-1}\big(F_{\widetilde{\mu}_{m,h_{\mu}}}(x)\big) - F_{\widetilde{\nu}_{n,h_{\nu}}}^{-1}\big(F_{\mu}(x)\big)\Big)^2
			+ 2\Big(F_{\widetilde{\nu}_{n,h_{\nu}}}^{-1}\big(F_{\mu}(x)\big) - F_{\nu}^{-1}\big(F_{\mu}(x)\big)\Big)^2\\
			&\le 2Q_{n,h_{\nu}}\big(F_{\widetilde{\mu}_{m,h_{\mu}}}(x)-F_{\mu}(x)\big)^2 + 2\Big(F_{\widetilde{\nu}_{n,h_{\nu}}}^{-1}\big(F_{\mu}(x)\big) - F_{\nu}^{-1}\big(F_{\mu}(x)\big)\Big)^2 \\
			&\hspace{164.0pt} \forall x\in[\underline{a}_{\mu},\overline{a}_{\mu}],\; \forall m\in\N,\; \forall n\in\N,\; \forall h_{\mu}>0,\; \forall h_{\nu}>0.
		\end{split}
		\label{eqn: OT-KDE-proof-specificUB}
	\end{align}
	On the one hand, since $\sigma\big((X_i)_{i=1:m}\big)$ and $\sigma\big((Y_j)_{j=1:n}\big)$ are independent,
	it follows from 
	(\ref{eqn: OT-KDE-proof-nu-densmin}) and 
	(\ref{eqn: OT-KDE-proof-mu-cdf}) that 
	\begin{align}
		\begin{split}
			\hspace{20pt}&\hspace{-20pt}\EXP\bigg[\int_{[\underline{a}_{\mu},\overline{a}_{\mu}]}Q_{n,h_{\nu}}\big(F_{\widetilde{\mu}_{m,h_{\mu}}}(x)-F_{\mu}(x)\big)^2\DIFFM{\mu}{\DIFF x}\INDI_{E_{\mu,m,h_{\mu}} \cap\, E_{\nu,n,h_{\nu}}}\bigg]\\
			&= \EXP\big[Q_{n,h_{\nu}}\INDI_{E_{\nu,n,h_{\nu}}}\big] \EXP\bigg[\int_{[\underline{a}_{\mu},\overline{a}_{\mu}]}\big(F_{\widetilde{\mu}_{m,h_{\mu}}}(x)-F_{\mu}(x)\big)^2\DIFFM{\mu}{\DIFF x}\INDI_{E_{\mu,m,h_{\mu}}}\bigg]=O\big(m^{-1}+h_{\mu}^2\big)
		\end{split}
		\label{eqn: OT-KDE-proof-specificUB1}
	\end{align}
	as $m\to\infty$, $n\to\infty$, $h_{\mu}\to0$, $h_{\nu}\to0$.
	On the other hand, since $F_{\mu}\sharp\mu$ is equal to the Lebesgue measure restricted to $[0,1]$,
	it follows from (\ref{eqn: OT-KDE-proof-nu-invcdf}) that 
	\begin{align}
		\begin{split}
			\EXP\bigg[\int_{[\underline{a}_{\mu},\overline{a}_{\mu}]}\Big(F_{\widetilde{\nu}_{n,h_{\nu}}}^{-1}\big(F_{\mu}(x)\big) - F_{\nu}^{-1}\big(F_{\mu}(x)\big)\Big)^2\DIFFM{\mu}{\DIFF x}\bigg]
			&=\EXP\bigg[\int_{0}^{1}\big(F_{\widetilde{\nu}_{n,h_{\nu}}}^{-1}(u) - F_{\nu}^{-1}(u)\big)^2\DIFFX{u}\bigg]\\
			&=O\big(n^{-\frac{1}{2}}+h_{\nu}^2\big)
		\end{split}
		\label{eqn: OT-KDE-proof-specificUB2}
	\end{align}
	as $n\to\infty$, $h_{\nu}\to0$.
	Lastly, we combine (\ref{eqn: OT-KDE-proof-prob})--(\ref{eqn: OT-KDE-proof-specificUB2}) to get
	\begin{align*}
		\hspace{20pt}&\hspace{-20pt}\EXP\Big[\big\|F_{\widetilde{\nu}_{n,h_{\nu}}}^{-1}\circ F_{\widetilde{\mu}_{m,h_{\mu}}}-T^{\mu}_{\nu}\big\|^2_{\CL^2(\mu)}\Big]\\
		&= \EXP\bigg[\int_{[\underline{a}_{\mu},\overline{a}_{\mu}]}\Big(F_{\widetilde{\nu}_{n,h_{\nu}}}^{-1}\big(F_{\widetilde{\mu}_{m,h_{\mu}}}(x)\big) - F_{\nu}^{-1}\big(F_{\mu}(x)\big)\Big)^2\DIFFM{\mu}{\DIFF x}\bigg]\\
		&\le 2\EXP\bigg[\int_{[\underline{a}_{\mu},\overline{a}_{\mu}]}Q_{n,h_{\nu}}\big(F_{\widetilde{\mu}_{m,h_{\mu}}}(x)-F_{\mu}(x)\big)^2\DIFFM{\mu}{\DIFF x}\INDI_{E_{\mu,m,h_{\mu}} \cap\, E_{\nu,n,h_{\nu}}}\bigg] \\
		&\qquad + 2\EXP\bigg[\int_{[\underline{a}_{\mu},\overline{a}_{\mu}]}\Big(F_{\widetilde{\nu}_{n,h_{\nu}}}^{-1}\big(F_{\mu}(x)\big) - F_{\nu}^{-1}\big(F_{\mu}(x)\big)\Big)^2\DIFFM{\mu}{\DIFF x}\bigg] + (\overline{a}_{\nu}-\underline{a}_{\nu})^2 \big(1 - \PROB[E_{\mu,m,h_{\mu}} \cap E_{\nu,n,h_{\nu}}]\big)\\
		&= O\big(m^{-1}+n^{-\frac{1}{2}}+h_{\mu}^2+h_{\nu}^2+m^{-1}h_{\mu}^{-2}+n^{-1}h_{\nu}^{-2}\big)
	\end{align*}
	as $m\to\infty$, $n\to\infty$, $h_{\mu}\to0$, $h_{\nu}\to0$.
	Note that the constant terms omitted by all the big-$O$ notations in this proof depend only on 
	$\underline{a}_{\mu}$, $\overline{a}_{\mu}$, $\zeta_{\mu}$, $\kappa_{\mu}(\cdot)$,
	$\underline{a}_{\nu}$, $\overline{a}_{\nu}$, $\zeta_{\nu}$, $\kappa_{\nu}(\cdot)$,
	and do not depend on $m$, $n$, $h_{\mu}$, $h_{\nu}$.
	The proof is now complete.
\end{proof}

\begin{proof}[Proof of Proposition~\ref{prop: special-case-1d}]
	It holds by assumption that 
	$\nu_1,\ldots,\nu_K$ all have $\CL^\infty$-bounded density functions.
	Moreover, 
	for $t\in\N$ and for $k=1,\ldots,K$,
	the definition of $\widehat{T}_{t,k}$ ensures that it is bounded and has Borel dependency on 
	$(X_{t,k,1},\ldots,X_{t,k,\widehat{M}_{t-1,k}},\allowbreak Y_{t,k,1},\ldots, Y_{t,k,\widehat{N}_{t-1,k}},\widehat{\ITheta}_{t-1,k})$.
	The proof of statement~\ref{props: special-case-1d-regularity} is complete.
	Statements~\ref{props: special-case-1d-uniqueness}--\ref{props: special-case-1d-var} are all direct consequences of the property that 
	$\big(\CP_{2,\AC}(\R),\CW_2\big)$ can be isometrically embedded into a Hilbert space;
	see, e.g., \citep[Proposition~7.14]{chewi2024statistical}.

	It remains to prove statement~\ref{props: special-case-1d-inequalities}.
	First, notice that the left-hand side of (\ref{eqn: main-convergence-cond2}) is equal to~0 for all $t\in\N$
	due to $\widehat{\mu}_t=\big[{\textstyle\sum_{k=1}^{K}w_k\widehat{T}_{t,k}}\big]\sharp \widehat{\mu}_{t-1}$.
	Next, let us show by induction that, for every $t\in\N_0$,
	the density function $f_{\widehat{\mu}_t}$ of $\widehat{\mu}_t$
	is bounded from above and away from zero on $[\underline{a},\overline{a}]$.
	To that end, let us first observe that 
	the density function $f_{\widehat{\mu}_0}=f_{\mu_0}$ of $\widehat{\mu}_0$
	is bounded from above and away from zero on $[\underline{a},\overline{a}]$.
	Let us suppose for some $t\in\N$ that 
	the density function $f_{\widehat{\mu}_{t-1}}$ of $\widehat{\mu}_{t-1}$
	is bounded from above and away from zero on $[\underline{a},\overline{a}]$.
	For $k=1,\ldots,K$,
	it follows from the definitions of $\widehat{T}_{t,k}$,
	Lemma~\ref{lem: OT-KDE}\ref{lems: OT-KDE-regularity},
	and Lemma~\ref{lem: OT-KDE}\ref{lems: OT-KDE-convexity} 
	with respect to 
	$\mu\leftarrow\widehat{\mu}_{t-1}$,
	$\nu\leftarrow\widehat{\nu}_k$,
	$m\leftarrow\widehat{M}_{t-1,k}$,
	$n\leftarrow\widehat{N}_{t-1,k}$,
	$(X_i)_{i=1:m}\leftarrow (X_{t,k,i})_{i=1:\widehat{M}_{t-1,k}}$,
	$(Y_j)_{j=1:n}\leftarrow (Y_{t,k,j})_{j=1:\widehat{N}_{t-1,k}}$,
	$\kappa_{\mu}\leftarrow \kappa_0$,
	$\kappa_{\nu}\leftarrow \kappa_k$,
	$h_{\mu}\leftarrow b_{t-1,k}$,
	$h_{\nu}\leftarrow h_{t-1,k}$
	that 
	$\widehat{T}_{t,k}:[\underline{a},\overline{a}]\to[\underline{a}_k,\overline{a}_k]$ is a diffeomorphism 
	(here we consider the restriction of $\widehat{T}_{t,k}$ to $[\underline{a},\overline{a}]$)
	with 
	$0<\nobreak\inf_{x\in[\underline{a},\overline{a}]}\big\{\widehat{T}_{t,k}'(x)\big\}\le \sup_{x\in[\underline{a},\overline{a}]}\big\{\widehat{T}_{t,k}'(x)\big\}<\nobreak\infty$.
	Let us denote $\bar{T}_{t}(x):=\sum_{k=1}^{K}w_k\widehat{T}_{t,k}(x)$ $\forall x\in[\underline{a},\overline{a}]$.
	It subsequently holds that 
	$\bar{T}_t:[\underline{a},\overline{a}]\to[\underline{a},\overline{a}]$ is a diffeomorphism,
	and that 
	$0<\nobreak\inf_{x\in[\underline{a},\overline{a}]}\big\{\bar{T}'_t(x)\big\}\le \sup_{x\in[\underline{a},\overline{a}]}\big\{\bar{T}'_t(x)\big\}<\nobreak\infty$.
	The change of variable formula for pushforward (see, e.g., \citep[Lemma 5.5.3]{ambrosio2008gradient}) then yields the following expression for the 
	density function $f_{\widehat{\mu}_t}$ of $\widehat{\mu}_t=\bar{T}_t\sharp\widehat{\mu}_{t-1}$:
	\begin{align*}
		f_{\widehat{\mu}_t}(x)=\frac{f_{\widehat{\mu}_{t-1}}\big(\bar{T}_t^{-1}(x)\big)}{\bar{T}'_{t}\big(\bar{T}_t^{-1}(x)\big)} \qquad\forall x\in[\underline{a},\overline{a}].
	\end{align*}
	This shows that 
	$f_{\widehat{\mu}_t}$ is bounded from above and away from zero on $[\underline{a},\overline{a}]$.
	Therefore, we conclude by induction that,
	for every $t\in\N_0$,
	the density function $f_{\widehat{\mu}_t}$ of $\widehat{\mu}_t$
	is bounded from above and away from zero on $[\underline{a},\overline{a}]$.

	We have now established the required regularity properties in order to apply Lemma~\ref{lem: OT-KDE}  
	on the entire sequence of
	$(\widehat{\mu}_t)_{t\in\N_0}$.
	For each $t\in\N$ and each $k\in\{1,\ldots,K\}$,
	applying Lemma~\ref{lem: OT-KDE}\ref{lems: OT-KDE-consistency}
	with respect to 
	$\mu\leftarrow\widehat{\mu}_{t-1}$,
	$\nu\leftarrow\widehat{\nu}_k$,
	$m\leftarrow\widehat{M}_{t-1,k}$,
	$n\leftarrow\widehat{N}_{t-1,k}$,
	$(X_i)_{i=1:m}\leftarrow (X_{t,k,i})_{i=1:\widehat{M}_{t-1,k}}$,
	$(Y_j)_{j=1:n}\leftarrow (Y_{t,k,j})_{j=1:\widehat{N}_{t-1,k}}$,
	$\kappa_{\mu}\leftarrow \kappa_0$,
	$\kappa_{\nu}\leftarrow \kappa_k$,
	$h_{\mu}\leftarrow b_{t-1,k}$,
	$h_{\nu}\leftarrow h_{t-1,k}$
	yields
	\begin{align*}
		\EXP\Big[\big\|\widehat{T}_{t,k}-T^{\widehat{\mu}_{t-1}}_{\nu_k}\big\|^2_{\CL^2(\widehat{\mu}_{t-1})}\Big|\CF_{t-1}\Big]
		&= O\big(\widehat{M}_{t-1,k}^{-1}+\widehat{N}_{t-1,k}^{-\frac{1}{2}}+b_{t-1,k}^{2}+h_{t-1,k}^{2}+\widehat{M}_{t-1,k}^{-1}b_{t-1,k}^{-2}+\widehat{N}_{t-1,k}^{-1}h_{t-1,k}^{-2}\big)
	\end{align*}
	as $\widehat{M}_{t-1,k}\to\infty$,
	$\widehat{N}_{t-1,k}\to\infty$,
	$b_{t-1,k}\to 0$,
	$h_{t-1,k}\to 0$.
	Consequently, for any $\beta\in(0,1)$,
	there exist  
	$\widehat{M}_{t-1,k}=O(\beta^{-2t})$,
	$\widehat{N}_{t-1,k}=O(\beta^{-2t})$,
	$b_{t-1,k}=O\big(\beta^{\frac{t}{2}}\big)$,
	$h_{t-1,k}=O\big(\beta^{\frac{t}{2}}\big)$
	to guarantee 
	$\EXP\Big[\big\|\widehat{T}_{t,k}-T^{\widehat{\mu}_{t-1}}_{\nu_k}\big\|^2_{\CL^2(\widehat{\mu}_{t-1})}\Big|\CF_{t-1}\Big]\le \beta^t$.
	Note that the constants omitted by the big-$O$ notations here
	only depend on $\widehat{\mu}_{t-1}$, $\nu_k$, $\kappa_0(\cdot)$, $\kappa_k(\cdot)$.
	The proof is now complete.
\end{proof}

\subsection{Omitted proofs in Section~\ref{ssec: Caffarelli-setting-concrete}}\label{sapx: proof-Caffarelli-setting-concrete}

In the following, let us establish an inequality which will be used in the subsequent proofs of Proposition~\ref{prop: Caffarelli-density} and Lemma~\ref{lem: regularity-truncation}.

\begin{lemma}\label{lem: truncation-bound}
	The following inequality holds:
	\begin{align*}
		\CW_2(\rho|_{\CX},\rho)^2 &\le \int_{\R^d}2\Big({\textstyle\frac{1-\rho(\CX)}{\rho(\CX)}}+\INDI_{\R^d\setminus\CX}(\BIx)\Big)\|\BIx\|^2\DIFFM{\rho}{\DIFF\BIx} \qquad \forall \CX\in\CB(\R^d),\; \rho(\CX)>0,\; \forall \rho\in\CP_2(\R^d).
	\end{align*}
\end{lemma}

\begin{proof}[Proof of Lemma~\ref{lem: truncation-bound}]
	Let us fix an arbitrary $\rho\in\CP_2(\R^d)$
	and an arbitrary 
	$\CX\in\CB(\R^d)$
	with $\rho(\CX)>0$.
	If $\rho(\R^d\setminus \CX)=\nobreak0$,
	then $\rho|_{\CX}=\rho$.
	Thus, we assume that 
	$\rho(\R^d\setminus\CX)>0$.
	Let us denote 
	$\dot{\mu}:=\rho|_{\CX}$, $\breve{\mu}:=\rho|_{\R^d\setminus\CX}$.
	Notice that $\rho=\rho(\CX)\dot{\mu}+(1-\rho(\CX))\breve{\mu}$.
	Let $\pi_{1}:=[I_d,I_d]\sharp\dot{\mu}$, 
	let $\pi_{2}\in\Pi(\dot{\mu},\breve{\mu})$ be arbitrary, 
	and let $\pi:=\rho(\CX)\pi_{1}+(1-\rho(\CX))\pi_{2}\in\CP(\R^d\times\R^d)$. 
	One may check that $\pi\in\Pi(\dot{\mu},\rho)$. 
	Subsequently, it holds that
	\begin{align*}
		\CW_2(\dot{\mu},\rho)^2 & \leq \int_{\R^d\times\R^d} \|\BIx - \BIy\|^2 \DIFFM{\pi}{ \DIFF \BIx, \DIFF \BIy}\\
		&= \rho(\CX) \int_{\R^d} \|\BIx - \BIx\|^2 \DIFFM{\dot{\mu}}{\DIFF \BIx} + (1 - \rho(\CX))\int_{\R^d\times\R^d} \|\BIx - \BIy\|^2 \DIFFM{\pi_{2}}{\DIFF \BIx, \DIFF \BIy} \allowdisplaybreaks\\
		& \leq (1 - \rho(\CX)) \int_{\R^d\times\R^d} 2\|\BIx\|^2 + 2\|\BIy\|^2 \DIFFM{\pi_{2}}{\DIFF \BIx, \DIFF \BIy} \\
		&= (1 - \rho(\CX)) \int_{\R^d} 2\|\BIx\|^2 \DIFFM{\dot{\mu}}{\DIFF \BIx} + (1 - \rho(\CX)) \int_{\R^d} 2\|\BIy\|^2 \DIFFM{\breve{\mu}}{\DIFF \BIy} \\
		&\le \int_{\R^d}2\Big({\textstyle\frac{1 - \rho(\CX)}{\rho(\CX)}}+\INDI_{\R^d\setminus\CX}(\BIx)\Big)\|\BIx\|^2 \DIFFM{\rho}{\DIFF \BIx}.
	\end{align*}
	The proof is now complete.
\end{proof}

\begin{proof}[Proof of Proposition~\ref{prop: Caffarelli-density}]
	To begin,
	it follows from 
	\citep[Lemma~7.1.10]{ambrosio2008gradient} 
	that $\CW_2(\nu,\rho)\le\frac{\epsilon}{2}$.
	Let us define
	$f_{\eta}(\BIx):=\big(\frac{\pi \epsilon^2}{2d}\big)^{-\frac{d}{2}}\exp\big({-\frac{2d}{\epsilon^2}}\|\BIx\|^2\big)$ $\forall\BIx\in\R^d$,
	which is the density function of $\eta$.
	Subsequently,
	one may check 
	from the definition of $\eta*\nu$
	that 
	$\rho$ admits a density function 
	$f_{\rho}(\BIx):=\int_{\R^d}f_{\eta}(\BIx-\BIy)\DIFFM{\nu}{\DIFF\BIy}$ $\forall\BIx\in\R^d$.
	Since $f_{\eta}$ has bounded derivatives of all orders,
	it follows from Lebesgue's dominated convergence theorem that 
	$f_{\rho}\in\CC^{\infty}(\R^d)\subset\CC^{\LOCAL,q+2,\alpha}(\R^d)$ for any $\alpha\in(0,1)$.
	Moreover, since $f_{\eta}(\BIx)>0$ $\forall\BIx\in\R^d$,
	we get $f_{\rho}(\BIx)>\nobreak0$ $\forall \BIx\in\R^d$.
	Thus, it holds that $\rho\in\CM^q_{\FULL}(\R^d)$.
	Furthermore,
	let us denote $\CX:=\bar{B}(\veczero_d,r)$.
	Since the choice of~$r$ guarantees 
	$\int_{\R^d}2\Big({\textstyle\frac{1 - \rho(\CX)}{\rho(\CX)}}+\INDI_{\R^d\setminus\CX}(\BIx)\Big)\|\BIx\|^2 \DIFFM{\rho}{\DIFF \BIx}\le\nobreak\frac{\epsilon^2}{4}$,
	Lemma~\ref{lem: regularity-truncation} yields
	$\CW_2(\rho|_{\CX},\rho)\le \frac{\epsilon}{2}$,
	and we hence get 
	$\CW_2(\rho|_{\CX},\nu)\le \CW_2(\rho|_{\CX},\rho)+\CW_2(\rho,\nu)\le\nobreak\epsilon$.
	The proof is now complete.
\end{proof}

\begin{proof}[Proof of Lemma~\ref{lem: curvature}]
	Let $f_{\mu}$ and $f_{\nu}$ denote the density functions of $\mu$ and $\nu$ which satisfy the conditions in Definition~\ref{def: admissible-measures}.
	Thus, $f_\mu\in\CC^{q,\alpha}(\support(\mu))$ and 
	$f_\nu\in\CC^{q,\alpha'}(\support(\nu))$ 
	for some $\alpha,\alpha'\in(0,1)$. 
	This implies that $f_\mu\in\CC^{q,\alpha''}(\support(\mu))$ and 
	$f_\nu\in\CC^{q,\alpha''}(\support(\nu))$ 
	for $\alpha'':=\alpha\wedge\alpha'$, and hence Caffarelli's global regularity theory (Theorem~\ref{thm: Caffarelli}) implies that the Brenier potential $\varphi^{\mu}_{\nu}$ belongs to $\CC^{q+2,\alpha''}(\support(\mu))$.
	Consequently, 
	the compactness of $\support(\mu)$ implies that 
	there exists $\lambda_{\UBSUB}<\infty$ such that $\nabla^2 \varphi^{\mu}_{\nu}(\BIx)\preceq \lambda_{\UBSUB}\BI_d$ $\forall\BIx\in\support(\mu)$. 
	Moreover, $\varphi^{\mu}_{\nu}$ needs to satisfy the following Monge--Amp{\`e}re type equation as implied by the change of variable formula for pushforward (see, e.g., \citep[Lemma 5.5.3]{ambrosio2008gradient}):
	\begin{align*}
		\det\big(\nabla^2\varphi^{\mu}_{\nu}(\BIx)\big)=\frac{f_{\mu}(\BIx)}{f_{\nu}\big(\nabla\varphi^{\mu}_{\nu}(\BIx)\big)} \qquad\forall \BIx\in\support(\mu).
	\end{align*}
	Since $f_{\nu}$ is bounded from above and $f_{\mu}$ is bounded away from zero on $\support(\mu)$, 
	it follows that $\det\big(\nabla^2\varphi^{\mu}_{\nu}(\BIx)\big)$ is bounded away from zero on $\support(\mu)$.
	Combining this and $\nabla^2\varphi^{\mu}_{\nu}(\BIx)\preceq \lambda_{\UBSUB}\BI_d$ $\forall \BIx\in\support(\mu)$ shows that there exists $\lambda_{\LBSUB}>0$ such that $\nabla^2\varphi^{\mu}_{\nu}(\BIx)\succeq \lambda_{\LBSUB}\BI_d$ $\forall\BIx\in\support(\mu)$.
	The proof is now complete.
\end{proof}

Before we prove Lemma~\ref{lem: regularity-truncation},
let us first prove the following properties of pushforward,
which will be used again in the proof of Proposition~\ref{prop: synthetic-generation} later.

\begin{lemma}\label{lem: regularity-pushforward}
	The following statements hold.
	\begin{enumerate}[label=(\roman*),beginpenalty=10000]
		\item\label{lems: regularity-pushforward-ac}
		If $T=\nabla\varphi\in\CC_{\LIN}(\R^d,\R^d)$ where 
		$\varphi\in\FC^{2}_{\underline{\lambda},\infty}(\R^d)$ for $\underline{\lambda}>\nobreak0$, 
		then it holds that $T\sharp\rho\in\CP_{2,\AC}(\R^d)$ for any $\rho\in\CP_{2,\AC}(\R^d)$.
		
		\item\label{lems: regularity-pushforward-Caffarelli}
		If $T=\nabla\varphi\in\CC_{\LIN}(\R^d,\R^d)$ where 
		$\varphi\in\FC^{\LOCAL,q+2,\alpha}_{\underline{\lambda},\infty}(\R^d)$ for $q\in\N_0$, $\alpha\in(0,1)$, $\underline{\lambda}>0$, 
		then it holds that $T\sharp\rho\in\CM^q_{\FULL}(\R^d)$ for any $\rho\in\CM^q_{\FULL}(\R^d)$.
	\end{enumerate}
\end{lemma}

\begin{proof}[Proof of Lemma~\ref{lem: regularity-pushforward}]
	Let us first prove statement~\ref{lems: regularity-pushforward-ac}.
	It follows from the duality between smooth convex functions and strongly convex functions (see, e.g., \citep[Theorem~26.6]{rockafellar1970convex}) that
	$T$ is a homeomorphism.
	Moreover, since $\varphi$ is twice continuously differentiable on $\R^d$, 
	it holds by the second-order characterization of strongly convex functions (see, e.g., \citep[Theorem~2.1.11]{nesterov2004introconvex}) that $\nabla^2\varphi(\BIx)\succeq \underline{\lambda}\BI_d$ $\forall\BIx\in\R^d$. 
	Let us fix an arbitrary $\rho\in\CP_{2,\AC}(\R^d)$, and let $f_{\rho}$ denote the density function of $\rho$.
	Subsequently, 
	the change of variable formula for pushforward (see, e.g., \citep[Lemma 5.5.3]{ambrosio2008gradient}) yields the following expression for the density function $f_{T\sharp\rho}$ of $T\sharp\rho$:
	\begin{align}
		\label{eqn: regularity-pushforward-changeofvariable}
		f_{T\sharp\rho}(\BIy)=\frac{f_{\rho}\big(T^{-1}(\BIy)\big)}{\det\big(\nabla^2\varphi\big(T^{-1}(\BIy)\big)\big)} \qquad\forall \BIy\in\R^d.
	\end{align}
	Moreover, since $\int_{\R^d}\|\BIy\|^2 \DIFFM{T\sharp\rho}{\DIFF\BIy} = \int_{\R^d}\big\|T(\BIx)\big\|^2\DIFFM{\rho}{\DIFF\BIx}\le \|T\|_{\CC_{\LIN}(\R^d,\R^d)}^2\int_{\R^d}\big(1+\|\BIx\|\big)^2\DIFFM{\rho}{\DIFF\BIx}<\infty$, 
	we can conclude that $T\sharp\rho\in\CP_{2,\AC}(\R^d)$.
	The proof of statement~\ref{lems: regularity-pushforward-ac} is complete.

	To prove statement~\ref{lems: regularity-pushforward-Caffarelli}, 
	let us fix an arbitrary $\rho\in\CM^q_{\FULL}(\R^d)$ and denote its density function by $f_{\rho}$.
	It thus holds that $f_{\rho}(\BIx)>0$ $\forall\BIx\in\R^d$,
	and that $f_{\rho}\in\CC^{\LOCAL,q,\alpha'}(\R^d)$ for some $\alpha'\in(0,1)$.
	By replacing either $\alpha$~or~$\alpha'$ with $\alpha\wedge\underline{\alpha}$ if necessary,
	we assume without loss of generality that $\alpha=\alpha'$
	and thus $f_{\rho}\in\CC^{\LOCAL,q+2,\alpha}(\R^d)$.
	Since $\FC^{\LOCAL,q+2,\alpha}_{\underline{\lambda},\infty}(\R^d)\subset\FC^2_{\underline{\lambda},\infty}(\R^d)$ and $\CM^q_{\FULL}(\R^d)\subset\CP_{2,\AC}(\R^d)$, 
	statement~\ref{lems: regularity-pushforward-ac} implies that $T\sharp\rho\in\CP_{2,\AC}(\R^d)$, where
	the density function $f_{T\sharp\rho}$ of $T\sharp\rho$ is given by (\ref{eqn: regularity-pushforward-changeofvariable}).
	Observe that (\ref{eqn: regularity-pushforward-changeofvariable}) shows that $f_{T\sharp\rho}(\BIy)>0$ for all $\BIy\in\R^d$.
	It remains to show the local H{\"o}lder property of $f_{T\sharp\rho}$.
	To that end,
	let $\varphi^*$ denote the convex conjugate of $\varphi$.
	It follows from the duality between smooth convex functions and strongly convex functions (see, e.g., the equivalence between (a) and (e) in \citep[Proposition~12.60]{rockafellar1998variational}) and the inverse function theorem (see, e.g., \citep[Theorem~1A.1]{dontchev2009implicit}) that $T^{-1}$ is continuously differentiable and
	\begin{align}
		\label{eqn: regularity-preservation-proof-inverse}
		\nabla^2\varphi^*(\BIy)=\nabla T^{-1}(\BIy)=\Big[\nabla^2 \varphi\big(T^{-1}(\BIy)\big)\Big]^{-1} \qquad \forall\BIy\in\R^d.
	\end{align}
	On the one hand, 
	since $\varphi\in\FC^{\LOCAL,q+2,\alpha}_{\underline{\lambda},\infty}(\R^d)\subset\CC^{\LOCAL,q+2,\alpha}(\R^d)$, 
	it follows from (\ref{eqn: regularity-preservation-proof-inverse}), 
	Fa{\`a} di Bruno's formula (see, e.g., \citep{craik2005prehistory}), 
	and an inductive argument that $\varphi^*\in\CC^{\LOCAL,q+2,\alpha}(\R^d)$.
	Consequently, since $\det(\cdot):\mathbb{S}^d\to\R$ is a polynomial in all entries of the input matrix, we have by (\ref{eqn: regularity-preservation-proof-inverse}) that
	$\frac{1}{\det\big(\nabla^2\varphi\big(T^{-1}(\cdot)\big)\big)}=\det \circ\, \nabla^2\varphi^* \in \CC^{\LOCAL,q,\alpha}(\R^d)$.
	On the other hand,
	since $f_{\rho}\in\CC^{\LOCAL,q,\alpha}(\R^d)$, 
	$T^{-1}=\nabla \varphi^*$, and 
	$\varphi^*\in\CC^{\LOCAL,q+2,\alpha}(\R^d)$, 
	we have by a similar derivation using Fa{\`a} di Bruno's formula and an inductive argument that $f_{\rho}\circ T^{-1}\in\CC^{\LOCAL,q,\alpha}(\R^d)$. 
	Hence, we conclude that $f_{T\sharp\rho}\in\CC^{\LOCAL,q,\alpha}(\R^d)$.
	The proof is now complete.
\end{proof}

\begin{proof}[Proof of Lemma~\ref{lem: regularity-truncation}]
	Throughout this proof, let \ref{setts: Caffarelli-inputs-measures}--\ref{setts: Caffarelli-OTestimator} in Setting~\ref{sett: Caffarelli} hold,
	and let us fix arbitrary 
	$\rho\in\CM^q_{\FULL}(\R^d)$, $\mu\in\CM^q(\R^d)$, $\CX\in\CS^q(\R^d)$,
	$(m_k)_{k=1:K}\subset\N\cap[\underline{m},\infty)$,
	$(n_k)_{k=1:K}\subset\N\cap[\underline{n},\infty)$,
	$(\theta_k)_{k=1:K}\subset\Theta$,
	as well as $\epsilon>0$.
	Moreover, 
	we let $f_{\rho}$ denote the density function of $\rho$,
	denote $\bar{T}:=\sum_{k=1}^{K}w_k\widehat{T}^{\mu,m_k}_{\nu_k,n_k}[\theta_k]$,
	and denote $\dot{\mu}_r:=\rho|_{\CX_r}$, $\dot{T}_r:=\sum_{k=1}^{K}w_k\widehat{T}^{\dot{\mu}_r,m_k}_{\nu_k,n_k}[\theta_k]$ for any $r\in\N$.
	The proof is divided into the following 4 steps.
	\begin{itemize}[beginpenalty=10000]
		\item Step~1: proving statement~\ref{lems: regularity-truncation-preserve}.
		
		\item Step~2: proving statement~\ref{lems: regularity-truncation-pushforward-preserve}.
		
		\item Step~3: showing the existence of $\overline{r}_1(\rho,\epsilon)\in\N$ such that $\CW_2(\dot{\mu}_r,\rho)^2\le \epsilon$
		$\forall r\ge \overline{r}_1(\rho,\epsilon)$.
		
		\item Step~4: showing the existence of $\overline{r}_2(\rho,\nu_1,\ldots,\nu_K,\epsilon)\in\N$
		such that
		$\EXP\big[\CW_2(\dot{T}_r\sharp\dot{\mu}_r,\dot{T}_r\sharp\rho)^2\big]\le\nobreak\epsilon$
		$\forall r\ge \nobreak\overline{r}_2(\rho,\nu_1,\ldots,\nu_K,\epsilon)$,
		and completing the proof of statement~\ref{lems: regularity-truncation-radius}.
	\end{itemize}

	\underline{Step~1.}
	Since $\support(\rho)=\R^d$,
	it holds that 
	$\support(\rho|_{\CX})=\CX\in\CS^q(\R^d)$.
	Moreover, Definition~\ref{def: admissible-measures} guarantees that
	$f_{\rho}(\BIx)>0$ $\forall\BIx\in\R^d$ and 
	$f_{\rho}\in\CC^{\LOCAL,q,\alpha}(\R^d)$ for some $\alpha\in(0,1)$.
	It thus holds that
	$f_{\rho|_{\CX}}:=\frac{f_{\rho}\INDI_{\CX}}{\rho(\CX)}\in\CC^{q,\alpha}(\CX)$ is the density function of $\rho|_{\CX}$.
	Subsequently, the compactness of $\CX$ implies that 
	$0<\inf_{\BIx\in\CX}\big\{f_{\rho|_{\CX}}(\BIx)\big\}\le \sup_{\BIx\in\CX}\big\{f_{\rho|_{\CX}}(\BIx)\big\}<\infty$ 
	and thus $\rho|_{\CX}\in\CM^q(\R^d)$.
	This completes the proof of statement~\ref{lems: regularity-truncation-preserve}.

	\underline{Step~2.}
	For $k=1,\ldots,K$, since $\widehat{T}^{\mu,m_k}_{\nu_k,n_k}[\cdot]$ satisfies the shape condition in Assumption~\ref{asp: OTmap-estimator}\ref{asps: OTmap-estimator-shape},
	it holds \mbox{$\PROB$-almost} surely that there exist 
	$\alpha_k\in(0,1)$, $\underline{\lambda}_k>0$, and 
	$\widehat{\varphi}_k\in\FC^{\LOCAL,q+2,\alpha_k}_{\underline{\lambda}_k,\infty}(\R^d)$
	such that 
	$\nabla\widehat{\varphi}_k=\widehat{T}^{\mu,m_k}_{\nu_k,n_k}[\theta_k]\in\CC_{\LIN}(\R^d,\R^d)$.
	Subsequently, let us denote $\bar{\varphi}:=\sum_{k=1}^{K}w_k\widehat{\varphi}_k$.
	It follows that $\nabla\bar{\varphi}=\bar{T}\in\CC_{\LIN}(\R^d,\R^d)$
	and $\bar{\varphi}\in\FC^{\LOCAL,q+2,\underline{\alpha}}_{\underline{\lambda},\infty}(\R^d)$ where 
	$\underline{\alpha}:=\min_{1\le k\le K}\{\alpha_k\}\in(0,1)$
	and $\underline{\lambda}:=\min_{1\le k\le K}\{\underline{\lambda}_k\}>\nobreak0$.
	The proof of statement~\ref{lems: regularity-truncation-pushforward-preserve}
	then follows from Lemma~\ref{lem: regularity-pushforward}\ref{lems: regularity-pushforward-Caffarelli}.

	\underline{Step~3.}
	Let us define $H_{1,r}(\rho,\epsilon)$ for $r\in\N$ and 
	$\overline{r}_{1}(\rho,\epsilon)$ as follows:
	\begin{align}
		\begin{split}
		H_{1,r}(\rho,\epsilon)
		&:=\int_{\R^d}2\Big({\textstyle\frac{1 - \rho(\CX_{r})}{\rho(\CX_{r})}}+\INDI_{\R^d\setminus \CX_{r}}(\BIx)\Big)\|\BIx\|^2 \DIFFM{\rho}{\DIFF \BIx}-\epsilon \qquad \forall r\in\N,\\
		\overline{r}_{1}(\rho,\epsilon)
		&:=\min\big\{r\in\nobreak\N:H_{1,\widetilde{r}}(\rho,\epsilon)\le 0 \; \forall \widetilde{r}\ge r\big\}.
		\end{split}
		\label{eqn: regularity-truncation-proof-r1-def}
	\end{align}
	One checks that 
	$H_{1,r}(\rho,\epsilon)$ has a Borel dependency on $(\rho,\epsilon)$
	for each $r\in\N$,
	and hence 
	$\overline{r}_{1}(\rho,\epsilon)$
	also has a Borel dependency on $(\rho,\epsilon)$.
	Since $\bigcup_{r\in\N}\CX_r=\R^d$ and $\rho\in\CP_{2}(\R^d)$ by assumption, it follows from Lebesgue's dominated convergence theorem that
	$\limsup_{r\to\infty}\int_{\R^d}2\Big({\textstyle\frac{1 - \rho(\CX_{r})}{\rho(\CX_{r})}}+\INDI_{\R^d\setminus\CX_{r}}(\BIx)\Big)\|\BIx\|^2 \DIFFM{\rho}{\DIFF \BIx}=\nobreak0$,
	and it hence holds that $\overline{r}_1(\rho,\epsilon)<\infty$.
	Lastly, Lemma~\ref{lem: truncation-bound} and the definition of $\overline{r}_{1}(\rho,\epsilon)$ guarantee that 
	$\CW_2(\dot{\mu}_r,\rho)^2\le\epsilon$ $\forall r\ge\overline{r}_1(\rho,\epsilon)$.
	Step~3 is now complete.

	\underline{Step~4.}
	In this step, we will use the growth condition 
	of $\widehat{T}^{\dot{\mu}_r,m_k}_{\nu_k,n_k}[\cdot]$ in Assumption~\ref{asp: OTmap-estimator}\ref{asps: OTmap-estimator-growth}.
	For every $r\in\N$, 
	let us denote $\breve{\mu}_r:=\rho|_{\R^d\setminus\CX_r}$
	(observe that $\rho(\R^d\setminus\CX_r)>0$).
	Notice that $\rho=\rho(\CX_r)\dot{\mu}_r+(1-\rho(\CX_r))\breve{\mu}_r$. 
	Let $\pi_{r,1}:=[I_d,I_d]\sharp\dot{\mu}_r$, 
	let $\pi_{r,2}\in\Pi(\dot{\mu}_r,\breve{\mu}_r)$ be arbitrary, 
	and let $\pi_r:=\rho(\CX_r)\pi_{r,1}+(1-\rho(\CX_r))\pi_{r,2}\in\CP(\R^d\times\R^d)$. 
	One may check that $\pi_r\in\Pi(\dot{\mu}_r,\rho)$. 
	Moreover, we denote by $\dot{T}_r\otimes\dot{T}_r$ the function $\R^d\times\R^d\ni (\BIx,\BIy)\mapsto \dot{T}_r\otimes\dot{T}_r(\BIx,\BIy):=\big(\dot{T}_{r}(\BIx),\dot{T}_{r}(\BIy)\big)\in\R^d\times\R^d$. 
	It holds that 
	$\big[\dot{T}_r\otimes\dot{T}_r\big]\sharp\pi_r\in \Pi\big(\dot{T}_r\sharp\dot{\mu}_r,\dot{T}_r\sharp\rho\big)$. 
	Therefore, we are able to bound $\CW_2\big(\dot{T}_{r}\sharp\dot{\mu}_r, \dot{T}_{r}\sharp\rho\big)^2$ by 
	\begin{align}
		\label{eqn: regularity-truncation-proof-control2-step1}
		\begin{split}
			\!\!\CW_2\big(\dot{T}_{r}\sharp\dot{\mu}_r, \dot{T}_{r}\sharp\rho\big)^2 
			&\leq \int_{\R^d\times\R^d} \big\|\BIx - \BIy\big\|^2 \DIFFM{\big[\dot{T}_{r}\otimes\dot{T}_{r}\big] \sharp \pi_r}{\DIFF \BIx, \DIFF \BIy} \\
			&= \rho (\CX_{r}) \int_{\R^d} \big\|\dot{T}_{r}(\BIx) - \dot{T}_{r}(\BIx)\big\|^2 \DIFFM{\dot{\mu}_r}{\DIFF \BIx} \\
			&\qquad + (1 - \rho (\CX_{r}))\int_{\R^d\times\R^d} \big\|\dot{T}_{r}(\BIx) - \dot{T}_{r}(\BIy)\big\|^2 \DIFFM{\pi_{r,2}}{\DIFF \BIx, \DIFF \BIy}\\
			&\leq (1 - \rho (\CX_{r})) \int_{\R^d\times\R^d} 2\big\|\dot{T}_{r}(\BIx)-\dot{T}_{r}(\veczero_d)\big\|^2 + 2\big\|\dot{T}_{r}(\BIy)-\dot{T}_{r}(\veczero_d)\big\|^2 \DIFFM{\pi_{r,2}}{\DIFF \BIx, \DIFF \BIy} \\
			&\leq \int_{\R^d} 2\Big({\textstyle\frac{1 - \rho(\CX_{r})}{\rho(\CX_{r})}}+\INDI_{\R^d\setminus\CX_{r}}(\BIx)\Big)\big\|\dot{T}_{r}(\BIx)-\dot{T}_{r}(\veczero_d)\big\|^2 \DIFFM{\rho}{\DIFF \BIx} \hspace{41pt}\qquad \forall r\in\N.
		\end{split}
	\end{align}
	For $k=1,\ldots,K$, 
	observe that the growth condition 
	of $\widehat{T}^{\dot{\mu}_r,m_k}_{\nu_k,n_k}[\cdot]$ in Assumption~\ref{asp: OTmap-estimator}\ref{asps: OTmap-estimator-growth}
	guarantees $\EXP\Big[\big\|\widehat{T}^{\dot{\mu}_r,m_k}_{\nu_k,n_k}[\theta_k](\BIx)-\widehat{T}^{\dot{\mu}_r,m_k}_{\nu_k,n_k}[\theta_k](\veczero_d)\big\|^2\Big]\le u_0(\nu_k)+u_1(\nu_k)\|\BIx\|^2$ 
	$\forall\BIx\in\R^d$, $\forall r\in\N$, where $u_0(\nu_k)\in\nobreak\R_+$ and $u_1(\nu_k)\in\nobreak\R_+$ only depend on $\nu_k$. 
	It thus holds by the convexity of $\R^d\ni\BIz\mapsto\|\BIz\|^2\in\R$ and Jensen's inequality that
	\begin{align}
		\label{eqn: regularity-truncation-proof-control2-step2}
		\begin{split}
		\EXP\Big[\big\|\dot{T}_r(\BIx)-\dot{T}_r(\veczero_d)\big\|^2\Big]
		&\le \sum_{k=1}^Kw_k \EXP\Big[\big\|\widehat{T}^{\dot{\mu}_r,m_k}_{\nu_k,n_k}[\theta_k](\BIx)-\widehat{T}^{\dot{\mu}_r,m_k}_{\nu_k,n_k}[\theta_k](\veczero_d)\big\|^2\Big]\\
		&\le \Bigg(\sum_{k=1}^{K}w_ku_0(\nu_k)\Bigg) + \Bigg(\sum_{k=1}^{K}w_ku_1(\nu_k)\Bigg)\|\BIx\|^2 \qquad\forall \BIx\in\R^d,\; \forall r\in\N.
		\end{split}
	\end{align}
	Subsequently, taking expectations on both sides of (\ref{eqn: regularity-truncation-proof-control2-step1}) and then applying Fubini's theorem and (\ref{eqn: regularity-truncation-proof-control2-step2}) leads to
	\begin{align}
		\hspace{20pt}&\hspace{-20pt}
		\EXP\Big[\CW_2\big(\dot{T}_r\sharp\dot{\mu}_r, \dot{T}_r\sharp\rho\big)^2\Big]\label{eqn: regularity-truncation-proof-control2-step3} \\
		&\le \int_{\R^d} 2\Big({\textstyle\frac{1 - \rho(\CX_{r})}{\rho(\CX_{r})}}+\INDI_{\R^d\setminus\CX_{r}}(\BIx)\Big)\Big[\Big({\textstyle\sum_{k=1}^{K}}w_ku_0(\nu_k)\Big)+\Big({\textstyle\sum_{k=1}^{K}}w_ku_1(\nu_k)\Big)\|\BIx\|^2\Big] \DIFFM{\rho}{\DIFF \BIx}
		\qquad \forall r\in\N.\nonumber
	\end{align}
	Now, let us define $H_{2,r}(\rho,\nu_1,\ldots,\nu_K,\epsilon)$ for $r\in\N$
	and $\overline{r}_2(\rho,\nu_1,\ldots,\nu_K,\epsilon)$ as follows:
	\begin{align}
		H_{2,r}(\rho,\nu_1,\ldots,\nu_K,\epsilon)&:=
		\int_{\R^d} 2\Big(\hspace{-1pt}{\textstyle\frac{1 - \rho(\CX_{r})}{\rho(\CX_{r})}}+\INDI_{\R^d\setminus\CX_{r}}(\BIx)\hspace{-1pt}\Big)\Big[\Big({\textstyle\sum_{k=1}^{K}}w_ku_0(\nu_k)\Big)\nonumber \\
		&\hspace{164pt}+\Big({\textstyle\sum_{k=1}^{K}}w_ku_1(\nu_k)\Big)\|\BIx\|^2\Big] \DIFFM{\rho}{\DIFF \BIx}-\epsilon \quad \forall r\in\N,\nonumber\\
		\overline{r}_2(\rho,\nu_1,\ldots,\nu_K,\epsilon)&:=
		\min\big\{r\in\N: H_{2,\widetilde{r}}(\rho,\nu_1,\ldots,\nu_K,\epsilon)\le\nobreak0\; \forall \widetilde{r}\ge \nobreak r\big\}.
		\label{eqn: regularity-truncation-proof-r2-def}
	\end{align}
	One checks that $H_{2,r}(\rho,\nu_1,\ldots,\nu_K,\epsilon)$
	has a Borel dependency on 
	$(\rho,\nu_1,\ldots,\nu_K,\epsilon)$ for each $r\in\N$,
	and hence 
	$\overline{r}_2(\rho,\nu_1,\ldots,\nu_K,\epsilon)$ 
	also has a Borel dependency on $\rho,\nu_1,\ldots,\nu_K,\epsilon$.
	Same as in Step~3,
	since $\bigcup_{r\in\N}\CX_r=\R^d$ and $\rho\in\CP_2(\R^d)$ by assumption,
	it follows from Lebesgue's dominated convergence theorem that 
	$\limsup_{r\to\infty}\int_{\R^d} 2\big({\textstyle\frac{1 - \rho(\CX_{r})}{\rho(\CX_{r})}}+\INDI_{\R^d\setminus\CX_{r}}(\BIx)\big)\Big[\Big({\textstyle\sum_{k=1}^{K}}w_ku_0(\nu_k)\Big)+\Big({\textstyle\sum_{k=1}^{K}}w_ku_1(\nu_k)\Big)\|\BIx\|^2\Big] \DIFFM{\rho}{\DIFF \BIx}=\nobreak0$,
	and it therefore holds that $\overline{r}_2(\rho,\nu_1,\ldots,\nu_K,\epsilon)<\nobreak\infty$.
	Lastly, 
	(\ref{eqn: regularity-truncation-proof-control2-step3})
	and the definition of 
	$\overline{r}_2(\rho,\nu_1,\ldots,\nu_K,\epsilon)$
	guarantee that 
	$\EXP\Big[\CW_2\big(\dot{T}_r\sharp\dot{\mu}_r, \dot{T}_r\sharp\rho\big)^2\Big]\le \epsilon$ $\forall r\ge\overline{r}_2(\rho,\nu_1,\ldots,\nu_K,\epsilon)$ and complete Step~4.
	The proof is now complete.
\end{proof}

\subsection{Omitted proofs in Section~\ref{ssec: Caffarelli-convergence-proof}}\label{sapx: proof-Caffarelli-convergence-proof}

The following measurability-related lemma is used in the proof of Theorem~\ref{thm: Caffarelli-convergence}.

\begin{lemma}[Measurability conditions in Algorithm~\ref{algo: concrete}]\label{lem: concrete-measurability}
	The following statements hold.
	\begin{enumerate}[label=(\roman*),beginpenalty=10000]
		\item\label{lems: concrete-measurability-truncation}%
		Let $q\in\N_0$, let $\CM^q_{\FULL}(\R^d)$ be defined in Definition~\ref{def: admissible-measures},
		and let $\CX\subseteq\R^d$ be the closure of an open set.
		Then, the mapping 
		$\CM^q_{\FULL}(\R^d)\ni\rho\mapsto \rho|_{\CX}\in\CP_{2}(\R^d)$ is Borel measurable.

		\item\label{lems: concrete-measurability-pushforward}%
		The mapping 
		$\CP_{2}(\R^d)\times \CC_{\LIN}(\R^d,\R^d)\ni (\rho,T)\mapsto T\sharp\rho\in\CP_{2}(\R^d)$ is Borel measurable.
		
	\end{enumerate}
\end{lemma}

\begin{proof}[Proof of Lemma~\ref{lem: concrete-measurability}]
	Let us first fix an arbitrary $q\in\N_0$,
	fix an arbitrary $\CX\subseteq\R^d$ that is the closure of an open set, and prove 
	statement~\ref{lems: concrete-measurability-truncation}.
	Observe that $\rho(\CX)>\nobreak0$ $\forall \rho\in\CM^q_{\FULL}(\R^d)$.
	Since the function $\INDI_{\CX}:\R^d\to\nobreak[0,1]$ is upper semi-continuous, 
	there exists a sequence of bounded continuous functions 
	$(g_n)_{n\in\N}\subset\CC^0(\R^d)$
	such that 
	$g_{n+1}\le g_n$ $\forall n\in\N$
	and 
	$\lim_{n\to\infty}g_n(\BIx)=\INDI_{\CX}(\BIx)$ $\forall\BIx\in\R^d$.
	It thus holds that $g_n\ge 0$ $\forall n\in\N$
	and that $\int_{\R^d}g_n\DIFFX{\rho}>\nobreak0$ $\forall \rho\in\CM^q_{\FULL}(\R^d)$, $\forall n\in\N$.
	Subsequently, 
	let us define 
	$\big(H_n:\CM^q_{\FULL}(\R^d)\to\CP_2(\R^d)\big)_{n\in\N}$ via the Radon--Nikodym derivatives of their images as follows:
	\begin{align*}
		\frac{\DIFF H_n(\rho)}{\DIFF\rho}:= \frac{g_n}{\int_{\R^d}g_n\DIFFX{\rho}}\qquad \forall \rho\in\CM^q_{\FULL}(\R^d),\; \forall n\in\N.
	\end{align*}
	This means that
	\begin{align}
		\begin{split}
			\int_{\R^d}f\DIFFX{H_n(\rho)}=\frac{\int_{\R^d}fg_n\DIFFX{\rho}}{\int_{\R^d}g_n\DIFFX{\rho}}
			\qquad &\forall \rho\in\CM^q_{\FULL}(\R^d),\; \forall n\in\N,\\
			&\forall f\in\CC^0(\R^d) \text{ with } \sup_{\BIx\in\R^d}\bigg\{\frac{|f(\BIx)|}{1+\|\BIx\|^2}\bigg\}<\infty.
		\end{split}
		\label{eqn: concrete-measurability-proof-truncate-limit}
	\end{align}
	It hence follows from 
	(\ref{eqn: concrete-measurability-proof-truncate-limit}) 
	and
	the equivalence between (i) and (iv) in 
	\citep[Theorem~7.12]{villani2003topics} that
	$(H_n)_{n\in\N}$ are all continuous.
	Moreover,
	in view of (\ref{eqn: concrete-measurability-proof-truncate-limit}),
	applying 
	Lebesgue's dominated convergence theorem and
	using again the equivalence between (i) and (iv) in 
	\citep[Theorem~7.12]{villani2003topics}
	yields
	$\lim_{n\to\infty}\CW_2\big(H_n(\rho),\rho|_{\CX}\big)=\nobreak0$ $\forall\rho\in\nobreak\CM^q_{\FULL}(\R^d)$.
	It follows that 
	$\CM^q_{\FULL}(\R^d)\ni\rho\mapsto \rho|_{\CX}\in\CP_{2}(\R^d)$ is
	the point-wise limit of a countable sequence of continuous functions on the metric space $(\CM^q_{\FULL}(\R^d),\CW_2)$ and is thus Borel measurable.
	This completes the proof of statement~\ref{lems: concrete-measurability-truncation}.

	In the following, we will prove statement~\ref{lems: concrete-measurability-pushforward}
	by showing that 
	$\CP_{2}(\R^d)\times \CC_{\LIN}(\R^d,\R^d)\ni (\rho,T)\mapsto T\sharp\rho\in\CP_{2}(\R^d)$ is continuous (thus Borel measurable).
	Let us fix arbitrary $\rho\in\CP_2(\R^d)$, $T\in\CC_{\LIN}(\R^d,\R^d)$, 
	as well as two sequences 
	$(\rho_n)_{n\in\N}\subset \CP_2(\R^d)$ and
	$(T_n)_{n\in\N}\subset\CC_{\LIN}(\R^d,\R^d)$, 
	with 
	$\lim_{n\to\infty}\CW_2(\rho_n,\rho)=\nobreak0$ and
	$\lim_{n\to\infty}\|T_n-\nobreak T\|_{\CC_{\LIN}(\R^d,\R^d)}=\nobreak0$.
	Moreover, let 
	$\rho_n \otimes \rho \in \Pi(\rho_n, \rho)$ be the product measure of $\rho_n$ and $\rho$ for each $n\in\N$.
	Then, $(\rho_n \otimes \rho)_{n \in \N}$
	converges weakly to $[I_d,I_d]\sharp\rho\in\CP_2(\R^d\times\R^d)$.
	One can then invoke the equivalence between (i) and (iii) in 
	\citep[Theorem~7.12]{villani2003topics}
	to show that 
	$(\rho_n \otimes \rho)_{n \in \N}$ also converges in $\CW_2$ to $[I_d,I_d]\sharp\rho$.
	Let $T\otimes T$ denote the mapping 
	$\R^d\times\R^d\ni (\BIx,\BIy)\mapsto \big(T(\BIx),T(\BIy)\big)\in\R^d\times\R^d$.
	Subsequently, 
	since $(T\otimes T)\sharp (\rho_n\otimes\rho)$ constitutes a suboptimal coupling of $T\sharp\rho_n$ and $T\sharp\rho$ for every $n\in\N$, and
	since 
	$\sup_{(\BIx,\BIy)\in\R^d\times\R^d}\Big\{\frac{\|T(\BIx)-T(\BIy)\|^2}{1+\|\BIx\|^2+\|\BIy\|^2}\Big\}\le 8\|T\|^2_{\CC_{\LIN}(\R^d,\R^d)}<\infty$,
	applying the equivalence between (i) and (iv) in 
	\citep[Theorem~7.12]{villani2003topics}
	yields
	\begin{align}
		\begin{split}
			\limsup_{n\to\infty} \CW_2(T\sharp\rho_n,T\sharp\rho)^2
			&\le
			\limsup_{n\to\infty}\int_{\R^d\times\R^d} \big\|T(\BIx)-T(\BIy)\big\|^2 \DIFFM{\rho_n \otimes \rho}{\DIFF\BIx,\DIFF\BIy} \\
			&= \int_{\R^d} \big\|T(\BIx)-T(\BIx)\big\|^2\DIFFM{\rho}{\DIFF\BIx}=0.
		\end{split}
		\label{eqn: concrete-measurability-proof-pushforward-step1}
	\end{align}
	On the other hand, 
	since $[T,T_n]\sharp\rho_n$ is a suboptimal coupling of $T\sharp\rho_n$ and $T_n\sharp\rho_n$ for every $n\in\N$,
	it holds that 
	\begin{align}
		\begin{split}
			\CW_2(T\sharp\rho_n,T_n\sharp\rho_n)^2
			&\le \int_{\R^d}\big\|T(\BIx)-T_n(\BIx)\big\|^2\DIFFM{\rho_n}{\DIFF\BIx}\\
			&\le \|T-T_n\|^2_{\CC_{\LIN}(\R^d,\R^d)}\int_{\R^d}\big(1+\|\BIx\|\big)^2\DIFFM{\rho_n}{\DIFF\BIx}
			\qquad \forall n\in\N.
		\end{split}
		\label{eqn: concrete-measurability-proof-pushforward-step2}
	\end{align}
	Now, combining (\ref{eqn: concrete-measurability-proof-pushforward-step1}),
	(\ref{eqn: concrete-measurability-proof-pushforward-step2}),
	and using the equivalence between (i) and (iv) in 
	\citep[Theorem~7.12]{villani2003topics}
	leads to 
	\begin{align*}
		\limsup_{n\to\infty}\CW_2(T\sharp\rho,T_n\sharp\rho_n)^2
		&\le \limsup_{n\to\infty} 2\CW_2(T\sharp\rho_n,T\sharp\rho)^2 + 2\CW_2(T\sharp\rho_n,T_n\sharp\rho_n)^2\\
		&\le 2\bigg(\limsup_{n\to\infty}\|T-T_n\|^2_{\CC_{\LIN}(\R^d,\R^d)}\bigg)\bigg(\limsup_{n\to\infty}\int_{\R^d}\big(1+\|\BIx\|\big)^2\DIFFM{\rho_n}{\DIFF\BIx}\bigg)
		=0.
	\end{align*}
	This proves the continuity and hence the Borel measurability of the mapping $\CP_{2}(\R^d)\times \CC_{\LIN}(\R^d,\R^d)\ni (\rho,T)\mapsto T\sharp\rho\in\CP_{2}(\R^d)$.
	The proof is now complete.
\end{proof}

\subsection{Omitted proofs in Section~\ref{ssec: entropic-estimator}}\label{sapx: proof-entropic-estimator}

Before we prove Proposition~\ref{prop: OTmap-estimator-entropic}, let us first establish two intermediate results as follows.

\begin{lemma}\label{lem: Sinkhorn-bound}
	Let $m,n\in\N$, 
	$(\BIx_i)_{i=1:m}\subset\R^d$,
	$(\BIy_j)_{j=1:n}\subset\R^d$,
	and $\gamma>0$ be arbitrary.
	Then, whenever 
	$(f_i)_{i=1:m}\subset\R$ and 
	$(g_j)_{j=1:n}\subset\R$ satisfy
	$g_j=-\gamma\log\Big(\sum_{i=1}^{m}\exp\Big(\frac{f_i+\langle\BIx_i,\BIy_j\rangle}{\gamma}\Big)\Big)$ $\forall 1\le j\le n$,
	it holds that
	\begin{align*}
		\max_{1\le j\le n}\{g_j\} - \min_{1\le j\le n}\{g_j\} 
		\le 2\max_{1\le i\le m}\big\{\|\BIx_i\|\big\}\max_{1\le j\le n}\big\{\|\BIy_j\|\big\}.
	\end{align*}
\end{lemma}

\begin{proof}[Proof of Lemma~\ref{lem: Sinkhorn-bound}]
	Let $V:=\max_{1\le i\le m}\big\{\|\BIx_i\|\big\}\max_{1\le j\le n}\big\{\|\BIy_j\|\big\}$.
	Since the Cauchy--Schwarz inequality yields
	$-V\le \langle\BIx_i,\BIy_j\rangle\le\nobreak V$
	$\forall 1\le\nobreak i\le\nobreak m$, 
	$\forall 1\le\nobreak j\le\nobreak n$,
	it follows directly that 
	\begin{align*}
		-\gamma\log\Bigg(\sum_{i=1}^{m}\exp\bigg(\frac{f_i}{\gamma}\bigg)\Bigg) - V 
		\le g_j
		\le -\gamma\log\Bigg(\sum_{i=1}^{m}\exp\bigg(\frac{f_i}{\gamma}\bigg)\Bigg) + V 
		\qquad \forall 1\le j\le n,
	\end{align*}
	which then completes the proof.
\end{proof}

\begin{lemma}\label{lem: logsumexp-properties}
	Let $n\in\N$, 
	$(\BIy_j)_{j=1:n}\subset\R^d$,
	and $\gamma>0$ be arbitrary.
	Let the functions
	$\eta_1,\ldots,\eta_n:\R^d\times\R^n\to(0,1)$, 
	$\Beta:\R^d\times\R^n\to(0,1)^n$,
	$\varphi:\R^d\times\R^n\to\R$, 
	$T:\R^d\times\R^n\to\R^d$,
	$\|\cdot\|_{\VARNORM}:\R^n\to\R_+$
	and the matrices
	$\BY\in\R^{d\times n}$,
	$\BSigma\in\R^{d\times d}$
	be defined as follows: 
	\begin{align*}
		\eta_j(\BIx,\BIg)&:=\frac{\exp\Big({\textstyle\frac{g_j + \langle\BIy_j,\BIx\rangle}{\gamma}}\Big)}{\sum_{k=1}^n\exp\Big({\textstyle\frac{g_{k} + \langle\BIy_{k},\BIx\rangle}{\gamma}}\Big)} 
		\qquad\qquad\qquad \forall \BIx\in\R^d, \; \forall \BIg=(g_1,\ldots,g_n)^\TRANSP\in\R^n,\; \forall 1\le j\le n,\allowdisplaybreaks\\
		\Beta(\BIx,\BIg)&:=\big(\eta_1(\BIx,\BIg),\ldots,\eta_n(\BIx,\BIg)\big)^\TRANSP 
		\hspace{190.0pt} \forall \BIx\in\R^d,\; \forall \BIg\in\R^n,\allowdisplaybreaks\\
		\varphi(\BIx,\BIg)&:= \gamma\log\left(\sum_{j=1}^n \exp\bigg({\frac{g_j + \langle\BIy_j,\BIx\rangle}{\gamma}}\bigg)\right)  
		\hspace{77.1pt} \forall \BIx\in\R^d,\; \forall \BIg=(g_1,\ldots,g_n)^\TRANSP\in\R^n, \allowdisplaybreaks\\
		T(\BIx,\BIg)&:= \sum_{j=1}^n \eta_j(\BIx,\BIg) \BIy_j  
		\hspace{240.6pt} \forall \BIx\in\R^d,\; \forall \BIg\in\R^n,\allowdisplaybreaks\\
		\|\BIg\|_{\VARNORM}&:=\max_{1\le j\le n}\{g_j\}-\min_{1\le j\le n}\{g_j\}
		\hspace{171.1pt} \forall \BIg=(g_1,\ldots,g_n)^\TRANSP\in\R^n, \allowdisplaybreaks\\
		\BY&:=\left(\begin{smallmatrix}
			| & | &  & | \\
			\BIy_1 & \BIy_2 & \cdots & \BIy_n \\
			| & | &  & |
	  	\end{smallmatrix}\right)\in\R^{d\times n},\allowdisplaybreaks\\
		\BSigma&:=\Bigg(\frac{1}{n}\sum_{j=1}^n\BIy_j\BIy_j^\TRANSP\Bigg) - \Bigg(\frac{1}{n}\sum_{j=1}^n\BIy_j\Bigg)\Bigg(\frac{1}{n}\sum_{j=1}^n\BIy_j\Bigg)^\TRANSP\in\R^{d\times d}.
	\end{align*}
	Moreover, let $\nabla_{\BIx}\varphi(\,\cdot\,,\cdot\,)$ 
	and $\nabla^2_{\BIx}\varphi(\,\cdot\,,\cdot\,)$ 
	denote the gradient and Hessian of $\varphi(\,\cdot\,,\cdot\,)$
	with respect to the first argument,
	and let $R_{\MAXSUB}:=\max_{1\le j\le n}\big\{\|\BIy_j\|\big\}$.
	Then, the following statements hold.
	\begin{enumerate}[label=(\roman*),beginpenalty=10000]
		\item\label{lems: logsumexp-properties-gradient}%
		For any $\BIg\in\R^n$, 
		it holds that 
		$\varphi(\,\cdot\,,\BIg)\in\FC^{\infty}_{0,\infty}(\R^d)$
		and 
		$\nabla_{\BIx}\varphi(\BIx,\BIg)=T(\BIx,\BIg)$ $\forall\BIx\in\R^d$.

		\item\label{lems: logsumexp-properties-Hessian-ub}%
		It holds that
		\begin{align*}
			\hspace{30pt}\nabla^2_{\BIx}\varphi(\BIx,\BIg)
			&\preceq \frac{1}{\gamma}e_{\MAXSUB}\Big(\BY\big(\diag(\Beta(\BIx,\BIg))-\Beta(\BIx,\BIg)\Beta(\BIx,\BIg)^\TRANSP\big)\BY^\TRANSP\Big)\BI_d
			\qquad \forall \BIx\in\R^d,\; \forall \BIg\in\R^n.
		\end{align*}

		\item\label{lems: logsumexp-properties-Hessian-lb}%
		It holds that
		\begin{align*}
			\hspace{30pt}\nabla^2_{\BIx}\varphi(\BIx,\BIg)
			&\succeq \frac{e_{\MINSUB}(\BSigma)}{\gamma}\exp\bigg({-\frac{\|\BIg\|_{\VARNORM}+2R_{\MAXSUB}\|\BIx\|}{\gamma}}\bigg)\BI_{d} 
			\hspace{76.4pt} \forall \BIx\in\R^d,\; \forall \BIg\in\R^n.
		\end{align*}

		\item\label{lems: logsumexp-properties-variation}%
		It holds that
		\begin{align*}
			\hspace{30pt}\big\|T(\BIx,\BIg) - T(\BIx,\BIg')\big\|
			&\le \frac{2R_{\MAXSUB}}{\gamma}\|\BIg-\BIg'\|_{\VARNORM} 
			\hspace{115.2pt} \forall \BIx\in\R^d,\; \forall \BIg,\BIg'\in\R^n.
		\end{align*}
	\end{enumerate}
\end{lemma}

\begin{proof}[Proof of Lemma~\ref{lem: logsumexp-properties}]
	Throughout this proof, 
	we denote
	\begin{align*}
		\Delta_n:={\Big\{\BIu=(u_1,\ldots,u_n)^\TRANSP\in\R^n_+:\textstyle{\sum_{j=1}^nu_j=1}\Big\}}.
	\end{align*}
	Observe that for any $\BIg\in\R^n$, 
	$\varphi(\,\cdot\,,\BIg)$ is the composition of a log-sum-exp function and an affine function and thus belongs to $\FC^{\infty}_{0,\infty}(\R^d)$. 
	Moreover, 
	$\nabla_{\BIx}\varphi(\BIx,\BIg)=T(\BIx,\BIg)$ holds by definition.
	This proves statement~\ref{lems: logsumexp-properties-gradient}.

	To prove statement~\ref{lems: logsumexp-properties-Hessian-ub},
	let $\nabla_{\BIx}\eta_j(\,\cdot\,,\BIg)$ denote the gradient of $\eta(\,\cdot\,,\BIg)$ for any $\BIg\in\R^n$ and for $j=1,\ldots,n$.
	Notice that
	\begin{align*}
		\nabla_{\BIx} \eta_j(\BIx,\BIg)&=\frac{1}{\gamma}\eta_j(\BIx,\BIg)\BIy_j-\frac{1}{\gamma}\eta_j(\BIx,\BIg)\Bigg(\sum_{k=1}^n\eta_k(\BIx,\BIg)\BIy_k\Bigg) \qquad \forall \BIx \in \R^d,\; \forall \BIg\in\R^n,\; \forall 1\le j\le n.
	\end{align*}
	We hence get
	\begin{align}
		\begin{split}
		\nabla^2_{\BIx}\varphi(\BIx,\BIg) &= \sum_{j=1}^n \nabla_{\BIx} \eta_j(\BIx,\BIg)\BIy_j^\TRANSP \\
		&= \frac{1}{\gamma}\Bigg(\sum_{j=1}^n \eta_j(\BIx,\BIg)\BIy_j\BIy_j^\TRANSP\Bigg)-\frac{1}{\gamma}\Bigg(\sum_{j=1}^n \eta_j(\BIx,\BIg)\BIy_j\Bigg) \Bigg(\sum_{j=1}^n \eta_j(\BIx,\BIg)\BIy_j\Bigg)^\TRANSP\\
		&= \frac{1}{\gamma}\BY\big(\diag(\Beta(\BIx,\BIg)) - \Beta(\BIx,\BIg)\Beta(\BIx,\BIg)^\TRANSP\big)\BY^\TRANSP \hspace{40pt} \qquad \forall \BIx \in \R^d,\; \forall \BIg\in\R^n,
		\end{split}
		\label{eqn: logsumexp-properties-proof-Hessian}
	\end{align}
	which proves statement~\ref{lems: logsumexp-properties-Hessian-ub}.

	We will divide the proof of statement~\ref{lems: logsumexp-properties-Hessian-lb} 
	into two steps, 
	where the first step will show that 
	\begin{align}
		\label{eqn: logsumexp-properties-proof-Hessian-lb-step1}
		\BY\big(\diag(\Beta(\BIx,\BIg)) - \Beta(\BIx,\BIg)\Beta(\BIx,\BIg)^\TRANSP\big)\BY^\TRANSP 
		&\succeq n e_{\MINSUB}(\BSigma)\min_{1\le j\le n}\big\{\eta_j(\BIx,\BIg)\big\}\BI_d \qquad\hspace{-7pt} \forall \BIx\in\R^d,\; \forall \BIg\in\R^n,
	\end{align}
	and the second step will show that
	\begin{align}
		\label{eqn: logsumexp-properties-proof-Hessian-lb-step2}
		\hspace{35pt}\min_{1\le j\le n}\big\{\eta_j(\BIx,\BIg)\big\}
		&\ge \frac{1}{n}\exp\bigg({-\frac{\|\BIg\|_{\VARNORM}+2R_{\MAXSUB}\|\BIx\|}{\gamma}}\bigg) 
		\hspace{60pt}\qquad \forall \BIx\in\R^d,\; \forall \BIg\in\R^n.
	\end{align}
	Subsequently, 
	combining (\ref{eqn: logsumexp-properties-proof-Hessian}),
	(\ref{eqn: logsumexp-properties-proof-Hessian-lb-step1}), and 
	(\ref{eqn: logsumexp-properties-proof-Hessian-lb-step2})
	will complete the proof of statement~\ref{lems: logsumexp-properties-Hessian-lb}.

	To prove (\ref{eqn: logsumexp-properties-proof-Hessian-lb-step1}),
	let us fix arbitrary $\BIx\in\R^d$ and $\BIg\in\R^n$,
	denote $\eta_{\MINSUB}:=\min_{1\le j\le n}\big\{\eta_j(\BIx,\BIg)\big\}$,
	define $\BIp:=\frac{1}{1-n\eta_{\MINSUB}}\big(\Beta(\BIx,\BIg)-\eta_{\MINSUB}\vecone_n\big)\in\Delta_n$ 
	in the case where $\eta_{\MINSUB}<\frac{1}{n}$,
	and let $\BIp\in\Delta_n$ be arbitrary in the case where $\eta_{\MINSUB}=\frac{1}{n}$.
	Then, it follows from the convexity of $\R\ni z\mapsto z^2\in\R$ 
	and Jensen's inequality that
	$\diag(\BIp)-\BIp\BIp^\TRANSP\succeq\BO_n$.
	Observe that since $\Beta(\BIx,\BIg)=\eta_{\MINSUB}\vecone_n+(1-n\eta_{\MINSUB})\BIp$, it holds that 
	\begin{align*}
		\diag\big(\Beta(\BIx,\BIg)\big)-\Beta(\BIx,\BIg)\Beta(\BIx,\BIg)^\TRANSP 
		&= \eta_{\MINSUB}\BI_n 
			+ (1-n\eta_{\MINSUB})\diag(\BIp) 
			- \eta_{\MINSUB}^2\vecone_n\vecone_n^\TRANSP \\
		&\qquad - (1-n\eta_{\MINSUB})^2\BIp\BIp^\TRANSP 
			- \eta_{\MINSUB}(1-n\eta_{\MINSUB})(\BIp\vecone_n^\TRANSP + \vecone_n\BIp^\TRANSP)\\
		&= n\eta_{\MINSUB}\bigg(\frac{1}{n}\BI_n-\frac{1}{n^2}\vecone_n\vecone_n^\TRANSP\bigg) 
			+ (1-n\eta_{\MINSUB})\big(\diag(\BIp)-\BIp\BIp^\TRANSP\big) \\
		&\qquad + n\eta_{\MINSUB}(1-n\eta_{\MINSUB})\bigg(\frac{1}{n}\vecone_n-\BIp\bigg)\bigg(\frac{1}{n}\vecone_n-\BIp\bigg)^\TRANSP\\
		&\succeq n\eta_{\MINSUB}\bigg(\frac{1}{n}\BI_n-\frac{1}{n^2}\vecone_n\vecone_n^\TRANSP\bigg).
	\end{align*}
	It hence follows that 
	\begin{align*}
		\BY\big(\diag(\Beta(\BIx,\BIg)) - \Beta(\BIx,\BIg)\Beta(\BIx,\BIg)^\TRANSP\big)\BY^\TRANSP
		&\succeq n\eta_{\MINSUB}\BY\bigg(\frac{1}{n}\BI_n-\frac{1}{n^2}\vecone_n\vecone_n^\TRANSP\bigg)\BY^\TRANSP
		=n\eta_{\MINSUB}\BSigma
		\succeq n\eta_{\MINSUB}e_{\MINSUB}(\BSigma)\BI_d.
	\end{align*}
	This proves (\ref{eqn: logsumexp-properties-proof-Hessian-lb-step1}).
	
	To prove (\ref{eqn: logsumexp-properties-proof-Hessian-lb-step2}),
	let us fix arbitrary $\BIx\in\R^d$,
	$\BIg=(g_1,\ldots,g_n)^\TRANSP\in\R^n$,
	as well as an arbitrary $j\in\{1,\ldots,n\}$.
	Observe that the Cauchy--Schwarz inequality implies
	\begin{align*}
		\big(g_k+\langle\BIy_k,\BIx\rangle\big) - \big(g_j+\langle\BIy_j,\BIx\rangle\big)
		&\le |g_j-g_k| + \|\BIy_j-\BIy_k\|\|\BIx\| 
		\le \|\BIg\|_{\VARNORM} + 2R_{\MAXSUB}\|\BIx\| \qquad \forall 1\le k\le n,
	\end{align*}
	and we subsequently get 
	\begin{align}
		\label{eqn: logsumexp-properties-proof-Hessian-lb-step2-1}
		\exp\bigg({\frac{g_k+\langle\BIy_k,\BIx\rangle}{\gamma}}\bigg)
		&\le \exp\bigg({\frac{g_j+\langle\BIy_j,\BIx\rangle}{\gamma}}\bigg)\exp\bigg({\frac{\|\BIg\|_{\VARNORM}+2R_{\MAXSUB}\|\BIx\|}{\gamma}}\bigg) \qquad \forall 1\le k\le n.
	\end{align}
	Summing (\ref{eqn: logsumexp-properties-proof-Hessian-lb-step2-1}) over $k=1,\ldots,n$ yields
	$\eta_j(\BIx,\BIg)\ge \frac{1}{n}\exp\Big({-\frac{\|\BIg\|_{\VARNORM}+2R_{\MAXSUB}\|\BIx\|}{\gamma}}\Big)$
	and proves (\ref{eqn: logsumexp-properties-proof-Hessian-lb-step2}).
	The proof of statement~\ref{lems: logsumexp-properties-Hessian-lb} is now complete.

	In the proof of statement~\ref{lems: logsumexp-properties-variation},
	let us define the matrix norms $\|\cdot\|_{\infty,\infty}$ and $\|\cdot\|_{\infty,2}$ as follows:
	\begin{align*}
		\|\BM\|_{\infty,\infty}&:= \sup\big\{\|\BM\BIv\|_{\infty}: \BIv\in\R^n,\; \|\BIv\|_{\infty}\le 1\big\} && \forall \BM\in\R^{n\times n},\\
		\|\BM\|_{\infty,2}&:= \sup\big\{\|\BM\BIv\|_{2}: \BIv\in\R^n,\; \|\BIv\|_{\infty}\le 1\big\} && \hspace{0.8pt}\forall \BM\in\R^{d\times n}.
	\end{align*}
	Moreover, let $\nabla_{\BIg}T(\,\cdot\,,\cdot\,)$ denote the gradient of 
	$T(\,\cdot\,,\cdot\,)$ with respect to the second argument, that is,
	$\nabla_{\BIg}T(\,\cdot\,,\cdot\,):\R^d\times\R^n\to\R^{d\times n}$ satisfies 
	\begin{align*}
		\lim_{\BIh\to\veczero_n}\frac{\big\|T(\BIx,\BIg+\BIh)-T(\BIx,\BIg)-\nabla_{\BIg}T(\BIx,\BIg)\BIh\big\|}{\|\BIh\|}=0 
		\qquad \forall \BIx\in\R^d,\; \forall \BIg\in\R^n.
	\end{align*}
	One checks directly from the definitions of $T(\,\cdot\,,\cdot\,)$, $\Beta(\,\cdot\,,\cdot\,)$, and $\BY$ that 
	\begin{align*}
		\nabla_{\BIg}T(\BIx,\BIg)&=\frac{1}{\gamma}\BY\diag(\Beta(\BIx,\BIg))\big(\BI_n-\vecone_n\Beta(\BIx,\BIg)^\TRANSP\big)
		\hspace{3.4pt}\qquad \forall \BIx\in\R^d, \; \forall \BIg\in\R^n.
	\end{align*}
	Hence, we get
	\begin{align}
		\label{eqn: logsumexp-properties-proof-variation-Taylor}
		\big\|T(\BIx,\BIg)-T(\BIx,\BIg')\big\|_2
		&\le \sup_{\BIh\in\R^n}\Big\{\big\|\nabla_{\BIg}T(\BIx,\BIh)\big\|_{\infty,2}\Big\}\|\BIg-\BIg'\|_{\infty}\\
		&\le \frac{1}{\gamma}\sup_{\BIh\in\R^n}\Big\{\big\|\BY\diag(\Beta(\BIx,\BIg))\big\|_{\infty,2}\Big\}\sup_{\BIh\in\R^n}\Big\{\big\|\BI_n-\vecone_n\Beta(\BIx,\BIg)^\TRANSP\big\|_{\infty,\infty}\Big\}\|\BIg-\BIg'\|_{\infty} \nonumber \\
		&\hspace{275pt} \forall \BIx\in\R^d,\; \forall \BIg\in\R^n. \nonumber
	\end{align}
	We will divide the remainder of the proof of statement~\ref{lems: logsumexp-properties-variation} into the following three steps.
	\begin{itemize}[beginpenalty=10000]
		\item Step~1: showing that 
		$\sup_{\BIh\in\R^n}\Big\{\big\|\BY\diag(\Beta(\BIx,\BIg))\big\|_{\infty,2}\Big\}\le R_{\MAXSUB}$.

		\item Step~2: showing that 
		$\sup_{\BIh\in\R^n}\Big\{\big\|\BI_n-\vecone_n\Beta(\BIx,\BIg)^\TRANSP\big\|_{\infty,\infty}\Big\}\le 2$.

		\item Step~3: proving statement~\ref{lems: logsumexp-properties-variation}.
	\end{itemize}

	\underline{Step~1.}
	Let us fix arbitrary $\BIx\in\R^d$, $\BIh\in\R^n$, and $\BIv=(v_1,\ldots,v_n)^\TRANSP\in\R^n$ 
	with $\|\BIv\|_{\infty}\le 1$.
	Since $\Beta(\BIx,\BIh)\in\Delta_n$,
	it holds by the triangle inequality and H{\"o}lder's inequality that 
	\begin{align*}
		\big\|\BY\diag(\Beta(\BIx,\BIh))\BIv\big\|_{2}
		&=\Bigg\|\sum_{j=1}^{n}v_j\eta_j(\BIx,\BIh)\BIy_j\Bigg\|_{2}
		\le \sum_{j=1}^{n} |v_j|\eta_j(\BIx,\BIh)\|\BIy_j\|_2
		\le R_{\MAXSUB}.
	\end{align*}
	This completes Step~1.

	\underline{Step~2.}
	Let us fix arbitrary $\BIx\in\R^d$, $\BIh\in\R^n$, and $\BIv\in\R^n$ 
	with $\|\BIv\|_{\infty}\le 1$.
	Since $\Beta(\BIx,\BIh)\in\Delta_n$,
	it holds by the triangle inequality that 
	\begin{align*}
		\big\|\big(\BI_n-\vecone_n\Beta(\BIx,\BIh)^\TRANSP\big)\BIv\big\|_{\infty}
		&\le \|\BIv\|_{\infty} + \big|\langle\Beta(\BIx,\BIh),\BIv\rangle\big|\|\vecone_n\|_{\infty}
		\le 1 + \|\Beta(\BIx,\BIh)\|_{1}\|\BIv\|_{\infty}\le 2.
	\end{align*}
	This completes Step~2.

	\underline{Step~3.}
	Let us fix arbitrary $\BIx\in\R^d$,
	$\BIg=(g_1,\ldots,g_n)^\TRANSP\in\R^n$,
	and $\BIg'=(g'_1,\ldots,g'_n)^\TRANSP\in\R^n$.
	Observe from the definition of $T(\,\cdot\,,\cdot\,)$ that 
	$T(\BIx,\BIg+c\vecone_n)=T(\BIx,\BIg)$ $\forall c\in\R$.
	Letting $c:=-\min_{1\le j\le n}\{g_j-g'_j\}$, 
	we obtain from (\ref{eqn: logsumexp-properties-proof-variation-Taylor}),
	Step~1, and Step~2 that 
	\begin{align*}
		\big\|T(\BIx,\BIg)-T(\BIx,\BIg')\big\|_2
		&= \big\|T(\BIx,\BIg+c\vecone_n)-T(\BIx,\BIg')\big\|_2 
		\le \frac{2R_{\MAXSUB}}{\gamma}\|\BIg-\BIg'+c\vecone_n\|_{\infty}
		=\frac{2R_{\MAXSUB}}{\gamma}\|\BIg-\BIg'\|_{\VARNORM}.
	\end{align*}
	This proves statement~\ref{lems: logsumexp-properties-variation} 
	and completes the proof of Lemma~\ref{lem: logsumexp-properties}.
\end{proof}

\begin{proof}[Proof of Proposition~\ref{prop: OTmap-estimator-entropic}]
	Throughout this proof, 
	let us define 
	$\widehat{g}^{(\gamma,\infty)}_j:=\lim_{l\to\infty}\Big(\widehat{g}^{(\gamma,l)}_j-\max_{1\le k\le n}\{\widehat{g}^{(\gamma,l)}_{k}\}\Big)$ 
	$\forall 1\le j\le n$,
	which exist due to the convergence of Sinkhorn's algorithm; 
	see, e.g., 
	\citep[Theorem~4.2 \& Remark~4.12]{peyre2019computational}.
	Subsequently, let us define
	$\widehat{f}^{(\gamma,\infty)}_i:=-\gamma\log\Big(\sum_{j=1}^{n}\exp\Big(\frac{\widehat{g}^{(\gamma,\infty)}_j+\langle\BIx_i,\BIy_j\rangle}{\gamma}\Big)\Big)$ 
	$\forall 1\le i\le m$.
	Note that 
	$\big(\widehat{f}^{(\gamma,\infty)}_i\big)_{i=1:m}$,
	$\big(\widehat{g}^{(\gamma,\infty)}_j\big)_{j=1:n}$
	maximize (\ref{eqn: EOT-dual-discrete}).
	Subsequently, let us define the functions 
	$\widetilde{T}_{\ENTROPIC}[\gamma,\infty]:\R^d\to\nobreak\R^d$,
	$\widetilde{\varphi}_{\ENTROPIC}[\gamma,\overline{l}]:\R^d\to\R$,
	$\zeta_{\STRONGLYCONVEX}:\R_+\to\R$,
	$\varphi_{\STRONGLYCONVEX}:\R^d\to\R$,
	$\|\cdot\|_{\VARNORM}:\R^n\to\R_+$, 
	the vectors 
	$\big(\widehat{\BIg}^{(\gamma,\overline{l})}\big)_{\overline{l}\in\N_0}\subset\nobreak\R^n$,
	$\widehat{\BIg}^{(\gamma,\infty)}\in\R^n$,
	and the matrix 
	$\BSigma\in\R^{d\times d}$ as follows:
	\begin{align*}
		\widetilde{T}_{\ENTROPIC}[\gamma,\infty](\BIx)
		&:= \frac{\sum_{j=1}^n \exp\Big(\textstyle\frac{\widehat{g}^{(\gamma,\infty)}_j+\langle\BIY_j,\BIx\rangle}{\gamma}\Big) \BIY_j}{\sum_{j=1}^n \exp\Big(\textstyle\frac{\widehat{g}^{(\gamma,\infty)}_j+\langle\BIY_j,\BIx\rangle}{\gamma}\Big)} 
		\hspace{96.1pt}\qquad\qquad \forall \BIx\in\R^d,\; \forall \gamma>0, \allowdisplaybreaks\\
		\widetilde{\varphi}_{\ENTROPIC}[\gamma,\overline{l}](\BIx)
		&:=\gamma\log\Bigg(\sum_{j=1}^{n}\exp\bigg({\frac{\widehat{g}^{(\gamma,\overline{l})}+\langle\BIY_j,\BIx\rangle}{\gamma}}\bigg)\Bigg)
		\hspace{23.2pt}\qquad\qquad\forall \BIx\in\R^d,\; \forall \gamma>0,\; \forall \overline{l}\in\N, \allowdisplaybreaks\\
		\zeta_{\STRONGLYCONVEX}(z)
		&:=\begin{cases}
			\exp\Big({\textstyle-\frac{1}{2z-R_{\mu}^2}}\Big) 
			& \hspace{150.5pt}\qquad\qquad\forall z\in\Big({\textstyle\frac{R_{\mu}^2}{2}},\infty\Big), \\
			0 
			& \hspace{158.7pt}\qquad\qquad\forall z\in\Big[0,{\textstyle\frac{R_{\mu}^2}{2}}\Big],
		\end{cases}\\
		\varphi_{\STRONGLYCONVEX}(\BIx)
		&:=\int_{0}^{\frac{\|\BIx\|^2}{2}} \zeta_{\STRONGLYCONVEX}(z)\DIFFX{z} 
		\hspace{196.7pt}\qquad\qquad\forall \BIx\in\R^d, \allowdisplaybreaks\\
		\|\BIg\|_{\VARNORM}
		&:=\max_{1\le j\le n}\{g_j\}-\min_{1\le j\le n}\{g_j\}
		\hspace{86.3pt}\qquad\qquad\forall \BIg=(g_1,\ldots,g_n)^\TRANSP\in\R^n, \allowdisplaybreaks\\
		\widehat{\BIg}^{(\gamma,\overline{l})}
		&:=\big(\widehat{g}^{(\gamma,\overline{l})}_1,\ldots,\widehat{g}^{(\gamma,\overline{l})}_{n}\big)^\TRANSP\in\R^n, 
		\hspace{114.4pt}\qquad\qquad\forall \gamma>0,\; \forall l\in\N_0, \allowdisplaybreaks\\
		\widehat{\BIg}^{(\gamma,\infty)}
		&:=\big(\widehat{g}^{(\gamma,\infty)}_1,\ldots,\widehat{g}^{(\gamma,\infty)}_{n}\big)^\TRANSP\in\R^n, 
		\hspace{146.2pt}\qquad\qquad\forall \gamma>0, \allowdisplaybreaks\\
		\BSigma
		&:=\Bigg(\frac{1}{n}\sum_{j=1}^n\BIY_j\BIY_j^\TRANSP\Bigg) - \Bigg(\frac{1}{n}\sum_{j=1}^n\BIY_j\Bigg)\Bigg(\frac{1}{n}\sum_{j=1}^n\BIY_j\Bigg)^\TRANSP\in\R^{d\times d}.
	\end{align*}
	These definitions are used throughout the proof.

	We carry out the proof through the following 9 steps.
	\begin{itemize}[beginpenalty=10000]
		\item Step~1: 
		showing that 
		$\widehat{T}_{\ENTROPIC}[\gamma,\overline{l}]=\nabla\big(\widetilde{\varphi}_{\ENTROPIC}[\gamma,\overline{l}]+\varphi_{\STRONGLYCONVEX}\big)$
		and $\widetilde{\varphi}_{\ENTROPIC}[\gamma,\overline{l}],\varphi_{\STRONGLYCONVEX}\in\CC^{\infty}(\R^d)$.
		
		\item Step~2: 
		proving statement~\ref{props: OTmap-estimator-entropic-Borel}.
		
		\item Step~3: 
		showing that 
		$\nabla^2\widetilde{\varphi}_{\ENTROPIC}[\gamma,\overline{l}](\BIx)\succeq\BO_d$ $\forall\BIx\in\R^d$ and 
		$\nabla^2\widetilde{\varphi}_{\ENTROPIC}[\gamma,\overline{l}](\BIx)\succeq \frac{e_{\MINSUB}(\BSigma)}{\gamma}\exp\big({-\frac{6R_{\mu}R_{\nu}}{\gamma}}\big)\BI_d$ 
		$\forall \BIx\in\bar{B}(\veczero_d,2R_{\mu})$.
		
		\item Step~4: 
		showing that
		$\nabla^2\varphi_{\STRONGLYCONVEX}(\BIx)\succeq\BO_d$ $\forall\BIx\in\R^d$ and
		$\nabla^2\varphi_{\STRONGLYCONVEX}(\BIx)\succeq\exp\big({-\frac{1}{3R_{\mu}^2}}\big)\BI_d$
		$\forall \BIx\in\R^d\setminus \bar{B}(\veczero_d,2R_{\mu})$.

		\item Step~5: 
		proving statement~\ref{props: OTmap-estimator-entropic-shape}.
		
		\item Step~6: 
		proving statement~\ref{props: OTmap-estimator-entropic-growth}.
		
		\item Step~7: 
		establishing the bound:
		\begin{align}
			\label{eqn: OTmap-estimator-entropic-proof-consistency-1-1}
			\begin{split}
				\hspace{30pt}\EXP\Big[\big\|\widetilde{T}_{\ENTROPIC}[\gamma,\infty]-T^{\mu}_{\nu}\big\|_{\CL^2(\mu)}^2\Big] 
				&\le C_{\ENTROPIC}(\mu,\nu) \Big[\gamma^{-\frac{d}{2}}\big(\log(m)m^{-\frac{1}{2}}+\log(n)n^{-\frac{1}{2}}\big) + \gamma^{\frac{\overline{\alpha}(\mu,\nu)}{2}}\Big] \\
				& \hspace{187pt} \forall \gamma \in \big(0,\overline{\gamma}(\mu,\nu)\big],
			\end{split}
		\end{align}
		and showing that 
		\begin{align}
			\label{eqn: OTmap-estimator-entropic-proof-consistency-1-2}
			\begin{split}
				&\EXP\Big[\big\|\widetilde{T}_{\ENTROPIC}[\widetilde{\gamma}(\mu,\nu,m,n,\epsilon),\infty]-T^{\mu}_{\nu}\big\|_{\CL^2(\mu)}^2\Big]\le \frac{\epsilon}{4} \\
				& \hspace{40pt} \forall m\ge \overline{m}(\mu,\nu,\epsilon), \; \forall n\ge \overline{n}(\mu,\nu,\epsilon), \; \forall \epsilon>0.
			\end{split}
		\end{align}
		
		\item Step~8: 
		establishing the bound:
		\begin{align}
			\label{eqn: OTmap-estimator-entropic-proof-consistency-2-1}
			\begin{split}
				\hspace{30pt}\sup_{\BIx\in\R^d}\Big\{\big\|\widetilde{T}_{\ENTROPIC}[\gamma,\overline{l}](\BIx)-\widetilde{T}_{\ENTROPIC}[\gamma,\infty](\BIx)\big\|\Big\} 
				&\le \frac{4R_{\mu}R_{\nu}^2}{\gamma}\Bigg(1-\exp\bigg({-\frac{2R_{\mu}R_{\nu}}{\gamma}}\bigg)\Bigg)^{2\overline{l}} \\
				& \hspace{87pt} \forall \gamma >0,\; \forall \overline{l}\in\N,\; \PROB\text{-a.s.},
			\end{split}
		\end{align}
		and showing that 
		\begin{align}
			\label{eqn: OTmap-estimator-entropic-proof-consistency-2-2}
			\begin{split}
				\hspace{36pt}&\sup_{\BIx\in\R^d}\Big\{\big\|\widetilde{T}_{\ENTROPIC}\big[\widetilde{\gamma}(\mu,\nu,m,n,\epsilon),\widetilde{\overline{l}}(\mu,\nu,m,n,\epsilon)\big](\BIx)-\widetilde{T}_{\ENTROPIC}\big[\widetilde{\gamma}(\mu,\nu,m,n,\epsilon),\infty\big](\BIx)\big\|\Big\} \le \frac{\sqrt{\epsilon}}{2} \\
				& \hspace{250pt} \forall m\in\N,\; \forall n\in\N, \; \forall \epsilon>0,\; \PROB\text{-a.s.}
			\end{split}
		\end{align}

		\item Step~9:
		proving statement~\ref{props: OTmap-estimator-entropic-consistency}.
	\end{itemize}

	\underline{Step~1.}
	It follows directly from Lemma~\ref{lem: logsumexp-properties}\ref{lems: logsumexp-properties-gradient} with respect to 
	$(\BIy_j)_{j=1:n}\leftarrow (\BIY_j)_{j=1:n}$,
	$(g_j)_{j=1:n}\leftarrow \big(\widehat{g}^{(\gamma,\overline{l})}_j\big)_{j=1:n}$
	that 
	$\widetilde{T}_{\ENTROPIC}[\gamma,\overline{l}]=\nabla\widetilde{\varphi}_{\ENTROPIC}[\gamma,\overline{l}]$ and
	$\widetilde{\varphi}_{\ENTROPIC}[\gamma,\overline{l}]\in\CC^{\infty}(\R^d)$.
	Moreover, observe that $T_{\STRONGLYCONVEX}(\BIx)=\zeta_{\STRONGLYCONVEX}\big(\frac{1}{2}\|\BIx\|^2\big)\BIx$ $\forall\BIx\in\R^d$
	and $\zeta_{\STRONGLYCONVEX}\in\CC^{\infty}(\R_+)$.
	It thus follows from the fundamental theorem of calculus that
	$T_{\STRONGLYCONVEX}=\nabla\varphi_{\STRONGLYCONVEX}$ 
	and $\varphi_{\STRONGLYCONVEX}\in\CC^{\infty}(\R^d)$.
	We conclude that 
	$\widehat{T}_{\ENTROPIC}[\gamma,\overline{l}]=\widetilde{T}_{\ENTROPIC}[\gamma,\overline{l}]+T_{\STRONGLYCONVEX}=\nabla\big(\widetilde{\varphi}_{\ENTROPIC}[\gamma,\overline{l}]+\varphi_{\STRONGLYCONVEX}\big)$.
	Step~1 is now complete.
	
	\underline{Step~2.}
	Observe from the definitions of 
	$\big(\widehat{f}^{(\gamma,\overline{l})}_i\big)_{i=1:m}$,
	$\big(\widehat{g}^{(\gamma,\overline{l})}_j\big)_{j=1:n}$
	in (\ref{eqn: Sinkhorn-empirical})
	as well as the definitions of 
	$\widetilde{T}_{\ENTROPIC}[\gamma,\overline{l}]$
	in (\ref{eqn: OTmap-estimator-baryproj})
	that $\widehat{T}_{\ENTROPIC}[\gamma,\overline{l}]$ has a Borel dependency on 
	$(\BIX_1,\ldots,\BIX_m,\allowbreak\BIY_1,\ldots,\BIY_n,\allowbreak\gamma,\overline{l})$.
	Moreover, since 
	$\widetilde{T}_{\ENTROPIC}[\gamma,\overline{l}](\BIx)\in\conv\big(\{\BIY_1,\ldots,\BIY_n\}\big)$ $\forall\BIx\in\R^d$, we get
	\begin{align}
		\label{eqn: OTmap-estimator-entropic-proof-convhullbound}
		\big\|\widetilde{T}_{\ENTROPIC}[\gamma,\overline{l}](\BIx)\big\|\le \max_{1\le j\le n}\big\{\|\BIY_j\|\big\} \qquad \forall \BIx\in\R^d.
	\end{align}
	Furthermore, notice that $\zeta_{\STRONGLYCONVEX}(z)\in[0,1)$ $\forall z\in\R_+$,
	which guarantees $\big\|T_{\STRONGLYCONVEX}(\BIx)\big\|\le \|\BIx\|$ $\forall\BIx\in\R^d$.
	Combining this with (\ref{eqn: OTmap-estimator-entropic-proof-convhullbound})
	shows that 
	$\widehat{T}_{\ENTROPIC}[\gamma,\overline{l}]\in\CC_{\LIN}(\R^d,\R^d)$.
	The proof of statement~\ref{props: OTmap-estimator-entropic-Borel} is complete.
	
	\underline{Step~3.}
	Firstly, since 
	$\big(\widehat{f}^{(\gamma,\overline{l})}_i\big)_{i=1:m}$,
	$\big(\widehat{g}^{(\gamma,\overline{l})}_j\big)_{j=1:n}$
	satisfy (\ref{eqn: Sinkhorn-empirical}),
	Lemma~\ref{lem: Sinkhorn-bound} with respect to
	$(\BIx_i)_{i=1:m}\leftarrow(\BIX_i)_{i=1:m}$,
	$(\BIy_j)_{j=1:n}\leftarrow(\BIY_j)_{j=1:n}$,
	$(f_i)_{i=1:m}\leftarrow\big(\widehat{f}^{(\gamma,\overline{l})}_i\big)_{i=1:m}$,
	$(g_j)_{j=1:n}\leftarrow\big(\widehat{g}^{(\gamma,\overline{l})}_j\big)_{j=1:n}$
	implies that 
	\begin{align*}
		\big\|\widehat{\BIg}^{(\gamma,\overline{l})}\big\|_{\VARNORM} &\le 2\max_{1\le i\le m}\big\{\|\BIX_i\|\big\}\max_{1\le j\le n}\big\{\|\BIY_j\|\big\}\le 2R_{\mu}R_{\nu} \qquad \PROB\text{-a.s.}
	\end{align*}
	Secondly,
	applying Lemma~\ref{lem: logsumexp-properties}\ref{lems: logsumexp-properties-Hessian-lb} with respect to 
	$(\BIy_j)_{j=1:n}\leftarrow (\BIY_j)_{j=1:n}$,
	$(g_j)_{j=1:n}\leftarrow \big(\widehat{g}^{(\gamma,\overline{l})}_j\big)_{j=1:n}$
	leads to
	\begin{align*}
		\nabla^2\widetilde{\varphi}_{\ENTROPIC}[\gamma,\overline{l}](\BIx)
		&\succeq \frac{e_{\MINSUB}(\BSigma)}{\gamma}\exp\bigg({-\frac{1}{\gamma}}\bigg(\big\|\widehat{\BIg}^{(\gamma,\overline{l})}\big\|_{\VARNORM}+2\max_{1\le j\le n}\big\{\|\BIY_j\|\big\}\|\BIx\|\bigg)\bigg)\BI_d\\
		&\succeq \frac{e_{\MINSUB}(\BSigma)}{\gamma}\exp\bigg({-\frac{2R_{\nu}\big(R_{\mu}+\|\BIx\|\big)}{\gamma}}\bigg)\BI_d
		\qquad \forall \BIx\in\R^d,\; \PROB\text{-a.s.}
	\end{align*}
	This shows that 
	$\nabla^2\widetilde{\varphi}_{\ENTROPIC}[\gamma,\overline{l}](\BIx)\succeq \BO_d$ $\forall\BIx\in\R^d$ as well as 
	$\nabla^2\widetilde{\varphi}_{\ENTROPIC}[\gamma,\overline{l}](\BIx)\succeq \frac{e_{\MAXSUB}(\BSigma)}{\gamma}\exp\big({-\frac{6R_{\mu}R_{\nu}}{\gamma}}\big)\BI_d$
	$\forall \BIx\in\bar{B}(\veczero_d,2R_{\mu})$. 
	Step~3 is now complete.
	
	\underline{Step~4.}
	Since $\zeta_{\STRONGLYCONVEX}$ is differentiable and increasing on $\R_+$,
	we have 
	$\zeta_{\STRONGLYCONVEX}'(z)\ge 0$ $\forall z\in\R_+$.
	Consequently, it holds that 
	\begin{align*}
		\nabla^2\varphi_{\STRONGLYCONVEX}(\BIx)
		&=\zeta_{\STRONGLYCONVEX}'\big({\textstyle\frac{1}{2}}\|\BIx\|^2\big)\BIx\BIx^\TRANSP 
		+\zeta_{\STRONGLYCONVEX}\big({\textstyle\frac{1}{2}}\|\BIx\|^2\big)\BI_d
		\succeq \zeta_{\STRONGLYCONVEX}\big({\textstyle\frac{1}{2}}\|\BIx\|^2\big)\BI_d
		\qquad \forall \BIx\in\R^d.
	\end{align*}
	This shows that 
	$\nabla^2\varphi_{\STRONGLYCONVEX}(\BIx)\succeq\BO_d$ 
	$\forall\BIx\in\R^d$
	as well as 
	$\nabla^2\varphi_{\STRONGLYCONVEX}(\BIx)\succeq\exp\big({-\frac{1}{3R_{\mu}^2}}\big)\BI_d$
	$\forall\BIx\in\R^d\setminus\bar{B}(\veczero_d,2R_{\mu})$.
	Step~4 is now complete.
	
	\underline{Step~5.}
	Combining Step~1, Step~3, Step~4,
	and utilizing the second-order characterization of strongly convex functions 
	(see, e.g., \citep[Theorem~2.1.11]{nesterov2004introconvex}) 
	shows that 
	$\nabla\big(\widetilde{\varphi}_{\ENTROPIC}[\gamma,\overline{l}]+\varphi_{\STRONGLYCONVEX}\big)=\widehat{T}_{\ENTROPIC}[\gamma,\overline{l}]$
	and 
	$\widetilde{\varphi}_{\ENTROPIC}[\gamma,\overline{l}]+\varphi_{\STRONGLYCONVEX}\in\FC^{\infty}_{\underline{\lambda},\infty}(\R^d)$.
	Moreover, one checks that 
	$\underline{\lambda}$ has a Borel dependency on 
	$(\mu,\nu,m,n,\allowbreak\BIX_1,\ldots,\BIX_m,\allowbreak\BIY_1,\ldots,\BIY_n,\allowbreak\gamma,\overline{l})$.
	Furthermore, whenever $n\ge d+1$, 
	since $\nu$ is absolutely continuous with respect to the Lebesgue measure on $\R^d$,
	it holds $\PROB$-almost surely that 
	the (biased) sample covariance matrix $\BSigma$ of $\BIY_1,\ldots,\BIY_n$ 
	is non-singular, 
	and thus $\underline{\lambda}>0$ holds $\PROB$-almost surely.
	The proof of statement~\ref{props: OTmap-estimator-entropic-shape} is now complete.
	
	\underline{Step~6.}
	It follows from (\ref{eqn: OTmap-estimator-entropic-proof-convhullbound})
	that 
	\begin{align*}
		\big\|\widehat{T}_{\ENTROPIC}[\gamma,\overline{l}](\BIx)-\widehat{T}_{\ENTROPIC}[\gamma,\overline{l}](\veczero_d)\big\|^2
		&\le 2\big\|\widetilde{T}_{\ENTROPIC}[\gamma,\overline{l}](\BIx)-\widetilde{T}_{\ENTROPIC}[\gamma,\overline{l}](\veczero_d)\big\|^2
		+ 2\big\|T_{\STRONGLYCONVEX}(\BIx)-T_{\STRONGLYCONVEX}(\veczero_d)\big\|^2\\
		&\le 8\max_{1\le j\le n}\big\{\|\BIY_j\|\big\}^2 + 2\zeta_{\STRONGLYCONVEX}\big({\textstyle\frac{1}{2}}\|\BIx\|^2\big)^2\|\BIx\|^2\\
		&\le 8R_{\nu}^2 + 2\|\BIx\|^2 
		\hspace{110pt}\qquad \forall\BIx\in\R^d,\; \PROB\text{-a.s.}
	\end{align*}
	This proves statement~\ref{props: OTmap-estimator-entropic-growth}.
	
	\underline{Step~7.}
	We have by the assumption that $\mu,\nu\in\CM^q(\R^d)$ and 
	Lemma~\ref{lem: curvature}
	that the Brenier potentials $\varphi^{\mu}_{\nu}$ and $\varphi^{\nu}_{\mu}$ satisfy $\varphi^{\mu}_{\nu}\in\CC^{q+2,\alpha}(\support(\mu))$ and $\varphi^{\nu}_{\mu}\in\CC^{q+2,\alpha}(\support(\nu))$ for some $\alpha\in(0,1)$,
	and that 
	$\lambda_{\LBSUB}\BI_d\preceq \nabla^2\varphi^{\mu}_{\nu}(\BIx)\preceq \lambda_{\UBSUB}\BI_d$
	$\forall \BIx\in\support(\mu)$
	for some $0<\lambda_{\LBSUB}\le\lambda_{\UBSUB}<\infty$.
	Thus, one may check that 
	the assumptions (A1)--(A3) in \citep{pooladian2021entropic} 
	are satisfied with respect to $\mu$ and~$\nu$. 
	It subsequently follows from \citep[Theorem~4 \& Theorem~5]{pooladian2021entropic} that 
	there exist 
	$C_1(\mu,\nu)>0$, 
	$C_2(\mu,\nu)>0$, 
	$C_3(\mu,\nu)>0$, 
	$C_4(\mu,\nu)>0$, and
	$\overline{\gamma}(\mu,\nu)>0$ 
	that have Borel dependencies on $(\mu,\nu)$ such that 
	\begin{align*}
		\EXP\Big[\big\|\widetilde{T}_{\ENTROPIC}[\gamma,\infty]-T^{\mu}_{\nu}\big\|^2_{\CL^2(\mu)}\Big] &\le C_1(\mu,\nu)\gamma^{1-\frac{d}{2}}\log(n)n^{-\frac{1}{2}} + C_2(\mu,\nu)\gamma^{\frac{\overline{\alpha}(\mu,\nu)}{2}} \\
		& \qquad + C_3(\mu,\nu)\gamma^{2}I_0(\mu,\nu) + C_4(\mu,\nu)\gamma^{-\frac{d}{2}}\log(m)m^{-\frac{1}{2}} 
		\qquad\forall \gamma\in \big(0,\overline{\gamma}(\mu,\nu)\big],
	\end{align*}
	where
	\begin{align}
		\overline{\alpha}(\mu,\nu):=(q+2+\alpha)\wedge 4\in(3,4],
		\label{eqn: OTmap-estimator-entropic-proof-consistency-alpha-def}
	\end{align}
	and $I_0(\mu,\nu)$ is the integrated Fisher information along the Wasserstein geodesic between $\mu$ and~$\nu$ defined in Appendix~A of \citep{pooladian2021entropic}.\footnote{We refer readers to the proofs of \citep[Theorem~4 \& Theorem~5]{pooladian2021entropic} in \citep[Section~4 \& Section~5]{pooladian2021entropic} for the explicit expressions 
	of 
	$C_1(\mu,\nu)$,
	$C_2(\mu,\nu)$,
	$C_3(\mu,\nu)$,
	$C_4(\mu,\nu)$, and
	$\overline{\gamma}(\mu,\nu)$.}
	Since $\varphi^{\mu}_{\nu}\in\CC^{q+2,\alpha}(\support(\mu))$ where $q+2\ge 3$, $\nabla^2\varphi^{\mu}_{\nu}$ is Lipschitz continuous, and we have by \citep[Proposition~1]{chizat2020faster} that $I_0(\mu,\nu)<\infty$.
	The above bound can thus be further bounded as follows:
	\begin{align*}
		&C_1(\mu,\nu)\gamma^{1-\frac{d}{2}}\log(n)n^{-\frac{1}{2}} + C_2(\mu,\nu)\gamma^{\frac{\overline{\alpha}(\mu,\nu)}{2}} + C_3(\mu,\nu)\gamma^{2}I_0(\mu,\nu) + C_4(\mu,\nu)\gamma^{-\frac{d}{2}}\log(m)m^{-\frac{1}{2}}\\
		&\qquad \le C_1(\mu,\nu)\overline{\gamma}(\mu,\nu)\gamma^{-\frac{d}{2}}\log(n)n^{-\frac{1}{2}}+C_4(\mu,\nu)\gamma^{-\frac{d}{2}}\log(m)m^{-\frac{1}{2}} \\
		&\qquad \qquad + \Big(C_2(\mu,\nu) + C_3(\mu,\nu)\overline{\gamma}(\mu,\nu)^{\frac{4-\overline{\alpha}(\mu,\nu)}{2}}I_0(\mu,\nu)\Big)\gamma^{\frac{\overline{\alpha}(\mu,\nu)}{2}}\\
		&\qquad \le C_{\ENTROPIC}(\mu,\nu)\Big[\gamma^{-\frac{d}{2}}\big(\log(m)m^{-\frac{1}{2}}+\log(n)n^{-\frac{1}{2}}\big)+\gamma^{\frac{\overline{\alpha}(\mu,\nu)}{2}}\Big] 
		\qquad \forall \gamma\in \big(0,\overline{\gamma}(\mu,\nu)\big],
	\end{align*}
	where
	\begin{align}
		\!C_{\ENTROPIC}(\mu,\nu):=\big(C_1(\mu,\nu)\overline{\gamma}(\mu,\nu)+ C_4(\mu,\nu)\big) \vee \Big(C_2(\mu,\nu) + C_3(\mu,\nu)\overline{\gamma}(\mu,\nu)^{\frac{4-\overline{\alpha}(\mu,\nu)}{2}}I_0(\mu,\nu) \Big)<\nobreak\infty.
		\label{eqn: OTmap-estimator-entropic-proof-consistency-Centr-def}
	\end{align}
	This proves (\ref{eqn: OTmap-estimator-entropic-proof-consistency-1-1}).
	Next, let us fix arbitrary $\epsilon>0$, $m\ge \overline{m}(\mu,\nu,\epsilon)$, and $n\ge \overline{n}(\mu,\nu,\epsilon)$.
	Recall that the definitions of 
	$\overline{m}(\mu,\nu,\epsilon)$,
	$\overline{n}(\mu,\nu,\epsilon)$,
	and 
	$\widetilde{\gamma}(\mu,\nu,m,n,\epsilon)$
	guarantee that
	\begin{align*}
		m^{-\frac{\overline{\alpha}(\mu,\nu)}{2(\overline{\alpha}(\mu,\nu)+d)}}\big(\log(m)+1\big)
		&\le \frac{\epsilon}{8C_{\ENTROPIC}(\mu,\nu)},\\
		n^{-\frac{\overline{\alpha}(\mu,\nu)}{2(\overline{\alpha}(\mu,\nu)+d)}}\big(\log(n)+1\big)
		&\le \frac{\epsilon}{8C_{\ENTROPIC}(\mu,\nu)},\\
		\widetilde{\gamma}(\mu,\nu,m,n,\epsilon)&=(m\wedge n)^{-\frac{1}{\overline{\alpha}(\mu,\nu)+d}}\in \big(0,\overline{\gamma}(\mu,\nu)\big].
	\end{align*}
	Subsequently, combining this with (\ref{eqn: OTmap-estimator-entropic-proof-consistency-1-1}) yields
	\begin{align*}
		&\hspace{-20pt}\EXP\Big[\big\|\widetilde{T}_{\ENTROPIC}\big[\widetilde{\gamma}(\mu,\nu,m,n,\epsilon),\infty\big]-T^{\mu}_{\nu}\big\|^2_{\CL^2(\mu)}\Big] \\
		&\le C_{\ENTROPIC}(\mu,\nu) \Big(\widetilde{\gamma}(\mu,\nu,m,n,\epsilon)^{-\frac{d}{2}}\big(\log(m)m^{-\frac{1}{2}}+\log(n)n^{-\frac{1}{2}}\big) + \widetilde{\gamma}(\mu,\nu,m,n,\epsilon)^{\frac{\overline{\alpha}(\mu,\nu)}{2}} \Big)\\
		&= C_{\ENTROPIC}(\mu,\nu) \Big(\log(m)m^{-\frac{1}{2}+\frac{d}{2(\overline{\alpha}(\mu,\nu)+d)}} 
		+ \log(n)n^{-\frac{1}{2}+\frac{d}{2(\overline{\alpha}(\mu,\nu)+d)}} 
		+ m^{-\frac{\overline{\alpha}(\mu,\nu)}{2(\overline{\alpha}(\mu,\nu)+d)}}
		+ n^{-\frac{\overline{\alpha}(\mu,\nu)}{2(\overline{\alpha}(\mu,\nu)+d)}} \Big)\\
		&\le \frac{\epsilon}{4}.
	\end{align*}
	This proves (\ref{eqn: OTmap-estimator-entropic-proof-consistency-1-2}) and completes Step~7.

	\underline{Step~8.}
	Let us first fix an arbitrary $\gamma>0$.
	Recall that 
	$\big(\widehat{f}^{(\gamma,\infty)}_i\big)_{i=1:m}$,
	$\big(\widehat{g}^{(\gamma,\infty)}_j\big)_{j=1:n}$
	maximize (\ref{eqn: EOT-dual-discrete}),
	implying that they satisfy 
	$\widehat{g}^{(\gamma,\infty)}_j=-\gamma\log\Big(\sum_{i=1}^{m}\exp\Big(\frac{\widehat{f}^{(\gamma,\infty)}_i+\langle\BIx_i,\BIy_j\rangle}{\gamma}\Big)\Big)$ $\forall 1\le j\le n$.
	Consequently, applying Lemma~\ref{lem: Sinkhorn-bound} with respect to 
	$(\BIx_i)_{i=1:m}\leftarrow(\BIX_i)_{i=1:m}$,
	$(\BIy_j)_{j=1:n}\leftarrow(\BIY_j)_{j=1:n}$,
	$(f_i)_{i=1:m}\leftarrow\big(\widehat{f}^{(\gamma,\infty)}_i\big)_{i=1:m}$,
	$(g_j)_{j=1:n}\leftarrow\big(\widehat{g}^{(\gamma,\infty)}_j\big)_{j=1:n}$
	shows that 
	\begin{align}
		\label{eqn: OTmap-estimator-entropic-proof-consistency-variation}
		\big\|\widehat{\BIg}^{(\gamma,\infty)}\big\|_{\VARNORM}
		&\le 2\max_{1\le i\le m}\big\{\|\BIX_i\|\big\}\max_{1\le j\le n}\big\{\|\BIY_j\|\big\}
		\le 2R_{\mu}R_{\nu}
		\qquad \PROB\text{-a.s.}
	\end{align}

	Next, we apply the convergence rate of Sinkhorn's algorithm
	developed by \citet{franklin1989on} 
	as presented in \citep[Theorem~4.2 \& Remark~4.12]{peyre2019computational}
	with respect to 
	\begin{align*}
		\eta(\BK)\leftarrow \max_{i,i'\in\{1,\ldots,m\},\, j,j'\in\{1,\ldots,n\}}\Bigg\{\exp\bigg(\frac{1}{\gamma}\big(\langle\BIX_i,\BIY_j\rangle + \langle\BIX_{i'},\BIY_{j'}\rangle - \langle\BIX_i,\BIY_{j'}\rangle - \langle\BIX_{i'},\BIY_j\rangle\big)\bigg)\Bigg\}
	\end{align*}
	in the notation of \citet{peyre2019computational};
	see also \citep[Theorem~4.1]{peyre2019computational}.
	Since 
	\begin{align*}
		\eta(\BK)
		&= \max_{i,i'\in\{1,\ldots,m\},\, j,j'\in\{1,\ldots,n\}}\Bigg\{\exp\bigg(\frac{1}{\gamma}\langle\BIX_i-\BIX_{i'},\BIY_j-\BIY_{j'}\rangle \bigg)\Bigg\} \\
		&\le \exp\bigg(\frac{4}{\gamma}\max_{1\le i\le m}\big\{\|\BIX_i\|\big\}\max_{1\le j\le n}\big\{\|\BIY_j\|\big\}\bigg) \\
		&\le \exp\bigg(\frac{4R_{\mu}R_{\nu}}{\gamma}\bigg)
		\hspace{130pt} \PROB\text{-a.s.},
	\end{align*}
	we have 
	$\frac{\sqrt{\eta(\BK)}-1}{\sqrt{\eta(\BK)}+1}
	\le 1-\frac{1}{\sqrt{\eta(\BK)}}\le 1-\exp\Big({-\frac{2R_{\mu}R_{\nu}}{\gamma}}\Big)$ $\PROB$-almost surely.
	Since $\widehat{\BIg}^{(\gamma,0)}=\veczero_n$ by definition,
	we subsequently get from \citep[Theorem~4.2 \& Remark~4.12]{peyre2019computational}
	and (\ref{eqn: OTmap-estimator-entropic-proof-consistency-variation}) that 
	\begin{align}
		\label{eqn: OTmap-estimator-entropic-proof-consistency-Sinkhornrate}
		\begin{split}
			\big\|\widehat{\BIg}^{(\gamma,\overline{l})}-\widehat{\BIg}^{(\gamma,\infty)}\big\|_{\VARNORM}
			&\le \Bigg(1-\exp\bigg({-\frac{2R_{\mu}R_{\nu}}{\gamma}}\bigg)\Bigg)^{2\overline{l}} \big\|\widehat{\BIg}^{(\gamma,0)}-\widehat{\BIg}^{(\gamma,\infty)}\big\|_{\VARNORM}\\
			&= \Bigg(1-\exp\bigg({-\frac{2R_{\mu}R_{\nu}}{\gamma}}\bigg)\Bigg)^{2\overline{l}} \big\|\widehat{\BIg}^{(\gamma,\infty)}\big\|_{\VARNORM}\\
			&\le 2R_{\mu}R_{\nu}\Bigg(1-\exp\bigg({-\frac{2R_{\mu}R_{\nu}}{\gamma}}\bigg)\Bigg)^{2\overline{l}} 
			\qquad \forall \overline{l}\in\N,\; \PROB\text{-a.s.}
		\end{split}
	\end{align}
	On the other hand,
	applying Lemma~\ref{lem: logsumexp-properties}\ref{lems: logsumexp-properties-variation} with respect to 
	$(\BIy_j)_{j=1:n}\leftarrow (\BIY_j)_{j=1:n}$,
	$\BIg\leftarrow \widehat{\BIg}^{(\gamma,\overline{l})}$,
	$\BIg'\leftarrow \widehat{\BIg}^{(\gamma,\infty)}$ leads to 
	\begin{align*}
		\sup_{\BIx\in\R^d}\Big\{\big\|\widetilde{T}_{\ENTROPIC}[\gamma,\overline{l}](\BIx)-\widetilde{T}_{\ENTROPIC}[\gamma,\infty](\BIx)\big\|\Big\} 
		&\le \frac{2}{\gamma}\max_{1\le j\le n}\big\{\|\BIY_j\|\big\}\big\|\widehat{\BIg}^{(\gamma,\overline{l})}-\widehat{\BIg}^{(\gamma,\infty)}\big\|_{\VARNORM}\\
		&\le \frac{2R_{\nu}}{\gamma}\big\|\widehat{\BIg}^{(\gamma,\overline{l})}-\widehat{\BIg}^{(\gamma,\infty)}\big\|_{\VARNORM} 
		\qquad \forall \overline{l}\in\N,\; \PROB\text{-a.s.}
	\end{align*}
	Substituting (\ref{eqn: OTmap-estimator-entropic-proof-consistency-Sinkhornrate}) 
	into the above bound
	proves 
	(\ref{eqn: OTmap-estimator-entropic-proof-consistency-2-1}).

	Now, let us fix arbitrary $m,n\in\N$, $\epsilon>0$,
	and let 
	$\widetilde{\gamma}:=\widetilde{\gamma}(\mu,\nu,m,n,\epsilon)$,
	$\widetilde{\overline{l}}:=\widetilde{\overline{l}}(\mu,\nu,m,n,\epsilon)$
	for notational simplicity.
	It thus holds that 
	$\widetilde{\overline{l}}\exp\Big({-\frac{2R_{\mu}R_{\nu}}{\widetilde{\gamma}}}\Big)\ge \frac{1}{4}\log\Big(\frac{64R_{\mu}^2R_{\nu}^4}{\epsilon\widetilde{\gamma}^2}\Big)$.
	In view of the inequality $u\le -\log(1-u)$ $\forall u\in(0,1)$,
	we let $u\leftarrow \exp\Big({-\frac{2R_{\mu}R_{\nu}}{\widetilde{\gamma}}}\Big)$
	and get 
	\begin{align*}
		\widetilde{\overline{l}}\log\Bigg(1-\exp\bigg({-\frac{2R_{\mu}R_{\nu}}{\widetilde{\gamma}}}\bigg)\Bigg)\le \frac{1}{4}\log\bigg(\frac{\epsilon\widetilde{\gamma}^2}{64R_{\mu}^2R_{\nu}^4}\bigg).
	\end{align*}
	Exponentiating and squaring both sides of the above inequality then yields 
	\begin{align*}
		\Bigg(1-\exp\bigg({-\frac{2R_{\mu}R_{\nu}}{\widetilde{\gamma}}}\bigg)\Bigg)^{2\overline{l}}
		\le \frac{\sqrt{\epsilon}\widetilde{\gamma}}{8R_{\mu}R_{\nu}^2}.
	\end{align*}
	We subsequently substitute this into (\ref{eqn: OTmap-estimator-entropic-proof-consistency-2-1})
	to prove (\ref{eqn: OTmap-estimator-entropic-proof-consistency-2-2}).
	Step~8 is now complete.
	
	\underline{Step~9.}
	Finally, one verifies that 
	$\overline{m}(\mu,\nu,\epsilon)$, $\overline{n}(\mu,\nu,\epsilon)$ 
	have Borel dependencies on $(\mu,\nu,\epsilon)$,
	and 
	$\widetilde{\theta}(\mu,\nu,\allowbreak m,n,\epsilon)$
	has a Borel dependency on $(\mu,\nu,m,n,\epsilon)$.
	Moreover,
	in view of 
	the property that
	$T_{\STRONGLYCONVEX}(\BIx)=\veczero_d$ for $\mu$-almost every $\BIx\in\R^d$, 
	and the bound:
	\begin{align*}
		\EXP\Big[\big\|\widehat{T}_{\ENTROPIC}[\gamma,\overline{l}]-T^{\mu}_{\nu}\big\|_{\CL^2(\mu)}^2\Big]
		&\le 2\EXP\Big[\big\|\widetilde{T}_{\ENTROPIC}[\gamma,\infty]-T^{\mu}_{\nu}\big\|_{\CL^2(\mu)}^2\Big]
		+2\EXP\Big[\big\|\widetilde{T}_{\ENTROPIC}[\gamma,\overline{l}]-\widetilde{T}_{\ENTROPIC}[\gamma,\infty]\big\|_{\CL^2(\mu)}^2\Big]\\
		& \hspace{250pt}\forall \gamma>0,\; \forall \overline{l}\in\N,
	\end{align*}
	combining (\ref{eqn: OTmap-estimator-entropic-proof-consistency-1-1})
	and (\ref{eqn: OTmap-estimator-entropic-proof-consistency-2-1})
	proves (\ref{eqn: OTmap-estimator-error}),
	whereas combining (\ref{eqn: OTmap-estimator-entropic-proof-consistency-1-2})
	and (\ref{eqn: OTmap-estimator-entropic-proof-consistency-2-2}) 
	proves (\ref{eqn: OTmap-estimator-error-control}).
	This completes the proof of statement~\ref{props: OTmap-estimator-entropic-consistency}.
	The proof is now complete.
\end{proof}

\subsection{Omitted proofs in Section~\ref{sec: generation-instance}}\label{sapx: proof-generation-instance}

\begin{proof}[Proof of Proposition~\ref{prop: synthetic-generation}]
	To begin, let $(\overline{\lambda}_{\widetilde{k}})_{\widetilde{k}=1:\widetilde{K}}$ satisfy the condition in Line~\ref{alglin: synthetic-generation-smoothness},
	let $(\beta_{-\widetilde{k}}, \beta_{\widetilde{k}})_{\widetilde{k}=1:\widetilde{K}}$ be defined in Line~\ref{alglin: synthetic-generation-multiplier},
	and let the functions $\widetilde{\varphi}_1,\ldots,\widetilde{\varphi}_{\widetilde{K}},\widetilde{\varphi}_{-1},\ldots,\widetilde{\varphi}_{-\widetilde{K}},\varphi_1,\ldots,\varphi_K,\varphi^*_1,\ldots,\varphi^*_K:\R^d\to\R$ be defined as follows:
	\begin{align}
		\label{eqn: synthetic-generation-potentials}
		\begin{split}
			\widetilde{\varphi}_{\widetilde{k}}(\BIx)&:= \gamma_{\widetilde{k}}\log\Bigg(\sum_{j=1}^{n_{\widetilde{k}}} \exp\bigg({\frac{g_{\widetilde{k},j} + \langle\BIy_{\widetilde{k},j},\BIx\rangle}{\gamma_{\widetilde{k}}}}\bigg)\Bigg) + \frac{\underline{\lambda}_{\widetilde{k}}}{2}\|\BIx\|^2 
			\qquad \forall \BIx\in\R^d, \; \forall 1\le \widetilde{k}\le \widetilde{K},\\
			\widetilde{\varphi}_{-\widetilde{k}}(\BIx)&:=\frac{\overline{\lambda}_{\widetilde{k}}}{2}\|\BIx\|^2-\widetilde{\varphi}_{\widetilde{k}}(\BIx) 
			\hspace{142.9pt}\qquad\forall \BIx\in\R^d, \; \forall 1\le \widetilde{k}\le \widetilde{K}, \\
			\varphi_k(\BIx)&:=\Bigg(\sum_{i\in\Phi^{-1}(k)}\beta_i \widetilde{\varphi}_{i}(\BIx)\Bigg) + \xi\big\langle{\textstyle\frac{1}{2}\BA_k}\BIx+\BIb_k,\BIx\big\rangle 
			\hspace{35.1pt}\qquad \forall \BIx\in\R^d, \; \forall 1\le k\le K,\\
			\varphi^*_k(\BIy)&:=\sup_{\BIx\in\R^d}\big\{\langle\BIy,\BIx\rangle-\varphi_k(\BIx)\big\} 
			\hspace{117.4pt}\qquad\forall \BIy\in\R^d,\;\forall 1\le k\le K.
		\end{split}
	\end{align}

	For $\widetilde{k}=1,\ldots,\widetilde{K}$, it follows from 
	Line~\ref{alglin: synthetic-generation-auxiliary-map}, 
	Lemma~\ref{lem: logsumexp-properties}\ref{lems: logsumexp-properties-gradient}
	and
	Lemma~\ref{lem: logsumexp-properties}\ref{lems: logsumexp-properties-Hessian-ub}
	that 
	$\nabla\widetilde{\varphi}_{\widetilde{k}}=\widetilde{T}_{\widetilde{k}}$ and
	$\widetilde{\varphi}_{\widetilde{k}}\in\FC^{\infty}_{\underline{\lambda}_{\widetilde{k}},\overline{\lambda}_{\widetilde{k}}-\underline{\lambda}_{\widetilde{k}}}(\R^d)$ 
	where 
	$0<\underline{\lambda}_{\widetilde{k}}<\overline{\lambda}_{\widetilde{k}}-\underline{\lambda}_{\widetilde{k}}<\infty$.
	Moreover, Line~\ref{alglin: synthetic-generation-auxiliary-map} implies that
	$\nabla\widetilde{\varphi}_{-\widetilde{k}}=\widetilde{T}_{-\widetilde{k}}$.
	It then follows from the second-order characterization of smooth and strongly convex functions (see, e.g., \citep[Theorem~2.1.6 \& Theorem~2.1.11]{nesterov2004introconvex}) that 
	$\underline{\lambda}_{\widetilde{k}}\BI_d 
	\preceq \nabla^2 \widetilde{\varphi}_{-\widetilde{k}}(\BIx) 
	\preceq (\overline{\lambda}_{\widetilde{k}}-\underline{\lambda}_{\widetilde{k}})\BI_d$ 
	$\forall\BIx\in\R^d$,
	and thus $\widetilde{\varphi}_{-\widetilde{k}}
	\in\FC^{\infty}_{\underline{\lambda}_{\widetilde{k}},\overline{\lambda}_{\widetilde{k}}-\underline{\lambda}_{\widetilde{k}}}(\R^d)$.
	Subsequently, 
	since 
	$(\widetilde{\varphi}_{\widetilde{k}})_{\widetilde{k}=1:\widetilde{K}}$ and 
	$(\widetilde{\varphi}_{-\widetilde{k}})_{\widetilde{k}=1:\widetilde{K}}$ are all infinitely differentiable, smooth, and strongly convex, 
	and since 
	$(\beta_{\widetilde{k}})_{\widetilde{k}=1:\widetilde{K}}\subset(0,\infty)$, 
	$(\beta_{-\widetilde{k}})_{\widetilde{k}=1:\widetilde{K}}\subset (0,\infty)$, 
	$\xi\in[0,1)$,
	$(\BA_{k})_{k=1:K}\subset\mathbb{S}^d_{++}$,
	it holds by definition that
	$\varphi_k\in\FC^{\infty}_{\underline{\zeta}_k,\overline{\zeta}_k}(\R^d)$ for some 
	$0<\underline{\zeta}_k<\overline{\zeta}_k<\infty$ for $k=1,\ldots,K$.
	Hence, since
	Line~\ref{alglin: synthetic-generation-maps} implies $T_k=\nabla\varphi_k$, it holds that $T_k$ is $\overline{\zeta}_k$-Lipschitz continuous and thus belongs to $C_{\LIN}(\R^d,\R^d)$.
	Furthermore, 
	since Line~\ref{alglin: synthetic-generation-multiplier} implies $w_{\Phi(\widetilde{k})}\beta_{\widetilde{k}} = w_{\Phi(-\widetilde{k})}\beta_{-\widetilde{k}}$ for $1\le \widetilde{k}\le\widetilde{K}$,
	$\sum_{\widetilde{k} = 1}^{\widetilde{K}}w_{\Phi(-\widetilde{k})} \beta_{-\widetilde{k}} \overline{\lambda}_{\widetilde{k}}=1-\nobreak\xi$,
	and since $\sum_{k=1}^K w_k \BA_k= \BI_{d}$,
	$\sum_{k=1}^K w_k\BIb_k = \veczero_d$ by the assumptions in Setting~\ref{sett: config-generation},
	we get
	\begin{align}
		\begin{split}
		\sum_{k=1}^K w_k \varphi_k(\BIx)&= \Bigg(\sum_{k = 1}^K w_k \sum_{i\in\Phi^{-1}(k)}\beta_i\widetilde{\varphi}_{i}(\BIx)\Bigg) + \xi\sum_{k = 1}^K\big\langle{\textstyle\frac{1}{2}} w_k \BA_k\BIx+ w_k\BIb_k,\BIx\big\rangle\\
		&= \sum_{\widetilde{k} = 1}^{\widetilde{K}}\Big( w_{\Phi(\widetilde{k})} \beta_{\widetilde{k}} \widetilde{\varphi}_{\widetilde{k}}(\BIx) + w_{\Phi(-\widetilde{k})} \beta_{-\widetilde{k}} \widetilde{\varphi}_{-\widetilde{k}}(\BIx)\Big) + \frac{\xi}{2}\|\BIx\|^2 \\
		&= \Bigg(\sum_{\widetilde{k} = 1}^{\widetilde{K}} \big(w_{\Phi(\widetilde{k})}\beta_{\widetilde{k}}-w_{\Phi(-\widetilde{k})}\beta_{-\widetilde{k}}\big)\widetilde{\varphi}_{\widetilde{k}}(\BIx)\Bigg)+ \frac{1}{2}\Bigg( \sum_{\widetilde{k} = 1}^{\widetilde{K}}w_{\Phi(-\widetilde{k})} \beta_{-\widetilde{k}} \overline{\lambda}_{\widetilde{k}}\Bigg)\|\BIx\|^2 + \frac{\xi}{2}\|\BIx\|^2 \\
		& = \frac{1 - \xi}{2} \|\BIx\|^2 + \frac{\xi}{2}\|\BIx\|^2 = \frac{1}{2}\|\BIx\|^2 \hspace{183pt} \forall \BIx\in\R^d.
		\end{split}
		\label{eqn: synthetic-generation-proof-conjugacy}
	\end{align}

	In the following,
	we will derive some consequences of (\ref{eqn: synthetic-generation-proof-conjugacy}) through 
	Brenier's theorem (Theorem~\ref{thm: Brenier}); see also \citep[Appendix~C.2]{chewi2020gradient}.
	For $k=1,\ldots,K$,
	notice that $[I_d,T_k]\sharp\bar{\mu}=[I_d,\nabla\varphi_k]\sharp\bar{\mu}\in\Pi(\bar{\mu},T_k\sharp\bar{\mu})$ is the unique optimal coupling between $\bar{\mu}$ and $T_k\sharp\bar{\mu}$ by Theorem~\ref{thm: Brenier},
	and thus 
	\begin{align}
		\label{eqn: synthetic-generation-proof-lowerbound}
		\sum_{k=1}^{K}w_k\CW_2(\bar{\mu},T_k\sharp\bar{\mu})^2=\int_{\R^d}{\textstyle\sum_{k=1}^{K}}w_k\big\|\BIx-T_k(\BIx)\big\|^2\DIFFM{\bar{\mu}}{\DIFF\BIx}=V_{\MINSUB}.
	\end{align}
	Since $\varphi_k(\BIx)+\varphi^*_k\big(\nabla\varphi_k(\BIx)\big)=\langle\BIx,\nabla\varphi_k(\BIx)\rangle$ $\forall\BIx\in\R^d$,
	it thus follows from 
	(\ref{eqn: synthetic-generation-proof-conjugacy})
	that
	\begin{align}
		\sum_{k=1}^K w_k \CW_2(\bar{\mu},T_k\sharp\bar{\mu})^2 &= \sum_{k=1}^K w_k \Bigg(\int_{\R^d} \|\BIx\|^2 - 2\varphi_k(\BIx)\DIFFM{\bar{\mu}}{\DIFF\BIx} + \int_{\R^d}\|\BIy\|^2 - 2\varphi^*_k(\BIy) \DIFFM{T_k\sharp\bar{\mu}}{\DIFF\BIy} \Bigg)\label{eqn: synthetic-generation-proof-optimality}\\
		&= \int_{\R^d}\|\BIx\|^2-2\big({\textstyle \sum_{k=1}^K w_k} \varphi_k(\BIx)\big)\DIFFM{\bar{\mu}}{\DIFF\BIx} + \Bigg(\sum_{k=1}^K w_k \int_{\R^d}\|\BIy\|^2-2\varphi^*_k(\BIy)\DIFFM{T_k\sharp\bar{\mu}}{\DIFF\BIy}\Bigg)\nonumber\\
		&=\sum_{k=1}^K w_k \int_{\R^d}\|\BIy\|^2-2\varphi^*_k(\BIy)\DIFFM{T_k\sharp\bar{\mu}}{\DIFF\BIy}.\nonumber
	\end{align}
	On the other hand, since $\varphi_k(\BIx)+\varphi^*_k(\BIy)\ge \langle\BIx,\BIy\rangle$ $\forall\BIx,\BIy\in\R^d$, it holds that
	\begin{align}
		\sum_{k=1}^K w_k \CW_2(\mu,\nu_k)^2 &\ge \sum_{k=1}^K w_k \Bigg(\int_{\R^d}\|\BIx\|^2 - 2\varphi_k(\BIx)\DIFFM{\mu}{\DIFF\BIx} + \int_{\R^d}\|\BIy\|^2 - 2\varphi^*_k(\BIy) \DIFFM{\nu_k}{\DIFF\BIy}\Bigg) \label{eqn: synthetic-generation-proof-suboptimality}\\
		&= \int_{\R^d}\|\BIx\|^2-2\big({\textstyle \sum_{k=1}^K w_k}\varphi_k(\BIx)\big) \DIFFM{\mu}{\DIFF\BIx} + \Bigg(\sum_{k=1}^K w_k \int_{\R^d}\|\BIy\|^2-2\varphi^*_k(\BIy)\DIFFM{\nu_k}{\DIFF\BIy}\Bigg)\nonumber\\
		&=\sum_{k=1}^K w_k  \int_{\R^d} \|\BIy\|^2-2\varphi^*_k(\BIy)\DIFFM{\nu_k}{\DIFF\BIy} \hspace{70pt} \qquad \forall \mu,\nu_1,\ldots,\nu_K\in\CP_2(\R^d).\nonumber
	\end{align}
	Combining 
	(\ref{eqn: synthetic-generation-proof-optimality}) 
	and 
	(\ref{eqn: synthetic-generation-proof-suboptimality})
	leads to 
	\begin{align}
		\inf_{\mu\in\CP_2(\R^d)}\Bigg\{\sum_{k=1}^{K}w_k\CW_2(\mu,\nu_k)^2\Bigg\}
		&\ge \sum_{k=1}^K w_k  \int_{\R^d} \|\BIy\|^2-2\varphi^*_k(\BIy)\DIFFM{\nu_k}{\DIFF\BIy} \label{eqn: synthetic-generation-proof-key-inequality}\\
		&\ge \sum_{k=1}^K w_k \CW_2(\bar{\mu},T_k\sharp\bar{\mu})^2 - \Bigg(\sum_{k=1}^K w_k  \bigg|\int_{\R^d} \|\BIy\|^2-2\varphi^*_k(\BIy)\DIFFM{\nu_k}{\DIFF\BIy}\nonumber\\
		&\hspace{160pt}\qquad -\int_{\R^d} \|\BIy\|^2-2\varphi^*_k(\BIy)\DIFFM{T_k\sharp\bar{\mu}}{\DIFF\BIy}\bigg|\Bigg)\nonumber\\
		& \hspace{238pt} \forall \nu_1,\ldots,\nu_K\in\CP_2(\R^d).\nonumber
	\end{align}

	Now, let us consider the case where $\mathtt{TRUNCATE}=\mathtt{False}$ and prove statements~\ref{props: synthetic-generation-notruncation} and \ref{props: synthetic-generation-lowerbound}.
	For $k=1,\ldots,K$,
	since 
	$T_k=\nabla\varphi_k\in\CC_{\LIN}(\R^d,\R^d)$ for $\varphi_k\in\FC^{\infty}_{\underline{\zeta}_k,\overline{\zeta}_k}(\R^d)$,
	and since $\bar{\mu}\in\CP_{2,\AC}(\R^d)$ by assumption,
	it follows from Lemma~\ref{lem: regularity-pushforward}\ref{lems: regularity-pushforward-ac}
	that $\nu_k\in\CP_{2,\AC}(\R^d)$.
	Subsequently, the uniqueness of the $\CW_2$-barycenter of $\nu_1,\ldots,\nu_K$ with weights $w_1,\ldots,w_K$ follows directly from Theorem~\ref{thm: unique barycenter}.
	Moreover, since $\nu_k=T_k\sharp\bar{\mu}$ $\forall 1\le k\le K$
	when $\mathtt{TRUNCATE}=\mathtt{False}$,
	(\ref{eqn: synthetic-generation-proof-key-inequality})
	and 
	(\ref{eqn: synthetic-generation-proof-lowerbound})
	show that $\bar{\mu}$ is the unique $\CW_2$-barycenter of $\nu_1,\ldots,\nu_K$ with weights $w_1,\ldots,w_K$
	and that 
	$V_{\MINSUB}=V(\bar{\mu})$.
	This completes the proofs of statements~\ref{props: synthetic-generation-notruncation} 
	and 
	\ref{props: synthetic-generation-lowerbound}.

	Next, let us consider the case where $\mathtt{TRUNCATE}=\mathtt{True}$ and prove statements~\ref{props: synthetic-generation-truncation}--\ref{props: synthetic-generation-error}.
	For $k=1,\ldots,K$,
	since 
	$T_k=\nabla\varphi_k\in\CC_{\LIN}(\R^d,\R^d)$ for $\varphi_k\in\FC^{\infty}_{\underline{\zeta}_k,\overline{\zeta}_k}(\R^d)$,
	and since $\bar{\mu}\in\CM^q_{\FULL}(\R^d)$ by assumption,
	it follows from Lemma~\ref{lem: regularity-pushforward}\ref{lems: regularity-pushforward-Caffarelli} that $T_k\sharp\bar{\mu}\in\CM^q_{\FULL}(\R^d)$.
	Moreover, since $\CY_k\in\CS^q(\R^d)$, 
	it follows from 
	Line~\ref{alglin: synthetic-generation-truncated}
	and Lemma~\ref{lem: regularity-truncation}\ref{lems: regularity-truncation-preserve} that
	$\nu_k:=(T_k\sharp\bar{\mu})|_{\CY_k}\in\CM^q(\R^d)$.
	This proves statement~\ref{props: synthetic-generation-truncation}.
	To prove statement~\ref{props: synthetic-generation-truncation-V-value}, 
	let us 
	define 
	$\big(\epsilon_{1,k}(\CY_k),\epsilon_{2,k}(\CY_k)\big)_{k=1:K}\subset (0,\infty)$ and
	$\epsilon(\CY_1,\ldots,\CY_K)\in(0,\infty)$
	for any $\CY_1,\ldots,\CY_K\in\CS^q(\R^d)$ 
	as follows:
	\begin{align}
		\label{eqn: synthetic-generation-truncation-error}
		\begin{split}
			\epsilon_{1,k}(\CY_k)&:= \int_{\R^d}2\|\BIx\|^2\Big({\textstyle\frac{1-T_k\sharp\bar{\mu}(\CY_k)}{T_k\sharp\bar{\mu}(\CY_k)}}+\INDI_{\CY_k^{c}}(\BIx)\Big)\DIFFM{T_k\sharp\bar{\mu}}{\DIFF\BIx} 
			\hspace{48.7pt}\quad\;\; \forall \CY_k\in\CS^q(\R^d),\; \forall 1\le k\le K,\\
			\epsilon_{2,k}(\CY_k)&:= \int_{\R^d}\big|\|\BIx\|^2-2\varphi^*_k(\BIx)\big|\Big({\textstyle\frac{1-T_k\sharp\bar{\mu}(\CY_k)}{T_k\sharp\bar{\mu}(\CY_k)}}+\INDI_{\CY_k^{c}}(\BIx)\Big)\DIFFM{T_k\sharp\bar{\mu}}{\DIFF\BIx} 
			\quad\;\; \forall \CY_k\in\CS^q(\R^d),\;  \forall 1\le k\le K, \\
			\epsilon(\CY_1,\ldots,\CY_K)&:=\sum_{k=1}^K w_k\Big(2\CW_2(\bar{\mu},T_k\sharp\bar{\mu})\epsilon_{1,k}(\CY_k)^{\frac{1}{2}} + \epsilon_{1,k}(\CY_k) + \epsilon_{2,k}(\CY_k)\Big) 
			\hspace{17.2pt}\qquad \forall \CY_1,\ldots,\CY_K\in\CS^q(\R^d),
		\end{split}
	\end{align}
	where $\CY_k^c:=\R^d\setminus \CY_k$ $\forall 1\le k\le K$.
	Let us fix arbitrary $\CY_1,\ldots,\CY_K\in\CS^q(\R^d)$.
	On the one hand,
	one can apply Lemma~\ref{lem: truncation-bound} to derive the upper bounds
	$\CW_2(T_k\sharp\bar{\mu},\nu_k)^2\le \epsilon_{1,k}(\CY_k)$ 
	$\forall 1\le k\le K$,
	which then lead to the following inequality:
	\begin{align}
		\begin{split}
			V(\bar{\mu})&\le \sum_{k=1}^Kw_k\Bigg( \CW_2(\bar{\mu},T_k\sharp\bar{\mu})^2 + 2\CW_2(\bar{\mu},T_k\sharp\bar{\mu})\CW_2(T_k\sharp\bar{\mu},\nu_k) + \CW_2(T_k\sharp\bar{\mu},\nu_k)^2 \Bigg)\\
			& \le \Bigg(\sum_{k=1}^Kw_k \CW_2(\bar{\mu},T_k\sharp\bar{\mu})^2\Bigg) + \Bigg(\sum_{k=1}^Kw_k \Big(2\CW_2(\bar{\mu},T_k\sharp\bar{\mu})\epsilon_{1,k}(\CY_k)^{\frac{1}{2}} + \epsilon_{1,k}(\CY_k)\Big)\Bigg).
		\end{split}
		\label{eqn: synthetic-generation-with-truncation-proof-bound1}
	\end{align}
	On the other hand, we have
	\begin{align}
		\begin{split}
			&\bigg|\int_{\R^d}\big(\|\BIy\|^2-2\varphi^*_k(\BIy)\big)\DIFFM{\nu_k}{\DIFF\BIy}-\int_{\R^d}\big(\|\BIy\|^2-2\varphi^*_k(\BIy)\big)\DIFFM{T_k\sharp\bar{\mu}}{\DIFF\BIy}\bigg| \\
			& \qquad \le \int_{\R^d}\big|\|\BIy\|^2-2\varphi^*_k(\BIy)\big| \DIFFM{|T_k\sharp\bar{\mu}-\nu_k|}{\DIFF\BIy}\\
			&\qquad = \int_{\R^d}\big|\|\BIy\|^2-2\varphi^*_k(\BIy)\big|\Big|1-{\textstyle\frac{1}{T_k\sharp\bar{\mu}(\CY_k)}}\INDI_{\CY_k}(\BIy)\Big| \DIFFM{T_k\sharp\bar{\mu}}{\DIFF\BIy}\\
			&\qquad \le \int_{\R^d}\big|\|\BIy\|^2-2\varphi^*_k(\BIy)\big|\Big({\textstyle\frac{1-T_k\sharp\bar{\mu}(\CY_k)}{T_k\sharp\bar{\mu}(\CY_k)}}+\INDI_{\CY_k^{c}}(\BIy)\Big)\DIFFM{T_k\sharp\bar{\mu}}{\DIFF\BIy}= \epsilon_{2,k}(\CY_k) \qquad \forall 1\le k\le K.
		\end{split}
		\label{eqn: synthetic-generation-with-truncation-proof-bound2}
	\end{align}
	Subsequently, combining 
	(\ref{eqn: synthetic-generation-with-truncation-proof-bound1}), 
	(\ref{eqn: synthetic-generation-proof-key-inequality}),
	(\ref{eqn: synthetic-generation-with-truncation-proof-bound2}), and
	(\ref{eqn: synthetic-generation-proof-lowerbound})
	leads to 
	\begin{align*}
		V(\bar{\mu})&\le \inf_{\mu\in\CP_2(\R^d)}\big\{V(\mu)\big\}+\epsilon(\CY_1,\ldots,\CY_K),\\
		V_{\MINSUB}&\le \inf_{\mu\in\CP_2(\R^d)}\big\{V(\mu)\big\}+\sum_{k=1}^{K}w_k\epsilon_{2,k}(\CY_k)\le \inf_{\mu\in\CP_2(\R^d)}\big\{V(\mu)\big\}+\epsilon(\CY_1,\ldots,\CY_K),
	\end{align*}
	as well as 
	\begin{align*}
		V_{\MINSUB}&\ge V(\bar{\mu}) - \Bigg(\sum_{k=1}^Kw_k \Big(2\CW_2(\bar{\mu},T_k\sharp\bar{\mu})\epsilon_{1,k}(\CY_k)^{\frac{1}{2}} + \epsilon_{1,k}(\CY_k)\Big)\Bigg)
		\ge \inf_{\mu\in\CP_2(\R^d)}\big\{V(\mu)\big\} - \epsilon(\CY_1,\ldots,\CY_K),
	\end{align*}
	which completes the proof of statement~\ref{props: synthetic-generation-truncation-V-value}.

	Lastly, for $k=1,\ldots,K$, observe that $\varphi_k^*$ is $\underline{\zeta}_k^{-1}$-smooth and $\overline{\zeta}_k^{-1}$-strongly convex by the duality between smooth convex functions and strongly convex functions (see, e.g., the equivalence between (a) and (e) in \citep[Proposition~12.60]{rockafellar1998variational}).
	Hence, $\varphi_k^*$ is bounded from below by some constant and dominated from
	above by some quadratic function, e.g., by
	$\R^d\ni\BIx\mapsto \frac{1}{\underline{\zeta}_k}\|\BIx\|^2+C\in\R$ for sufficiently large $C>0$.
	Consequently, for $k=1,\ldots,K$, 
	the property $\bigcup_{r\in\N}\CY_{k,r}=\R^d$ and
	Lebesgue's dominated convergence theorem imply that $\lim_{r\to\infty}\epsilon_{1,k}(\CY_{k,r})=\lim_{r\to\infty}\epsilon_{2,k}(\CY_{k,r})=\nobreak0$,
	and we thus get $\lim_{r\to\infty}\epsilon(\CY_{1,r},\ldots,\CY_{K,r})=\nobreak0$, which proves statement~\ref{props: synthetic-generation-error}.
	The proof is now complete.
\end{proof}

\section{Omitted details of numerical experiments}\label{apx: numerics}

\subsection{Descriptions of the benchmark algorithms}\label{sapx: benchmark-descriptions}
Table~\ref{tab: benchmark-summary} summarizes the procedures of each benchmark algorithm considered for comparison in Section~\ref{sec: numerics}, as well as their implementation details.
When applying these benchmarks in our experiments, we preserve the model hyperparameters specified in the respective source code, 
and we run all algorithms for sufficient numbers of iterations until convergence.

\begin{table}[t!]
\small
\noindent\begin{adjustbox}{max width=\textwidth}
\begin{tabular}{L{2.5cm} L{5.5cm} L{4.5cm} L{9.5cm}}
\toprule
\textbf{Method} & \textbf{Summary} & \textbf{Input \& Output} & \textbf{Implementation \& Evaluation Details} \\
\midrule
\citet{cuturi2014fast}
&
Alternates between optimizing the support locations and weights of a discrete approximate barycenter with a prescribed number of atoms. The built-in $\mathtt{POT}$ library function is invoked, which removes the line search and weight optimization steps.
&
\textbf{Input:} discrete input measures (specified by the locations and probability masses of the atoms).\newline
\textbf{Output:} a discrete approximate barycenter with equally weighted atoms (specified by the locations of the atoms).
&
Each input measure was represented by an empirical measure of $10^4$ random samples, and the approximate barycenter was prescribed $10^4$ atoms in its discrete support. The resulting equally-weighted empirical measure was used for evaluation.
\\
\midrule
\citet{fan2021scalable}
&
Reformulates the barycenter problem via the semi-dual $\CW_2$ formulation and parametrizes convex functions using Input Convex Neural Networks.
&
\textbf{Input:} streams of independent samples from the input measures.\newline
\textbf{Output:} a stream of independent samples on $\R^d$ from the approximate barycenter.
&
$10^4$ independent output samples were used for evaluation.
\\
\midrule
\citet{korotin2022wasserstein}
&
Uses a generative model to parametrize the $G$-operator in the fixed-point iterative scheme of \citet{alvarez2016fixed}.
&
Same as \citet{fan2021scalable}.
&
Same as \citet{fan2021scalable}.
\\
\midrule
\citet{li2020continuous}
&
Develops a stochastic algorithm for a regularized Wasserstein barycenter problem.
&
Same as \citet{fan2021scalable}.
&
Same as \citet{fan2021scalable}.
\\
\midrule
\citet{kim2025optimal}
&
Reformulates the barycenter problem using the Kantorovich dual of the Wasserstein distance, and develops a first-order method using gradients on the Wasserstein space. The algorithm is restricted to two-dimensional problems.
&
\textbf{Input:} discretized probability densities on a two-dimensional finite grid over $[0,1]^2$.\newline
\textbf{Output:} a density on the same grid.
&
This algorithm is only applicable to the problem instance [SG-2d]. The supports of all input measures were rescaled to $[0,1]^2$, and kernel density estimations were computed from $10^4$ random samples from each input measure on a $2048\times2048$ grid over $[0,1]^2$. 
$10^4$ independent samples were subsequently drawn from the output density (after uniformly redistributing the probability mass of each cell over the cell) and were used for evaluation.
\\
\bottomrule
\end{tabular}
\end{adjustbox}
\caption{Summary of benchmark algorithms.}\label{tab: benchmark-summary}
\end{table}

\subsection{Omitted details in Experiment~1}\label{sapx: exp-details-synthetic-geneneration}

Before delving into the details of the experiment, let us first elucidate the specifics on how we execute Algorithm~\ref{algo: concrete}, and introduce the metrics we use to evaluate Wasserstein barycenter algorithms across problem instances. 

In each problem instance with input measures $\nu_1, \dots, \nu_K$, Algorithm~\ref{algo: concrete} is run for 9 iterations in which produces 10 probability measures $(\widehat{\mu}_t)_{t = 0:9}$.
To initialize Algorithm~\ref{algo: concrete}, we generate $10^4$ samples from $\nu_1, \dots, \nu_K$, and define $\rho_0$ as the multivariate Gaussian distribution parameterized by the empirical mean and covariance of these samples, such that $\rho_0 \in \CM^q_{\FULL}(\R^d)$ for all $q \in \N_0$.
Uniform truncation indices $(\widehat{R}_t)_{t =0: 9}$ are assigned across all iterations and closed spheres $\CX_{\widehat{R}_t} = \bar{B}(\veczero_d, \widehat{R}_t)$ are considered for truncation. 
The constant indices are selected adaptive to the density distributions of $\nu_1, \dots, \nu_K$.
For each $t = 1, \dots, 9$ and for each $k = 1, \dots, K$, we draw identical number of independent samples from $\widehat{\mu}_{t - 1}$ and $\nu_k$.
In particular, the sample size in each experiment follows an exponentially increasing schedule over iterations, which provides more precise OT map estimations when the $\widehat{\mu}_t$ becomes near-optimal.\footnote{As shown in Table~\ref{tab: instance-config}, compared with the sample size schedule imposed in [SG-2d], the reduced sample sizes across iterations adopted in [SG-10d] and [BS-8d] reflect a compromise due to the increased computational cost of high-dimensional OT estimation.}
Moreover, we employ the modified entropic OT map estimator $\widehat{T}_{\ENTROPIC}[\cdot]$ with constant entropic regularization parameters $(\gamma_{t, k})_{t = 1:9,\, k = 1: K}$ across iterations.
The Sinkhorn step is carried out by invoking and running Sinkhorn's algorithm implemented in the $\mathtt{Geomloss}$ library \citep{Feydy2019Fast} until the defaulted stopping criterion is reached.\footnote{See \url{https://www.kernel-operations.io/geomloss/index.html}.} 
Concrete setups for the problem instances are formally introduced in Section~\ref{ssec: exp-synthetic-problem-instances}, and 
concrete values imposed on (selected) parameters and configurations in Setting~\ref{sett: Caffarelli} and Setting~\ref{sett: config-generation} are listed in Table~\ref{tab: instance-config}.

The probability density functions of $\bar{\mu}$ and $\varkappa_1, \dots, \varkappa_{K}$ in [SG-2d] are visualized in Figure~\ref{fig: source-auxiliary-pdf}.

\begin{table}[t]
    \centering
	\begin{adjustbox}{max width=\textwidth}
    \begin{tabular}{lrrrrrrrr}
        \toprule
        \multirow{2}{*}{Instance} & \multicolumn{5}{c}{Input configurations} & \multicolumn{3}{c}{Implementation details of Algorithm~\ref{algo: concrete}}\\
        \cmidrule(lr){2-6} \cmidrule(lr){7-9}
        & $d$ & $K$ & $\widetilde{K}$ & $(n_{\widetilde{k}})_{\widetilde{k} = 1 : \widetilde{K}}$ & $(\CY_k)_{k = 1: K}$ & Sample size schedule across $t = 1, \dots, 9$ & $(\CX_{\widehat{R}_t})_{t = 0:9}$ & $(\gamma_{t, k})_{t = 0: 9, k = 1: K}$\\
        \midrule
        {[SG-2d]}  & 2  & 5  & 5  & 1{,}000  & $\bar{B}(\veczero_2, 225)$  & Exp.\ growth ($10{,}000 \rightarrow 160{,}000$)& $\bar{B}(\veczero_2, 225)$ & $5$\\
        {[SG-10d]} & 10 & 10 & 10 & 1{,}000  & $\bar{B}(\veczero_{10}, 1{,}000)$ & Exp.\ growth ($5{,}000 \rightarrow 20{,}000$)& $\bar{B}(\veczero_{10}, 1{,}000)$ & $1$\\
		{[BS-8d]} & 8 & 5 & -- & --  & -- & Exp.\ growth ($5{,}000 \rightarrow 20{,}000$)& $\bar{B}(\veczero_{8}, 15)$ & $10^{-8}$\\
        \bottomrule
    \end{tabular}
\end{adjustbox}
    \caption{Selected parameter configurations and implementation details of Algorithm~\ref{algo: concrete} across problem instances.}\label{tab: instance-config}
\end{table}

\begin{figure}[t]
    \centering
    \includegraphics[width=0.6\textwidth]{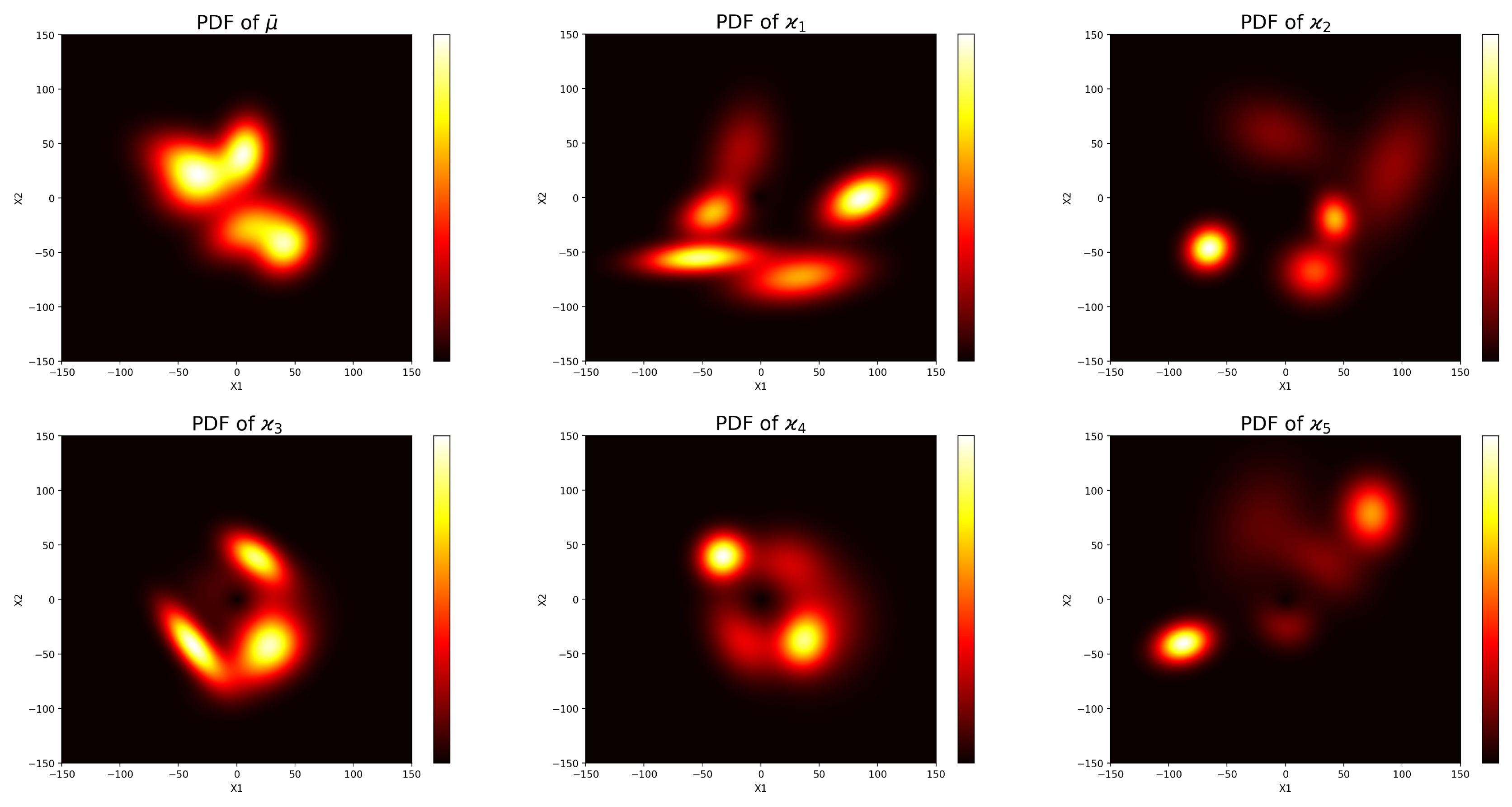}
    \caption{The probability density functions of the ground-truth $\CW_2$-barycenter $\bar{\mu}$ and the auxiliary measures $\varkappa_1, \dots, \varkappa_5$ in problem instance [SG-2d].}\label{fig: source-auxiliary-pdf}
\end{figure}

\subsection{Omitted details in Experiment~2}\label{sapx: exp-details-bike-sharing}
The summary statistics of the bike-sharing dataset used in [BS-8d] are presented in Table~\ref{tab:bike_summary}.
To generate samples from each posterior of the regression coefficients, 
we followed the data preprocessing pipelines provided in \citep{campbell2018bayesian} and \citep{li2020continuous}, and adopted the stochastic approximation trick introduced in \citep{minsker2014scalable} to appropriately 
rescale the subset posterior densities.
The posterior means of $\Bvartheta \in \R^8$ are presented in Table~\ref{tab:posterior_summary}.
In particular, under a Gaussian prior, it can be shown that Poisson regression posterior density is dominated by a Gaussian density (up to a multiplicative finite constant), therefore the posterior distribution lies in $\CP_{2,\AC}(\R^8)$.
Notice that we do not truncate $\nu_1, \dots, \nu_5$ to ensure the representative power of $\bar{\mu}$ as the presumed $\CW_2$-barycenter for quantitative evaluations.
Concrete setups for the problem instance are formally introduced in Section~\ref{ssec: exp-subset-posterior-aggregation}, and 
concrete values imposed on (selected) parameters in Setting~\ref{sett: Caffarelli} are listed in Table~\ref{tab: instance-config}.

\begin{remark}
	Although $\nu_1, \dots, \nu_5$ in [BS-8d] do not necessarily satisfy Setting~\ref{sett: Caffarelli} that has been required for our rigorous convergence analysis, we emphasize that Algorithm~\ref{algo: concrete} is completely sample-driven and therefore it can still be used to approximate the $\CW_2$-barycenter.
\end{remark}

\begin{table}[t]
  \centering
  \begin{adjustbox}{max width=\textwidth}
  \begin{tabular}{ll>{\raggedleft\arraybackslash}p{1.6cm}>{\raggedleft\arraybackslash}p{1.6cm}>{\raggedleft\arraybackslash}p{1.6cm}>{\raggedleft\arraybackslash}p{1.6cm}>{\raggedleft\arraybackslash}p{1.6cm}}
    \toprule
    \textbf{Variable} & \textbf{Description} & \textbf{Mean} & \textbf{Std} & \textbf{Min} & \textbf{Median} & \textbf{Max} \\
    \midrule
    \multicolumn{7}{l}{\textit{Continuous variables}} \\
    \midrule
    Count$^*$ & Total bike rentals (response variable) & 189.463 & 181.388 & 1 & 142 & 977 \\
    Temp. & Normalized temperature & 0.497 & 0.193 & 0.020 & 0.500 & 1 \\
    Feels-like Temp. & Normalized feeling temperature & 0.476 & 0.172 & 0 & 0.485 & 1 \\
    Humidity & Normalized humidity & 0.627 & 0.193 & 0 & 0.630 & 1 \\
    Wind Speed & Normalized wind speed & 0.190 & 0.122 & 0 & 0.194 & 0.851 \\
    \midrule
    \multicolumn{7}{l}{\textit{Categorical \& binary variables}} \\
    \midrule
    Hour & Hour of the day (ordinal, 0--23) & \multicolumn{5}{l}{Range: 0--23 \quad Mode: 16} \\
    Season & Meteorological season & \multicolumn{5}{l}{Spring: 24.4\% \quad Summer: 25.4\% \quad Fall: 25.9\% \quad Winter: 24.4\%} \\
    Working Day & Whether the day is a working day & \multicolumn{5}{l}{No: 31.7\% \quad Yes: 68.3\%} \\
    Weather & Weather situation & \multicolumn{5}{l}{Clear: 65.7\% \quad Mist: 26.1\% \quad Light rain/snow: 8.2\% \quad Heavy rain/snow: 0.0\%} \\
    \bottomrule
  \end{tabular}
  \end{adjustbox}
  \caption{Summary statistics of the bike-sharing dataset ($N = 17{,}379$).}\label{tab:bike_summary}
\end{table}

\begin{table}[t]
    \centering
    \begin{adjustbox}{max width=0.7\textwidth}
    \begin{tabular}{lrrrrrrrr}
        \toprule
        \textbf{Posteriors} & \multicolumn{8}{c}{\textbf{Regression coefficients}} \\
        \cmidrule(l){2-9}
        & $\vartheta_1$ & $\vartheta_2$ & $\vartheta_3$ & $\vartheta_4$ & $\vartheta_5$ & $\vartheta_6$ & $\vartheta_7$ & $\vartheta_8$ \\
        \midrule
        $\bar{\mu}$ & $0.2031$ & $0.4182$ & $0.0250$ & $-0.1265$ & $0.2790$ & $0.0147$ & $-0.1961$ & $0.0272$ \\
        $\nu_1$ & $0.1814$ & $0.4207$ & $0.0218$ & $-0.1128$ & $0.2947$ & $0.0248$ & $-0.2181$ & $0.0111$ \\
        $\nu_2$ & $0.2015$ & $0.4018$ & $0.0362$ & $-0.1328$ & $0.2921$ & $-0.0155$ & $-0.1746$ & $0.0271$ \\
        $\nu_3$ & $0.2130$ & $0.4256$ & $0.0204$ & $-0.1386$ & $0.2751$ & $0.0096$ & $-0.2057$ & $0.0441$ \\
        $\nu_4$ & $0.2127$ & $0.4080$ & $0.0251$ & $-0.1161$ & $0.2654$ & $0.0507$ & $-0.1829$ & $0.0202$ \\
        $\nu_5$ & $0.2085$ & $0.4362$ & $0.0215$ & $-0.1338$ & $0.2657$ & $0.0150$ & $-0.2006$ & $0.0341$ \\
        \bottomrule
    \end{tabular}
    \end{adjustbox}
    \caption{Posterior means of $\Bvartheta \in \R^8$ in [BS-8d]}\label{tab:posterior_summary}
\end{table}

\section{An instance with multiple Karcher means}\label{apx: multiple-karcher-means}

In this appendix, we examine the sensitivity of our stochastic fixed-point algorithm (Algorithm~\ref{algo: concrete}) against suboptimal Karcher means by detecting its performance on a simple problem instance where the corresponding barycenter functional attains multiple Karcher means. 

We focus on an instance provided in \citet[Example~2.2]{backhoff2025stochastic}. 
Specifically, let $\nu_1 \in \CP(\R^2)$ be the uniform measure on $\bar{B}((-60, 20), 2) \cup \bar{B}((60, -20), 2)$ and let $\nu_2 \in \CP(\R^2)$ be the uniform measure on $\bar{B}((60, 20), 2) \cup \bar{B}((-60, -20), 2)$.
Moreover, let $\mu_1$ be the uniform measure on $\bar{B}((0, 20), 2) \cup \bar{B}((0, -20), 2)$, and let $\mu_2$ be the uniform measure on $\bar{B}((60, 0), 2) \cup \bar{B}((-60, 0), 2)$. 
Then, it can be shown that $\mu_1$ and $\mu_2$ are two distinct Karcher means of the barycenter functional $V(\cdot) := \frac{1}{2} \CW_2(\cdot, \nu_1) + \frac{1}{2} \CW_2(\cdot, \nu_2)$, while $\mu_2$ is the unique $\CW_2$-barycenter. 
We name this problem instance as [MKM-2d]. 

We conduct two experimental trials in [MKM-2d], indexed by alphabets.
To assess whether our algorithm can escape the suboptimal Karcher mean, the initial measure $\rho_0$ in Algorithm~\ref{algo: concrete} is set as $\mu_1$ in both trials.
The two trials differ in the controlled schedule of the entropic regularization parameter throughout the iterations.
Specifically, we adopt a fixed regularization value of 10 in Trial~A, and adopt a geometrically decreasing schedule from 200,000 down to 10 in Trial~B.
Across both trials, we run Algorithm~\ref{algo: concrete} for 9 iterations and 
employ truncation via closed Euclidean balls centered at the origin with a uniform radius of 200.

The results for both trials in [MKM-2d] are presented in Figure~\ref{fig: multiple-karcher-means}.
The blue and yellow filled circles denote $\nu_1$ and $\nu_2$, and the red and green hollow circles denote $\mu_1$ and $\mu_2$, respectively. 
In each trial, the black scatter points in the subplot corresponding to the $t$-th iteration represent 2,000 random samples drawn from $\widehat{\mu}_t$ generated by Algorithm~\ref{algo: concrete}.
It is unsurprising that in Trial~A, Algorithm~\ref{algo: concrete} with a small fixed regularization parameter closely mimics the $G$-iteration, and thus remained stuck at $\mu_1$, which is a fixed-point of the $G$-operator
but not the $\CW_2$-barycenter. 
Therefore, 
it is indeed possible for
Algorithm~\ref{algo: concrete} without sufficient regularization to be stuck around a suboptimal Karcher mean.

In contrast, when the OT map estimators were suitably regularized in initial iterations as in Trial~B, Algorithm~\ref{algo: concrete} was able to escape the suboptimal Karcher mean $\mu_1$ and converge to the $\CW_2$-barycenter $\mu_2$ with fewer than 10 iterations. 
This behavior is consistent with the empirical findings of \citet[Section~6]{chizat2025doubly}, who has also highlighted the importance of regularization in overcoming suboptimal solutions.
Taken together, these results indicate a degree of practical robustness of our proposed algorithm: although unfavorable initialization may slow down convergence, imposing large regularization parameters in early iterations tends to facilitate convergence to the barycenter. 
We would also like to remark that an exact initialization at a suboptimal Karcher mean is unlikely to arise in practice.

\begin{figure}[t!]
	\centering

	\begin{subfigure}{0.95\textwidth}
		\makebox[\textwidth][c]{\includegraphics[width=\textwidth]{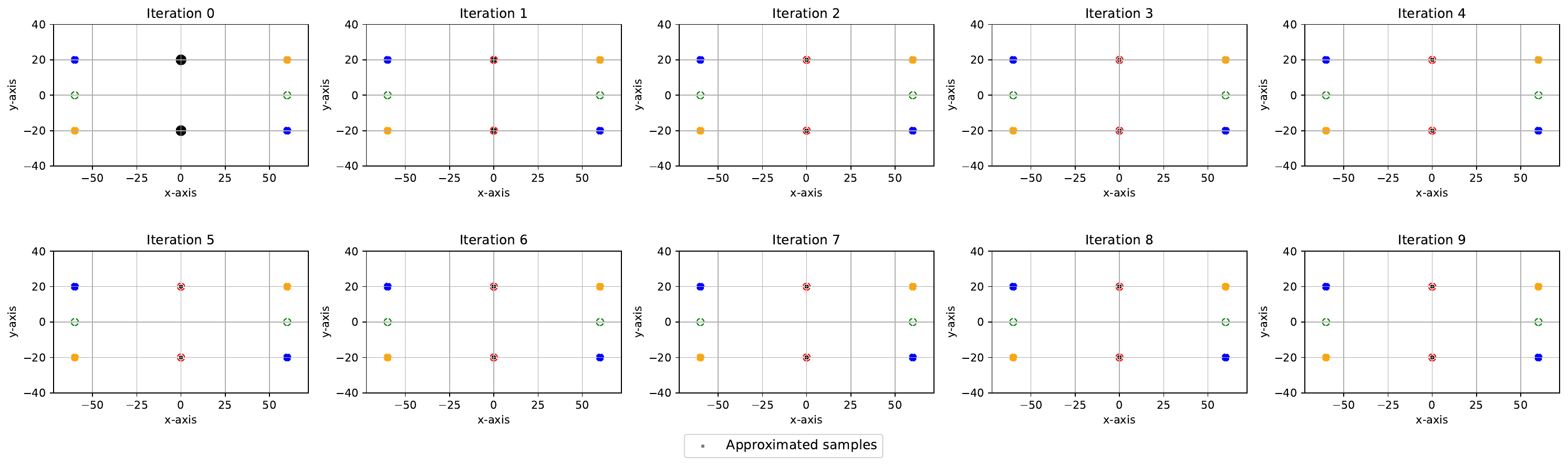}}
		\caption{Trial A}\label{fig: karcher-B}
	\end{subfigure}

	\begin{subfigure}{0.95\textwidth}
		\makebox[\textwidth][c]{\includegraphics[width=\textwidth]{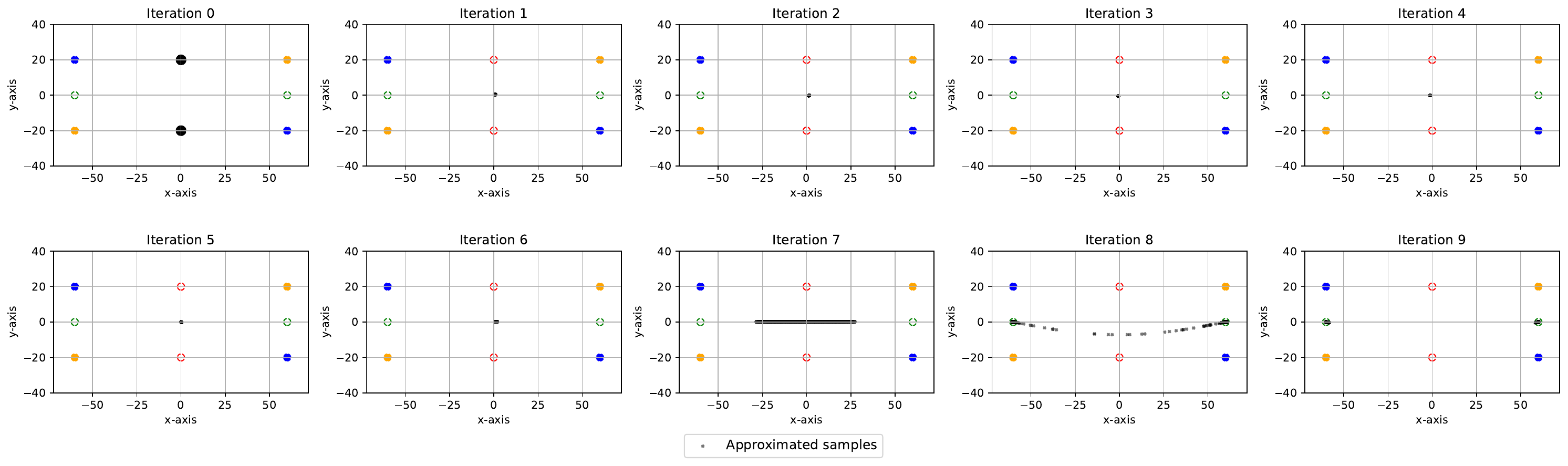}}
		\caption{Trial B}\label{fig: karcher-A}
	\end{subfigure}
	\caption{Performances of Algorithm~\ref{algo: concrete} across the two trials in [MKM-2d].}\label{fig: multiple-karcher-means}
\end{figure}


\bibliographystyle{abbrvnat} 
\bibliography{reference.bib}

@article{alvarez2016fixed,
  title     = {A fixed-point approach to barycenters in {W}asserstein space},
  author    = {{\'A}lvarez-Esteban, Pedro C and del Barrio, E and Cuesta-Albertos, JA and Matr{\'a}n, C},
  journal   = {Journal of Mathematical Analysis and Applications},
  volume    = {441},
  number    = {2},
  pages     = {744--762},
  year      = {2016},
  publisher = {Elsevier}
}

@book{villani2009optimal,
  author    = {Villani, C\'{e}dric},
  title     = {Optimal transport: Old and new},
  series    = {Grundlehren der mathematischen Wissenschaften [Fundamental
               Principles of Mathematical Sciences]},
  volume    = {338},
  publisher = {Springer-Verlag, Berlin},
  year      = {2009},
  pages     = {xxii+973}
}

@article{agueh2011barycenters,
  title     = {Barycenters in the {W}asserstein space},
  author    = {Agueh, Martial and Carlier, Guillaume},
  journal   = {SIAM Journal on Mathematical Analysis},
  volume    = {43},
  number    = {2},
  pages     = {904--924},
  year      = {2011},
  publisher = {SIAM}
}

@book{villani2003topics,
  author    = {Villani, C\'{e}dric},
  title     = {Topics in optimal transportation},
  series    = {Graduate Studies in Mathematics},
  volume    = {58},
  publisher = {American Mathematical Society, Providence, RI},
  year      = {2003},
  pages     = {xvi+370}
}

@article{manole2024plugin,
  author  = {Manole, T. and Balakrishnan, S. and Niles-Weed, J. and Wasserman, L.},
  title   = {Plugin Estimation of Smooth Optimal Transport Maps},
  journal = {The Annals of Statistics},
  volume  = {52},
  number  = {3},
  pages   = {966--998},
  year    = {2024}
}

@inproceedings{paty2020regularity,
  title     = {Regularity as Regularization: Smooth and Strongly Convex {B}renier Potentials in Optimal Transport},
  author    = {Paty, Fran{\c}ois-Pierre and d'Aspremont, Alexandre and Cuturi, Marco},
  booktitle = {Proceedings of the 23rd International Conference on Artificial Intelligence and Statistics},
  pages     = {1222--1232},
  year      = {2020},
  volume    = {108},
  publisher = {PMLR}
}

@article{curmei2023shape,
  title     = {Shape-constrained regression using sum of squares polynomials},
  author    = {Curmei, Mihaela and Hall, Georgina},
  journal   = {Operations Research},
  year      = {2023},
  volume    = {73},
  number    = {1},
  pages     = {543-559},
  publisher = {INFORMS}
}

@phdthesis{taylor2017convex,
  title  = {Convex interpolation and performance estimation of first-order methods for convex optimization.},
  author = {Taylor, Adrien B},
  year   = {2017},
  school = {Catholic University of Louvain, Louvain-la-Neuve, Belgium}
}

@inproceedings{cuturi2013sinkhorn,
  author    = {Cuturi, Marco},
  booktitle = {Advances in Neural Information Processing Systems},
  title     = {Sinkhorn Distances: Lightspeed Computation of Optimal Transport},
  volume    = {26},
  year      = {2013}
}

@article{caffarelli1990localization,
  title     = {A localization property of viscosity solutions to the {M}onge-{A}mp\`{e}re equation and their strict convexity},
  author    = {Caffarelli, Luis A},
  journal   = {Annals of Mathematics},
  volume    = {131},
  number    = {1},
  pages     = {129--134},
  year      = {1990},
  publisher = {JSTOR}
}

@article{caffarelli1991some,
  title     = {Some regularity properties of solutions of {M}onge-{A}mp\`{e}re equation},
  author    = {Caffarelli, Luis A},
  journal   = {Communications on Pure and Applied Mathematics},
  volume    = {44},
  number    = {8-9},
  pages     = {965--969},
  year      = {1991},
  publisher = {Wiley Subscription Services, Inc., A Wiley Company New York}
}

@article{caffarelli1992regularity,
  title   = {The regularity of mappings with a convex potential},
  author  = {Caffarelli, Luis A},
  journal = {Journal of the American Mathematical Society},
  volume  = {5},
  number  = {1},
  pages   = {99--104},
  year    = {1992}
}

@article{caffarelli1996boundary,
  title     = {Boundary regularity of maps with convex potentials--{II}},
  author    = {Caffarelli, Luis A},
  journal   = {Annals of Mathematics},
  volume    = {144},
  number    = {3},
  pages     = {453--496},
  year      = {1996},
  publisher = {JSTOR}
}

@book{ambrosio2008gradient,
  title     = {Gradient flows: in metric spaces and in the space of probability measures},
  author    = {Ambrosio, Luigi and Gigli, Nicola and Savar{\'e}, Giuseppe},
  year      = {2008},
  publisher = {Springer Science \& Business Media}
}

@book{mcneil2015quantitative,
  title     = {Quantitative risk management: concepts, techniques and tools-revised edition},
  author    = {McNeil, Alexander J and Frey, R{\"u}diger and Embrechts, Paul},
  year      = {2015},
  publisher = {Princeton University Press}
}

@book{rockafellar1970convex,
  title     = {Convex Analysis:(PMS-28)},
  author    = {Rockafellar, Ralph Tyrell},
  year      = {1970},
  publisher = {Princeton university press}
}

@article{fournier2015rate,
  title     = {On the rate of convergence in {W}asserstein distance of the empirical measure},
  author    = {Fournier, Nicolas and Guillin, Arnaud},
  journal   = {Probability theory and related fields},
  volume    = {162},
  number    = {3-4},
  pages     = {707--738},
  year      = {2015},
  publisher = {Springer}
}

@book{santambrogio2015optimal,
  author    = {Santambrogio, Filippo},
  title     = {Optimal transport for applied mathematicians},
  series    = {Progress in Nonlinear Differential Equations and their
               Applications},
  volume    = {87},
  publisher = {Birkh\"auser/Springer, Cham},
  year      = {2015},
  pages     = {xxvii+353}
}

@article{peyre2019computational,
  title     = {Computational optimal transport: With applications to data science},
  author    = {Peyr{\'e}, Gabriel and Cuturi, Marco},
  journal   = {Foundations and Trends{\textregistered} in Machine Learning},
  volume    = {11},
  number    = {5-6},
  pages     = {355--607},
  year      = {2019},
  publisher = {Now Publishers, Inc.}
}

@book{nesterov2004introconvex,
  title     = {Introductory lectures on convex optimization: A basic course},
  author    = {Nesterov, Yurii},
  volume    = {87},
  year      = {2004},
  publisher = {Springer Science \& Business Media}
}

@book{evans2022partial,
  author    = {Lawrence C. Evans},
  title     = {Partial Differential Equations},
  series    = {Graduate Studies in Mathematics},
  volume    = {19},
  edition   = {2nd},
  year      = {2010},
  publisher = {American Mathematical Society}
}

@article{srivastava2018scalable,
  title   = {Scalable {B}ayes via barycenter in {W}asserstein space},
  author  = {Srivastava, Sanvesh and Li, Cheng and Dunson, David B},
  journal = {Journal of Machine Learning Research},
  volume  = {19},
  number  = {8},
  pages   = {1--35},
  year    = {2018}
}

@article{karcher1977riemannian,
  title     = {Riemannian center of mass and mollifier smoothing},
  author    = {Karcher, Hermann},
  journal   = {Communications on Pure and Applied Mathematics},
  volume    = {30},
  number    = {5},
  pages     = {509--541},
  year      = {1977},
  publisher = {Wiley Online Library}
}

@inproceedings{cuturi2014fast,
  title        = {Fast computation of {W}asserstein barycenters},
  author       = {Cuturi, Marco and Doucet, Arnaud},
  booktitle    = {International conference on machine learning},
  pages        = {685--693},
  year         = {2014},
  organization = {PMLR}
}

@inproceedings{rabin2012wasserstein,
  title        = {{W}asserstein barycenter and its application to texture mixing},
  author       = {Rabin, Julien and Peyr{\'e}, Gabriel and Delon, Julie and Bernot, Marc},
  booktitle    = {Scale Space and Variational Methods in Computer Vision: Third International Conference, SSVM 2011, Ein-Gedi, Israel, May 29--June 2, 2011, Revised Selected Papers 3},
  pages        = {435--446},
  year         = {2012},
  organization = {Springer}
}

@article{petracou2022decision,
  title     = {Decision making under model uncertainty: Fr{\'e}chet--{W}asserstein mean preferences},
  author    = {Petracou, Electra V and Xepapadeas, Anastasios and Yannacopoulos, Athanasios N},
  journal   = {Management Science},
  volume    = {68},
  number    = {2},
  pages     = {1195--1211},
  year      = {2022},
  publisher = {INFORMS}
}

@inproceedings{srivastava2015wasp,
  title     = {{WASP}: Scalable {B}ayes via barycenters of subset posteriors},
  author    = {Srivastava, Sanvesh and Cevher, Volkan and Dinh, Quoc and Dunson, David},
  booktitle = {Proceedings of the Eighteenth International Conference on Artificial Intelligence and Statistics},
  pages     = {912--920},
  year      = {2015},
  volume    = {38},
  publisher = {PMLR}
}

@article{altschuler2022NP,
  title     = {{W}asserstein barycenters are {NP}-hard to compute},
  author    = {Altschuler, Jason M and Boix-Adser\`{a}, Enric},
  journal   = {SIAM Journal on Mathematics of Data Science},
  volume    = {4},
  number    = {1},
  pages     = {179--203},
  year      = {2022},
  publisher = {SIAM}
}

@inproceedings{claici2018stochastic,
  title     = {Stochastic {W}asserstein Barycenters},
  author    = {Claici, Sebastian and Chien, Edward and Solomon, Justin},
  booktitle = {Proceedings of the 35th International Conference on Machine Learning},
  pages     = {999--1008},
  year      = {2018},
  volume    = {80},
  publisher = {PMLR}
}

@inproceedings{chewi2020gradient,
  title     = {Gradient descent algorithms for {B}ures-{W}asserstein barycenters},
  author    = {Chewi, Sinho and Maunu, Tyler and Rigollet, Philippe and Stromme, {Austin J.}},
  booktitle = {Proceedings of 33rd Conference on Learning Theory},
  pages     = {1276--1304},
  year      = {2020},
  volume    = {125},
  publisher = {PMLR}
}

@inproceedings{li2020continuous,
  author    = {Li, Lingxiao and Genevay, Aude and Yurochkin, Mikhail and Solomon, Justin M},
  booktitle = {Advances in Neural Information Processing Systems},
  pages     = {17755--17765},
  title     = {Continuous Regularized {W}asserstein Barycenters},
  volume    = {33},
  year      = {2020}
}

@article{cohen2020estimating,
  title   = {Estimating barycenters of measures in high dimensions},
  author  = {Cohen, Samuel and Arbel, Michael and Deisenroth, Marc Peter},
  journal = {Preprint, arXiv:2007.07105},
  year    = {2020}
}

@inproceedings{korotin2022wasserstein,
  author    = {Korotin, Alexander and Egiazarian, Vage and Li, Lingxiao and Burnaev, Evgeny},
  booktitle = {Advances in Neural Information Processing Systems},
  pages     = {15672--15686},
  title     = {{W}asserstein Iterative Networks for Barycenter Estimation},
  volume    = {35},
  year      = {2022}
}

@article{tacskesen2023discrete,
  title     = {Discrete Optimal Transport with Independent Marginals is \#{P}-Hard},
  author    = {Ta{\c{s}}kesen, Bahar and Shafieezadeh-Abadeh, Soroosh and Kuhn, Daniel and Natarajan, Karthik},
  journal   = {SIAM Journal on Optimization},
  volume    = {33},
  number    = {2},
  pages     = {589--614},
  year      = {2023},
  publisher = {SIAM}
}

@article{taskesen2023semi,
  title     = {Semi-discrete optimal transport: hardness, regularization and numerical solution},
  author    = {Ta\c{s}kesen, Berk and Shafieezadeh-Abadeh, Saeedeh and Kuhn, Daniel},
  journal   = {Mathematical Programming},
  volume    = {199},
  pages     = {1033--1106},
  year      = {2023},
  publisher = {Springer}
}

@article{kantorovich1948problem,
  title   = {On a problem of {M}onge},
  author  = {Kantorovich, Leonid V},
  journal = {CR (Doklady) Acad. Sci. URSS (NS)},
  volume  = {3},
  pages   = {225--226},
  year    = {1948}
}

@inproceedings{chizat2020faster,
  author    = {Chizat, L\'{e}na\"{\i}c and Roussillon, Pierre and L\'{e}ger, Flavien and Vialard, Fran\c{c}ois-Xavier and Peyr\'{e}, Gabriel},
  booktitle = {Advances in Neural Information Processing Systems},
  pages     = {2257--2269},
  title     = {Faster {W}asserstein Distance Estimation with the {S}inkhorn Divergence},
  volume    = {33},
  year      = {2020}
}

@inproceedings{makkuva2020optimal,
  title        = {Optimal transport mapping via input convex neural networks},
  author       = {Makkuva, Ashok and Taghvaei, Amirhossein and Oh, Sewoong and Lee, Jason},
  booktitle    = {International Conference on Machine Learning},
  pages        = {6672--6681},
  year         = {2020},
  organization = {PMLR}
}

@article{hutter2021minimax,
  title   = {Minimax estimation of smooth optimal transport maps},
  author  = {H{\"u}tter, Jan-Christian and Rigollet, Philippe},
  journal = {The Annals of Statistics},
  volume  = {49},
  number  = {2},
  pages   = {1166--1194},
  year    = {2021}
}

@inproceedings{deb2021rates,
  author    = {Deb, Nabarun and Ghosal, Promit and Sen, Bodhisattva},
  booktitle = {Advances in Neural Information Processing Systems},
  pages     = {29736--29753},
  title     = {Rates of Estimation of Optimal Transport Maps using Plug-in Estimators via  Barycentric Projections},
  volume    = {34},
  year      = {2021}
}

@article{pooladian2021entropic,
  title   = {Entropic estimation of optimal transport maps},
  author  = {Pooladian, Aram-Alexandre and Niles-Weed, Jonathan},
  journal = {Preprint, arXiv:2109.12004},
  year    = {2024}
}

@article{vacher2024optimal,
  title     = {Optimal estimation of smooth transport maps with kernel {SoS}},
  author    = {Vacher, Adrien and Muzellec, Boris and Bach, Francis and Vialard, Francois-Xavier and Rudi, Alessandro},
  journal   = {SIAM Journal on Mathematics of Data Science},
  volume    = {6},
  number    = {2},
  pages     = {311--342},
  year      = {2024},
  publisher = {SIAM}
}

@article{gonzalez2022gan,
  title   = {{GAN} estimation of {L}ipschitz optimal transport maps},
  author  = {Gonz{\'a}lez-Sanz, Alberto and De Lara, Lucas and B{\'e}thune, Louis and Loubes, Jean-Michel},
  journal = {Preprint, arXiv:2202.07965},
  year    = {2022}
}

@book{rockafellar1998variational,
  author    = {Rockafellar, R. Tyrrell and Wets, Roger J.-B.},
  title     = {Variational analysis},
  series    = {Grundlehren der mathematischen Wissenschaften [Fundamental
               Principles of Mathematical Sciences]},
  volume    = {317},
  publisher = {Springer-Verlag, Berlin},
  year      = {1998},
  pages     = {xiv+733}
}

@article{gigli2011on,
  title   = {On {H}{\"o}lder continuity-in-time of the optimal transport map towards measures along a curve},
  volume  = {54},
  number  = {2},
  journal = {Proceedings of the Edinburgh Mathematical Society},
  author  = {Gigli, Nicola},
  year    = {2011},
  pages   = {401--409}
}

@book{dontchev2009implicit,
  author    = {Dontchev, Asen L. and Rockafellar, R. Tyrrell},
  title     = {Implicit functions and solution mappings: A view from variational analysis},
  series    = {Springer Monographs in Mathematics},
  publisher = {Springer, Dordrecht},
  year      = {2009},
  pages     = {xii+375}
}

@article{craik2005prehistory,
  author    = {Alex D. D. Craik},
  journal   = {The American Mathematical Monthly},
  number    = {2},
  pages     = {119--130},
  publisher = {Mathematical Association of America},
  title     = {Prehistory of {F}a{\`a} di {B}runo's Formula},
  volume    = {112},
  year      = {2005}
}

@inproceedings{fan2021scalable,
  title     = {Scalable Computations of {W}asserstein Barycenter via Input Convex Neural Networks},
  author    = {Fan, Jiaojiao and Taghvaei, Amirhossein and Chen, Yongxin},
  booktitle = {Proceedings of the 38th International Conference on Machine Learning},
  pages     = {1571--1581},
  year      = {2021},
  volume    = {139},
  publisher = {PMLR}
}

@article{cuturi2022optimal,
  title   = {Optimal Transport Tools {(OTT)}: A {JAX} Toolbox for all things {W}asserstein},
  author  = {Cuturi, Marco and Meng-Papaxanthos, Laetitia and Tian, Yingtao and Bunne, Charlotte and
             Davis, Geoff and Teboul, Olivier},
  journal = {Preprint, arXiv:2201.12324},
  year    = {2022}
}

@article{rychener2024wasserstein,
  title   = {{W}asserstein distributionally robust optimization with heterogeneous data sources},
  author  = {Rychener, Yves and Esteban-P{\'e}rez, Adri{\'a}n and Morales, Juan M and Kuhn, Daniel},
  journal = {Preprint, arXiv:2407.13582},
  year    = {2024}
}

@inproceedings{genevay2016stochastic,
  author    = {Genevay, Aude and Cuturi, Marco and Peyr\'{e}, Gabriel and Bach, Francis},
  booktitle = {Advances in Neural Information Processing Systems},
  title     = {Stochastic Optimization for Large-scale Optimal Transport},
  volume    = {29},
  year      = {2016}
}

@article{tanguy2024computing,
  title   = {Computing Barycentres of Measures for Generic Transport Costs},
  author  = {Tanguy, Eloi and Delon, Julie and Gozlan, Natha{\"e}l},
  journal = {Preprint, arXiv:2501.04016},
  year    = {2024}
}

@article{chewi2024statistical,
  title   = {Statistical optimal transport},
  author  = {Chewi, Sinho and Niles-Weed, Jonathan and Rigollet, Philippe},
  year    = {2024},
  journal = {Preprint, arXiv:2407.18163}
}

@inproceedings{Feydy2019Fast,
  author    = {Jean Feydy and Pierre Roussillon and Alain Trouv{\'e} and Pietro Gori},
  title     = {Fast and Scalable Optimal Transport for Brain Tractograms},
  booktitle = {Medical Image Computing and Computer Assisted Intervention – MICCAI 2019},
  series    = {Lecture Notes in Computer Science},
  volume    = {11767},
  pages     = {636--644},
  publisher = {Springer},
  year      = {2019}
}

@article{legouic2017existence,
  author   = {Le Gouic, Thibaut and Loubes, Jean-Michel},
  title    = {Existence and consistency of {W}asserstein barycenters},
  journal  = {Probab. Theory Related Fields},
  fjournal = {Probability Theory and Related Fields},
  volume   = {168},
  year     = {2017},
  number   = {3-4},
  pages    = {901--917}
}

@article{zemel2019frechet,
  author   = {Zemel, Yoav and Panaretos, Victor M.},
  title    = {Fr\'echet means and {P}rocrustes analysis in {W}asserstein
              space},
  journal  = {Bernoulli},
  fjournal = {Bernoulli. Official Journal of the Bernoulli Society for
              Mathematical Statistics and Probability},
  volume   = {25},
  year     = {2019},
  number   = {2},
  pages    = {932--976}
}

@article{backhoff2025stochastic,
  author   = {Backhoff, Julio and Fontbona, Joaquin and Rios, Gonzalo and
              Tobar, Felipe},
  title    = {Stochastic gradient descent for barycenters in {W}asserstein
              space},
  journal  = {J. Appl. Probab.},
  fjournal = {Journal of Applied Probability},
  volume   = {62},
  year     = {2025},
  number   = {1},
  pages    = {15--43}
}

@InProceedings{kim2025optimal,
  title = 	 {Optimal Transport Barycenter via Nonconvex-Concave Minimax Optimization},
  author =       {Kim, Kaheon and Yao, Rentian and Zhu, Changbo and Chen, Xiaohui},
  booktitle = 	 {Proceedings of the 42nd International Conference on Machine Learning},
  pages = 	 {30879--30899},
  year = 	 {2025},
  OPTeditor = 	 {Singh, Aarti and Fazel, Maryam and Hsu, Daniel and Lacoste-Julien, Simon and Berkenkamp, Felix and Maharaj, Tegan and Wagstaff, Kiri and Zhu, Jerry},
  volume = 	 {267},
  series = 	 {Proceedings of Machine Learning Research},
  OPTmonth = 	 {13--19 Jul},
  publisher =    {PMLR},
}

@article{ghosal2022multivariate,
  author   = {Ghosal, Promit and Sen, Bodhisattva},
  title    = {Multivariate ranks and quantiles using optimal transport:
              consistency, rates and nonparametric testing},
  journal  = {Ann. Statist.},
  fjournal = {The Annals of Statistics},
  volume   = {50},
  year     = {2022},
  number   = {2},
  pages    = {1012--1037}
}

@book{bogachev2007measure,
  author    = {Bogachev, V. I.},
  title     = {Measure theory. {V}ol. {I}, {II}},
  publisher = {Springer-Verlag, Berlin},
  year      = {2007},
  pages     = {Vol. I: xviii+500 pp., Vol. II: xiv+575}
}

@article{flamary2021pot,
  title   = {{POT}: Python optimal transport},
  author  = {Flamary, R{\'e}mi and Courty, Nicolas and Gramfort, Alexandre and Alaya, Mokhtar Z and Boisbunon, Aur{\'e}lie and Chambon, Stanislas and Chapel, Laetitia and Corenflos, Adrien and Fatras, Kilian and Fournier, Nemo and others},
  journal = {Journal of Machine Learning Research},
  volume  = {22},
  number  = {78},
  pages   = {1--8},
  year    = {2021}
}

@article{carpenter2017stan,
  title   = {Stan: A probabilistic programming language},
  author  = {Carpenter, Bob and Gelman, Andrew and Hoffman, Matthew D and Lee, Daniel and Goodrich, Ben and Betancourt, Michael and Brubaker, Marcus and Guo, Jiqiang and Li, Peter and Riddell, Allen},
  journal = {Journal of statistical software},
  volume  = {76},
  pages   = {1--32},
  year    = {2017}
}

@inproceedings{minsker2014scalable,
  title        = {Scalable and robust Bayesian inference via the median posterior},
  author       = {Minsker, Stanislav and Srivastava, Sanvesh and Lin, Lizhen and Dunson, David},
  booktitle    = {International conference on machine learning},
  pages        = {1656--1664},
  year         = {2014},
  organization = {PMLR}
}

@book{panaretos2020invitation,
  title     = {An invitation to statistics in Wasserstein space},
  author    = {Panaretos, Victor M and Zemel, Yoav},
  year      = {2020},
  publisher = {Springer}
}

@article{lau2022wasserstein,
  title   = {Wasserstein distributionally robust optimization with Wasserstein barycenters},
  author  = {Lau, Tim Tsz-Kit and Liu, Han},
  journal = {Preprint, arXiv:2203.12136},
  year    = {2022}
}

@article{chizat2025doubly,
  author  = {Chizat, L{\'e}na{\"\i}c},
  journal = {Foundations of Computational Mathematics},
  title   = {Doubly Regularized Entropic {W}asserstein Barycenter},
  year    = {2025}
}

@article{carlier2021entropic,
  title     = {Entropic-{W}asserstein barycenters: {PDE} characterization, regularity, and {CLT}},
  author    = {Carlier, Guillaume and Eichinger, Katharina and Kroshnin, Alexey},
  journal   = {SIAM Journal on Mathematical Analysis},
  volume    = {53},
  number    = {5},
  pages     = {5880--5914},
  year      = {2021},
  publisher = {SIAM}
}

@inproceedings{janati2020debiased,
  title        = {Debiased {S}inkhorn barycenters},
  author       = {Janati, Hicham and Cuturi, Marco and Gramfort, Alexandre},
  booktitle    = {International Conference on Machine Learning},
  pages        = {4692--4701},
  year         = {2020},
  organization = {PMLR}
}

@article{cuturi2018semidual,
  title     = {Semidual regularized optimal transport},
  author    = {Cuturi, Marco and Peyr{\'e}, Gabriel},
  journal   = {SIAM Review},
  volume    = {60},
  number    = {4},
  pages     = {941--965},
  year      = {2018},
  publisher = {SIAM}
}

@article{bigot2019penalization,
  title     = {Penalization of barycenters in the {W}asserstein space},
  author    = {Bigot, J{\'e}r{\'e}mie and Cazelles, Elsa and Papadakis, Nicolas},
  journal   = {SIAM Journal on Mathematical Analysis},
  volume    = {51},
  number    = {3},
  pages     = {2261--2285},
  year      = {2019},
  publisher = {SIAM}
}

@inproceedings{feydy2019interpolating,
  title        = {Interpolating between optimal transport and {MMD} using {S}inkhorn divergences},
  author       = {Feydy, Jean and S{\'e}journ{\'e}, Thibault and Vialard, Fran{\c{c}}ois-Xavier and Amari, Shun-ichi and Trouv{\'e}, Alain and Peyr{\'e}, Gabriel},
  booktitle    = {The 22nd international conference on artificial intelligence and statistics},
  pages        = {2681--2690},
  year         = {2019},
  organization = {PMLR}
}

@article{sinkhorn1964relationship,
  author   = {Sinkhorn, Richard},
  title    = {A relationship between arbitrary positive matrices and doubly
              stochastic matrices},
  journal  = {Ann. Math. Statist.},
  fjournal = {Annals of Mathematical Statistics},
  volume   = {35},
  year     = {1964},
  pages    = {876--879}
}

@article{franklin1989on,
  author   = {Franklin, Joel and Lorenz, Jens},
  title    = {On the scaling of multidimensional matrices},
  journal  = {Linear Algebra Appl.},
  fjournal = {Linear Algebra and its Applications},
  volume   = {114/115},
  year     = {1989},
  pages    = {717--735}
}

@inproceedings{dvurechensky2018computational,
  title     = {Computational Optimal Transport: Complexity by Accelerated Gradient Descent Is Better Than by {S}inkhorn's Algorithm},
  author    = {Dvurechensky, Pavel and Gasnikov, Alexander and Kroshnin, Alexey},
  booktitle = {Proceedings of the 35th International Conference on Machine Learning},
  pages     = {1367--1376},
  year      = {2018},
  editor    = {Dy, Jennifer and Krause, Andreas},
  volume    = {80},
  series    = {Proceedings of Machine Learning Research},
  month     = {10--15 Jul},
  publisher = {PMLR}
}

@article{chizat2026sharper,
  author   = {Chizat, L{\'e}na{\"i}c and Delalande, Alex and Va{\v s}kevi{\v c}ius,
              Tomas},
  title    = {Sharper exponential convergence rates for {S}inkhorn's
              algorithm in continuous settings},
  journal  = {Math. Program.},
  fjournal = {Mathematical Programming},
  volume   = {215},
  year     = {2026},
  number   = {1-2},
  pages    = {809--858}
}

@article{cuesta1993optimal,
  author   = {Cuesta-Albertos, J. A. and R\"uschendorf, L. and Tuero-D\'iaz,
              A.},
  title    = {Optimal coupling of multivariate distributions and stochastic
              processes},
  journal  = {J. Multivariate Anal.},
  fjournal = {Journal of Multivariate Analysis},
  volume   = {46},
  year     = {1993},
  number   = {2},
  pages    = {335--361}
}

@article{flamary2019concentration,
  title   = {Concentration bounds for linear {M}onge mapping estimation and optimal transport domain adaptation},
  author  = {Flamary, R{\'e}mi and Lounici, Karim and Ferrari, Andr{\'e}},
  journal = {Preprint, arXiv:1905.10155},
  year    = {2019}
}

@article{mendelson2006on,
  author   = {Mendelson, Shahar and Pajor, Alain},
  title    = {On singular values of matrices with independent rows},
  journal  = {Bernoulli},
  fjournal = {Bernoulli. Official Journal of the Bernoulli Society for
              Mathematical Statistics and Probability},
  volume   = {12},
  year     = {2006},
  number   = {5},
  pages    = {761--773}
}

@article{muzellec2021near,
  title   = {Near-optimal estimation of smooth transport maps with kernel sums-of-squares},
  author  = {Muzellec, Boris and Vacher, Adrien and Bach, Francis and Vialard, Fran{\c{c}}ois-Xavier and Rudi, Alessandro},
  journal = {Preprint, arXiv:2112.01907},
  year    = {2021}
}

@article{ghosal2024on,
  author  = {Ghosal, Promit and Nutz, Marcel},
  title   = {On the Convergence Rate of {S}inkhorn's Algorithm},
  journal = {Mathematics of Operations Research},
  volume  = {},
  number  = {},
  pages   = {},
  year    = {2025}
}

@article{polyak1963gradient,
  title     = {Gradient methods for the minimisation of functionals},
  author    = {Polyak, Boris T},
  journal   = {USSR Computational Mathematics and Mathematical Physics},
  volume    = {3},
  number    = {4},
  pages     = {864--878},
  year      = {1963},
  publisher = {Elsevier}
}

@phdthesis{feydy2020geometric,
  author  = {Feydy, Jean},
  title   = {Geometric Data Analysis, Beyond Convolutions},
  school  = {Université Paris-Saclay},
  year    = {2020},
  address = {France},
  type    = {PhD thesis}
}

@article{montesuma2025computing,
  title   = {Computing {W}asserstein Barycenters through Gradient Flows},
  author  = {Montesuma, Eduardo Fernandes and Bendou, Yassir and Gartrell, Mike},
  journal = {Preprint, arXiv:2510.04602},
  year    = {2025}
}

@inproceedings{campbell2018bayesian,
  title        = {Bayesian coreset construction via greedy iterative geodesic ascent},
  author       = {Campbell, Trevor and Broderick, Tamara},
  booktitle    = {International Conference on Machine Learning},
  pages        = {698--706},
  year         = {2018},
  organization = {PMLR}
}

@article {carlier2024quantitative,
    AUTHOR = {Carlier, Guillaume and Delalande, Alex and M\'erigot, Quentin},
     TITLE = {Quantitative stability of barycenters in the {W}asserstein
              space},
   JOURNAL = {Probab. Theory Related Fields},
  FJOURNAL = {Probability Theory and Related Fields},
    VOLUME = {188},
      YEAR = {2024},
    NUMBER = {3-4},
     PAGES = {1257--1286},
}

\end{document}